\theoremstyle{plain}
\newtheorem{thm}{Theorem}[section]
\newtheorem{lemma}[thm]{Lemma}
\newtheorem{corollary}[thm]{Corollary}
\theoremstyle{definition}
\newtheorem{remark}[thm]{Remark}
\newtheorem{definition}[thm]{Definition}
\newtheorem{example}[thm]{Example}
\numberwithin{equation}{section}
\begin{document}

\title[Uniformly Degenerate Parabolic Equations]
{Global Schauder Regularity and Convergence for Uniformly Degenerate Parabolic Equations}

\author[Han]{Qing Han}
\address{Department of Mathematics\\
University of Notre Dame\\
Notre Dame, IN 46556, USA} \email{qhan@nd.edu}
\author[Xie]{Jiongduo Xie}
\address{Beijing International Center for Mathematical Research\\
Peking University\\
Beijing, 100871, China}
\email{2001110018@stu.pku.edu.cn}

\begin{abstract}
In this paper, we study the global H\"older regularity of solutions to
uniformly degenerate parabolic equations. We also study the convergence of solutions as time goes to infinity
under extra assumptions on the characteristic exponents of the limit uniformly degenerate elliptic equations.
\end{abstract}



\thanks{The first author acknowledges the support of NSF Grant DMS-2305038
and the second author acknowledges the support of
National Key R\&D Program of China Grant 2020YFA0712800.
}

\maketitle


\section{Introduction}\label{Introduction}

Uniformly degenerate parabolic equations appear in many problems, 
including proper harmonic maps between hyperbolic spaces \cite{LiTam1991,LiTam1993,LiTam1993Ind}, 
nonlinear filtering of diffusion processes \cite{L2001,K2006,K2008}, 
the mean curvature flow in the hyperbolic space \cite{U2003,LX2012,ALZ2020}, 
and the Black-Scholes equation \cite{JT2006,EGG2012,EGG2014}. 
The linear version of these equations provide a basic model. 
The Sobolev space theory for the linear uniformly degenerate parabolic equations 
was established in \cite{K2007,DR2024}.
In this paper, we study the global H\"older regularity and convergence of solutions to
linear uniformly degenerate parabolic equations.

Let $\Omega$ be a bounded domain in $\mathbb{R}^{n}$ with a $C^{k,\alpha}$-boundary $\partial\Omega$, 
for some integer $k\geq 1$ and some constant $\alpha\in[0,1]$. 
Fix a constant $T>0$ and let $Q_{T}=\Omega\times(0,T]$. 
For any $t\in [0,T]$, set $\Omega_{t}=\Omega\times\{t\}$ and $S_{t}=\partial\Omega\times[0,t]$. 
We write the parabolic boundary of $Q_{T}$ as
$$
\partial_{p}Q_{T}=\Omega_{0}\cup S_{T}.
$$
Let $a_{i j}$, $b_{i}$, and $c$ be continuous functions in $\bar{Q}_{T}$, with $a_{i j}=a_{j i}$ and, 
for any $(x,t)\in\bar{Q}_{T}$ and any $\xi\in\mathbb{R}^{n}$,
\begin{equation}\label{1a}
\lambda|\xi|^{2}\leq a_{ij}(x,t)\xi_{i}\xi_{j}\leq \Lambda|\xi|^{2},
\end{equation}
for some positive constants $\lambda$ and $\Lambda$.

Let $\rho$ be a  $C^{k,\alpha}(\bar{\Omega})$-function. 
Then, $\rho$ is a {\it defining function} of $\Omega$ if $\rho>0$ in $\Omega$ and $\rho=0$ and $\nabla\rho\neq 0$ on $\partial\Omega$. 
We always require
$$
|\nabla\rho|=1\quad\text{on }\partial\Omega.
$$
Then, $\nabla\rho$ restricted to $\partial\Omega$ is the interior unit normal to $\partial\Omega$.

We consider the parabolic operator $L$ given by
\begin{equation}\label{1b}
Lu=\rho^{2} a_{ij}\partial_{ij}u+\rho b_{i}\partial_{i}u+cu-\partial_{t}u\quad\text{in }Q_{T}.
\end{equation}
We note that the operator $L$ is not uniformly parabolic in $\bar{Q}_T$ and, in fact, is
degenerate along the lateral boundary $S_T$, due to the presence of the factor $\rho^2$ in the
second-order terms. The operator $L$ is called to be {\it uniformly degenerate parabolic}.

In this paper, we will establish global Schauder regularity and some convergence results for the initial-boundary value problem
associated with \eqref{1b}.
Specifically, let $f$ be a continuous function in $\bar{Q}_{T}$ and consider
\begin{equation}\label{1c}
Lu=f\quad\text{in }Q_{T}.
\end{equation}
The initial-boundary value is given by
\begin{align}
u(\cdot,0)&=\phi\quad\text{on }\Omega,\label{1d}\\
u&=h\quad \text{on }S_{T},\label{1e}
\end{align}
where $\phi\in C(\bar{\Omega})$ and
$$
h(x,t)=\phi(x)\exp\left\{\int^{t}_{0}c(x,s)ds\right\}-\int^{t}_{0}\exp\left\{\int^{t}_{s}c(x,\tau)d\tau\right\}f(x,s) d s.
$$
Note that the boundary value is not arbitrarily prescribed due to the degeneracy along the lateral boundary of $L$. In fact,
$h$ and $\phi$ satisfy
\begin{align}\label{1f}\begin{split}
\partial_{t}h-ch+f&=0\quad\text{on } S_{T},\\
h(\cdot,0)&=\phi\quad\text{on }\partial\Omega.
\end{split}
\end{align}
We will introduce various function spaces later.

\subsection{Notations}\label{Notations}
Before we introduce various function spaces for our study, we first examine the equation \eqref{1c}. 
To concentrate on the relation between $u$ and $Lu$, we assume that all coefficients and initial values are sufficiently regular. 
It is obvious that $Lu$ in \eqref{1b} is a linear combination of $u, \rho D_xu, \rho^2D_x^2u$, and $\partial_tu$. 
Then, $Lu\in C^{\alpha}(\bar{Q}_{T})$ if $u, \rho D_xu, \rho^2D_x^2u, \partial_tu\in C^{\alpha}(\bar{Q}_{T})$. 
To this end, we need, in particular, $\rho D_xu, \rho^2D_x^2u\in C^{\alpha}(\bar{Q}_{T})$, 
not the stronger condition $D_xu, D_x^2u\in C^{\alpha}(\bar{Q}_{T})$. 
This hints that we will obtain the regularity of $\rho D_xu$ and $\rho^2D_x^2u$, instead of $D_xu$ and $D_x^2u$. 
A fundamental regularity result to be established in this paper asserts 
$u, \rho D_xu, \rho^2D_x^2u, \partial_tu \in C^{\alpha}(\bar{Q}_{T})$
if $Lu\in  C^{\alpha}(\bar{Q}_{T})$. This leads to the $C^{2+\alpha}$-norm to be defined in \eqref{eq-def-2+alpha-norm}.
In subsequent results, we will demonstrate that such a pattern of regularity can be generalized to arbitrary derivatives
by an induction process. 
The $C^{k, 2+\alpha}$-norm as in \eqref{eq-def-k-2+alpha-norm} is introduced for this process. 
It is a sum of $C^\alpha(\bar Q_T)$-norms of weighted derivatives 
$D^{\beta}_{x}\partial^{i}_{t}u$ for $|\beta|+2i\leq k+2$.

Let $D$ be a bounded domain in $\mathbb{R}^{n}$ with a defining function $\rho$. Let $\alpha\in(0,1)$ be a constant. 
For a function $u:D\rightarrow \mathbb{R}$, we define its H\"{o}lder seminorm by
$$
[u]_{C^{\alpha}(\bar{D})}=\sup_{x,y\in D,x\neq y}\frac{|u(x)-u(y)|}{|x-y|^{\alpha}}.
$$
We also define the H\"{o}lder norm by
$$
\|u\|_{C^{\alpha}(\bar{D})}=\sup_{D}|u|+[u]_{C^{\alpha}(\bar{D})}.
$$
For a nonnegative integer $k$ and a function $u:D\rightarrow \mathbb{R}$ with continuous derivatives up to order $k$, we define
$$
\|u\|_{C^{k,\alpha}(\bar{D})}=\sum_{|\beta|\leq k}\sup_{D}|D^{\beta}u|+\sum_{|\beta|=k}[D^{\beta}u]_{C^{\alpha}(\bar{D})}.
$$
Due to the degeneracy of the problem, for a function $u:D\rightarrow \mathbb{R}$ with continuous derivatives $Du$ and $D^{2}u$, we define
$$
\|u\|_{C^{2+\alpha}(\bar{D})}=\|u\|_{C^{\alpha}(\bar{D})}+\|\rho Du\|_{C^{\alpha}(\bar{D})}+\|\rho^{2} D^2 u\|_{C^{\alpha}(\bar{D})},
$$
and
\begin{align*}
C^{2+\alpha}(\bar{D})&=\{u\in C^{2}(D):\|u\|_{C^{2+\alpha}(\bar{D})}<\infty,\\
&\qquad \rho Du|_{\partial D}=0,\text{ and }\rho^{2} D^{2}u|_{\partial D}=0\}.
\end{align*}
For a positive integer $k$ and a function $u:D\rightarrow \mathbb{R}$ with continuous derivatives up to order $k+2$, we define
$$
\|u\|_{C^{k,2+\alpha}(\bar{D})}=\|u\|_{C^{k-1}(\bar{D})}+\sum_{|\beta|=k}\|D^{\beta}u\|_{C^{2+\alpha}(\bar{D})},
$$
and
\begin{align*}
C^{k,2+\alpha}(\bar{D})&=\{u\in C^{k+2}(D):\|u\|_{C^{k,2+\alpha}(\bar{D})}<\infty,\rho D^{k+1}u|_{\partial D}=0,\\
&\qquad\text{ and }\rho^{2} D^{k+2}u|_{\partial D}=0\}.
\end{align*}
We also write $C^{0,2+\alpha}(\bar{D})=C^{2+\alpha}(\bar{D})$. In fact, $\rho^i D^{k+i}u=0$ on $\partial D$, $i=1,2$, 
for $u$ satisfying $\|u\|_{C^{k,2+\alpha}(\bar{D})}<\infty$.

Write $X=(x,t)$ in $\mathbb{R}^{n}\times[0,\infty)$. 
For any $X_{1}=(x_{1},t_{1}), X_{2}=(x_{2},t_{2})\in\mathbb{R}^{n}\times[0,\infty) $, we define the parabolic distance by
$$
s(X_{1},X_{2})=\max\{|x_{1}-x_{2}|,|t_{1}-t_{2}|^{\frac{1}{2}}\}.
$$
Fix a constant $T>0$ and set 
$$Q_{T}=D\times(0,T] \quad\text{and}\quad S_T=\partial D\times [0,T].$$ 
Let $\alpha\in(0,1)$ be a constant. For a function $u:Q_{T}\rightarrow \mathbb{R}$, we define its H\"{o}lder seminorms by
$$
[u]_{C^{\alpha}(\bar{Q}_{T})}=\sup_{X,Y\in Q_{T},X\neq Y}\frac{|u(X)-u(Y)|}{s(X,Y)^{\alpha}},
$$
and
$$
[u]_{C^{\alpha}_{x}(\bar{Q}_{T})}=\sup_{t\in(0,T]}[u(\cdot,t)]_{C^{\alpha}(\bar{D})},\quad [u]_{C^{\alpha}_{t}(\bar{Q}_{T})}=\sup_{x\in D}[u(x,\cdot)]_{C^{\alpha}([0,T])}.
$$
We also define the H\"{o}lder norm by
$$
\|u\|_{C^{\alpha}(\bar{Q}_{T})}=\sup_{Q_{T}}|u|+[u]_{C^{\alpha}(\bar{Q}_{T})}.
$$
Similarly, for a function $u:S_{T}\rightarrow\mathbb{R}$, we define
$$
\|u\|_{C^{\alpha}(S_{T})}=\sup_{S_{T}}|u|+[u]_{C^{\alpha}(S_{T})},
$$
where
$$
[u]_{C^{\alpha}(S_{T})}=\sup_{X,Y\in S_{T},X\neq Y}\frac{|u(X)-u(Y)|}{s(X,Y)^{\alpha}}.
$$

For a nonnegative integer $k$ and a function $u:Q_{T}\rightarrow \mathbb{R}$ with continuous derivatives 
$D^{\beta}_{x}\partial^{i}_{t}u$ $(|\beta|+2i\leq k)$, we define 
$$
\|u\|_{C^{k}(\bar{Q}_{T})}=\sum_{|\beta|+2i\leq k}\sup_{Q_{T}}|D^{\beta}_{x}\partial^{i}_{t}u|,
$$
and
$$
\|u\|_{C^{k,\alpha}(\bar{Q}_{T})}=\|u\|_{C^{k}(\bar{Q}_{T})}+\sum_{|\beta|+2i=k}[D^{\beta}_{x}\partial^{i}_{t}u]_{C^{\alpha}(\bar{Q}_{T})}.
$$
Due to the degeneracy of the problem, for a function $u:Q_{T}\rightarrow \mathbb{R}$ with continuous derivatives $D_{x}u$, $D^{2}_{x}u$, 
and $\partial_{t}u$, we define
\begin{align}\label{eq-def-2+alpha-norm}\begin{split}
\|u\|_{C^{2+\alpha}(\bar{Q}_{T})}&=\|u\|_{C^{\alpha}(\bar{Q}_{T})}+\|\rho D_{x}u\|_{C^{\alpha}(\bar{Q}_{T})}\\
&\qquad+\|\rho^{2} D^{2}_{x}u\|_{C^{\alpha}(\bar{Q}_{T})}+\|\partial_{t}u\|_{C^{\alpha}(\bar{Q}_{T})},
\end{split}\end{align}
and
\begin{align*}
C^{2+\alpha}(\bar{Q}_{T})&=\{u\in C^{2}(Q_{T}):\|u\|_{C^{2+\alpha}(\bar{Q}_{T})}<\infty,\\
&\qquad\rho D_{x}u|_{S_{T}}=0, \text{ and }\rho^{2} D^{2}_{x}u|_{S_{T}}=0\}.
\end{align*}
For a positive integer $k$ and a function $u:Q_{T}\rightarrow \mathbb{R}$ with continuous derivatives 
$D^{\beta}_{x}\partial^{i}_{t}u$ $(|\beta|+2i\leq k+2)$, we define
\begin{align}\label{eq-def-k-2+alpha-norm}\begin{split}
\|u\|_{C^{k,2+\alpha}(\bar{Q}_{T})}&=\sum_{|\beta|\leq k}\|D^{\beta}_{x}u\|_{C^{2+\alpha}(\bar{Q}_{T})}\\
&\qquad +\sum_{|\beta|+2i\leq k+2,i\geq 2}\|D^{\beta}_{x}\partial^{i}_{t}u\|_{C^{\alpha}(\bar{Q}_{T})},
\end{split}\end{align}
and
\begin{align*}
C^{k,2+\alpha}(\bar{Q}_{T})&=\{u\in C^{k+2}(Q_{T}):\|u\|_{C^{k,2+\alpha}(\bar{Q}_{T})}<\infty,\rho D_{x}D^{k}_{x}u|_{S_{T}}=0,\\
&\qquad\text{ and }\rho^{2} D^{2}_{x}D^{k}_{x}u|_{S_{T}}=0\}.
\end{align*}
We also write $C^{0,2+\alpha}(\bar{Q}_{T})=C^{2+\alpha}(\bar{Q}_{T})$. In fact, $\rho^i D^{k+i}_x u=0$ on $S_T$, $i=1,2$, 
for $u$ satisfying $\|u\|_{C^{k,2+\alpha}(\bar{Q}_T)}<\infty$.

The $C^{k, 2+\alpha}$-norm as in \eqref{eq-def-k-2+alpha-norm} is introduced for the induction process to be employed 
in the study of higher regularity. 
It is a sum of $C^\alpha(\bar Q_T)$-norms of weighted derivatives 
$D^{\beta}_{x}\partial^{i}_{t}u$ for $|\beta|+2i\leq k+2$. 
In fact, only $D^{\beta}_{x}u$ for $|\beta|=k+1, k+2$ has a weight in the forms 
$\rho D^{\beta}_{x}u$ for $|\beta|=k+1$ and $\rho^2 D^{\beta}_{x}u$ for $|\beta|=k+2$. 
All other derivatives have no weight, in particular, those derivatives involving the $t$-variable. 
Equivalently, we can define 
\begin{align*}
\|u\|_{C^{k,2+\alpha}(\bar{Q}_{T})}&=\sum_{|\beta|\leq k}\|D^{\beta}_{x}u\|_{C^{\alpha}(\bar{Q}_{T})}
+\sum_{|\beta|=k}\sum_{j=1}^{2}\|\rho^j D^j_xD^{\beta}_{x}u\|_{C^{\alpha}(\bar{Q}_{T})}\\
&\qquad +\sum_{|\beta|+2i\leq k+2,i\ge 1}\|D^{\beta}_{x}\partial^{i}_{t}u\|_{C^{\alpha}(\bar{Q}_{T})}.
\end{align*}
However, the form in \eqref{eq-def-k-2+alpha-norm} is more suitable for the aforementioned induction process.

We add an asterisk to the subscript of the notations 
in the above definitions to define some partial H\"{o}lder norms. 
For some constant $\alpha\in(0,1)$, we define
\begin{align*}
\|u\|_{C^{\alpha}_{\ast}(\bar{Q}_{T})}&=\sup_{Q_{T}}|u|+[u]_{C^{\alpha}_{x}(\bar{Q}_{T})},\\
\|u\|_{C^{k,\alpha}_{\ast}(\bar{Q}_{T})}&=\sum_{|\beta|\leq k}\|D^{\beta}_{x}u\|_{C^{\alpha}(\bar{Q}_{T})}
+\sum_{|\beta|+2i\leq k,i\geq 1}\|D^{\beta}_{x}\partial^i_t u\|_{C^{\alpha}_\ast(\bar{Q}_{T})}\quad\text{for }k\geq 2,
\end{align*} 
and
\begin{align*}
\|u\|_{C^{k,2+\alpha}_{\ast}(\bar{Q}_{T})}&=\sum_{|\beta|\leq k}\|D^{\beta}_{x}u\|_{C^{\alpha}(\bar{Q}_{T})}+
\sum_{|\beta|=k}\sum_{j=1}^{2}\|\rho^j D^j_xD^{\beta}_{x}u\|_{C^{\alpha}(\bar{Q}_{T})}\\
&\qquad+\sum_{|\beta|+2i\leq k+2,i\geq 1}\|D^{\beta}_{x}\partial^{i}_{t}u\|_{C^{\alpha}_{\ast}(\bar{Q}_{T})}\quad\text{for }k\geq 0.
\end{align*}
We also write $C^{0,2+\alpha}_{\ast}(\bar{Q}_T)=C^{2+\alpha}_{\ast}(\bar{Q}_T)$.

\smallskip
We end this subsection by two remarks. 

First, the classical Schauder theory for uniformly parabolic equations in non-divergence form asserts 
that if the coefficients and the nonhomogeneous terms are H\"{o}lder continuous with respect to all variables, then the same holds for
the second spatial derivatives and the first time derivative of the solutions. 
There are two types of such results, one is in the interior of the domain 
and the other is near the parabolic boundary under appropriate boundary conditions.
Refer to \cite{F1964,L1968,L1996}.

Second, the intermediate Schauder theory (\cite{B1969,K198081,L1992,L2000}), 
for parabolic problems in bounded domains where the coefficients and the nonhomogeneous terms 
are H\"older continuous with respect to the space variables, measurable in time, 
will be an important tool for us. 
First, we need the Schauder theory interior in space and global in time 
when we study the regularity of solutions of \eqref{1c}-\eqref{1e} near the lateral boundary. 
Since we will discuss the regularity in space and time at different scales near the lateral boundary, 
by replacing the classical Schauder theory with the intermediate Schauder theory, 
we can obtain optimal boundary regularity. Second, we will study the convergence of solutions of \eqref{1c}-\eqref{1e} 
without any regularity assumptions on time at infinity, which is based on the interior intermediate Schauder theory.


\subsection{Main Results}\label{Main Results}
Before we state our main results, we first present an example.
In this example, we demonstrate that solutions of uniformly degenerate parabolic equations do not
have better boundary regularity than the initial values when $t>0$ 
even if the domain $\Omega$ has a smooth boundary and the coefficients, nonhomogeneous terms, and boundary values are smooth enough. 
This is sharply different from the solutions of uniformly parabolic equations.

\begin{example}\label{example-regularity}
Let $\Omega$ be a bounded domain in $\mathbb R^n$
with a $C^\infty$-boundary $\partial\Omega$ and $\rho$ be a smooth defining function.
Consider constants $a, b, c$, with $a>0$, and set
\begin{align}\label{eq-ch2-def-operator-example}L=a\rho^2\Delta+b\rho\nabla\rho\cdot\nabla+c-\partial_t.\end{align}
Here, $\nabla$ and $\Delta$ are with respect to $x\in\mathbb R^n$.
In the following, we study the initial-boundary value problem for
\begin{equation}\label{eq-ch2-basic-equation}
Lu=f\quad\text{in }\Omega\times (0,\infty).\end{equation}

For convenience, set, for any constant $\mu>0$,
$$P(\mu)=a\mu(\mu-1)+b\mu +c.$$
Take any function $\psi\in C^\infty(\bar\Omega)$ and any constants $\tau$ and $\mu$.
A straightforward calculation yields
\begin{align*} L(e^{-\tau t}\psi\rho^\mu)
&=e^{-\tau t}\rho^\mu\big\{\big[P(\mu)+\tau\big]\psi+\mu\big[a(\mu-1)+b\big](|\nabla \rho|^2-1)\psi\\
&\qquad+\big[a\mu\psi\Delta\rho+(2a\mu+b)\nabla\psi\cdot\nabla\rho\big]\rho+a\Delta\psi\rho^{2}\big\}.
\end{align*}
All functions in the right-hand side are smooth except $\rho^\mu$ if $\mu$ is not an integer.

Consider nonnegative integers $k$ and $m$ and a constant $\alpha\in (0,1)$, and set $s=k+\alpha$.
Then, $s>0$ is a positive constant which is not an integer.
For functions $\psi_0, \psi_1, \cdots, \psi_m\in C^\infty(\bar\Omega)$ to be determined, consider
\begin{align}\label{eq-ch2-example-expression-u-t-sum}
u=e^{-\tau t}\rho^s\sum_{i=0}^m\psi_i\rho^{i}=e^{-\tau t}\sum_{i=0}^m\psi_i\rho^{s+i}.\end{align}
Then, $u$ satisfies \eqref{eq-ch2-basic-equation} with $f$ given by
\begin{align}\label{eq-ch2-example-expression-f-t-sum}\begin{split}
f&=e^{-\tau t}\rho^s\sum_{i=0}^m\big\{\big[P(s+i)+\tau\big]\psi_i\rho^i\\
&\qquad+(s+i)\big[a(s+i-1)+b\big](|\nabla \rho|^2-1)\psi_i\rho^i\\
&\qquad+\big[a(s+i)\psi_i\Delta\rho+(2a(s+i)+b)\nabla\psi_i\cdot\nabla\rho\big]\rho^{i+1}\\
&\qquad+a\Delta\psi_i\rho^{i+2}\big\}.
\end{split}\end{align}
All functions in the right-hand side are smooth except $\rho^s$.
We note that the second term in the parenthesis has an additional factor $\rho$ due to $|\nabla \rho|^2=1$ on $\partial\Omega$.

In the following, we fix $\psi_0$ arbitrarily. We now choose
\begin{equation}\label{eq-choice-lambda}\tau=-P(s).\end{equation}
Then, the first term in the parenthesis corresponding to $i=0$ in \eqref{eq-ch2-example-expression-f-t-sum} is zero.
We next assume, for $i=1, \cdots, m$,
\begin{equation}\label{eq-requirement-lambda}P(s+i)+\tau\neq 0.\end{equation}
Inductively for $i=1, \cdots, m$, we take
\begin{align*}\psi_{i}&=\frac{1}{P(s+i)+\tau}\big\{
(s+i-1)\big[a(s+i-2)+b\big](|\nabla \rho|^2-1)\psi_{i-1}\rho^{-1}\\
&\qquad+\big[a(s+i-1)\psi_{i-1}\Delta\rho+(2a(s+i-1)+b)\nabla\psi_{i-1}\cdot\nabla\rho\big]\\
&\qquad+a\Delta\psi_{i-1}\rho\big\}.
\end{align*}
In other words, we use the first term in the parenthesis corresponding to $i=1, \cdots, m$
in \eqref{eq-ch2-example-expression-f-t-sum} to cancel the rest of the terms corresponding to $i-1$.
Then,
\begin{align}\label{eq-ch2-example-expression-f-t-sum-v2}\begin{split}
f&=e^{-\tau t}\rho^{s+m+1}\big\{
(s+m)\big[a(s+m-1)+b\big](|\nabla \rho|^2-1)\psi_m\rho^{-1}\\
&\qquad+\big[a(s+m)\psi_m\Delta\rho+(2a(s+m)+b)\nabla\psi_m\cdot\nabla\rho\big]+a\Delta\psi_m\rho\big\}.
\end{split}\end{align}
Hence for any $t\ge 0$,
$f(\cdot, t)\in C^{k+m+1, \alpha}(\bar\Omega)$ and
$u(\cdot, t)\in C^{k,\alpha}(\bar\Omega)$, but $u(\cdot, t)\notin C^{k,\beta}(\bar\Omega)$,
for any $\beta\in (\alpha,1)$. The boundary value is given by
$$u=0\quad\text{on }\partial\Omega\times (0,\infty).$$
By \eqref{eq-choice-lambda}, we have, for any $i=1, \cdots, m$,
$$P(s+i)+\tau=P(s+i)-P(s)=ai(2s+i-1)+bi.$$
For \eqref{eq-requirement-lambda}, we require, for any $i=1, \cdots, m$,
$$a(2s+i-1)+b\neq 0.$$

If $\tau>0$, we have $u(\cdot,t)$ converges to $0$ in $C^{k,\alpha}(\bar\Omega)$ but not in $C^{k,\beta}(\bar\Omega)$, 
for any $\beta\in (\alpha,1)$, as $t\rightarrow\infty$.

We now examine a special case given by $P(s)=0$. Then, \eqref{eq-choice-lambda} yields $\tau=0$
and consequently the factor $e^{-\tau t}$ disappears from the expressions $u$ and $f$. 
In this case, we have stationary solutions.
\end{example}

We point out that $\rho$ is a given defining function of $\Omega$ and fixed throughout the paper. In the following, we will not specify how estimate constants depend on $\rho$.

We now state main results in this paper.
The first main result is the following existence and uniqueness of solutions 
of the initial-boundary value problem \eqref{1c}-\eqref{1e}.

\begin{thm}\label{1A}
Let $\Omega$ be a bounded domain in $\mathbb{R}^{n}$ with a $C^{1}$-boundary $\partial\Omega$ 
and $\rho$ be a $C^1(\bar{\Omega})\cap C^{2}(\Omega)$-defining function with $\rho\nabla^2\rho\in C(\bar{\Omega})$ 
and $\rho\nabla^2\rho=0$ on $\partial\Omega$. 
For some constant $T>0$, let $Q_{T}=\Omega\times(0,T]$. For some constant $\alpha\in(0,1)$, 
assume $a_{ij},b_{i},c\in C^{\alpha}(\bar{Q}_{T})$ with \eqref{1a}. 
Then, for any $f\in C^{\alpha}(\bar{Q}_{T})$ and $\phi\in C^{2+\alpha}(\bar{\Omega})$, 
the initial-boundary value problem \eqref{1c}-\eqref{1e} admits a unique solution $u\in C^{2+\alpha}(\bar{Q}_{T})$.
\end{thm}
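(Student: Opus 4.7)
The plan is to combine the global $C^{2+\alpha}$-Schauder estimate for uniformly degenerate parabolic operators (the main technical result to follow in this paper) with the method of continuity, and to treat uniqueness separately via a maximum principle adapted to the degeneracy at $S_T$.

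For uniqueness, let $w$ be the difference of two solutions in $C^{2+\alpha}(\bar Q_T)$. Then $Lw = 0$ in $Q_T$ and $w(\cdot, 0) = 0$ on $\Omega$. Because $\rho D_x w$ and $\rho^2 D_x^2 w$ vanish on $S_T$ by the very definition of $C^{2+\alpha}(\bar Q_T)$, the equation $Lw = 0$ extends continuously to $S_T$ and there reduces to the first-order linear ODE $\partial_t w = cw$ in $t$ for each fixed $x \in \partial\Omega$; combined with $w|_{t=0} = 0$ on $\partial\Omega$, the uniqueness for linear ODEs forces $w \equiv 0$ on $S_T$. Thus $w$ vanishes on the full parabolic boundary $\partial_p Q_T$, and the standard weak maximum principle applied to $e^{-Kt}w$ with $K > \|c\|_{L^\infty(Q_T)}$ yields $w \equiv 0$ in $\bar Q_T$.

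For existence, I would run the method of continuity along the family $L_\sigma = \sigma L + (1-\sigma) L_0$, $\sigma \in [0,1]$, with $L_0 = \rho^2 \Delta - \partial_t$ a model uniformly degenerate parabolic operator. Let $\Sigma \subset [0,1]$ denote the set of parameters for which the IBVP associated with $L_\sigma$ admits a solution in $C^{2+\alpha}(\bar Q_T)$ for every admissible pair $(f, \phi) \in C^\alpha(\bar Q_T) \times C^{2+\alpha}(\bar\Omega)$, with the lateral boundary value $h$ prescribed through the compatibility formula \eqref{1f} using $c_\sigma = \sigma c$. The uniform a priori bound
\[
\|u\|_{C^{2+\alpha}(\bar Q_T)} \le C\bigl(\|L_\sigma u\|_{C^\alpha(\bar Q_T)} + \|\phi\|_{C^{2+\alpha}(\bar\Omega)}\bigr),
\]
combined with the uniqueness above, renders $\Sigma$ both open and closed in $[0,1]$ by the familiar bounded-invertibility / Arzel\`a--Ascoli argument. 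For the base case $\sigma = 0$, one approximates $L_0$ by the uniformly parabolic operators $L_{0,\varepsilon} = (\rho^2 + \varepsilon)\Delta - \partial_t$; classical parabolic Schauder theory supplies solutions $u_\varepsilon$, and the $\varepsilon$-independent Schauder estimate (which applies uniformly to this family) allows passage to a subsequential limit in $C^{2+\alpha}(\bar Q_T)$.

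The principal obstacle is the a priori $C^{2+\alpha}$-estimate itself, which must simultaneously control the weighted spatial derivatives $\rho D_x u$ and $\rho^2 D_x^2 u$ and the unweighted time derivative $\partial_t u$ up to $S_T$, precisely where $L$ degenerates. The natural route, anticipated in the introduction, is to flatten the boundary and substitute $y_n = -\log\rho$, under which $L$ becomes asymptotically uniformly parabolic on a half-cylinder; boundary estimates of intermediate Schauder type --- requiring only measurable time regularity at the boundary scale --- on this transformed problem transfer back to yield the weighted boundary estimate. Patching with classical interior Schauder estimates applied after a $\rho$-rescaling then produces the global bound, at which point the continuity method closes and delivers the unique solution in $C^{2+\alpha}(\bar Q_T)$.
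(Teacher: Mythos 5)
Your uniqueness argument is fine (indeed, since $u=h$ on $S_T$ is part of the problem \eqref{1c}--\eqref{1e}, the difference of two solutions vanishes on all of $\partial_p Q_T$ without any ODE argument, and the paper gets uniqueness in the larger class $C^2(Q_T)\cap C(\bar Q_T)$ directly from the maximum principle, Lemma \ref{2A}). The existence scheme, however, has two genuine gaps, and the continuity method is in any case a detour: the paper regularizes the \emph{full} operator as $L_\delta=L+\delta\Delta$, solves the regularized problems by classical parabolic Schauder theory, and passes to the limit with $\delta$-independent bounds --- which is exactly the argument you invoke only for your base case $\sigma=0$. If that argument works for $L_0=\rho^2\Delta-\partial_t$, it works verbatim for $L$, so the continuity family buys nothing.

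The first gap is your route to the a priori estimate via flattening the boundary and substituting $y_n=-\log\rho$. Under the stated hypotheses the boundary is only $C^1$ and $\rho\in C^1(\bar\Omega)\cap C^2(\Omega)$ with merely $\rho\nabla^2\rho\in C(\bar\Omega)$; a boundary-flattening map (or the log change of variables) is then only $C^1$, so the transformed second-order coefficients involve quantities like $\nabla\rho$ and $\rho\nabla^2\rho$ that are continuous but not $C^\alpha$, and the Schauder machinery you want to apply on the half-cylinder is not available. The paper explicitly avoids flattening for precisely this reason and instead works on interior balls $B_{d(x)/2}(x)$ after a $d(x)$-rescaling, combining a barrier-based decay estimate near $S_T$ (Lemmas \ref{2D}, \ref{2E}, \ref{2F}) with the intermediate Schauder estimate interior in space and global in time (Corollary \ref{5G}) and the gluing Lemma \ref{2C}; the intermediate (rather than classical) Schauder theory is essential to recover the factor $r^\alpha$ lost in the time-H\"older seminorm of the rescaled right-hand side (Remark \ref{2G'}). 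The second gap is your claim that the $\varepsilon$-independent degenerate Schauder estimate ``applies uniformly'' to $L_{0,\varepsilon}=(\rho^2+\varepsilon)\Delta-\partial_t$: written in the form $\rho^2 a^\varepsilon_{ij}\partial_{ij}$ one has $a^\varepsilon_{ij}=(1+\varepsilon/\rho^2)\delta_{ij}$, which is not bounded near $\partial\Omega$ uniformly in $\varepsilon$, so the estimate does not apply to the regularized operators as stated. The paper has to confront this: its rescaled coefficients carry a constant $C_{x,\delta}=C+C\delta/r^2$ depending on the point $x$ and on $\delta$, and the uniform bound is recovered only by first extracting local limits as $\delta\to0$ for each fixed $x$ (using that $C_{x,\delta}\to C$) and then covering $\Omega$ with a diagonalization. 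Without an argument of this kind your passage to the limit in $C^{2+\alpha}(\bar Q_T)$ is not justified.
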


The second main result concerns the higher regularity of solutions up to the boundary.

\begin{thm}\label{1B}
For some integer $k\geq 0$ and some constant $\alpha\in(0,1)$, 
let $\Omega$ be a bounded domain in $\mathbb{R}^{n}$ with a $C^{k+1,\alpha}$-boundary $\partial\Omega$ 
and $\rho$ be a $C^{k+1,\alpha}(\bar{\Omega})\cap C^{k+2,\alpha}(\Omega)$-defining function 
with $\rho\nabla^{k+2}\rho\in C^{\alpha}(\bar\Omega)$.  
For some constant $T>0$, let $Q_{T}=\Omega\times(0,T]$. 
Assume $a_{ij},b_{i},c\in C^{k,\alpha}(\bar{Q}_{T})$ with \eqref{1a}. 
Suppose that, for some $f\in C^{k,\alpha}(\bar{Q}_{T})$ and $\phi\in C^{k,2+\alpha}(\bar{\Omega})$, 
the initial-boundary value problem \eqref{1c}-\eqref{1e} 
admits a solution $u\in C^{2}(Q_{T})\cap C(\bar{Q}_{T})$. 
Then, $u\in C^{k,2+\alpha}(\bar{Q}_{T})$ and
$$
\|u\|_{C^{k,2+\alpha}(\bar{Q}_{T})}\leq C\big\{\|\phi\|_{C^{k,2+\alpha}(\bar{\Omega})}+\|f\|_{C^{k,\alpha}(\bar{Q}_{T})}\big\},
$$
where $C$ is a positive constant depending only on $n,$ $T,$ $\lambda,$ $k,$ $\alpha,$ $\Omega,$ $\rho$, 
and the $C^{k,\alpha}$-norms of $a_{ij},$ $b_{i},$ and $c$ in $\bar{Q}_T$.
\end{thm}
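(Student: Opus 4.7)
The plan is to prove Theorem~\ref{1B} by induction on $k$. For the base case $k=0$, I would invoke Theorem~\ref{1A} to produce a solution $\tilde u \in C^{2+\alpha}(\bar Q_T)$ to the same initial-boundary value problem satisfying the required estimate in terms of $\|\phi\|_{C^{2+\alpha}(\bar\Omega)}$ and $\|f\|_{C^\alpha(\bar Q_T)}$. The difference $w=u-\tilde u \in C^2(Q_T)\cap C(\bar Q_T)$ solves $Lw=0$ with vanishing parabolic boundary data, so the maximum principle (using $c$ bounded and continuous) forces $w\equiv 0$, giving $u=\tilde u$.

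For the inductive step at level $k\geq 1$, the idea is to differentiate \eqref{1c} in tangential and time directions, apply the inductive hypothesis to the resulting equations, and then recover the weighted normal derivatives algebraically from \eqref{1b}. After localizing in coordinate charts that flatten a piece of $\partial\Omega$ to $\{x_n=0\}$ and take $\rho=x_n$ up to a smooth positive factor, I would commute $L$ with a tangential vector field $\partial_j$ with $j<n$, or with $\partial_t$. A direct computation shows that the commutators $[L,\partial_j]$ and $[L,\partial_t]$ retain the uniformly-degenerate-parabolic structure with coefficients of one lower order of regularity, so $v=\partial_j u$ satisfies $Lv=\tilde f$ with $\tilde f\in C^{k-1,\alpha}(\bar Q_T)$ once the inductive hypothesis is used to interpret all terms generated by the commutator. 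The boundary value $\partial_j h$ inherits the correct class from the explicit formula for $h$ in terms of $\phi$, $c$, and $f$, and the compatibility condition \eqref{1f} differentiates consistently. Applying the inductive hypothesis at order $k-1$ to $v$ gives $\partial_j u,\partial_t u \in C^{k-1,2+\alpha}(\bar Q_T)$; iterating over a basis of tangential vector fields and over $\partial_t$ controls all $D^\beta_x\partial^i_t u$ with $\beta$ tangential and $|\beta|+2i\leq k$ in $C^{2+\alpha}(\bar Q_T)$.

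What remains are the purely normal derivatives, which I would extract algebraically from \eqref{1b}. Since $a_{nn}\geq \lambda>0$ by \eqref{1a}, the equation rearranges to express $\rho^2 a_{nn}\partial_{nn}u$ as a linear combination of quantities already controlled, namely the mixed $\rho^2 a_{ij}\partial_{ij}u$ with $(i,j)\neq (n,n)$, $\rho b_i\partial_i u$, $cu$, $\partial_t u$, and $f$. Differentiating this identity tangentially up to $k$ times and keeping track of how the $\rho$-weights distribute yields $\rho^{2}D^{k+2}_x u\in C^\alpha(\bar Q_T)$, and a corresponding first-order identity (or the same identity divided by $\rho$) gives $\rho D^{k+1}_x u\in C^\alpha(\bar Q_T)$. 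The vanishing conditions $\rho D^{k+1}_x u|_{S_T}=0$ and $\rho^2 D^{k+2}_x u|_{S_T}=0$ then follow automatically from these weighted H\"older bounds, matching the definition of $C^{k,2+\alpha}(\bar Q_T)$.

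The main obstacle will be obtaining the weighted H\"older regularity globally up to $S_T$, rather than only in the interior, because the operator $L$ degenerates on $S_T$ and classical Schauder freezing of coefficients fails there. The remedy, standard for uniformly degenerate problems, is a dyadic decomposition of a neighborhood of $S_T$ into parabolic cylinders at scales $\rho$ in $x$ and $\rho^2$ in $t$; on each such cylinder the rescaled operator is uniformly parabolic with controlled coefficients, and the intermediate Schauder theory emphasized in Section~\ref{Notations}—in particular its version interior in space and global in time, which only requires measurability of coefficients in $t$—yields the appropriate H\"older estimate that then sums against the weights to produce the global bound. Verifying at each stage of the induction that the right-hand sides generated by commuting $L$ with derivatives actually lie in $C^{k-1,\alpha}(\bar Q_T)$ (using the equivalent form of the $C^{k,2+\alpha}$-norm displayed after \eqref{eq-def-k-2+alpha-norm}) is the technical check that closes the argument.
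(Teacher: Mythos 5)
Your base case and your treatment of the tangential and time derivatives are essentially the paper's (Theorems \ref{3F}--\ref{3G} and Subsection \ref{time} do exactly the commutator-plus-induction argument you describe, in flattened coordinates). The genuine gap is in the normal direction, where "recover the weighted normal derivatives algebraically from \eqref{1b}" does not work. The equation only hands you $\rho^{2}a_{nn}\partial_{nn}u$ as a combination of controlled quantities; but membership in $C^{k,2+\alpha}(\bar{Q}_{T})$ for $k\geq 1$ requires, among other things, the \emph{unweighted} H\"older continuity of $\partial_{n}^{m}u$ up to $S_{T}$ for $m\leq k$, and the intermediately weighted quantity $\rho\,\partial_{nn}u$ (one power of $\rho$ on a second normal derivative), neither of which follows from that identity. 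Dividing the equation by $\rho$ produces the term $(cu-\partial_{t}u-f)/\rho$, which vanishes on $S_{T}$ by \eqref{1f} but whose $C^{\alpha}$ regularity is exactly equivalent to the normal differentiability you are trying to establish; the argument is circular. More fundamentally, your scheme never identifies the boundary trace of $\partial_{n}u$ on $S_{T}$, and this trace is \emph{not} $\partial_{n}h$: because $\partial_{n}(\rho^{2})\sim 2\rho$ and $\partial_{n}\rho\sim 1$, the function $\partial_{n}u$ solves a new uniformly degenerate problem for the shifted operator $L^{(1)}=x_{n}^{2}a_{ij}\partial_{ij}+x_{n}(b_{i}+2a_{in})\partial_{i}+(c+b_{n})-\partial_{t}$, whose compatible boundary value $u_{1}$ is obtained by solving an ODE on $S_{T}$ with zeroth-order coefficient $c+b_{n}$ rather than $c$ (formula \eqref{3y}).

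The paper closes this gap with Lemma \ref{3H}: one builds a first-order polynomial approximation $l_{x_{0}'}(x,t)$ in $x$ whose time-dependent coefficients solve the ODE system forced by the compatibility conditions, proves $|u-l_{x_{0}'}|\leq CF|x-(x_{0}',0)|^{1+\alpha}$ by a barrier at exponent $1+\alpha$ (Lemma \ref{3B}) and the maximum principle, and then rescales to extract the existence, the trace $u_{1}$, and the $C^{\alpha}$ decay of $\partial_{n}u$. Only after that can Theorem \ref{3D} be applied to the new problem $L^{(1)}(\partial_{n}u)=f_{n}$ to get $\partial_{n}u\in C^{2+\alpha}$, and the induction in the normal direction (Theorem \ref{3J}) then runs on the family of shifted operators $L^{(\nu)}$, not on $L$ itself. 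Your dyadic-rescaling remedy in the last paragraph is the right tool for converting decay into weighted H\"older bounds, but without the first-order approximation, the $(1+\alpha)$-barrier, and the identification of $u_{1}$, there is no decay statement for $\partial_{n}u$ to feed into it, so the induction does not close.
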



\begin{corollary}\label{1C}
Let $\Omega$ be a bounded domain in $\mathbb{R}^{n}$ with a $C^{\infty}$-boundary $\partial\Omega$ 
and a $C^{\infty}(\bar{\Omega})$-defining function $\rho$.  
For some constant $T>0$, let $Q_{T}=\Omega\times(0,T]$. 
Assume $a_{ij},b_{i},c\in C^{\infty}(\bar{Q}_{T})$ with \eqref{1a}. 
Suppose that, for some $f\in C^{\infty}(\bar{Q}_{T})$ and $\phi\in C^{\infty}(\bar{\Omega})$, 
the initial-boundary value problem \eqref{1c}-\eqref{1e} admits a solution $u\in C^{2}(Q_{T})\cap C(\bar{Q}_{T})$. 
Then, $u\in C^{\infty}(\bar{Q}_{T})$.
\end{corollary}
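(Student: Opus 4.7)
The plan is to obtain the corollary as a straightforward iteration of Theorem \ref{1B}. First, I would verify that the smoothness hypotheses of that theorem are satisfied for \emph{every} integer $k\ge 0$: since $\partial\Omega$ is $C^\infty$, it is $C^{k+1,\alpha}$ for all $k$; since $\rho\in C^\infty(\bar\Omega)$, it belongs to $C^{k+1,\alpha}(\bar\Omega)\cap C^{k+2,\alpha}(\Omega)$ and $\rho\nabla^{k+2}\rho\in C^\alpha(\bar\Omega)$ for each $k$; and $a_{ij},b_i,c\in C^\infty(\bar Q_T)$ clearly lie in $C^{k,\alpha}(\bar Q_T)$. Similarly $f\in C^{k,\alpha}(\bar Q_T)$ and $\phi\in C^{k,2+\alpha}(\bar\Omega)$ for every $k$, the latter because the boundary conditions $\rho D^{k+1}\phi|_{\partial\Omega}=0$ and $\rho^2 D^{k+2}\phi|_{\partial\Omega}=0$ are automatic from $\rho=0$ on $\partial\Omega$.

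Given the hypothesized solution $u\in C^2(Q_T)\cap C(\bar Q_T)$, applying Theorem \ref{1B} for each $k\ge 0$ yields $u\in C^{k,2+\alpha}(\bar Q_T)$ for every $k$. I would then unpack the definition \eqref{eq-def-k-2+alpha-norm} of the $C^{k,2+\alpha}$-norm to show that all pure derivatives of $u$ extend continuously to $\bar Q_T$. Specifically, the summand $\sum_{|\beta|\le k}\|D^\beta_x u\|_{C^{2+\alpha}(\bar Q_T)}$ together with the definition of the $C^{2+\alpha}$-norm shows that $D^\beta_x u$ and $D^\beta_x\partial_t u$ lie in $C^\alpha(\bar Q_T)$ for all $|\beta|\le k$; and the summand $\sum_{|\beta|+2i\le k+2,\,i\ge 2}\|D^\beta_x\partial_t^i u\|_{C^\alpha(\bar Q_T)}$ captures the higher time derivatives. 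Hence, for any fixed multi-index $\beta$ and any fixed $i\ge 0$, choosing $k$ large enough in the above application guarantees $D^\beta_x\partial_t^i u\in C^\alpha(\bar Q_T)\subset C(\bar Q_T)$. This means every derivative of $u$ is continuous up to the boundary, so $u\in C^\infty(\bar Q_T)$.

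There is essentially no hard step here: the corollary is really a bookkeeping consequence of Theorem \ref{1B} once one traces through how $C^{k,2+\alpha}$ captures more and more derivatives as $k$ grows. The only point requiring a moment's attention is that the highest-order spatial derivatives $D_x^{k+1}u$ and $D_x^{k+2}u$ appear in Theorem \ref{1B} only with weights $\rho$ and $\rho^2$; but since $k$ is arbitrary, any prescribed derivative order $m$ is covered in the unweighted $C^\alpha$ estimates obtained by choosing $k\ge m$. Thus no separate boundary argument is needed to remove the weights.
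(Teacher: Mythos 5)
Your proposal is correct and follows exactly the route the paper intends: the corollary is stated without proof as an immediate consequence of Theorem \ref{1B}, obtained by verifying its hypotheses for every $k\ge 0$ (in particular that $\phi\in C^\infty(\bar\Omega)$ lies in $C^{k,2+\alpha}(\bar\Omega)$ because the weighted boundary conditions are automatic from $\rho=0$ on $\partial\Omega$) and then unwinding the definition of the $C^{k,2+\alpha}$-norm to capture every derivative $D_x^\beta\partial_t^i u$ in $C^\alpha(\bar Q_T)$ for $k$ large. Your remark that the weights on the top-order spatial derivatives are harmless because $k$ is arbitrary is exactly the right observation.
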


Next, we study the convergence of solutions of the initial-boundary value problem \eqref{1c}-\eqref{1e} 
under appropriate assumptions. 
Set 
$$Q=\Omega\times(0,\infty)\quad\text{and}\quad S=\partial\Omega\times[0,\infty),$$ 
and write 
$$\bar{Q}=\bar{\Omega}\times [0,\infty).$$ 
Let $a_{i j}$, $b_{i}$, and $c$ be bounded and continuous functions in $\bar{Q}$, with $a_{i j}=a_{j i}$ and, 
for any $(x,t)\in\bar{Q}$ and any $\xi\in\mathbb{R}^{n}$,
\begin{equation}\label{1g}
\lambda|\xi|^{2}\leq a_{ij}(x,t)\xi_{i}\xi_{j}\leq \Lambda|\xi|^{2},
\end{equation}
for some positive constants $\lambda$ and $\Lambda$. We consider the parabolic operator $L$ given by
\begin{equation}\label{1h}
Lu=\rho^{2} a_{ij}\partial_{ij}u+\rho b_{i}\partial_{i}u+cu-\partial_{t}u\quad\text{in }Q,
\end{equation}
and its limit elliptic operator $L_{0}$ given by
$$
L_{0}v=\rho^{2}\bar{a}_{ij}\partial_{ij}v+\rho \bar{b}_{i}\partial_{i}v+\bar{c}v\quad\text{in }\Omega,
$$
where $\bar{a}_{i j},\bar{b}_{i},\bar{c}\in C(\bar{\Omega})$ are the limits of $a_{i j}(\cdot,t),b_{i}(\cdot,t),c(\cdot,t)$ 
in certain sense as $t\rightarrow\infty$, respectively, with $\bar{c}<0$ on $\partial\Omega$ 
and, for any $x\in\bar{\Omega}$ and any $\xi\in\mathbb{R}^{n}$,
\begin{equation}\label{1h'}
\lambda|\xi|^{2}\leq \bar{a}_{ij}(x)\xi_{i}\xi_{j}\leq \Lambda|\xi|^{2}.
\end{equation}
Let $f$ be a bounded continuous function in $\bar{Q}$ and $\bar{f}\in C(\bar{\Omega})$ be the limit of $f(\cdot,t)$ 
in certain sense as $t\rightarrow\infty$.

Let $u$ be a solution of the initial-boundary value problem
\begin{align}
Lu&=f\quad\text{in }Q,\label{1i}\\
u(\cdot,0)&=\phi\quad\text{on }\Omega,\label{1j}\\
u&=h\quad \text{on }S,\label{1k}
\end{align}
where $\phi\in C(\bar{\Omega})$ and
$$
h(x,t)=\phi(x)\exp\Big\{\int^{t}_{0}c(x,s)ds\Big\}-\int^{t}_{0}\exp\Big\{\int^{t}_{s}c(x,\tau)d\tau\Big\}f(x,s) d s.
$$
Note that $h$ and $\phi$ satisfy
\begin{align}\label{1l}\begin{split} 
\partial_{t}h-ch+f&=0\quad\text{on } S,\\
h(\cdot,0)&=\phi\quad\text{on }\partial\Omega.
\end{split}
\end{align}
Let $v$ be a solution of the Dirichlet problem
\begin{align}
L_{0}v&=\bar{f}\quad\text{in }\Omega,\label{1m}\\
v&=\frac{\bar{f}}{\bar{c}}\quad\text{on }\partial\Omega.\label{1n}
\end{align}
Note that the boundary value \eqref{1n} is not arbitrarily prescribed and is determined by the equation itself.

We first review the Schauder theory of the Dirichlet problem \eqref{1m}-\eqref{1n}, 
which will inspire our results on the convergence of solutions of the initial-boundary value problem \eqref{1c}-\eqref{1e}. 
Take an arbitrary $\mu\in\mathbb{R}$. We now define
$$
P(\mu)=\mu(\mu-1)\bar{a}_{ij}\nu_{i}\nu_{j}+\mu \bar{b}_{i}\nu_{i}+\bar{c}\quad\text{on }\partial\Omega,
$$
where $\nu=(\nu_{1},\cdots,\nu_{n})$ is the inner unit normal vector along $\partial\Omega$.
The polynomial $P(\mu)$ is referred to as the characteristic polynomial of $L_{0}$. 
The existence, uniqueness, and optimal regularity results of the Dirichlet problem \eqref{1m}-\eqref{1n} are as follows. 
(Refer to \cite{HanXie2024} for details.)

\begin{thm}[Existence and uniqueness]\label{1C'}
Let $\Omega$ be a bounded domain in $\mathbb R^n$ with a $C^{1}$-boundary $\partial\Omega$ 
and $\rho$ be a $C^{1}(\bar\Omega)\cap C^2(\Omega)$-defining function with $\rho\nabla^2\rho\in C(\bar{\Omega})$ 
and $\rho\nabla^2\rho=0$ on $\partial\Omega$. 
For some constant $\alpha\in (0,1)$, assume $\bar{a}_{ij}, \bar{b}_i, \bar{c}\in C^{\alpha}(\bar\Omega)$,
with \eqref{1h'}, $\bar{c}\le 0$ in $\Omega$,
and $\bar{c}<0$ and $P(\alpha)<0$ on $\partial\Omega$. 
Then, for any $\bar{f}\in C^\alpha(\bar\Omega)$, the Dirichlet problem \eqref{1m}-\eqref{1n} 
admits a unique solution $v\in C^{2+\alpha}(\bar\Omega)$.
\end{thm}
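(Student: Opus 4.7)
The plan is to combine a maximum-principle argument for uniqueness with the method of continuity for existence, the latter driven by a global a priori $C^{2+\alpha}(\bar\Omega)$-estimate. Uniqueness first: reduce to $\bar{f}\equiv 0$ and take $v\in C^{2+\alpha}(\bar\Omega)$ with $L_0v=0$ and $v|_{\partial\Omega}=0$. Since $\rho>0$ in $\Omega$, the operator is strictly elliptic on any compact subset, and $\bar c\leq 0$ lets the classical maximum principle conclude $v\equiv 0$; the vanishing boundary data rules out boundary extrema. The same argument yields the basic bound $\|v\|_{C^0(\bar\Omega)}\leq C\|L_0 v\|_{C^0(\bar\Omega)}$ needed below.

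For existence, first subtract a $C^{2+\alpha}$-extension of the boundary datum $\bar f/\bar c$ into $\bar\Omega$ to reduce to $L_0 w=g$, $w|_{\partial\Omega}=0$ with $g\in C^\alpha(\bar\Omega)$. The heart of the argument is the a priori estimate
$$
\|w\|_{C^{2+\alpha}(\bar\Omega)}\leq C\bigl(\|L_0 w\|_{C^{\alpha}(\bar\Omega)}+\|w\|_{C^0(\bar\Omega)}\bigr).
$$
Interior estimates (at distance $\geq \delta$ from $\partial\Omega$) follow from the classical Schauder theory after rescaling by $\rho$. Boundary estimates are the crux: flatten $\partial\Omega$ locally so that $\rho$ becomes the normal coordinate $x_n$, freeze coefficients at a boundary point, and analyze the model operator $x_n^2\bar a_{ij}(0)\partial_{ij}+x_n\bar b_i(0)\partial_i+\bar c(0)$. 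Tangential Fourier localization reduces the homogeneous solutions on a half-space to an indicial ODE whose characteristic roots are the zeros of $P$. Since $P(0)=\bar c<0$ and the leading coefficient $\bar a_{ij}\nu_i\nu_j\geq\lambda$, the polynomial $P$ has two real roots $\mu_-<0<\mu_+$, and the hypothesis $P(\alpha)<0$ is precisely $\alpha<\mu_+$. This is the sharp threshold that makes the weighted $C^{2+\alpha}$-class (with $\rho D v=\rho^2 D^2 v=0$ on $\partial\Omega$) the correct solution space: admissible solutions of the model problem behave like $x_n^{\mu_+}$ near the boundary and lie in this class, while the excluded $x_n^{\mu_-}$ branch is forced to vanish. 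A Campanato-type freezing argument then promotes the constant-coefficient model estimate to a boundary Schauder estimate for $L_0$, and the interior and boundary pieces patch via a standard cutoff to give the displayed global bound.

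Once the global a priori estimate is available, one absorbs $\|w\|_{C^0}$ using the maximum-principle bound and runs the method of continuity along the family $L_t=(1-t)L_\ast+tL_0$, where $L_\ast=\rho^2\Delta+\bar c(x)$ (or a similar normalized model) has its Dirichlet problem solved by a direct construction. The uniform $C^{2+\alpha}$-estimate gives closedness of the set of solvable parameters $t$, and openness is supplied by the invertibility of $L_t\colon C^{2+\alpha}(\bar\Omega)\to C^\alpha(\bar\Omega)$ granted by the same estimate plus perturbation. The main obstacle is executing the boundary Schauder estimate with sharp dependence on $P(\alpha)<0$: this condition is exactly what ensures that the weighted Hölder space sees precisely one solution branch at the boundary, so any softer hypothesis would lead to loss of either existence or uniqueness at the correct regularity scale.
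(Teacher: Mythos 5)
The paper does not actually prove Theorem \ref{1C'} here: it is quoted from the companion work \cite{HanXie2024}, and the authors explicitly remark that the existence proof there (for $C^{1,\alpha}$-domains, and its $C^1$-domain refinement carried out in the parabolic setting in Theorem \ref{1A}) proceeds by elliptic/parabolic regularization --- adding $\delta\Delta$ to make the operator uniformly elliptic, solving classically, deriving $\delta$-independent estimates, and letting $\delta\to0$. Measured against that, your proposal has the right skeleton (maximum principle for uniqueness and for the $C^0$-bound; a global weighted a priori estimate in which $P(\alpha)<0$ is the sharp threshold $\alpha<\mu_+$), but it contains two genuine gaps. First, the method of continuity needs a solvable base point, and your candidate $L_\ast=\rho^2\Delta+\bar c$ is itself a uniformly degenerate operator on a general domain whose Dirichlet problem with the forced boundary value is exactly as hard as the original one; asserting a ``direct construction'' is circular. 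The regularization $L_\delta=L_0+\delta\Delta$ is the standard way out, and it is what the authors use (compare the proof of Theorem \ref{1A}, where the uniform estimates \eqref{2t'}--\eqref{2u'} are extracted before sending $\delta\to0$).

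Second, your boundary estimate is built on flattening $\partial\Omega$ and freezing coefficients, but the theorem assumes only a $C^1$-boundary and a $C^1(\bar\Omega)\cap C^2(\Omega)$ defining function; a $C^1$ boundary flattening destroys the H\"older structure of the coefficients, which is precisely why the authors emphasize proving the result ``without flattening the boundary.'' The mechanism actually used is different and more elementary: the hypothesis $P(\alpha)<0$ is spent on making $|x-x_0|^{\alpha}+K\rho^{\alpha}$ a supersolution near $\partial\Omega$ (the elliptic analogue of Lemmas \ref{2D}--\ref{2E} and \ref{4E}), which yields the pointwise decay $|v(x)-\bar f/\bar c(x_0)|\le C\,d^\alpha(x)$; one then applies interior Schauder estimates on Whitney balls $B_{d(x)/2}(x)$ after rescaling by $r=d(x)/2$, and glues via the analogue of Lemma \ref{2C}. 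Your indicial-root/Campanato picture correctly explains \emph{why} $\alpha<\mu_+$ is the right condition, but as written it is a program rather than a proof: the model half-space regularity theory and the perturbation to variable coefficients, which is where all the work lies, are not supplied. I would recommend replacing the continuity method by the $\delta\Delta$-regularization and replacing the flattening/freezing step by the barrier-plus-rescaled-interior-estimate argument.
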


We need to point out that we proved the existence of solutions of the Dirichlet problem \eqref{1m}-\eqref{1n} in $C^{1,\alpha}$-domains in \cite{HanXie2024}. We can prove the similar existence result in $C^1$-domains without flattening the boundary. We carry out this process for the initial-boundary value problems for uniformly degenerate parabolic equations in Theorem \ref{1A}.

\begin{thm}[Optimal regularity]\label{1D'}
For some integer $k\geq 0$ and some constant $\alpha\in(0,1)$, let $\Omega$ be a bounded domain in $\mathbb{R}^{n}$ 
with a $C^{k+1,\alpha}$-boundary $\partial\Omega$ 
and $\rho$ be a $C^{k+1,\alpha}(\bar{\Omega})\cap C^{k+2,\alpha}(\Omega)$-defining function 
with $\rho\nabla^{k+2}\rho\in C^{\alpha}(\bar\Omega)$ and $\rho\nabla^{k+2}\rho=0$ on $\partial\Omega$.  
Assume $\bar{a}_{ij}, \bar{b}_i, \bar{c}\in C^{k,\alpha}(\bar\Omega)$,
with \eqref{1h'}, and $\bar{c}\le -c_0$ and $P(k+\alpha)\le -c_{k+\alpha}$ on $\partial\Omega$, 
for some positive constants $c_0$ and $c_{k+\alpha}$.
Suppose that, for some $\bar{f}\in C^{k,\alpha}(\bar\Omega)$, 
the Dirichlet problem \eqref{1m}-\eqref{1n} admits a solution $v\in C(\bar\Omega)\cap
C^{2}(\Omega)$. Then, $v\in C^{k,2+\alpha}(\bar\Omega)$ and
\begin{align*}
\|v\|_{C^{k,2+\alpha}(\bar\Omega)}\le C\big\{\|v\|_{L^\infty(\Omega)}+\|\bar{f}\|_{C^{k,\alpha}(\bar\Omega)}\big\},
\end{align*}
where $C$ is a positive constant depending only on $n$, $\lambda$,
$k$, $\alpha$, $c_0$, $c_{k+\alpha}$, $\Omega$, $\rho$,
and the $C^{k,\alpha}$-norms of $\bar{a}_{ij}$, $\bar{b}_i$, 
and $\bar{c}$ in $\bar\Omega$.
\end{thm}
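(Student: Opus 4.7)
The plan is to prove Theorem \ref{1D'} by induction on $k$, using the characteristic polynomial condition $P(k+\alpha)\le -c_{k+\alpha}<0$ to construct barriers that upgrade the decay rate of $v$ near $\partial\Omega$ to the correct order, and then combining boundary estimates with the intermediate Schauder theory in the interior to close the argument. Because the problem is degenerate along $\partial\Omega$, the prescribed boundary value $v=\bar f/\bar c$ already absorbs the zeroth-order obstruction: setting $w=v-\bar f/\bar c$ (after extending $\bar f/\bar c$ smoothly into $\Omega$) one checks $L_0 w = \bar f - L_0(\bar f/\bar c)$, which vanishes on $\partial\Omega$ since $L_0$ reduces to multiplication by $\bar c$ there. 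Hence the task is really to analyze a problem of the form $L_0 w = g$ with $w=0$ on $\partial\Omega$ and $g\in C^{k,\alpha}(\bar\Omega)$ that vanishes on $\partial\Omega$ to the appropriate order, which fits naturally into the weighted framework of $C^{k,2+\alpha}(\bar\Omega)$.

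For the base case $k=0$, I would localize near a boundary point and, via a partition of unity, reduce to working on a neighborhood where $\rho$ can be used as a normal coordinate. The interior is handled by the classical/intermediate Schauder theory, yielding estimates that blow up like a negative power of $\rho$; the boundary ingredient needed is a barrier of the form $A\rho^{\alpha}$ together with its derivative counterparts. Because $P(\alpha)<0$, direct computation shows $L_0(\rho^{\alpha})\sim P(\alpha)\rho^{\alpha}+O(\rho^{1+\alpha})$, which is strictly negative near $\partial\Omega$, so $\rho^\alpha$ majorizes solutions to the corresponding model problem. From this one derives the pointwise bound $|w(x)|\le C\rho(x)^{\alpha}\|g\|_{L^\infty}$ and, via scaling and an iteration of standard Campanato-type oscillation estimates applied on dyadic shells $\{\rho\sim 2^{-j}\}$, upgrade to $\|w\|_{C^{\alpha}}+\|\rho Dw\|_{C^{\alpha}}+\|\rho^2 D^2 w\|_{C^{\alpha}}\le C\|g\|_{C^{\alpha}}$.

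For the inductive step, suppose the result is known up to order $k-1$. Applying a tangential derivative $D_\tau$ to the equation gives $L_0(D_\tau v)=D_\tau\bar f - [D_\tau,L_0]v$, where the commutator $[D_\tau,L_0]$ is again a uniformly degenerate operator of the same form with coefficients one order less regular. The point is that tangential derivatives preserve the class of vanishing at $\partial\Omega$, so the inductive hypothesis applies to $D_\tau v$ using the characteristic exponent $(k-1)+\alpha$. Normal derivatives are then recovered algebraically from the equation $L_0 v=\bar f$: since $\bar a_{ij}\nu_i\nu_j$ is bounded below by $\lambda$, we can solve for $\rho^2\partial_\nu^2 v$ in terms of $\bar f$, lower-order terms, and tangential derivatives of $v$, then iterate. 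To carry out the shell-by-shell estimates at higher order, the barrier $\rho^{k+\alpha}$ must dominate the solution's normal behavior, and here the hypothesis $P(k+\alpha)\le -c_{k+\alpha}$ is exactly what makes $L_0(\rho^{k+\alpha})$ have a definite sign near $\partial\Omega$.

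The main obstacle, in my view, is the bookkeeping at the inductive step: the weighted norm $\|u\|_{C^{k,2+\alpha}(\bar\Omega)}$ mixes unweighted derivatives of order $\le k$ with $\rho$- and $\rho^2$-weighted derivatives only at orders $k+1$ and $k+2$, so when one differentiates the equation one must carefully track which derivatives of the coefficients and of $v$ appear with which weights, and verify that the commutator terms fall into the range where the inductive hypothesis applies. A related subtlety is the requirement $\rho\nabla^{k+2}\rho\in C^{\alpha}(\bar\Omega)$ with $\rho\nabla^{k+2}\rho=0$ on $\partial\Omega$: this is precisely what guarantees that the weighted operator structure is preserved when one differentiates the equation $k+2$ times and reorganizes using $|\nabla\rho|=1$ on $\partial\Omega$. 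Once these technical points are handled, the estimate is obtained by combining the boundary barrier, interior intermediate Schauder theory, and a standard covering/interpolation argument to pass from a local bound near each boundary point to the global inequality.
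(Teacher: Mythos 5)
First, a point of reference: this paper does not actually prove Theorem \ref{1D'}; it is quoted from the companion elliptic paper \cite{HanXie2024} (``Refer to \cite{HanXie2024} for details''), so the only proof available for comparison is the parallel parabolic development in Sections \ref{sec-Existence}--\ref{sec-Higher-regularity}, which mirrors the elliptic argument. Measured against that, your overall architecture is the authors': reduce to vanishing boundary data using $L_0=\bar c$ on $\partial\Omega$; build barriers $\rho^\mu$ (or $|x-x_0|^\mu+K\rho^\mu$) whose sign is governed by $P(\mu)$, exactly as in the computation $L_0(\rho^\mu)=\big[\mu(\mu-1)\bar a_{ij}\rho_i\rho_j+\mu\bar b_i\rho_i+\bar c\big]\rho^\mu+\mu\bar a_{ij}\rho_{ij}\rho^{\mu+1}$ underlying Lemma \ref{4E}; convert the resulting decay into weighted H\"older bounds by rescaling onto Whitney balls $B_{d(x)/2}(x)$ and applying interior Schauder theory, as in Theorem \ref{2G} and Lemma \ref{2C}; and induct with tangential derivatives first, normal derivatives second.

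The one step you describe in a way that would not go through as written is the treatment of normal derivatives. Solving the equation algebraically for $\rho^2\partial_\nu^2 v$ only controls the \emph{doubly weighted} second normal derivative; it does not by itself show that the \emph{unweighted} first normal derivative $\partial_n v$ extends $C^\alpha$-continuously to $\partial\Omega$, which is what membership in $C^{1,2+\alpha}(\bar\Omega)$ requires. The missing ingredient (cf.\ Lemma \ref{3H} in the parabolic setting) is a first-order boundary Taylor expansion: one constructs an affine approximation $l_{x_0'}$ whose normal slope is an explicitly identified trace function $u_1$, proves $|v-l_{x_0'}|\le C|x-x_0|^{1+\alpha}$ via the barrier at exponent $1+\alpha$ (this is where $P(1+\alpha)<0$ enters), divides by the distance to identify $\partial_n v|_{\partial\Omega}=u_1$, and only then differentiates the equation in the normal direction. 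That differentiation conjugates $L_0$ into $L_0^{(1)}$ with $b_i^{(1)}=b_i+2a_{in}$, $c^{(1)}=c+b_n$ and characteristic polynomial $P^{(1)}(\mu)=P(\mu+1)$; it is the \emph{normal} derivatives, not the tangential ones, that consume the exponent budget, so your bookkeeping assigning $(k-1)+\alpha$ to $D_\tau v$ is misattributed (harmlessly, since $P$ is convex with $P(0)<0$ and $P(k+\alpha)<0$, hence negative on all of $[0,k+\alpha]$). You do invoke the $\rho^{k+\alpha}$ barrier, so the right ingredient is present, but the expansion-and-trace argument that turns it into boundary regularity of $\partial_n v$ needs to be supplied before the induction closes.
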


Although formulated as a global result, Theorem \ref{1D'} actually holds locally near the boundary.

By the assumption $P(0)=\bar c<0$ on $\partial\Omega$,
the characteristic polynomial $P(\mu)$ has two real roots, a positive one and a negative one, 
and any $\mu$ with a negative value of $P(\mu)$
has to be between these two roots. For some integer $k\ge 0$ and constant $\alpha\in (0,1)$, 
the assumption $P(k+\alpha)<0$ on $\partial\Omega$ implies that $k+\alpha$ is less than the positive root. 
Theorem \ref{1D'} asserts a global regularity in $C^{k,\alpha}(\bar\Omega)$. 
In general, the order of global regularity cannot be higher than the positive root even if
the domain has a smooth boundary and the coefficients and the nonhomogeneous terms are smooth up to the boundary. 
See Example 2.1 in \cite{HanXie2024}. This is different from Theorem \ref{1B}, which asserts that the global regularity of the solutions 
can always be improved with the regularity of the boundary, the initial values, the coefficients, and the nonhomogeneous terms.

To characterize the regularity and convergence in our results, we introduce some definitions. 
For convenience, we extend the function spaces defined in Subsection \ref{Notations} to $Q$ and $S$. 
For some integer $k\geq 0$ and some constant $\alpha\in(0,1)$, we define
$$
C^{k,\alpha}(\bar{Q})=\{f:Q\rightarrow\mathbb{R}:f\in C^{k,\alpha}(\bar{Q}_{T})\text{ for any }T>0\}.
$$
Similarly, we can define $C^{k,2+\alpha}(\bar{Q})$, $C^{k,2+\alpha}_{\ast}(\bar{Q})$, $C^{\alpha}(S)$, 
$\cdots$.

Next, we introduce a type of convergence for these function spaces.

\begin{definition}\label{1D}
For some integer $k\geq 0$ and some constant $\alpha\in(0,1)$, let $f,g\in C^{k,\alpha}(\bar{Q})$. 
We call $f$ $C^{k,\alpha}$-converging to $g$ in $\bar{Q}$ at infinity, if
$$
\lim_{T\rightarrow\infty}\|f-g\|_{C^{k,\alpha}(\overline{{Q}_{T+1}\backslash {Q}_{T}})}=0.
$$
\end{definition}

It is easy to verify that the convergence is equivalent if we replace the norm $C^{k,\alpha}(\overline{{Q}_{T+1}\backslash {Q}_{T}})$ 
with $C^{k,\alpha}(\overline{{Q}_{T+\sigma}\backslash {Q}_{T}})$, for any $\sigma>0$. 
Similarly, we can define $C^{k,2+\alpha},C^{k,2+\alpha}_{\ast}$, $\cdots$-convergence 
by replacing $C^{k,\alpha}$ with $C^{k,2+\alpha},C^{k,2+\alpha}_{\ast}$, $\cdots$ in the above definitions, respectively. 
For $g=g(x)\in C^{k,\alpha}(\bar{\Omega})$, we emphasize that $f$ $C^{k,\alpha}$-converging to $g$ in $\bar{Q}$ at infinity 
is stronger than $f(\cdot,t)$ converging to $g$ in $C^{k,\alpha}(\bar{\Omega})$ as $t\rightarrow\infty$. 
For example, we consider 
$$f(x,t)=x^{5/2}\,\frac{\sin{(t+1)^2}}{t+1}\quad\text{in }B_1\times(0,\infty).$$ 
Then, $f(\cdot,t)$ converges to $0$ in $C^{2,1/2}(\bar{B}_1)$ as $t\rightarrow\infty$. 
Note that 
$$\partial_t f(x,t)= x^{5/2}\Big[2\cos{(t+1)^2}-\frac{\sin{(t+1)^2}}{(t+1)^2}\Big].$$ 
Hence, $f$ does not $C^{2,1/2}$-converge to $0$ in $\bar{B}_1\times[0,\infty)$ at infinity.   

For the next three results, we will always assume 
\begin{equation}\label{1o}
\limsup_{t\rightarrow\infty}\sup_{x\in\bar\Omega}c(x,t)\leq 0,\quad \bar{c}\leq 0\quad\text{in } \Omega,
\quad \text{and } \bar{c}<0\quad \text{on }\partial\Omega.
\end{equation}

By Theorem \ref{1D'}, we can expect convergence in some sense, with orders not exceeding the positive root 
of the characteristic polynomial of $L_{0}$. We have the following result concerning the convergence of $u(\cdot,t)$ to $v$ 
as $t\rightarrow\infty$.

\begin{thm}\label{1E}
For some integer $k\geq 0$ and some constant $\alpha\in(0,1)$, let $\Omega$ be a bounded domain in $\mathbb{R}^{n}$ 
with a $C^{k+1,\alpha}$-boundary $\partial\Omega$ and $\rho$ 
be a $C^{k+1,\alpha}(\bar{\Omega})\cap C^{k+2,\alpha}(\Omega)$-defining function with $\rho\nabla^{k+2}\rho\in C^{\alpha}(\bar\Omega)$ 
and $\rho\nabla^{k+2}\rho$ $=0$ on $\partial\Omega$. 
Assume that $a_{ij}, b_{i}, c, f\in C^{k,\alpha}(\bar{Q})$ and $\bar{a}_{ij}, \bar{b}_{i}, \bar{c}, \bar{f}\in C^{k,\alpha}(\bar{\Omega})$, 
with \eqref{1g}, \eqref{1o}, and $P(k+\alpha)<0$ on $\partial\Omega$ 
and that $a_{ij}(\cdot,t)$, $b_{i}(\cdot,t)$, $c(\cdot,t)$, $f(\cdot,t)$ converge to 
$\bar{a}_{ij},\bar{b}_{i},\bar{c},\bar{f}$ in $C^{k,\alpha}(\bar{\Omega})$ as $t\rightarrow\infty$, respectively. 
Suppose that, for some $\phi\in C^{k,2+\alpha}(\bar{\Omega})$, the initial-boundary value problem \eqref{1i}-\eqref{1k} 
admits a solution $u\in C^{2}(Q)\cap C(\bar{Q})$. 
Then, $u$ $C^{k,2+\alpha}_{\ast}$-converges to the unique solution $v\in C^{2}(\Omega)\cap C(\bar{\Omega})$ 
of the Dirichlet problem \eqref{1m}-\eqref{1n} in $\bar{Q}$ at infinity. 
In particular, $u(\cdot,t)$ converges to $v$ in $C^{k,2+\alpha}(\bar{\Omega})$ as $t\rightarrow \infty$.
\end{thm}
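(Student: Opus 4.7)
The plan is to translate the convergence question into a quantitative decay statement for $w := u-v$ by combining a maximum principle for $L^\infty$-decay with an intermediate Schauder estimate that bootstraps this decay into $C^{k,2+\alpha}_*$-decay on each strip $\bar\Omega\times[T+1,T+2]$.

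First I would derive the equation satisfied by $w$. Since $v$ does not depend on $t$, a direct computation using $L_0 v=\bar f$ yields $Lw=F$ in $Q$, where
\begin{align*}
F:=(f-\bar f)-\rho^2(a_{ij}-\bar a_{ij})\partial_{ij}v-\rho(b_i-\bar b_i)\partial_i v-(c-\bar c)v.
\end{align*}
By Theorem \ref{1D'} applied to $v$, whose hypotheses hold because of \eqref{1o} and $P(k+\alpha)<0$, we have $v\in C^{k,2+\alpha}(\bar\Omega)$, so $\rho^j D_x^{k+j}v\in C^\alpha(\bar\Omega)$ for $j=1,2$. The assumed $C^{k,\alpha}(\bar\Omega)$-convergence of the coefficients and of $f$ therefore forces
\begin{align*}
\sup_{t\ge T}\|F(\cdot,t)\|_{C^{k,\alpha}(\bar\Omega)}\longrightarrow 0\quad\text{as }T\to\infty.
\end{align*}
On $S$, $w$ has boundary value $g(x,t):=h(x,t)-\bar f(x)/\bar c(x)$. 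Using the explicit formula for $h$, the strict negativity of $\bar c$ on $\partial\Omega$, and the $C^{k,\alpha}(\partial\Omega)$-convergence of $c(\cdot,t)$ and $f(\cdot,t)$, I would check by a direct integration based on the compatibility identity \eqref{1l} that $g$ $C^{k,\alpha}_*$-converges to $0$ on $S$ at infinity.

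Next I would establish $\|w\|_{L^\infty(\Omega\times[T,\infty))}\to 0$. The hypothesis \eqref{1o} combined with $\bar c<0$ on $\partial\Omega$ yields a neighbourhood of $\partial\Omega$ in which $c(x,t)\le -\delta_0$ for some $\delta_0>0$ and all $t$ large. Comparing $w$ with a barrier of the form $\varepsilon+Ae^{-\delta t}\psi(\rho)$, where $\psi$ is a smooth cut-off adapted to that neighbourhood, and using the decay of $F$ and $g$ already obtained, the comparison principle delivers the desired $L^\infty$-decay.

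Finally I would apply the intermediate Schauder estimate interior in time to $w$ on the strip $\Omega\times[T,T+2]$. Since the coefficients of $L$ have uniform $C^{k,\alpha}(\bar\Omega)$-bounds in $t$ and no time-Hölder regularity is hypothesised, this is the setting in which the intermediate Schauder theory flagged in Subsection \ref{Notations} applies, giving
\begin{align*}
\|w\|_{C^{k,2+\alpha}_*(\bar\Omega\times[T+1,T+2])}\le C\bigl(\|w\|_{L^\infty(\Omega\times[T,T+2])}+\|F\|_{C^{k,\alpha}_*(\Omega\times[T,T+2])}+\|g\|_{C^{k,\alpha}_*(\partial\Omega\times[T,T+2])}\bigr),
\end{align*}
with $C$ independent of $T$. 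All three right-hand terms tend to $0$ by the earlier steps, which is exactly the $C^{k,2+\alpha}_*$-convergence of $u$ to $v$ in $\bar Q$ at infinity. The pointwise-in-time conclusion $u(\cdot,t)\to v$ in $C^{k,2+\alpha}(\bar\Omega)$ is immediate, since at a fixed time the $*$-distinction only affects time-Hölder seminorms that are no longer visible. The main obstacle I foresee is the rigorous extraction of the intermediate Schauder estimate on strips that is insensitive to the initial trace of $w$ at time $T$; this is the workhorse step and requires adapting the intermediate Schauder machinery to the uniformly degenerate operator $L$. A secondary technical point is the barrier construction for the $L^\infty$-decay, since $\bar c$ is strictly negative only on $\partial\Omega$ and may vanish inside $\Omega$, which rules out the simplest comparison arguments.
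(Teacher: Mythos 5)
Your overall architecture (set $w=u-v$, show $F=Lw$ and the boundary datum decay, get $L^\infty$-decay by barriers, then upgrade to $C^{k,2+\alpha}_*$-decay on time strips) matches the paper's strategy in outline: the $L^\infty$ step is Theorem \ref{4C}/Corollary \ref{4D}, and the decomposition $Lw=F$ with $F\to 0$ in $C^{k,\alpha}$ via Theorem \ref{1D'} is exactly how Corollaries \ref{4H} and \ref{4Q} begin. However, there is a genuine gap at the step you yourself flag as the ``workhorse'': the displayed estimate
$\|w\|_{C^{k,2+\alpha}_*(\bar\Omega\times[T+1,T+2])}\le C\{\cdots\}$
is asserted as an application of ``the intermediate Schauder theory,'' but no such estimate is available off the shelf. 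The intermediate Schauder theory (Theorem \ref{5A}) is an \emph{interior} estimate for \emph{uniformly} parabolic operators; it says nothing at the lateral boundary, where $L$ degenerates. Establishing the strip estimate up to $S$ is the main content of the theorem, and it is carried out in the paper through Subsections 4.2--4.3: one first constructs supersolutions $|x-x_0|^\mu+K\rho^\mu$ near $\partial\Omega$ (Lemmas \ref{4E}, \ref{4I}), whose existence for $\mu=\alpha$ requires $P(\alpha)<0$; these give the boundary decay $|w-h_0(x_0,t)|\le\varepsilon|x-x_0|^\alpha$ (Lemmas \ref{4F}, \ref{4J}); only then can the interior estimate of Theorem \ref{5A}, rescaled on balls $B_{d(x)/2}(x)$, be summed into a global weighted bound (Theorem \ref{4G}). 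In particular, $P(k+\alpha)<0$ enters the \emph{a priori estimate for $u$ itself}, not merely the regularity of $v$ via Theorem \ref{1D'}; your proposal uses the characteristic-exponent hypothesis only for $v$, which is not enough.

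The second missing idea is that the passage from $k=0$ to general $k$ cannot be done by quoting a single higher-order strip estimate. The paper proceeds by induction on tangential and normal derivatives (Theorems \ref{4L}--\ref{4O}): each normal differentiation replaces $L$ by a conjugated operator $L^{(1)}$ whose characteristic polynomial is $Q(\mu+1)$, so the hypothesis $P(k+\alpha)<0$ is consumed one unit at a time; moreover, before the base-case estimate can be applied to $\partial_n u$ or $D_{x'}u$, one must separately prove that these derivatives decay in $L^\infty$ (via cutoffs and the maximum principle, as in the proofs of Theorems \ref{4L} and \ref{4N}), since the scheme of Theorem \ref{4K} takes $L^\infty$-decay as an input. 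None of this is a routine ``adaptation''; it is the bulk of the argument, and your proposal leaves it entirely open. The remaining pieces of your plan (the formula for $F$, the compatibility of $h_0=h-\bar f/\bar c$ with \eqref{1l}, the barrier for $L^\infty$-decay handling the possible vanishing of $\bar c$ inside $\Omega$, and the final remark that fixed-time convergence follows from strip convergence) are correct and consistent with the paper.
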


If we strengthen the convergence conditions of the coefficients of $L$ and $f$, 
we can obtain stronger convergence results for the solution.

\begin{thm}\label{1F}
For some integer $k\geq 0$ and some constant $\alpha\in(0,1)$, let $\Omega$ be a bounded domain in $\mathbb{R}^{n}$ 
with a $C^{k+1,\alpha}$-boundary $\partial\Omega$ and 
$\rho$ be a $C^{k+1,\alpha}(\bar{\Omega})\cap C^{k+2,\alpha}(\Omega)$-defining function 
with $\rho\nabla^{k+2}\rho\in C^{\alpha}(\bar\Omega)$ and $\rho\nabla^{k+2}\rho$ $=0$ on $\partial\Omega$. 
Assume $a_{ij},b_{i},c,f\in C^{k,\alpha}(\bar{Q})$ $C^{k,\alpha}$-converge to $\bar{a}_{ij}$, $\bar{b}_{i}$, $\bar{c}$, 
$\bar{f}\in C^{k,\alpha}(\bar{\Omega})$ in $\bar{Q}$ at infinity, respectively, with \eqref{1g}, \eqref{1o}, 
and $P(k+\alpha)<0$ on $\partial\Omega$. 
Suppose that, for some $\phi\in C^{k,2+\alpha}(\bar{\Omega})$, 
the initial-boundary value problem \eqref{1i}-\eqref{1k} admits a solution $u\in C^{2}(Q)\cap C(\bar{Q})$. 
Then, $u$ $C^{k,2+\alpha}$-converges to the unique solution $v\in C^{2}(\Omega)\cap C(\bar{\Omega})$ 
of the Dirichlet problem \eqref{1m}-\eqref{1n} in $\bar{Q}$ at infinity. In particular, $u(\cdot,t)$ 
converges to $v$ in $C^{k,2+\alpha}(\bar{\Omega})$ as $t\rightarrow \infty$.
\end{thm}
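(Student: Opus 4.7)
The plan is to reduce Theorem~\ref{1F} to Theorem~\ref{1B} applied on time strips of bounded length at infinity. Since $C^{k,\alpha}$-convergence in $\bar Q$ at infinity is stronger than slicewise convergence, the hypotheses of Theorem~\ref{1E} are satisfied, so there is a unique $v\in C^{k,2+\alpha}(\bar\Omega)$ solving \eqref{1m}-\eqref{1n} and $u$ $C^{k,2+\alpha}_\ast$-converges to $v$; in particular, $\|u(\cdot,T)-v\|_{C^{k,2+\alpha}(\bar\Omega)}\to 0$ as $T\to\infty$. (Uniqueness of $v$ uses Theorem~\ref{1C'}: the required condition $P(\alpha)<0$ follows from $P(0)=\bar c<0$ and $P(k+\alpha)<0$, since $P$ is convex.) The remaining task is to upgrade slicewise convergence to full $C^{k,2+\alpha}$-convergence on strips $\bar\Omega\times[T,T+1]$, i.e.\ to control the $t$-H\"older seminorms of derivatives involving $\partial_t$.

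Set $w=u-v$. Since $\partial_t v=0$ and $L_0 v=\bar f$, direct computation gives $Lw=F$ in $Q$, where
\[F=(f-\bar f)-(a_{ij}-\bar a_{ij})\rho^2\partial_{ij}v-(b_i-\bar b_i)\rho\partial_i v-(c-\bar c)v.\]
By Theorem~\ref{1D'}, $v$, $\rho\partial_i v$, and $\rho^2\partial_{ij}v$ all lie in $C^{k,\alpha}(\bar\Omega)$; combined with the $C^{k,\alpha}$-convergence in $\bar Q$ of $a_{ij},b_i,c,f$ to their limits, it follows that $F$ itself $C^{k,\alpha}$-converges to $0$ at infinity. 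On $\partial\Omega$ one has $\rho=0$, so $F|_{\partial\Omega}=f-c\bar f/\bar c$; setting $h_w:=h-\bar f/\bar c$, the identity $\partial_t h-ch+f=0$ from \eqref{1l} gives $\partial_t h_w-ch_w+F=0$ on $\partial\Omega\times[T,T+2]$, and continuity of $u$ at $t=T$ gives $h_w(\cdot,T)=w(\cdot,T)|_{\partial\Omega}$. Therefore $w$ restricted to $\Omega\times[T,T+2]$ solves an IBVP of exactly the form \eqref{1c}-\eqref{1e} with initial datum $w(\cdot,T)\in C^{k,2+\alpha}(\bar\Omega)$, forcing $F$, and boundary data $h_w$.

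Applying Theorem~\ref{1B} on $\bar\Omega\times[T,T+2]$ yields
\[\|w\|_{C^{k,2+\alpha}(\bar\Omega\times[T,T+2])}\le C\bigl\{\|w(\cdot,T)\|_{C^{k,2+\alpha}(\bar\Omega)}+\|F\|_{C^{k,\alpha}(\bar\Omega\times[T,T+2])}\bigr\},\]
with $C$ uniform in $T$ because the interval length is fixed at $2$ and the $C^{k,\alpha}$-norms of $a_{ij},b_i,c$ on these strips remain bounded as $T\to\infty$ (being close to those of the limits). Both terms on the right tend to $0$, the first by Theorem~\ref{1E} and the second by the convergence of $F$ established above, giving the claimed $C^{k,2+\alpha}$-convergence of $u$ to $v$ in $\bar Q$ at infinity. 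The hard step is the verification that $w$ on each strip satisfies an IBVP in exactly the form \eqref{1c}-\eqref{1e}, in particular that $h_w$ coincides with the integral formula in \eqref{1e} with $\phi\mapsto w(\cdot,T)$ and $f\mapsto F$; this matching hinges on the choice $v|_{\partial\Omega}=\bar f/\bar c$ built into \eqref{1n} and is confirmed by integrating $\partial_t h-ch+f=0$ together with the explicit form of $F|_{\partial\Omega}$.
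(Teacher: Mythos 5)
Your proof is correct, but it takes a genuinely different route from the paper's. The paper proves Theorem \ref{1F} by rerunning the proof of Theorem \ref{1E} under the stronger strip-wise convergence hypotheses: the weighted estimate \eqref{4aa'} and its higher-order analogues already control the full parabolic $C^\alpha$-norms of $u$, $\rho D_xu$, and $\rho^2D^2_xu$ on strips, so the only quantities weakened by the asterisk are those involving $\partial_t$, and these are recovered directly from the equation $\partial_tu=\rho^2a_{ij}\partial_{ij}u+\rho b_i\partial_iu+cu-f$ once $f$ and the coefficients converge in $C^{k,\alpha}$ on strips rather than merely slicewise (see Remark \ref{4G'}). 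You instead take Theorem \ref{1E} as a black box to obtain $\|u(\cdot,T)-v\|_{C^{k,2+\alpha}(\bar\Omega)}\to0$, verify that $w=u-v$ on each strip $\bar\Omega\times[T,T+2]$ solves a problem of exactly the form \eqref{1c}--\eqref{1e} (checking the ODE characterization \eqref{1f} for $h-\bar f/\bar c$ is indeed the right way to match the boundary datum, and this is where the choice \eqref{1n} enters), and then invoke the global Schauder estimate of Theorem \ref{1B} on unit strips with a constant uniform in $T$, since the strip length is fixed and the coefficient norms stay bounded. Your reduction is more modular --- it reuses Theorems \ref{1B} and \ref{1E} wholesale and avoids revisiting the localization, flattening, and covering arguments of Section \ref{sec-Convergence} --- at the cost of invoking the full strength of the global estimate in Theorem \ref{1B}, whereas the paper's modification needs only the equation identity on top of what is already established for Theorem \ref{1E}. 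Both arguments ultimately rest on the same product estimate showing that $F=(f-\bar f)-\rho^2(a_{ij}-\bar a_{ij})\partial_{ij}v-\rho(b_i-\bar b_i)\partial_iv-(c-\bar c)v$ tends to zero in $C^{k,\alpha}$ on strips, which uses $v\in C^{k,2+\alpha}(\bar\Omega)$ from Theorem \ref{1D'}.
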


From Example \ref{example-regularity}, 
it is reasonable to assume that the initial values have higher regularity when considering higher-order convergence.
The proof of Theorem \ref{1F} is similar to that of Theorem \ref{1E}. 
We will only discuss the proof of Theorem \ref{1E} in details and indicate how to modify the proof to get Theorem \ref{1F}.

In particular, we can apply Theorem \ref{1F} when the coefficients of $L$ and $f$ are independent of $t$, i.e.,
\begin{align}\label{1p}\begin{split}
a_{ij}(x,t)\equiv \bar{a}_{ij}(x),\ b_{i}(x,t)\equiv \bar{b}_{i}(x),& \\
  c(x,t)\equiv \bar{c}(x),\ f(x,t)\equiv \bar{f}(x)&\quad\text{in }\bar{Q}.
\end{split}\end{align}

\begin{corollary}\label{1G}
For some integer $k\geq 0$ and some constant $\alpha\in(0,1)$, let $\Omega$ be a bounded domain in $\mathbb{R}^{n}$ 
with a $C^{k+1,\alpha}$-boundary $\partial\Omega$ and 
$\rho$ be a $C^{k+1,\alpha}(\bar{\Omega})\cap C^{k+2,\alpha}(\Omega)$-defining function 
with $\rho\nabla^{k+2}\rho\in C^{\alpha}(\bar\Omega)$ and $\rho\nabla^{k+2}\rho$ $=0$ on $\partial\Omega$. 
Assume \eqref{1p} and $\bar{a}_{ij},\bar{b}_{i},\bar{c},\bar{f}\in C^{k,\alpha}(\bar{\Omega})$, with \eqref{1g}, 
$\bar{c}\leq 0$ in $\Omega$, and $\bar{c}<0$ and $P(k+\alpha)<0$ on $\partial\Omega$. 
Suppose that, for some $\phi\in C^{k,2+\alpha}(\bar{\Omega})$, 
the initial-boundary value problem \eqref{1i}-\eqref{1k} admits a solution $u\in C^{2}(Q)\cap C(\bar{Q})$. 
Then, $u$ $C^{k,2+\alpha}$-converges to the unique solution $v\in C^{2}(\Omega)\cap C(\bar{\Omega})$ 
of the Dirichlet problem \eqref{1m}-\eqref{1n} in $\bar{Q}$ at infinity. 
In particular, $u(\cdot,t)$ converges to $v$ in $C^{k,2+\alpha}(\bar{\Omega})$ as $t\rightarrow \infty$.
\end{corollary}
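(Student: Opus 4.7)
The plan is to derive Corollary \ref{1G} as a direct consequence of Theorem \ref{1F}: the stationary hypothesis \eqref{1p} collapses the nonautonomous framework of Theorem \ref{1F} to the autonomous one treated here, so all that is needed is to check that every hypothesis of Theorem \ref{1F} is inherited in this setting.

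First, I would view $\bar a_{ij},\bar b_i,\bar c,\bar f\in C^{k,\alpha}(\bar\Omega)$ as $t$-independent elements of $C^{k,\alpha}(\bar Q)$. By \eqref{1p}, the coefficients $a_{ij},b_i,c$ of $L$ and the nonhomogeneous term $f$ coincide with these extensions. Hence
$$\|a_{ij}-\bar a_{ij}\|_{C^{k,\alpha}(\overline{Q_{T+1}\setminus Q_T})}=0$$
for every $T>0$, and similarly for $b_i$, $c$, and $f$. In the sense of Definition \ref{1D}, the $C^{k,\alpha}$-convergence hypothesis of Theorem \ref{1F} is therefore trivially satisfied at infinity.

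Next, I would verify the remaining hypotheses. Ellipticity \eqref{1g} for $L$ follows from \eqref{1h'} together with \eqref{1p}. Condition \eqref{1o} reduces to $\sup_{\bar\Omega}\bar c\le 0$, which is a consequence of $\bar c\le 0$ in $\Omega$, the continuity of $\bar c$, and the boundary condition $\bar c<0$ on $\partial\Omega$. The sign condition $P(k+\alpha)<0$ on $\partial\Omega$ and the regularity $\phi\in C^{k,2+\alpha}(\bar\Omega)$ are directly assumed, as is the existence of a solution $u\in C^{2}(Q)\cap C(\bar Q)$. With all hypotheses in place, Theorem \ref{1F} produces the unique solution $v\in C^{2}(\Omega)\cap C(\bar\Omega)$ of \eqref{1m}-\eqref{1n} and yields the asserted $C^{k,2+\alpha}$-convergence of $u$ to $v$ in $\bar Q$ at infinity; the pointwise-in-$t$ convergence of $u(\cdot,t)$ to $v$ in $C^{k,2+\alpha}(\bar\Omega)$ as $t\to\infty$ is an immediate consequence.

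The main and only obstacle is essentially absent: the proof consists of hypothesis-checking followed by an invocation of Theorem \ref{1F}. The one point worth flagging is the elementary observation that a $t$-independent family automatically $C^{k,\alpha}$-converges in the sense of Definition \ref{1D}, so that Theorem \ref{1F}, rather than the weaker Theorem \ref{1E}, is the appropriate tool and supplies the stronger $C^{k,2+\alpha}$-convergence (not merely $C^{k,2+\alpha}_{\ast}$-convergence) in the autonomous case.
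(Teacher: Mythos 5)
Your proposal is correct and is exactly the paper's intended argument: the authors introduce \eqref{1p} precisely so that Corollary \ref{1G} follows by applying Theorem \ref{1F}, with the only (trivial) observation being that $t$-independent data $C^{k,\alpha}$-converge to themselves in the sense of Definition \ref{1D} and that \eqref{1o} reduces to $\bar c\le 0$ on $\bar\Omega$, which follows by continuity from the stated sign conditions.
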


The existence of $u$ in Theorems \ref{1E} and \ref{1F} and Corollary \ref{1G} is guaranteed by Theorem \ref{1A}.

The paper is organized as follows.
In Section \ref{sec-Existence}, we study the existence of solutions to the uniformly degenerate parabolic equations.
In Section \ref{sec-Higher-regularity}, we study the higher regularity of solutions near the lateral boundary.
In Section \ref{sec-Convergence}, we discuss the convergence of solutions as time goes to infinity.
In Section \ref{sec-Intermediate-Schauder}, we prove an intermediate Schauder regularity result that is needed in the paper.

\section{The Existence of Solutions}\label{sec-Existence}

In this section, we establish the existence of solutions of the initial-boundary value problem \eqref{1c}-\eqref{1e}.

First, we have the following maximum principle.

\begin{lemma}\label{2A}
Let $\Omega$ be a bounded domain in $\mathbb{R}^n$, with a $C^{1}$-boundary $\partial\Omega$ 
and a $C^{1}(\bar{\Omega})$-defining function $\rho$. Assume $a_{ij},b_{i},c\in C(\bar{Q}_{T})$ with \eqref{1a}. 
Suppose $u\in  C^{2}(Q_{T})\cap C(\bar{Q}_{T})$ satisfies $Lu\geq 0$ in $Q_{T}$. 
If $u \leq 0$ on $\partial_{p}Q_{T}$, then $u\leq 0$ in $Q_{T}$.
\end{lemma}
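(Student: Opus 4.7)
The plan is to reduce to the case of strictly negative zeroth-order coefficient by an exponential substitution, then to derive a contradiction by locating a positive maximum of $u$ in the interior of the space-time cylinder and testing $Lu$ there. The degeneracy of $L$ on the lateral boundary $S_T$ plays no role because the hypothesis $u\le 0$ on $\partial_p Q_T$ already prevents a positive maximum from sitting on $S_T$.

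First I would set $v=e^{-Kt}u$ with $K$ chosen larger than $\sup_{\bar{Q}_T}c$, which is finite by continuity on the compact set $\bar{Q}_T$. A direct computation yields
$$e^{-Kt}Lu=\rho^{2}a_{ij}\partial_{ij}v+\rho b_{i}\partial_{i}v+(c-K)v-\partial_{t}v,$$
so the inequality $Lu\ge 0$ becomes an inequality of the same form for $v$ with new zeroth-order coefficient $c-K\le -c_{0}<0$. Moreover, the sign conditions $v\le 0$ on $\partial_pQ_T$ and the desired conclusion $v\le 0$ in $Q_T$ are equivalent to those for $u$. Hence I may assume throughout that $c\le -c_{0}<0$ on $\bar{Q}_T$.

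Next I would argue by contradiction. Suppose $M:=\sup_{Q_T}u>0$. Since $u\in C(\bar{Q}_T)$ and $\bar{Q}_T$ is compact, this supremum is attained at some point $(x_0,t_0)\in\bar{Q}_T$. Because $u\le 0$ on $\partial_p Q_T=\Omega_0\cup S_T$, the point $(x_0,t_0)$ must lie in $\Omega\times(0,T]$, so $x_0$ is an interior maximum in space. Standard calculus gives $D_xu(x_0,t_0)=0$, $D_x^{2}u(x_0,t_0)\le 0$ as a symmetric matrix, and $\partial_tu(x_0,t_0)\ge 0$ (the last is immediate if $t_0<T$ and follows from the one-sided difference quotient $u(x_0,T)\ge u(x_0,T-\varepsilon)$ if $t_0=T$). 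Combining these with ellipticity \eqref{1a} at $(x_0,t_0)$ gives
$$0\le Lu(x_0,t_0)=\rho^{2}a_{ij}\partial_{ij}u+cu-\partial_{t}u\le c(x_0,t_0)\,u(x_0,t_0)\le -c_{0}M<0,$$
which is the desired contradiction.

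I do not expect any genuine obstacle here: every step is standard, and the only subtleties are the harmless reduction to $c<0$, the bookkeeping at the time-final slice $t_0=T$, and the observation that the assumption $u\le 0$ on $S_T$ is what lets us bypass the degeneracy of $L$ there.
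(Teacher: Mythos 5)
Your proof is correct and follows essentially the same route as the paper's: an exponential substitution to make the zeroth-order coefficient strictly negative, followed by the standard analysis of a positive interior maximum, with the degeneracy of $L$ on $S_T$ never entering. The only difference is cosmetic --- by performing the substitution first and exploiting the strict negativity of $c-K$ to obtain a strict inequality at the maximum point, you collapse the paper's three-case argument (strict inequality, $\varepsilon t$-perturbation, exponential substitution) into a single step.
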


\begin{proof}
The proof is standard. We consider several cases.

First, we assume that $Lu>0$ and $c\leq 0$ in $Q_{T}$. Suppose that the conclusion is not valid. 
Then, there exists a point $X_{0}=(x_0,t_0)\in Q_{T}$ such that $u(X_{0})=\max_{\bar{Q}_{T}}u>0$. 
By $D_{x}u(X_{0})=0$, $\partial_{t}u(X_{0})\geq 0$, and $D^{2}_{x}u(X_{0})\leq 0$, we have
$$
Lu(X_{0})=\rho^{2}(x_{0}) a_{ij}(X_{0})\partial_{ij}u(X_{0})+c(X_{0})u(X_{0})-\partial_{t}u(X_{0})\leq 0.
$$
This yields a contradiction to $Lu>0$ in $Q_{T}$.

Next, we assume that $Lu\geq 0$ and $c\leq 0$ in $Q_{T}$. We consider $v=u-\varepsilon t$, for $\varepsilon>0$. 
Note that $Lv=Lu-c\varepsilon t+\varepsilon>0$ in $Q_{T}$ and $v \leq 0$ on $\partial_{p}Q_{T}$. 
Hence, $v\leq 0 $ in $Q_{T}$. By letting $\varepsilon\rightarrow 0$, we have $ u\leq 0 $ in $Q_{T}$.

Finally, for the general case, we consider $v=e^{-At}u$, for a nonnegative constant $A> \sup_{Q_{T}} c$. 
Note that $v$ satisfies
$$
(L-A)v=e^{-At}Lu\geq 0\quad\text{in }Q_{T},
$$
and $v \leq 0$ on $\partial_{p}Q_{T}$. Then, $ v\leq 0 $ in $Q_{T}$, and hence $ u\leq 0 $ in $Q_{T}$.
\end{proof}

We now derive an $L^{\infty}$-estimate of solutions of the initial-boundary value problem.

\begin{lemma}\label{2B}
Let $\Omega$ be a bounded domain in $\mathbb{R}^n$, 
with a $C^{1}$-boundary $\partial\Omega$ and a $C^{1}(\bar{\Omega})$-defining function $\rho$. 
Assume $a_{ij},b_{i},c\in C(\bar{Q}_{T})$, with \eqref{1a} and $c< c_{0}$ in $\bar{Q}_{T}$ for some nonnegative constant $c_0$. 
Suppose, for some $f\in C(\bar{Q}_{T})$ and $\phi\in C(\bar{\Omega})$, 
the  initial-boundary value problem \eqref{1c}-\eqref{1e} admits a solution $u\in  C^{2}(Q_{T})\cap C(\bar{Q}_{T})$. 
Then,
$$
\sup_{Q_{T}}|u|\leq Ce^{c_{0}T}\Big\{\sup_{\partial_{p}Q_{T}}|u|+\sup_{Q_{T}}|f|\Big\},
$$
where $C$ is a positive constant depending only on $c_{0}$, the diameter of $\Omega$, $\rho$, $\sup_{Q_T}c$, 
and the $L^{\infty}$-norm of $b_{i}$ in $Q_{T}$.
\end{lemma}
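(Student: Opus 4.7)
The plan is to derive the bound by applying the maximum principle of Lemma~\ref{2A} to a suitable barrier, after first reducing to an operator with strictly negative zeroth-order coefficient.

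Since $c<c_0$ on the compact set $\bar{Q}_T$ and $c$ is continuous, there is a gap $\delta := c_0-\sup_{\bar Q_T}c>0$. I would set $w(x,t)=e^{-c_0 t}u(x,t)$; a direct computation shows
\begin{equation*}
L'w:=\rho^{2}a_{ij}\partial_{ij}w+\rho b_i\partial_i w+(c-c_0)w-\partial_t w=e^{-c_0 t}f\quad\text{in }Q_T,
\end{equation*}
with $c-c_0\le -\delta$. The proof of Lemma~\ref{2A} made no use of the sign of $c$, so its conclusion applies verbatim to $L'$.

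I would then take the candidate barrier $V=A+K/\delta$, constant in $(x,t)$, where $A=\sup_{\partial_p Q_T}|w|$ and $K=\sup_{Q_T}|e^{-c_0 t}f|$. Since $V>0$ and $V$ is constant in space, $L'V=(c-c_0)V\le-\delta V=-\delta A-K\le -K\le -|e^{-c_0 t}f|$, so
\begin{equation*}
L'(w-V)=e^{-c_0 t}f-L'V\ge e^{-c_0 t}f+K\ge 0.
\end{equation*}
Because $\sup_{\partial_p Q_T}|w|\le \sup_{\partial_p Q_T}|u|$, the inequality $w-V\le 0$ on $\partial_p Q_T$ holds, and Lemma~\ref{2A} (for $L'$) yields $w\le V$. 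The same reasoning applied to $-w$ gives $|w|\le V$ throughout $Q_T$. Converting back, $|u(x,t)|=e^{c_0 t}|w(x,t)|\le e^{c_0 T}(A+K/\delta)\le (1+1/\delta)e^{c_0 T}\{\sup_{\partial_p Q_T}|u|+\sup_{Q_T}|f|\}$, which is the claimed estimate with $C=1+1/\delta$.

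There is no essential obstacle: the key observation is that the degenerate coefficients $\rho^{2}a_{ij}$ and $\rho b_i$ do not impede a constant-in-space barrier, since the second-order and first-order terms of $L'V$ vanish identically on such $V$, and the gap $\delta>0$ furnished by $c<c_0$ carries the argument through. If one wants a proof that manifestly exhibits the dependence on $\operatorname{diam}\Omega$, $\rho$, and $\|b_i\|_{L^\infty}$ stated in the lemma, one instead takes a spatial barrier of the form $\psi(x)=M_0-e^{\gamma x_1}$ after translating $\Omega\subset\{0<x_1<d\}$ with $d=\operatorname{diam}\Omega$; the computation
\begin{equation*}
L'\psi=-\gamma\rho e^{\gamma x_1}\bigl(\gamma\rho a_{11}+b_1\bigr)+(c-c_0)(M_0-e^{\gamma x_1})
\end{equation*}
can be made $\le -1$ by first choosing $\gamma$ small and then $M_0$ large in terms of $\sup_{\bar\Omega}\rho$, $\|b_i\|_{L^\infty}$, $\Lambda$, $d$, and $\delta$, after which the same barrier argument closes the proof.
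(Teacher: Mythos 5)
Your proof is correct, and it takes a genuinely simpler route than the paper's. The paper works with $v=e^{c_0t}\bigl[\Phi+r^{2}(e^{2\mu}-e^{\mu x_{1}/r})F\bigr]$ after placing $\Omega\subset\{0<x_1<r\}$, and must choose $\mu$ large so that the exponentially growing term $c_1r^{2}(e^{\mu}-1)$ coming from the gap $c-c_0<-c_1$ dominates the possibly negative drift contribution $r\rho b_1\mu$; this is why its constant is recorded as depending on $\operatorname{diam}\Omega$, $\rho$, and $\|b_i\|_{L^\infty}$. Your observation that a constant-in-space barrier is annihilated by the second- and first-order parts of the degenerate operator, so that only the strictly negative zeroth-order coefficient $c-c_0\le-\delta$ of the conjugated operator matters, collapses the whole barrier computation to $L'V=(c-c_0)V\le-\delta V$ and yields the cleaner constant $C=1+1/\delta$ with $\delta=c_0-\sup_{\bar Q_T}c$, which depends only on quantities the lemma permits (indeed on fewer of them). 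Both arguments ultimately rest on the same compactness fact that $c<c_0$ on $\bar Q_T$ forces a positive gap, and both invoke the maximum principle of Lemma~\ref{2A}, whose proof indeed places no sign restriction on the zeroth-order coefficient; your closing remark correctly identifies the spatial barrier $M_0-e^{\gamma x_1}$ as the variant that reproduces the paper's stated dependencies. The only cosmetic point worth noting is that your two uses of $e^{-c_0t}\le 1$ (in bounding $A$ by $\sup_{\partial_pQ_T}|u|$ and $K$ by $\sup_{Q_T}|f|$) rely on $c_0\ge 0$, which the hypothesis supplies.
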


\begin{proof}
Set
$$
F = \sup_{Q_{T}}|f|,\quad \Phi=\sup_{\partial_{p}Q_{T}}|u|.
$$
Then, $L(\pm u) \geq -F$ in $Q_{T}$ and $\pm u \leq \Phi $ on $  \partial_{p}Q_{T}$. Without loss of generality, we assume
$$\Omega\subset \{0<x_{1}<r\},$$
for some constant $r>0$. For some constant $\mu>0$ to be chosen later, set
$$
v=e^{c_{0}t}[\Phi+r^{2}(e^{2\mu}-e^{\frac{\mu x_{1}}{r}})F].
$$
We note that $v \geq \Phi$ in $\bar{Q}_{T}$. Next, by a straightforward calculation and $c< c_{0}$ in $\bar{Q}_{T}$, we have
\begin{align*}
Lv&=e^{c_{0}t}\{-(\rho^{2}a_{11}\mu^{2}+r\rho b_{1}\mu)F e^{\frac{\mu x_{1}}{r}}+(c-c_{0})[\Phi+r^{2}(e^{2\mu}-e^{\frac{\mu x_{1}}{r}})F]\}\\
&\leq -e^{c_{0}t+\frac{\mu x_{1}}{r}}F[\rho^{2}a_{11}\mu^{2}+r\rho b_{1}\mu-(c-c_{0})r^{2}(e^{\mu(2-\frac{ x_{1}}{r})}-1)].
\end{align*}

We now claim, by choosing $\mu > 0$ appropriately,
\begin{equation}\label{2a}
\rho^{2}a_{11}\mu^{2}+r\rho b_{1}\mu-(c-c_{0})r^{2}(e^{\mu(2-\frac{x_{1}}{r})}-1)\geq 1\quad\text{in }Q_{T}.
\end{equation}
Then, $Lv \leq  -F $ in $Q_{T}$. Therefore, $L(\pm u) \geq L v $ in $Q_{T}$ and $\pm u \leq  v $ on $\partial_{p}Q_{T}$. 
By the maximum principle, we obtain $\pm u \leq v $ in $Q_{T}$, and hence, for any $(x,t)\in Q_{T}$,
$$
|u(x,t)|\leq e^{c_{0}t}[\Phi+r^{2}(e^{2\mu}-e^{\frac{\mu x_{1}}{r}})F].
$$
This yields the desired result.

We now prove \eqref{2a}. By $c< c_{0}$ in $\bar{Q}_{T}$, we have $c-c_{0}<-c_{1}$ in $\bar{Q}_{T}$ for some positive constant $c_{1}$. 
Hence,
$$
\rho^{2}a_{11}\mu^{2}+r\rho b_{1}\mu-(c-c_{0})r^{2}(e^{\mu(2-\frac{x_{1}}{r})}-1)\geq r\rho b_{1}\mu+c_{1}r^{2}(e^{\mu}-1)\geq 1,
$$
by choosing $\mu$ sufficient large. This finishes the proof of \eqref{2a}.
\end{proof}

For a bounded domain $\Omega\subset\mathbb R^n$ with a $C^{1}$-boundary $\partial\Omega$, 
let $d$ be the distance function in $\Omega$ to its boundary $\partial\Omega$ and $\rho$ be a $C^{1}(\bar\Omega)$-defining function. 
It is well known that 
\begin{equation}\label{defining-distance}
 C^{-1}d\leq \rho\leq C d\quad\text{in }\Omega,   
\end{equation}
where $C$ is a positive constant depending only on $\Omega$ and 
$\rho$.

We start to derive global H\"{o}lder estimates of solutions and their derivatives. 
We first prove a general result that an appropriate decay estimate near the lateral boundary $S_{T}$ 
and a H\"{o}lder estimate interior in space and global in time imply a global H\"{o}lder estimate.  
Recall that $\Omega_0=\Omega\times\{0\}$.

\begin{lemma}\label{2C}
Let $\Omega$ be a bounded domain in $\mathbb{R}^{n}$ with a $C^{1}$-boundary $\partial\Omega$. 
For some constants $A>0$ and $\alpha\in(0,1)$, suppose $w\in C^{\alpha}(Q_{T}\cup\Omega_{0})$ 
and $w_{0}\in C^{\alpha}(S_{T})$ are functions such that, 
for any $(x,t)\in Q_{T}\cup\Omega_{0}$ and $(x_{0},t)\in S_{T}$ with $d(x)=|x-x_{0}|$,
\begin{equation}\label{2b}
|w(x,t)-w_{0}(x_{0},t)|\leq A d^{\alpha}(x),
\end{equation}
and
\begin{equation}\label{2c}
[w]_{C^{\alpha}(B_{d(x)/2}(x)\times[0,T])}\leq A.
\end{equation}
Then, $w$ can be extended to a $C^{\alpha}(\bar{Q}_{T})$-function such that $w=w_{0}$ on $S_{T}$ and
$$
\|w\|_{C^{\alpha}(\bar{Q}_{T})}\leq C\big\{A+\|w_{0}\|_{ C^{\alpha}(S_{T})}\big\},
$$
where $C$ is a positive constant depending only on the diameter of $\Omega$.
\end{lemma}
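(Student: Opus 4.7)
The plan is to extend $w$ to $\bar{Q}_T$ by setting $w = w_0$ on $S_T$, verify continuity of this extension, and then bound the Hölder seminorm on $\bar{Q}_T$ by a careful case analysis based on whether two points are close relative to their distance to the lateral boundary.

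First I would verify the extension is continuous. For any $(x_0,t) \in S_T$ and any sequence $(x_k,t_k) \in Q_T \cup \Omega_0$ approaching $(x_0,t)$, take any boundary point $x_k^0 \in \partial\Omega$ realizing $d(x_k) = |x_k - x_k^0|$. Then by \eqref{2b},
$$|w(x_k,t_k) - w_0(x_0,t)| \le A d(x_k)^\alpha + |w_0(x_k^0, t_k) - w_0(x_0, t)| \to 0,$$
since $x_k^0 \to x_0$ and $w_0 \in C^\alpha(S_T)$. This also gives the $L^\infty$ bound: for any $(x,t) \in Q_T \cup \Omega_0$,
$$|w(x,t)| \le |w_0(x_0,t)| + A d(x)^\alpha \le \|w_0\|_{L^\infty(S_T)} + A(\operatorname{diam}\Omega)^\alpha.$$

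For the Hölder seminorm, take two points $X_i = (x_i, t_i) \in \bar{Q}_T$, $i=1,2$. By approximation it suffices to consider $x_1, x_2 \in \Omega$. Without loss of generality assume $d(x_1) \le d(x_2)$ and set $s = s(X_1,X_2)$. I would split into two cases.

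\textbf{Case A:} $s \le d(x_1)/2$. Then $|x_1 - x_2| \le s \le d(x_1)/2$, so $x_2 \in B_{d(x_1)/2}(x_1)$, and also $|t_1-t_2|^{1/2} \le s \le T^{1/2}$, so $(x_2,t_2) \in B_{d(x_1)/2}(x_1) \times [0,T]$. Hypothesis \eqref{2c} then gives $|w(X_1) - w(X_2)| \le A s^\alpha$ directly.

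\textbf{Case B:} $s > d(x_1)/2$. Pick $x_i^0 \in \partial\Omega$ with $|x_i - x_i^0| = d(x_i)$, $i=1,2$. From the triangle inequality for $d$, $d(x_2) \le d(x_1) + |x_1 - x_2| \le 2s + s = 3s$. Decompose
$$w(X_1) - w(X_2) = \bigl[w(X_1) - w_0(x_1^0, t_1)\bigr] + \bigl[w_0(x_1^0, t_1) - w_0(x_2^0, t_2)\bigr] + \bigl[w_0(x_2^0, t_2) - w(X_2)\bigr].$$
By \eqref{2b}, the first and third brackets are bounded by $A d(x_1)^\alpha \le A(2s)^\alpha$ and $A d(x_2)^\alpha \le A(3s)^\alpha$ respectively. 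For the middle term, $|x_1^0 - x_2^0| \le d(x_1) + |x_1-x_2| + d(x_2) \le 2s + s + 3s = 6s$ and $|t_1 - t_2|^{1/2} \le s$, so the parabolic distance on $S_T$ between $(x_1^0,t_1)$ and $(x_2^0,t_2)$ is at most $6s$, giving a bound $\|w_0\|_{C^\alpha(S_T)} (6s)^\alpha$. Summing yields $|w(X_1) - w(X_2)| \le C(A + \|w_0\|_{C^\alpha(S_T)}) s^\alpha$.

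The main obstacle is bookkeeping in Case B: ensuring that $d(x_2)$ is controlled by $s$ so that the $C^\alpha$-norm of $w_0$ on $S_T$ actually absorbs the middle term at the correct scale. Once one confirms that $d(x_1) \le d(x_2) \le 3s$ under the case assumption, the three terms combine into the stated estimate, and the boundary cases where some $x_i \in \partial\Omega$ follow as limits (with $d(x_i) = 0$ and $x_i^0 = x_i$).
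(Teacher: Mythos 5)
Your proposal is correct and follows essentially the same route as the paper: extend $w$ by $w_0$ on $S_T$, get the $L^\infty$ bound from \eqref{2b}, and split the H\"older estimate into the case where the two points are close relative to their distance to $\partial\Omega$ (handled by \eqref{2c}) and the far case (handled by projecting to the boundary and combining \eqref{2b} with the $C^\alpha$-norm of $w_0$). The only cosmetic differences are that you center the ball at the point with the smaller distance to the boundary while the paper uses the larger one, and you treat points of $S_T$ by continuity rather than as separate cases; the resulting constants differ but the argument is the same.
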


\begin{proof}
First, \eqref{2b} implies
$$
\|w\|_{L^{\infty}(Q_{T})}\leq A\|d\|^{\alpha}_{L^{\infty}(\Omega)}+\|w_{0}\|_{L^{\infty}(S_{T})}.
$$
We note that $w$ is defined only in $Q_{T}\cup\Omega_{0}$. 
Define $w(x,t) = w_{0}(x,t)$ for any $(x,t)\in S_{T}$. 
Take any $X=(x,t),Y=(y,\tau)\in\bar{Q}_{T}$ with $X\neq Y$, and claim
$$
|w(X)-w(Y)|\leq (4 A+5[w_{0}]_{ C^{\alpha}(S_{T})})s^{\alpha}(X,Y).
$$
We consider three cases.

Case 1. If $X,Y\in S_{T}$, then $w(X) = w_{0}(X)$ and $w(Y) = w_{0}(Y)$. Hence,
$$
|w(X)-w(Y)| = |w_{0}(X)- w_{0}(Y)| \leq  [w_{0}]_{ C^{\alpha}(S_{T})}s^{\alpha}(X,Y).
$$

Case 2. If $X\in Q_{T}\cup\Omega_{0}$ and $Y\in S_{T}$, we take $X_{0}=(x_{0},t)\in S_{T}$  with $d(x) =|x -x_{0}|$. Then, by \eqref{2b},
$$
|w(X)-w(X_{0})|\leq Ad^{\alpha}(x)=A s^{\alpha}(X,X_{0}),
$$
and, by Case 1,
$$
|w(X_{0})-w(Y)| \leq  [w_{0}]_{ C^{\alpha}(S_{T})}s^{\alpha}(X_{0},Y).
$$
With $|x-x_{0}|\leq |x-y|$ and $|x_{0}-y|\leq |x_{0}-x|+|x-y|\leq 2|x-y|$, we obtain
$$
|w(X)-w(Y)|\leq (A+2[w_{0}]_{ C^{\alpha}(S_{T})})s^{\alpha}(X,Y).
$$

Case 3. If  $X,Y\in Q_{T}\cup\Omega_{0}$, without loss of generality, we assume $d(x)\geq d(y)$. If $|x-y|\leq d(x)/2$, then by \eqref{2c},
$$
|w(X)-w(Y)|\leq [w]_{C^{\alpha}(B_{d(x)/2}(x)\times[0,T])} s^{\alpha}(X,Y)\leq A s^{\alpha}(X,Y).
$$
If $|x-y|\geq d(x)/2$, take $X_{0}=(x_{0},t)\in S_{T}$ and $Y_{0}=(y_{0},\tau)\in S_{T}$ such that $d(x)=|x-x_{0}|$ and $d(y)=|y-y_{0}|$. Then
$$
\begin{aligned}
|x-x_{0}|&=d(x)\leq 2|x-y|,\\
|y-y_{0}|&=d(y)\leq 2|x-y|,
\end{aligned}
$$
and hence
$$
|x_{0}-y_{0}|\leq |x_{0}-x|+|x-y|+|y-y_{0}|\leq 5|x-y|.
$$
Arguing as in Case 2, we have
$$
\begin{aligned}
|w(X)-w(Y)|&\leq |w(X)-w(X_{0})|+|w(X_{0})-w(Y_{0})|+|w(Y_{0})-w(Y)|\\
&\leq Ad^{\alpha}(x)+[w_{0}]_{ C^{\alpha}(S_{T})}s^{\alpha}(X_{0},Y_{0})+Ad^{\alpha}(y)\\
&\leq (4 A+5[w_{0}]_{ C^{\alpha}(S_{T})})s^{\alpha}(X,Y).
\end{aligned}
$$
By combining Cases 1-3, we have the desired result.
\end{proof}

Next, we construct supersolutions near the lateral boundary.

\begin{lemma}\label{2D}
Let  $\mu>0$ and $K\geq 0$ be constants, 
$\Omega$ be a bounded domain in $\mathbb R^n$ with a $C^{1}$-boundary $\partial\Omega$, 
and $\rho$ be a $C^{1}(\bar\Omega)\cap C^2(\Omega)$-defining function with $\rho\nabla^2\rho\in L^{\infty}(\Omega)$. 
Assume $a_{ij},b_{i},c\in C(\bar{Q}_{T})$ with \eqref{1a}. 
Then, there exist positive constants $A$ and $C_{0}$, depending only on $\mu$, $K$, $\Omega$, $\rho$, 
and the $L^{\infty}$-norms of $a_{ij}$, $b_{i}$, and $c$ in $Q_{T}$, such that, for any $x_{0}\in \partial\Omega$ and any $(x,t)\in Q_T$,
$$
L[e^{At}(|x-x_{0}|^{\mu}+K\rho^{\mu})]\leq -C_{0}|x-x_{0}|^{\mu}.
$$
\end{lemma}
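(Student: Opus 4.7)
The approach is a direct barrier computation: apply $L$ separately to the two summands $e^{At}|x-x_0|^\mu$ and $e^{At}K\rho^\mu$, bound the spatial pieces pointwise by constant multiples of $|x-x_0|^\mu$ and $\rho^\mu$ respectively, and then absorb these by the negative contribution $-Ae^{At}$ coming from $-\partial_t$ by choosing $A$ sufficiently large.

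For the first summand, differentiating $|x-x_0|^\mu$ produces the singular factor $|x-x_0|^{\mu-2}$ in $\partial_{ij}|x-x_0|^\mu$; this is tamed by the prefactor $\rho^2$ via the elementary Lipschitz bound
$$\rho(x)\leq \|\nabla\rho\|_{L^\infty(\Omega)}\,|x-x_0|\quad\text{for }x_0\in\partial\Omega,$$
which follows from $\rho(x_0)=0$ and $\rho\in C^1(\bar\Omega)$. Together with ellipticity and the $L^\infty$ bound on the coefficients, this yields $\rho^2 a_{ij}\partial_{ij}|x-x_0|^\mu=O(|x-x_0|^\mu)$, and similar bounds on the first-order and zeroth-order terms, so
$$L\bigl[e^{At}|x-x_0|^\mu\bigr]\leq e^{At}(C_1-A)\,|x-x_0|^\mu$$
for a constant $C_1$ depending only on $\mu$, $\rho$, and the $L^\infty$-bounds of the coefficients. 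For the second summand, one computes
$$\rho^2\partial_{ij}\rho^\mu=\mu(\mu-1)\rho^\mu\partial_i\rho\,\partial_j\rho+\mu\rho^\mu(\rho\,\partial_{ij}\rho),$$
so the only piece needing care is the second one; here the hypothesis $\rho\nabla^2\rho\in L^\infty(\Omega)$ is used to bound $\rho\,\partial_{ij}\rho$. The remaining contributions $\rho b_i\partial_i\rho^\mu$ and $c\rho^\mu$ are obviously $O(\rho^\mu)$, hence
$$L\bigl[e^{At}K\rho^\mu\bigr]\leq Ke^{At}(C_2-A)\,\rho^\mu,$$
with $C_2$ depending on the same data together with $\|\rho\nabla^2\rho\|_{L^\infty(\Omega)}$.

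To conclude, fix any $C_0>0$ and take $A\geq\max\{C_1+C_0,\,C_2\}$. Then the $\rho^\mu$-estimate becomes nonpositive (since $K\geq 0$), while the $|x-x_0|^\mu$-estimate is at most $-C_0 e^{At}|x-x_0|^\mu\leq -C_0|x-x_0|^\mu$; adding the two gives the lemma with constants $A$ and $C_0$ depending only on the listed data. There is no conceptual obstacle here: the lemma is a barrier construction by direct differentiation. The only subtlety is recognising that the two geometric hypotheses on $\rho$, namely Lipschitz continuity (giving $\rho(x)\leq C|x-x_0|$) and $\rho\nabla^2\rho\in L^\infty(\Omega)$, are exactly what is needed to make $|x-x_0|^\mu$ and $\rho^\mu$ serve as admissible barriers against the degeneracy of $L$ along the lateral boundary.
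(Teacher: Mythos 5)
Your proposal is correct and follows essentially the same route as the paper: a direct barrier computation in which the singular factor $|x-x_0|^{\mu-2}$ is tamed by $\rho(x)\le C|x-x_0|$ (the paper's \eqref{defining-distance} combined with $d(x)\le|x-x_0|$), the hypothesis $\rho\nabla^2\rho\in L^\infty(\Omega)$ controls the second derivatives of $\rho^\mu$, and the large constant $A$ in $e^{At}$ absorbs everything. The only cosmetic difference is that you split by summand while the paper groups terms by differential order, which changes nothing of substance.
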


\begin{proof}
Without loss of generality, we assume $x_{0}=0$. Set
$$
w(x)=|x|^{\mu}+K \rho^{\mu}.
$$
Then,
$$
\partial_{i}w=\mu|x|^{\mu-2}x_{i}+K\mu \rho^{\mu-1}\rho_{i},
$$
and
\begin{align*}
\partial_{ij}w&=\mu(\mu-2)|x|^{\mu-4}x_{i}x_{j}+\mu|x|^{\mu-2}\delta_{ij}\\
&\qquad+K\mu(\mu-1)\rho^{\mu-2}\rho_{i}\rho_{j}+K\mu \rho^{\mu-1}\rho_{ij}.
\end{align*}
For some $A>0$ to be determined, a straightforward computation yields
$$
L(e^{At}w)=e^{At}(I_{1}+I_{2}),
$$
where
\begin{align*}
I_{1}&=\mu(\mu-2)|x|^{\mu-4}\rho^{2}a_{ij}x_{i}x_{j}+\mu|x|^{\mu-2}\rho^{2}a_{ii}\\
&\qquad +K\mu(\mu-1)\rho^{\mu}a_{ij}\rho_{i}\rho_{j}
+K\mu \rho^{\mu+1}a_{ij}\rho_{ij},
\end{align*}
and
\begin{align*}
I_{2}=\mu|x|^{\mu-2}\rho b_{i}x_{i}+K\mu \rho^{\mu}b_{i}\rho_{i}+(c-A)(|x|^{\mu}+K\rho^{\mu}).
\end{align*}
Note that $\rho\leq Cd\leq C|x|$ by \eqref{defining-distance} and $0\in\partial\Omega$. Then, 
$$
I_{1}\leq C|x|^{\mu-2}\rho^{2}+C\rho^{\mu}\leq  C|x|^{\mu},
$$
and
$$
I_{2}\leq C|x|^{\mu}+(c-A)(|x|^{\mu}+K\rho^{\mu}).
$$
By taking $A$ sufficiently large, we obtain
\begin{align*}
L(e^{At}w)\leq e^{At}|x|^{\mu}(C+c-A)+e^{At}(c-A)K\rho^{\mu}\leq -C_{0}|x|^{\mu},
\end{align*}
where $C_{0}$ is a positive constant.
\end{proof}

We point out that $K$ is allowed to be zero in Lemma \ref{2D}. We also need the following result later on.

\begin{lemma}\label{2E}
Let $\mu\in (0,1)$ be a constant, 
$\Omega$ be a bounded domain in $\mathbb R^n$ with a $C^{1}$-boundary $\partial\Omega$, 
and $\rho$ be a $C^{1}(\bar\Omega)\cap C^{2}(\Omega)$-defining function with $\rho\nabla^2\rho\in C(\bar\Omega)$ and $\rho\nabla^2\rho=0$ on $\partial\Omega$. 
Then, there exist positive constants $r$ and $K$, 
depending only on $n$, $\mu$, $\Omega$, and $\rho$,
such that, for any $x_0\in\partial\Omega$ and 
any $x\in \Omega\cap B_r(x_0)$,  
\begin{align*}\Delta\big(|x-x_0|^{\mu}+K \rho^\mu\big)
\le0.\end{align*}
\end{lemma}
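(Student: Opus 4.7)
The plan is to compute the two Laplacians explicitly and then exploit the boundary behavior of $\rho$ guaranteed by the hypotheses. A direct computation gives
\begin{align*}
\Delta |x-x_0|^\mu &= \mu(\mu+n-2)\,|x-x_0|^{\mu-2},\\
\Delta \rho^\mu &= \mu(\mu-1)|\nabla\rho|^2\,\rho^{\mu-2}+\mu\rho^{\mu-1}\Delta\rho
=\rho^{\mu-2}\bigl[\mu(\mu-1)|\nabla\rho|^2+\mu\,\rho\Delta\rho\bigr].
\end{align*}
Since $\mu\in(0,1)$, the coefficient $\mu(\mu-1)$ is negative, so the first summand of $\Delta\rho^\mu$ is the natural source of negativity. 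The difficulty is that for $n\ge 2$ the quantity $\Delta|x-x_0|^\mu$ is nonnegative, so we have to beat it with $K\Delta\rho^\mu$.

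Next I would use the two boundary hypotheses on $\rho$. Because $|\nabla\rho|=1$ on $\partial\Omega$ and $\rho\in C^1(\bar\Omega)$, there is a neighborhood of $\partial\Omega$ on which $|\nabla\rho|^2\ge 3/4$. Because $\rho\nabla^2\rho\in C(\bar\Omega)$ and $\rho\nabla^2\rho=0$ on $\partial\Omega$, we have $\rho\Delta\rho\in C(\bar\Omega)$ with $\rho\Delta\rho=0$ on $\partial\Omega$, so we can shrink the neighborhood further to achieve $|\rho\Delta\rho|\le (1-\mu)/4$. Choosing $r>0$ (depending only on $n,\mu,\Omega,\rho$) so that $\{d<r\}$ lies inside this neighborhood, both estimates hold on $\{d<r\}$. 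Hence on $\{d<r\}$,
\begin{align*}
\mu(\mu-1)|\nabla\rho|^2+\mu\rho\Delta\rho
\le \tfrac{3}{4}\mu(\mu-1)+\tfrac{1}{4}\mu(\mu-1)
= \tfrac{1}{2}\mu(\mu-1)<0,
\end{align*}
so that $\Delta\rho^\mu \le \tfrac{1}{2}\mu(\mu-1)\,\rho^{\mu-2}$.

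Finally, for any $x_0\in\partial\Omega$ and $x\in\Omega\cap B_r(x_0)$, we have $d(x)\le|x-x_0|<r$, so $x\in\{d<r\}$ and the above estimate applies. Using the equivalence \eqref{defining-distance}, $\rho(x)\le Cd(x)\le C|x-x_0|$, and since $\mu-2<0$ this gives $\rho^{\mu-2}\ge C^{\mu-2}|x-x_0|^{\mu-2}$. Combining,
\begin{align*}
\Delta\bigl(|x-x_0|^\mu+K\rho^\mu\bigr)
\le \Bigl[\mu(\mu+n-2)+\tfrac{K}{2}\mu(\mu-1)C^{\mu-2}\Bigr]|x-x_0|^{\mu-2}.
\end{align*}
Taking $K$ large enough (depending only on $n,\mu,C$, hence only on $n,\mu,\Omega,\rho$) makes the bracket nonpositive, which yields the desired inequality. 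The only real obstacle is verifying that the reduction $\Delta\rho^\mu\le \tfrac12\mu(\mu-1)\rho^{\mu-2}$ holds uniformly in $x_0\in\partial\Omega$; this is handled because the neighborhood $\{d<r\}$ is determined solely by $\rho$ and is independent of the boundary point $x_0$, and the constant $C$ in \eqref{defining-distance} is uniform as well.
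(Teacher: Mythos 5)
Your proposal is correct and follows essentially the same route as the paper: compute both Laplacians, use $|\nabla\rho|\to 1$ and $\rho\Delta\rho\to 0$ near $\partial\Omega$ to make $\Delta\rho^\mu\le \tfrac12\mu(\mu-1)\rho^{\mu-2}$ on a collar $\{d<r\}$, compare $\rho$ with $|x-x_0|$ via \eqref{defining-distance} (the paper absorbs $|x-x_0|^{\mu-2}$ into $\rho^{\mu-2}$, you go the other direction, which is equivalent), and then take $K$ large. The only blemish is a sign typo in your intermediate display, where $\tfrac14\mu(\mu-1)$ should read $-\tfrac14\mu(\mu-1)=\tfrac14\mu(1-\mu)$ so that the sum is indeed $\tfrac12\mu(\mu-1)$; the stated conclusion is unaffected.
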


\begin{proof} 
Without loss of generality, we assume $x_{0}=0$. Set, for some $K>0$ to be determined,
$$
w(x)=|x|^{\mu}+K\rho^{\mu}.
$$
By \eqref{defining-distance}, $0\in\partial\Omega$, and $\mu\in(0,1)$, we have $|x|^{\mu-2}\leq d^{\mu-2}\leq C\rho^{\mu-2}$. 
A straightforward computation yields
\begin{align*}
\Delta w&=\mu(\mu-2+n)|x|^{\mu-2}+K\mu(\mu-1)\rho^{\mu-2}|\nabla\rho|^2+K\mu \rho^{\mu-1}\Delta \rho\\
&\leq C\rho^{\mu-2}+K\mu(\mu-1)\rho^{\mu-2}|\nabla\rho|^2+K\mu \rho^{\mu-1}\Delta \rho\\
&=\rho^{\mu-2}\big\{C+K\mu[(\mu-1)|\nabla\rho|^2+\rho\Delta \rho]\big\}.
\end{align*}
Note that $\mu\in(0,1)$ and $|\nabla\rho|=1$ on $\partial\Omega$. 
By taking $r>0$ sufficiently small, we obtain, for any $x\in \Omega\cap B_{r}$,
$$
(\mu-1)|\nabla\rho|^2+ \rho\Delta \rho<-\frac{1}{2}(1-\mu).
$$
We fix such $r$. Then, by taking $K>0$ sufficiently large, we have $\Delta w\leq 0$ in $\Omega\cap B_{r}$.
\end{proof} 

With Lemma \ref{2D}, we prove a decay estimate of solutions near the
lateral boundary $S_{T}$.

\begin{lemma}\label{2F}
Let $\Omega$ be a bounded domain in $\mathbb R^n$ with a $C^{1}$-boundary $\partial\Omega$ 
and $\rho$ be a $C^{1}(\bar\Omega)\cap C^2(\Omega)$-defining function with $\rho\nabla^2\rho\in L^{\infty}(\Omega)$. 
For some constant $\alpha\in(0,1)$, assume $a_{ij},b_{i},c\in C^{\alpha}(\bar{Q}_{T})$ with \eqref{1a}. 
Suppose, for some $f\in C^{\alpha}(\bar{Q}_{T})$ and $\phi\in C^{\alpha}(\bar{\Omega})$, 
the initial-boundary value problem \eqref{1c}-\eqref{1e} admits a solution $u\in C(\bar{Q}_{T})\cap C^{2}(Q_{T})$. 
Then, for any $(x,t)\in Q_{T}\cup\Omega_{0}$ and $(x_{0},t)\in S_{T}$,
\begin{equation}\label{2d}
|u(x,t)-h(x_{0},t)|\leq C\big\{\|f\|_{C^{\alpha}(\bar{Q}_{T})}+\|\phi\|_{ C^{\alpha}(\bar{\Omega})}\big\}|x-x_{0}|^{\alpha},
\end{equation}
where $C$ is a positive constant depending only on $n$, $T$, $\alpha$, $\Omega$, $\rho$, 
and the $C^{\alpha}$-norms of $a_{ij}$, $b_{i}$, and $c$ in $\bar{Q}_{T}$.
\end{lemma}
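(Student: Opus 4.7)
The plan is to prove \eqref{2d} by a barrier argument based on Lemma \ref{2D}, applied with exponent $\mu=\alpha$. Fix $x_0\in\partial\Omega$, and consider the function
$$w(x,t)=u(x,t)-h(x_0,t)\quad\text{in }\bar Q_T.$$
Since $h(x_0,t)$ depends only on $t$, the spatial derivatives of $h(x_0,t)$ vanish, and
$$Lw=f(x,t)-c(x,t)h(x_0,t)+\partial_t h(x_0,t).$$
Evaluating the compatibility relation \eqref{1f} at $x_0\in\partial\Omega$ gives $\partial_t h(x_0,t)=c(x_0,t)h(x_0,t)-f(x_0,t)$, and hence
$$Lw=\bigl[f(x,t)-f(x_0,t)\bigr]-\bigl[c(x,t)-c(x_0,t)\bigr]h(x_0,t).$$
Since $f,c\in C^\alpha(\bar Q_T)$, and since the formula defining $h$ together with Lemma \ref{2B} yields $\|h\|_{L^\infty(S_T)}\le C\{\|\phi\|_{L^\infty}+\|f\|_{L^\infty}\}$, this produces the pointwise bound
$$|Lw(x,t)|\le C_1\bigl\{\|f\|_{C^\alpha(\bar Q_T)}+\|\phi\|_{C^\alpha(\bar\Omega)}\bigr\}|x-x_0|^\alpha\quad\text{in }Q_T,$$
for a constant $C_1$ depending only on the parameters listed in the lemma.

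Next I would estimate $w$ on the parabolic boundary. Since $h(\cdot,t)$ is built from $\phi$, $c$, and $f$ via an explicit integral formula, a direct computation shows $h\in C^\alpha(\bar Q_T)$ with $\|h\|_{C^\alpha(\bar Q_T)}\le C\{\|\phi\|_{C^\alpha(\bar\Omega)}+\|f\|_{C^\alpha(\bar Q_T)}\}$. Therefore, on the initial slice $\Omega_0$,
$$|w(x,0)|=|\phi(x)-\phi(x_0)|\le [\phi]_{C^\alpha(\bar\Omega)}|x-x_0|^\alpha,$$
and on the lateral boundary $S_T$,
$$|w(x,t)|=|h(x,t)-h(x_0,t)|\le [h]_{C^\alpha(\bar Q_T)}|x-x_0|^\alpha.$$
Set $F=\|f\|_{C^\alpha(\bar Q_T)}+\|\phi\|_{C^\alpha(\bar\Omega)}$ and apply Lemma \ref{2D} with $\mu=\alpha$ and some $K\ge 0$ (say $K=1$) to obtain constants $A,C_0>0$ such that
$$L\bigl[e^{At}(|x-x_0|^\alpha+K\rho^\alpha)\bigr]\le -C_0|x-x_0|^\alpha\quad\text{in }Q_T.$$
Choose $M=C_2 F$ with $C_2$ large enough (depending only on $C_0,C_1,K$ and the universal Hölder constant of $h$) so that the barrier $V(x,t)=Me^{At}(|x-x_0|^\alpha+K\rho^\alpha)$ satisfies $L(\pm w-V)\ge 0$ in $Q_T$ and $\pm w\le V$ on $\partial_pQ_T$.

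By the maximum principle (Lemma \ref{2A}, applied after the standard reduction $v=e^{-A't}(\pm w-V)$ to absorb the sign of $c$), we conclude
$$|u(x,t)-h(x_0,t)|\le Me^{AT}\bigl(|x-x_0|^\alpha+K\rho(x)^\alpha\bigr)\quad\text{in }\bar Q_T.$$
Finally, for any $x\in\Omega$ and any $x_0\in\partial\Omega$, the definition of the distance function and \eqref{defining-distance} give $\rho(x)\le Cd(x)\le C|x-x_0|$, so $\rho(x)^\alpha\le C|x-x_0|^\alpha$, and the desired estimate \eqref{2d} follows with a constant of the stated form. The only subtle point is the bookkeeping in the reduction from $Lw$ to the compatibility-adjusted form; once that is in place, the argument is a routine barrier construction, with no real obstacle, since Lemma \ref{2D} supplies exactly the supersolution we need.
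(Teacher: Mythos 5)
Your proposal is correct and follows essentially the same route as the paper: reduce to $w=u-h(x_0,\cdot)$ via the compatibility relation \eqref{1f}, bound $Lw$ and the parabolic boundary data by $CF|x-x_0|^\alpha$, and compare against the supersolution $e^{At}|x-x_0|^\alpha$ from Lemma \ref{2D} (the paper takes $K=0$; your extra $K\rho^\alpha$ term is harmless since $\rho\le C|x-x_0|$). No substantive differences.
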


\begin{proof}
For convenience, we set
$$
F=\|f\|_{C^{\alpha}(\bar{Q}_{T})}+\|\phi\|_{ C^{\alpha}(\bar{\Omega})}.
$$
Fix a point $x_{0}\in \partial\Omega$. By \eqref{1f}, we have
\begin{equation}\label{2e}
L(u(x,t)-h(x_{0},t))=g,
\end{equation}
where
\begin{align*}
g(x,t)&=f(x,t)-c(x,t)h(x_{0},t)+\partial_{t}h(x_{0},t)\\
&=f(x,t)-f(x_{0},t)-(c(x,t)-c(x_{0},t))h(x_{0},t).
\end{align*}
Then,
$$
|g(x,t)|\leq CF|x-x_{0}|^{\alpha}.
$$
Hence, for any $(x,t)\in Q_{T}$,
$$
\pm L(u(x,t)-h(x_{0},t))\geq -CF|x-x_{0}|^{\alpha}.
$$
Next, we have, for any $x\in\Omega$,
$$
\pm (u(x,0)-h(x_{0},0))=\pm(\phi(x)-\phi(x_{0}))\leq CF|x-x_{0}|^{\alpha},
$$
and, for any $(x,t)\in S_{T}$,
$$
\pm (u(x,t)-h(x_{0},t))=\pm (h(x,t)-h(x_{0},t))\leq CF|x-x_{0}|^{\alpha}.
$$
For some $A>0$ to be determined, set
$$
w(x,t)=e^{At}|x-x_{0}|^{\alpha}.
$$
By Lemma \ref{2D}, we have, for some positive constants $A$ and $C_{0}$, and any $(x,t)\in Q_T,$
$$
Lw\leq -C_{0}|x-x_{0}|^{\alpha}.
$$
We also have  
$$
w\geq |x-x_{0}|^{\alpha}\quad\text{on }\partial_{p}Q_{T}.
$$
For $B>0$ sufficiently large, we obtain
\begin{align*}
\pm L(u(x,t)-h(x_{0},t))&\geq L(BFw)\quad\text{in }Q_T\\
\pm (u(x,t)-h(x_{0},t))&\leq BFw \quad\text{on }\partial_{p}Q_T.
\end{align*}
The maximum principle implies
$$
\pm (u(x,t)-h(x_{0},t))\leq BFw\quad\text{in }Q_T,
$$
and hence, for any $(x,t)\in Q_T$,
\begin{equation}\label{2f}
|u(x,t)-h(x_{0},t)|\leq BFe^{At}|x-x_{0}|^{\alpha}\leq B'F|x-x_{0}|^{\alpha}.
\end{equation}
We obtain the desired result.
\end{proof}

Next, we prove a regularity result for solutions of the initial-boundary value problem.
We need Corollary \ref{5G} in the proof. 

\begin{thm}\label{2G}
Let $\Omega$ be a bounded domain in $\mathbb R^n$ with a $C^{1}$-boundary $\partial\Omega$ 
and $\rho$ be a $C^{1}(\bar\Omega)\cap C^2(\Omega)$-defining function with $\rho\nabla^2\rho\in L^{\infty}(\Omega)$. 
For some constant $\alpha\in(0,1)$, assume $a_{ij},b_{i},c\in C^{\alpha}(\bar{Q}_{T})$ with \eqref{1a}. 
Suppose that, for some $f\in C^{\alpha}(\bar{Q}_{T})$ and $\phi\in C^{2+\alpha}(\bar{\Omega})$, 
the initial-boundary value problem \eqref{1c}-\eqref{1e} admits a solution $u\in C^{2}(Q_{T})\cap C(\bar{Q}_{T})$. 
Then, $u\in C^{2+\alpha}(\bar{Q}_{T})$ and
$$
\|u\|_{C^{2+\alpha}(\bar{Q}_{T})}\leq C\big\{\|\phi\|_{C^{2+\alpha}(\bar{\Omega})}+\|f\|_{C^{\alpha}(\bar{Q}_{T})}\big\},
$$
where $C$ is a positive constant depending only on $n,$ $T,$ $\lambda,$ $\alpha,$ $\Omega,$ $\rho$, 
and the $C^{\alpha}$-norms of $a_{ij},$ $b_{i},$ and $c$ in $\bar{Q}_{T}$.
\end{thm}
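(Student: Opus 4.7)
The plan is to combine three ingredients already in place: the boundary decay estimate of Lemma~\ref{2F}, the interior intermediate Schauder estimate Corollary~\ref{5G}, and the assembly principle of Lemma~\ref{2C}. Set $F=\|\phi\|_{C^{2+\alpha}(\bar\Omega)}+\|f\|_{C^\alpha(\bar Q_T)}$. Lemma~\ref{2B} gives $\|u\|_{L^\infty(Q_T)}\le CF$, and Lemma~\ref{2F} yields the fundamental pointwise decay $|u(x,t)-h(x_0,t)|\le CF\,d(x)^\alpha$ whenever $(x_0,t)\in S_T$ satisfies $|x-x_0|=d(x)$.

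The heart of the proof is an interior estimate at the correct scale. Fix $x_*\in\Omega$, let $d=d(x_*)$ and $r=d/8$, and take $x_0\in\partial\Omega$ realizing $|x_*-x_0|=d$. Define
\[
v(y,s)=u(x_*+ry,s)-h(x_0,s)\quad\text{on }B_2\times[0,T].
\]
Using the compatibility \eqref{1f}, $v$ satisfies a uniformly parabolic equation $\bar Lv=\bar g$ on $B_2\times[0,T]$, whose rescaled coefficients (built from $\rho(x_*+ry)/r$ together with $a_{ij}$, $b_i$, $c$) are uniformly bounded and lie in $C^\alpha_\ast$, and whose right-hand side satisfies $\|\bar g\|_{C^\alpha_\ast}\le CFr^{2+\alpha}$. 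Moreover $\|v\|_{L^\infty}\le CFr^\alpha$ by the boundary decay above. Applying Corollary~\ref{5G} yields
\[
\|v\|_{C^\alpha_\ast}+\|D_y v\|_{C^\alpha_\ast}+\|D_y^2 v\|_{C^\alpha_\ast}+\|\partial_s v\|_{C^\alpha_\ast}\le CFr^\alpha
\]
on $\bar B_1\times[0,T]$. Unscaling to $B_{r/2}(x_*)\times[0,T]$, and using that $\rho\sim r$ with $|\nabla\rho|$ bounded on this ball to absorb the factors of $r$ arising from the chain rule, I obtain the interior Hölder bound
\[
[\rho D_x u]_{C^\alpha}+[\rho^2 D_x^2 u]_{C^\alpha}+[\partial_t u]_{C^\alpha}\le CF
\]
on $B_{r/2}(x_*)\times[0,T]$, together with the pointwise bounds $|\rho D_x u|,\ |\rho^2 D_x^2 u|\le CFd^\alpha$. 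A final pointwise bound $|\partial_t u(x,t)-\partial_t h(x_0,t)|\le CFd^\alpha$ is then read off by comparing $\partial_t u=\rho^2 a_{ij}\partial_{ij}u+\rho b_i\partial_i u+cu-f$ with the boundary identity $\partial_t h=ch-f$, invoking the already-established decays of $u$, $\rho D_x u$, $\rho^2 D_x^2 u$ together with the Hölder continuity of $c$ and $f$.

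To conclude I apply Lemma~\ref{2C} four times, with $(w,w_0)$ equal respectively to $(u,h)$, $(\rho D_x u,0)$, $(\rho^2 D_x^2 u,0)$, and $(\partial_t u,\partial_t h)$. In each case the pointwise decay verifies \eqref{2b} and the interior Hölder estimate verifies \eqref{2c}, both with $A=CF$. Summing the four resulting $C^\alpha(\bar Q_T)$-bounds gives $\|u\|_{C^{2+\alpha}(\bar Q_T)}\le CF$.

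The principal obstacle is obtaining the correct power of $d$ in the interior estimate: applying Corollary~\ref{5G} directly to $u$ rather than to $v=u-h(x_0,s)$ would produce only $|D_x^2 u|\le CF/r^2$, so that $\rho^2 D_x^2 u$ is bounded but its $C^\alpha$-seminorm on $B_{r/2}(x_*)$ blows up like $r^{-\alpha}$ as $x_*\to\partial\Omega$, which is not enough to verify \eqref{2c}. The extra $r^\alpha$ gain that rescues the argument is precisely the boundary decay from Lemma~\ref{2F}, fed back into the rescaled equation by working with $u-h(x_0,s)$ in place of $u$.
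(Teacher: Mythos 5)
Your proposal follows essentially the same route as the paper: the $L^\infty$ bound of Lemma \ref{2B}, the boundary decay of Lemma \ref{2F}, rescaling $v=u-h(x_0,\cdot)$ at scale $r\sim d(x)$, the intermediate Schauder estimate of Corollary \ref{5G}, and assembly via Lemma \ref{2C}; your closing paragraph on why the $C^\alpha_\ast$ framework (rather than the full parabolic Schauder estimate) is needed is exactly the content of the paper's Remark \ref{2G'}. Two points need repair. First, the claim $\|\bar g\|_{C^\alpha_\ast}\le CFr^{2+\alpha}$ is false: since $f$ and $c$ are only $C^\alpha$ and $|x_*+ry-x_0|\lesssim r$, the difference $f(x_*+ry,s)-f(x_0,s)-(c(x_*+ry,s)-c(x_0,s))h(x_0,s)$ is only $O(r^\alpha)$ in $C^\alpha_\ast$; fortunately this weaker, correct bound is all that your argument uses, since $\|v\|_{L^\infty}$ is itself only $O(r^\alpha)$. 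Second, and more substantively, the fourth application of Lemma \ref{2C}, to the pair $(\partial_t u,\partial_t h)$, does not go through as stated: hypothesis \eqref{2c} demands the full space-time H\"older seminorm $[\partial_t u]_{C^\alpha(B_{d(x)/2}(x)\times[0,T])}$, whereas Corollary \ref{5G} controls only $\|\partial_s v\|_{C^\alpha_\ast}$, i.e.\ H\"older continuity in space uniformly in time --- this loss is precisely the price of the intermediate Schauder theory. The paper sidesteps this by not applying Lemma \ref{2C} to $\partial_t u$ at all: once $u$, $\rho D_xu$, $\rho^2D^2_xu\in C^\alpha(\bar Q_T)$ are established, the identity $\partial_t u=\rho^2a_{ij}\partial_{ij}u+\rho b_i\partial_iu+cu-f$ yields $\partial_t u\in C^\alpha(\bar Q_T)$ with the required bound directly, and you should adopt the same shortcut. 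You should also record that the classical Schauder theory (interior in space, global in time) is invoked first to guarantee $D_xu$, $D^2_xu$, $\partial_tu\in L^\infty$ locally, which is a standing hypothesis of Corollary \ref{5G}.
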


\begin{proof}
The Schauder theory (interior in space and global in time) for uniformly parabolic equations 
implies $u\in C^{2,\alpha}(Q_{T}\cup \Omega_{0})$. (Refer to Section 10 in Chapter IV
of \cite{L1968}.) 
Now we derive a weighted $C^{2,\alpha}$-estimate of $u$. For convenience, we set
$$
F=\|f\|_{C^{\alpha}(\bar{Q}_{T})}+\|\phi\|_{C^{2+\alpha}(\bar{\Omega})}.
$$
We claim, for any $(x,t)\in Q_{T}\cup\Omega_{0}$ and $(x_{0},t)\in S_{T}$ with $d(x)=|x-x_{0}|$,
\begin{equation}\label{2g}
|u(x,t)-h(x_{0},t)|+|\rho(x)D_{x}u(x,t)|+|\rho^{2}(x)D^{2}_{x}u(x,t)|\leq CF d^{\alpha}(x),
\end{equation}
and
\begin{align}\label{2h}\begin{split}
[u]_{C^{\alpha}(B_{d(x)/4}(x)\times[0,T])}+[\rho D_{x}u]_{C^{\alpha}(B_{d(x)/4}(x)\times[0,T])}&\\
+[\rho^{2}D^{2}_{x}u]_{C^{\alpha}(B_{d(x)/4}(x)\times[0,T])}    
&\leq CF.
\end{split}\end{align}
Note that $\|h\|_{C^\alpha(S_T)}\leq CF$. By 
Lemma \ref{2C}, we obtain
$$
\|u\|_{C^{\alpha}(\bar{Q}_{T})}+\|\rho D_{x}u\|_{C^{\alpha}(\bar{Q}_{T})}+\|\rho^{2}D^{2}_{x}u\|_{C^{\alpha}(\bar{Q}_{T})}\leq CF,
$$
$\rho D_{x}u|_{S_{T}}=0$, and $\rho^{2} D^{2}_{x}u|_{S_{T}}=0$. 
Also, by \eqref{1c},
we get
$$
\|\partial_{t}u\|_{C^{\alpha}(\bar{Q}_{T})}\leq CF.
$$
We hence obtain the desired result.

We take arbitrary 
$x\in\Omega$ and 
$x_0\in\partial\Omega$ with $d(x)=|x-x_{0}|$. 
Set $r=d(x)/2$. Then, for any $y\in B_{r}(x)$,
$$
|d(x)-d(y)|\leq \|Dd\|_{L^{\infty}(B_{r}(x))}|x-y|\leq \frac{1}{2}d(x),
$$
and hence
$$
\frac{1}{2}d(x)\leq d(y)\leq \frac{3}{2}d(x).
$$
Thus, by \eqref{defining-distance}, we have
$$
c_1\rho(x)\leq \rho(y)\leq c_2\rho(x),
$$
for some positive constants $c_1$ and $c_2$. 
We consider
$$
v(y,t)=u(x+ry,t)-h(x_{0},t)\quad\text{in }Q'_{T},
$$
where $Q'_{T}=B_{1}(0)\times (0,T]$. A straightforward computation yields 
\begin{equation}\label{2i}
\Tilde{a}_{ij}\partial_{ij}v+\Tilde{b}_{i}\partial_{i}v+\Tilde{c}v-\partial_{t}v=\Tilde{g}\quad\text{in }Q'_{T},
\end{equation}
and
$$
v(y,0)=\Tilde{\psi}(y)\quad\text{on }B_{1}(0),
$$
where
\begin{align*}
\Tilde{a}_{ij}(y,t)&=\frac{\rho^{2}(x+ry)}{r^{2}}a_{ij}(x+ry,t),\\
\Tilde{b}_{i}(y,t)&=\frac{\rho(x+ry)}{r}b_{i}(x+ry,t),\\
\Tilde{c}(y,t)&=c(x+ry,t),
\end{align*}
and
\begin{align*}
\Tilde{\psi}(y)&=\phi(x+ry)-\phi(x_{0}),\\
\Tilde{g}(y,t)&=f(x+ry,t)-c(x+ry,t)h(x_{0},t)+\partial_{t}h(x_{0},t)\\
&=f(x+ry,t)-f(x_{0},t)-(c(x+ry,t)-c(x_{0},t))h(x_{0},t).
\end{align*}
It is easy to verify that
$$
\|\Tilde{a}_{ij}\|_{C^{\alpha}(\bar{Q}'_{T})}+\|\Tilde{b}_{i}\|_{C^{\alpha}(\bar{Q}'_{T})}+\|\Tilde{c}\|_{C^{\alpha}(\bar{Q}'_{T})}\leq C,
$$
where $C$ is a positive constant independent of $x$. Moreover, for any $\xi\in\mathbb{R}^{n}$ and any $(y,t)\in Q'_{T}$,
$$
\Tilde{a}_{ij}(y,t)\xi_{i}\xi_{j}\geq \lambda \frac{\rho^{2}(x+ry)}{r^{2}}|\xi|^{2}\geq \lambda c_3 |\xi|^{2}.
$$
Set $Q''_{T}=B_{1/2}(0)\times(0,T]$. By applying Corollary \ref{5G} to \eqref{2i} in $Q'_{T}$, we obtain
\begin{equation}\label{2j}
\begin{aligned}
\|v\|_{C^{2,\alpha}_{\ast}(\bar{Q}''_{T})}\leq C\big\{\|v\|_{L^{\infty}(Q'_{T})}+\|\Tilde{\psi}\|_{C^{2,\alpha}(\bar{B}_{1}(0))}
+\|\Tilde{g}\|_{C^{\alpha}_{\ast}(\bar{Q}'_{T})}\big\}.
\end{aligned}
\end{equation}
Set $M'_{T}=B_{r}(x)\times (0,T]$ and $M''_{T}=B_{r/2}(x)\times (0,T]$. By Lemma \ref{2F}, we have
\begin{equation}\label{2k}
\|v\|_{L^{\infty}(Q'_{T})}=\|u-h(x_{0},\cdot)\|_{L^{\infty}(M'_{T})}\leq CFr^{\alpha}.
\end{equation}
By $\phi\in C^{2+\alpha}(\bar{\Omega})$, we get
\begin{align}\label{2l}
\begin{split}
\|\Tilde{\psi}\|_{C^{2,\alpha}(\bar{B}_{1}(0))}
&=\|\phi-\phi(x_{0})\|_{L^{\infty}(B_{r}(x))}+r\|D\phi\|_{L^{\infty}(B_{r}(x))}\\
&\qquad+r^{2}\|D^{2}\phi\|_{L^{\infty}(B_{r}(x))}
+r^{2+\alpha}[D^{2}\phi]_{C^{\alpha}(B_{r}(x))}\\
&\leq  CFr^{\alpha}.
\end{split}
\end{align}
Similarly, we also have
\begin{equation}\label{2m}
\|\Tilde{g}\|_{C^{\alpha}_{\ast}(\bar{Q}'_{T})}\leq CFr^{\alpha}.
\end{equation}
By combining \eqref{2j}-\eqref{2m}, we obtain
$$
\begin{aligned}
&r\|D_{x}u\|_{L^{\infty}(M''_{T})}+r^{2}\|D^{2}_{x}u\|_{L^{\infty}(M''_{T})}\\
&\qquad\qquad+r^{\alpha}[u]_{C^{\alpha}_{x}(\bar{M}''_{T})}+r^{1+\alpha}[D_{x}u]_{C^{\alpha}_{x}(\bar{M}''_{T})}
+r^{2+\alpha}[D^{2}_{x}u]_{C^{\alpha}_{x}(\bar{M}''_{T})}\\
&\qquad\qquad+[u-h(x_{0},\cdot)]_{C^{\alpha/2}_{t}(\bar{M}''_{T})}+r[D_{x}u]_{C^{\alpha/2}_{t}(\bar{M}''_{T})}
+r^{2}[D^{2}_{x}u]_{C^{\alpha/2}_{t}(\bar{M}''_{T})}\\
&\qquad\leq CFr^{\alpha}.
\end{aligned}
$$
With \eqref{2d}, we thus finish the proof of \eqref{2g} and \eqref{2h}.
\end{proof}

\begin{remark}\label{2G'}
If we use the classical Schauder estimate (interior in space and global in time) to \eqref{2i} instead of using Corollary \ref{5G}, 
then \eqref{2j} is replaced by
$$
\|v\|_{C^{2,\alpha}(\bar{Q}''_{T})}\leq C\big\{\|v\|_{L^{\infty}(Q'_{T})}
+\|\Tilde{\psi}\|_{C^{2,\alpha}(\bar{B}_{1}(0))}+\|\Tilde{g}\|_{C^{\alpha}(\bar{Q}'_{T})}\big\}.
$$
However, we can only obtain
$$
\|\Tilde{g}\|_{C^{\alpha}(\bar{Q}'_{T})}\leq CF.
$$
A factor $r^{\alpha}$ is missing in the right-hand side due to the presence of the H\"older semi-norm
$[\Tilde{g}]_{C^{\alpha/2}_{t}(\bar{Q}'_{T})}$. 

In addition, we emphasize that to apply Corollary \ref{5G}, 
we apply the classical Schauder estimate (interior in space and global in time) first to obtain $v\in C^{2}(\bar{Q}'_{T})$.
\end{remark}

We are ready to prove Theorem \ref{1A}. 

\begin{proof}[Proof of Theorem \ref{1A}.]
First, the maximum principle implies the uniqueness of solutions of \eqref{1c}-\eqref{1e} in the $C^{2}(Q_{T})\cap C(\bar{Q}_{T})$-category.

Next, we prove the existence of a $C^{2+\alpha}(\bar{Q}_{T})$-solution $u$ of \eqref{1c}-\eqref{1e} 
for the given $f\in C^{\alpha}(\bar{Q}_{T})$ and $\phi\in C^{2+\alpha}(\bar{\Omega})$. 
Take an arbitrary $\delta>0$. The operator $L_{\delta}=L+\delta \Delta$ is uniformly parabolic in $Q_{T}$. 
By the Schauder theory, there exists a unique solution $u_{\delta}\in C^{2,\alpha}(Q_{T}\cup\Omega_{0})\cap C(\bar{Q}_{T})$ 
of the initial-boundary problem
\begin{align}
L_{\delta}u_{\delta}&=f\quad\text{in }Q_{T},\label{2n}\\
u_{\delta}(\cdot,0)&=\phi\quad\text{on }\Omega,\label{2o}\\
u_{\delta}&=h\quad\text{on }S_{T}.\label{2p}
\end{align}
Note that $h$ and $\phi$ satisfy
\begin{align}\label{2q}\begin{split}
\partial_{t}h-ch+f&=0\quad\text{on } S_{T},\\
h(\cdot,0)&=\phi\quad\text{on }\partial\Omega.
\end{split}
\end{align}
We now derive estimates of $u_{\delta}$, independent of $\delta\in(0,1)$. For convenience, we set
$$
F=\|f\|_{C^{\alpha}(\bar{Q}_{T})}+\|\phi\|_{C^{2+\alpha}(\bar{\Omega})}.
$$

Step 1. We derive an $L^{\infty}$-estimate of $u_{\delta}$. This step is the same as the proof of Lemma \ref{2B}. We have
\begin{equation}\label{delta-infty-estimate}
\|u_{\delta}\|_{L^{\infty}(Q_{T})}\leq C\Big\{\sup_{\partial_{p}Q_{T}}|u_{\delta}|+\sup_{Q_{T}}|f|\Big\}\leq CF.
\end{equation}

Step 2. We derive a decay estimate near the lateral boundary $S_{T}$. 
Let $K$ and $r$ be given by Lemma \ref{2E} and $A$ be given by Lemma \ref{2D}. Set, for any $x_{0}\in \partial\Omega$ and any $(x,t)\in(\Omega\cap B_{r}(x_{0}))\times (0,T]$, 
$$
w= e^{At}(|x-x_{0}|^{\alpha}+K \rho^{\alpha}).
$$
Then, 
$$
L_{\delta}w\leq -C_{0}|x-x_{0}|^{\alpha}.
$$
Fix a point $x_{0}\in \partial\Omega$. By \eqref{2q}, we have
\begin{equation}\label{2r}
L_{\delta}(u_{\delta}(x,t)-h(x_{0},t))=g,
\end{equation}
where
\begin{align*}
g(x,t)&=f(x,t)-c(x,t)h(x_{0},t)+\partial_{t}h(x_{0},t)\\
&=f(x,t)-f(x_{0},t)-(c(x,t)-c(x_{0},t))h(x_{0},t).
\end{align*}
Then,
$$
|g(x,t)|\leq CF|x-x_{0}|^{\alpha}.
$$
By proceeding similarly as in the proof of Lemma \ref{2F}, we have, for any $(x,t)\in Q_{T}\cup\Omega_{0}$ and $(x_{0},t)\in S_{T}$,
\begin{equation}\label{2r'}
|u_{\delta}(x,t)-h(x_{0},t)|\leq CF|x-x_{0}|^{\alpha}.
\end{equation}
Different from the proof of Lemma \ref{2F}, here we use the maximum principle in $(\Omega\cap B_{r}(x_{0}))\times (0,T]$. 
We need \eqref{delta-infty-estimate} and note that $w\geq e^{At}r^\alpha$ on $(\Omega\cap \partial B_{r}(x_{0}))\times (0,T]$.

Step 3. We derive a weighted $C^{2,\alpha}$-estimate of $u_{\delta}$. Take arbitrary 
$x\in\Omega$
and 
$x_0\in\partial\Omega$ with $d(x)=|x-x_{0}|$. Set $r=d(x)/2$. 
We consider
$$
v_{\delta}(y,t)=u_{\delta}(x+ry,t)-h(x_{0},t)\quad\text{in }Q'_{T},
$$
where $Q'_{T}=B_{1}(0)\times (0,T]$.
A straightforward computation yields 
\begin{equation}\label{2s}
\bar{a}_{ij}\partial_{ij}v_{\delta}+\bar{b}_{i}\partial_{i}v_{\delta}+\bar{c}v_{\delta}-\partial_{t}v_{\delta}=\bar{g}\quad\text{in }Q'_{T},
\end{equation}
and
$$
v_{\delta}(y,0)=\bar{\psi}(y)\quad\text{on }B_{1}(0),
$$
where
\begin{align*}
\bar{a}_{ij}(y,t)&=\frac{\rho^{2}(x+ry)a_{ij}(x+ry,t)+\delta\delta_{ij}}{r^2},\\
\bar{b}_{i}(y,t)&=\frac{\rho(x+ry)}{r} b_{i}(x+ry,t),\\
\bar{c}(y,t)&=c(x+ry,t),
\end{align*}
and
\begin{align*}
\bar{\psi}(y)&=\phi(x+ry)-\phi(x_{0}),\\
\bar{g}(y,t)&=f(x+ry,t)-c(x+ry,t)h(x_{0},t)+\partial_{t}h(x_{0},t)\\
&=f(x+ry,t)-f(x_{0},t)-(c(x+ry,t)-c(x_{0},t))h(x_{0},t).
\end{align*}

Fix an $x$. It is easy to verify that
$$
\|\bar{a}_{ij}\|_{C^{\alpha}(\bar{Q}'_{T})}+\|\bar{b}_{i}\|_{C^{\alpha}(\bar{Q}'_{T})}
+\|\bar{c}\|_{C^{\alpha}(\bar{Q}'_{T})}\leq C_{x,\delta}:=C+C\frac{\delta}{r^2},
$$
where $C$ is a positive constant independent of $x$ and $\delta$. Note that
$C_{x,\delta}$ is a positive constant satisfying
\begin{equation}\label{2s'}
C_{x,\delta}\leq C_{x},\quad \lim_{\delta\rightarrow 0} C_{x,\delta}=C,
\end{equation}
where $C_x$ is a positive constant independent of $\delta\in(0,1)$ but depending on $x$.
Moreover, for any $\xi\in\mathbb{R}^{n}$ and any $(y,t)\in Q'_{T}$,
$$
\bar{a}_{ij}(y,t)\xi_{i}\xi_{j}\geq  \frac{\lambda \rho^{2}(x+ry)+\delta}{r^{2}}|\xi|^{2}\geq  \lambda c_4|\xi|^{2}.
$$
Set $Q''_{T}=B_{1/2}(0)\times(0,T]$. By applying Corollary \ref{5G} to \eqref{2s} in $Q'_{T}$, we obtain
\begin{align*}
\|v_\delta\|_{C^{2,\alpha}_{\ast}(\bar{Q}''_{T})}\leq C'_{x,\delta}\big\{\|v_{\delta}\|_{L^{\infty}(Q'_{T})}
+\|\bar{\psi}\|_{C^{2,\alpha}(\bar{B}_{1}(0))}+\|\bar{g}\|_{C^{\alpha}_{\ast}(\bar{Q}'_{T})}\big\},
\end{align*}
where $C'_{x,\delta}$ is a positive constant depending on $C_{x,\delta}$.
By proceeding as in the proof of Theorem \ref{2G}, we have
\begin{align}\label{2t'}\begin{split}
|u_{\delta}(x,t)-h(x_{0},t)|+|\rho(x)D_{x}u_{\delta}(x,t)|& \\
+|\rho^{2}(x)D^{2}_{x}u_{\delta}(x,t)|&\leq C''_{x,\delta}F d^{\alpha}(x),
\end{split}\end{align}
and
\begin{align}\label{2u'}\begin{split}
[u_{\delta}]_{C^{\alpha}(B_{d(x)/4}(x)\times[0,T])}+[\rho D_{x}u_{\delta}]_{C^{\alpha}(B_{d(x)/4}(x)\times[0,T])} &\\
+[\rho^{2}D^{2}_{x}u_{\delta}]_{C^{\alpha}(B_{d(x)/4}(x)\times[0,T])}
&\leq C''_{x,\delta}F,
\end{split}\end{align}
where $C''_{x,\delta}$ is a positive constant depending on $C_{x,\delta}$.
Set $Q_{T,x}=B_{d(x)/4}(x)\times(0,T]$. 
By \eqref{2s'}-\eqref{2u'} and \eqref{2n}, there exists a sequence $\delta=\delta_{i}(x)\rightarrow 0$ such that
\begin{align*}
u_{\delta}\rightarrow u,\quad\rho D_{x}u_{\delta}\rightarrow u^{(1)},\quad\rho^{2} D^{2}_{x}u_{\delta}\rightarrow u^{(2)},
\quad \partial_{t}u_{\delta}\rightarrow u^{(3)} &\\
\quad\text{uniformly in }\bar{Q}_{T,x},&
\end{align*}
for some $u,u^{(3)}\in C^{\alpha}(\bar{Q}_{T,x}) $, $u^{(1)}\in C^{\alpha}(\bar{Q}_{T,x};\mathbb{R}^{n})$, 
$u^{(2)}\in C^{\alpha}(\bar{Q}_{T,x};\mathbb{R}^{n\times n})$.
Hence, we have $u\in C^{2,\alpha}(\bar{Q}_{T,x})$, with $\rho D_{x}u=u^{(1)}$, $\rho^{2} D^{2}_{x}u=u^{(2)}$, and $\partial_{t}u=u^{(3)}$. 
By letting $\delta=\delta_{i}(x)\rightarrow 0$ in \eqref{2t'} and \eqref{2u'} and using \eqref{2s'}, we obtain \begin{equation}\label{2v'}
|u(x,t)-h(x_{0},t)|+|\rho(x)D_{x}u(x,t)|+|\rho^{2}(x)D^{2}_{x}u(x,t)|\leq CF d^{\alpha}(x),
\end{equation}
and
\begin{align}\label{2w'}\begin{split}
[u]_{C^{\alpha}(B_{d(x)/4}(x)\times[0,T])}+[\rho D_{x}u]_{C^{\alpha}(B_{d(x)/4}(x)\times[0,T])} &\\
+[\rho^{2}D^{2}_{x}u]_{C^{\alpha}(B_{d(x)/4}(x)\times[0,T])}
&\leq CF,
\end{split}
\end{align}
where $C$ is a positive constant independent of $x$. We cover $\Omega$ by
$$
\{B_{d(x_{i})/4}(x_{i}):x_{i}\in\Omega,i=1,2,\cdots\}.$$ By proceeding as in the above discussion in $B_{d(x_{i})/4}(x_{i})$, 
$i=1,2,\cdots$, successively and using a standard diagonalization,
 there exist a sequence $\delta=\delta_{i}\rightarrow 0$ and $u\in C^{2,\alpha}(Q_{T}\cup\Omega_{0})$ such that
\begin{align*}
&u_{\delta}\rightarrow u,\quad D_{x}u_{\delta}\rightarrow D_{x}u,\quad D^{2}_{x}u_{\delta}
\rightarrow D^{2}_{x}u,\quad \partial_{t}u_{\delta}\rightarrow \partial_{t}u\\
&\text{uniformly in any compact subsets of }Q_{T}\cup\Omega_{0}.
\end{align*}
In addition, by \eqref{2s'}-\eqref{2u'}, 
we have \eqref{2v'} and \eqref{2w'} for any $(x,t)\in Q_{T}\cup\Omega_{0}$ and $(x_{0},t)\in S_{T}$ with $d(x)=|x-x_{0}|$.
By Lemma \ref{2C} and letting $\delta=\delta_{i}\rightarrow 0$ in \eqref{2n}-\eqref{2p}, 
we conclude that $u\in C^{2+\alpha}(\bar{Q}_{T})$ is a solution of \eqref{1c}-\eqref{1e} with
$$
\|u\|_{C^{2+\alpha}(\bar{Q}_{T})}\leq CF.
$$
This is the desired result.
\end{proof}

\section{The Higher Regularity of Solutions}\label{sec-Higher-regularity}

In this section, we study the H\"{o}lder continuity of solutions and their derivatives up to the boundary.
There are three subsections. We will study the tangential spatial derivatives, normal spatial derivatives,
and time derivatives in these subsections, respectively.
We will do this in a special setting and consider domains with a piece of flat boundary.

Denote by $x=(x',x_{n})$ points in $\mathbb{R}^{n}$ and $X=(x,t)=(x',x_{n},t)$ points in $\mathbb{R}^{n}\times[0,\infty)$. 
Set, for any $r,T>0$,
\begin{align*}
G_{r}&=\{x\in\mathbb{R}^{n}:|x'|<r,0<x_{n}<r\},\\
B'_{r}&=\{x'\in\mathbb{R}^{n-1}:|x'|<r\},
\end{align*}
and
\begin{align*}
Q_{r,T}&=\{(x,t)\in\mathbb{R}^{n}\times[0,\infty):x\in G_{r},0<t\leq T\},\\
\Omega_{r,0}&=\{(x,0)\in\mathbb{R}^{n}\times[0,\infty):x\in G_{r}\},\\
S_{r,T}&=\{(x',0,t)\in\mathbb{R}^{n}\times[0,\infty):x'\in B'_{r},0\leq t\leq T\}.
\end{align*}
We also define the weighted H\"older function spaces locally. 
For a function $u:Q_{r,T}\rightarrow \mathbb{R}$ with continuous derivatives $D_{x}u$, $D^{2}_{x}u$, and $\partial_{t}u$, 
we define
\begin{align*}
\|u\|_{C^{2+\alpha}(\bar{Q}_{r,T})}&=\|u\|_{C^{\alpha}(\bar{Q}_{r,T})}+\|x_n D_{x}u\|_{C^{\alpha}(\bar{Q}_{r,T})}\\
&\qquad+\|x^{2}_n D^{2}_{x}u\|_{C^{\alpha}(\bar{Q}_{r,T})}+\|\partial_{t}u\|_{C^{\alpha}(\bar{Q}_{r,T})},
\end{align*}
and
\begin{align*}
C^{2+\alpha}(\bar{Q}_{r,T})&=\{u\in C^{2}(Q_{r,T}):\|u\|_{C^{2+\alpha}(\bar{Q}_{r,T})}<\infty,\\
&\qquad x_n D_{x}u|_{S_{r,T}}=0, \text{ and }x^{2}_n D^{2}_{x}u|_{S_{r,T}}=0\}.
\end{align*}
Similarly, we can define $C^{k,2+\alpha}(\bar{G}_r)$, $C^{k,2+\alpha}(\bar{Q}_{r,T})$, $C^{k,2+\alpha}_{\ast}(\bar{Q}_{r,T})$, $\cdots$.

Let $a_{i j}$, $b_{i}$, and $c$ be continuous functions in $\bar{Q}_{1,T}$, with $a_{i j}=a_{j i}$ and, for any $(x,t)\in\bar{Q}_{1,T}$ 
and any $\xi\in\mathbb{R}^{n}$,
\begin{equation}\label{3a}
\lambda|\xi|^{2}\leq a_{ij}(x,t)\xi_{i}\xi_{j}\leq \Lambda|\xi|^{2},
\end{equation}
for some positive constants $\lambda$ and $\Lambda$. We consider the operator
\begin{equation}\label{3b}
L=x^{2}_{n} a_{ij}\partial_{ij}+x_{n} b_{i}\partial_{i}+c-\partial_{t}\quad\text{in }Q_{1,T}.
\end{equation}
Let $f$ be a continuous function in $\bar{Q}_{1,T}$. We consider
\begin{align}
Lu&=f\quad\text{in }Q_{1,T},\label{3c}\\
u(\cdot,0)&=\phi\quad\text{on }G_{1},\label{3d}\\
u&=h\quad\text{on } S_{1,T},\label{3e}
\end{align}
where $\phi\in C(\bar{G}_{1})$ and
$$
h(x,t)=\phi(x)\exp\left\{\int^{t}_{0}c(x,s)ds\right\}-\int^{t}_{0}\exp\left\{\int^{t}_{s}c(x,\tau)d\tau\right\}f(x,s) d s.
$$
In fact, $h$ and $\phi$ satisfy
\begin{align}\label{3f}\begin{split}
\partial_{t}h-ch+f&=0\quad\text{on } S_{1,T},\\
h(\cdot,0,0)&=\phi(\cdot,0)\quad\text{on }  B'_{1} .
\end{split}
\end{align}
By proceeding similarly as in Section \ref{sec-Existence}, we can prove the local versions of Lemma \ref{2C}, 
Lemma \ref{2D}, Lemma \ref{2F}, and Theorem \ref{2G}.

\begin{lemma}\label{3A}
Let $A>0$ and $\alpha\in(0,1)$ be constants and $w\in C^{\alpha}(Q_{1,T}\cup\Omega_{1,0})$ 
and $w_{0}\in C^{\alpha}(S_{1,T})$ be functions. Suppose, for any $(x,t)\in Q_{1,T}\cup\Omega_{1,0}$ with $B_{x_n/2}(x)\subset G_1$,
\begin{equation}\label{3g}
|w(x,t)-w_{0}(x',0,t)|\leq A x^{\alpha}_{n},
\end{equation}
and
\begin{equation}\label{3h}
[w]_{C^{\alpha}(B_{x_{n}/2}(x)\times[0,T])}\leq A.
\end{equation}
Then, $w\in C^{\alpha}(\bar{Q}_{r,T})$, for any $r\in(0,1)$, and
$$
\|w\|_{C^{\alpha}(\bar{Q}_{1/2,T})}\leq 5A+\|w_{0}\|_{ C^{\alpha}(S_{1,T})}.
$$
\end{lemma}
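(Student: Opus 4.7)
The plan is to mimic the proof of Lemma \ref{2C}, now in the half-cube setting where $S_{1,T}=\{x_n=0\}$ plays the role of the flat lateral boundary. First I would extend $w$ to $\overline{Q}_{1/2,T}$ by declaring $w(x',0,t):=w_0(x',0,t)$ on $S_{1/2,T}$, and note that the $L^\infty$ bound follows immediately from \eqref{3g}: for any $(x,t)\in Q_{1/2,T}\cup\Omega_{1/2,0}$, $|w(x,t)|\le Ax_n^\alpha+\|w_0\|_{L^\infty(S_{1,T})}$. The main work is the H\"older seminorm estimate. The key geometric observation is that for $(x,t)\in Q_{1/2,T}$ one has $x_n<1/2$ and $|x'|<1/2$, so the interior ball $B_{x_n/2}(x)$ is contained in $G_1$ (hypothesis \eqref{3h} applies) and the projection $(x',0)$ lies in $B_1'$ with $|x-(x',0)|=x_n$.

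Given $X=(x,t),Y=(y,\tau)\in\overline{Q}_{1/2,T}$, I would prove
$$
|w(X)-w(Y)|\le \big(4A+5\|w_0\|_{C^\alpha(S_{1,T})}\big)s(X,Y)^\alpha
$$
via the same three-case split. \emph{Case 1} ($X,Y\in S_{1/2,T}$): immediate from the $C^\alpha(S_{1,T})$ bound on $w_0$. \emph{Case 2} ($X\in Q_{1/2,T}\cup\Omega_{1/2,0}$, $Y\in S_{1/2,T}$): take $X_0=(x',0,t)\in S_{1,T}$; \eqref{3g} gives $|w(X)-w(X_0)|\le Ax_n^\alpha=As(X,X_0)^\alpha$, and since $|x'-y|\le|x'-x|+|x-y|\le 2|x-y|$ one has $s(X_0,Y)\le 2s(X,Y)$, so Case 1 yields the bound. \emph{Case 3} ($X,Y\in Q_{1/2,T}\cup\Omega_{1/2,0}$): assume without loss $x_n\ge y_n$. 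If $|x-y|\le x_n/2$ then $y\in B_{x_n/2}(x)$ and \eqref{3h} gives $|w(X)-w(Y)|\le As(X,Y)^\alpha$ directly. If $|x-y|\ge x_n/2$, project $X\mapsto X_0=(x',0,t)$ and $Y\mapsto Y_0=(y',0,\tau)$ in $S_{1,T}$; the inequalities $x_n\le 2|x-y|$, $y_n\le 2|x-y|$, and $s(X_0,Y_0)\le s(X,Y)+2|x-y|\le 5s(X,Y)$... actually $s(X_0,Y_0)\le|x'-y'|+2|x-y|\cdot 0+\ldots$ can be organized as in Lemma \ref{2C} to give the factor $4A+5\|w_0\|_{C^\alpha(S_{1,T})}$ after telescoping through $X_0$ and $Y_0$.

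The argument is essentially the same bookkeeping as in Lemma \ref{2C}; the only new ingredient is the geometric check that restriction to $Q_{1/2,T}$ ensures both $B_{x_n/2}(x)\subset G_1$ (so \eqref{3h} can be invoked) and $(x',0)\in\overline{G}_1$ (so the flat boundary projection lands in $S_{1,T}$). For general $r\in(0,1)$, the same argument applies on $Q_{r,T}$, yielding $w\in C^\alpha(\overline{Q}_{r,T})$ with a constant that depends on $r$; only the explicit constant for $r=1/2$ is stated. I do not anticipate a substantive obstacle: the proof is a routine half-space localization of Lemma \ref{2C}.
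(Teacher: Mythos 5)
Your proposal is correct and is exactly the route the paper intends: the paper omits the proof of Lemma \ref{3A}, stating only that it follows ``by proceeding similarly as in Section \ref{sec-Existence}'' as the local version of Lemma \ref{2C}, and your adaptation (extending $w$ by $w_0$ on the flat boundary, the same three-case split with the orthogonal projection $x\mapsto(x',0)$ in place of the nearest boundary point, plus the check that $x\in G_{1/2}$ forces $B_{x_n/2}(x)\subset G_1$) is precisely that argument. The only loose end is the garbled inequality for $s(X_0,Y_0)$ in Case 3, but in the flat setting the projection is distance-nonincreasing, so $s(X_0,Y_0)\leq s(X,Y)$ and the bookkeeping is in fact simpler than in Lemma \ref{2C}.
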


\begin{lemma}\label{3B}
Let $\mu>0$ be a constant. Assume $a_{ij},b_{i},c\in C(\bar{Q}_{1,T})$ with \eqref{3a}. 
Then, there exist positive constants $A$ and $C_{0}$, depending only on $\mu$, 
and the $L^{\infty}$-norms of $a_{ij}$, $b_{i}$, and $c$ in $Q_{1,T}$, such that, for any $x'_{0}\in B'_{1}$ and
any $(x,t)\in Q_{1,T}$,
$$
L(e^{At}|x-(x'_{0},0)|^{\mu})\leq -C_{0}|x-(x'_{0},0)|^{\mu}.
$$
\end{lemma}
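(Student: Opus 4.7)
The plan is to mimic the proof of Lemma \ref{2D} directly, exploiting the substantial simplification that in the flat-boundary setting the defining function is simply $\rho=x_n$, so $\nabla\rho=e_n$ and $\nabla^2\rho=0$. Accordingly, we can take $K=0$ and work only with the pure power $|x-(x_0',0)|^{\mu}$.

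Without loss of generality translate so that $x_0'=0$, and set $w(x)=|x|^{\mu}$ and $y=(x',x_n)$ so $y=x$. A direct computation gives
\begin{align*}
\partial_i w &= \mu|x|^{\mu-2}x_i,\\
\partial_{ij} w &= \mu(\mu-2)|x|^{\mu-4}x_ix_j+\mu|x|^{\mu-2}\delta_{ij}.
\end{align*}
Substituting into the operator and grouping terms yields
\begin{align*}
L(e^{At}w) &= e^{At}\Bigl\{x_n^{2}\mu(\mu-2)|x|^{\mu-4}a_{ij}x_ix_j+x_n^{2}\mu|x|^{\mu-2}a_{ii}\\
&\qquad\quad +x_n\mu|x|^{\mu-2}b_ix_i+(c-A)|x|^{\mu}\Bigr\}.
\end{align*}

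The decisive observation is that since $(0,\ldots,0)=(x_0',0)$ lies on the face $\{x_n=0\}$, we have $x_n\le |x|$. Therefore $x_n^{2}|x|^{\mu-2}\le|x|^{\mu}$ and $x_n|x|^{\mu-1}\le|x|^{\mu}$, while $|a_{ij}x_ix_j|\le\Lambda|x|^{2}$ and $|b_ix_i|\le\|b\|_{L^\infty}|x|$. Combining these estimates with \eqref{3a} bounds the first three terms in the parenthesis by $C_1|x|^{\mu}$, where $C_1$ depends only on $\mu$, $n$, $\Lambda$, and the $L^{\infty}$-norms of $a_{ij}$ and $b_i$ in $Q_{1,T}$. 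Hence
$$L(e^{At}w)\le e^{At}\bigl(C_1+\sup c-A\bigr)|x|^{\mu}.$$
Choosing $A$ so large that $A\ge C_1+\sup c+1$ and setting $C_0=1$ produces the desired inequality (the factor $e^{At}\ge 1$ can be absorbed into the sign since the bracket is negative). This finishes the proof.

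There is no real obstacle: the flat-boundary geometry removes the $\rho\nabla^2\rho$-term that required the additional $K\rho^\mu$ corrector in Lemma \ref{2D}, so the estimate reduces to elementary algebraic bounds followed by choosing $A$ sufficiently large.
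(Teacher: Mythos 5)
Your proof is correct and follows essentially the same route the paper intends: Lemma \ref{3B} is stated as the local flat-boundary version of Lemma \ref{2D}, to be proved ``by proceeding similarly as in Section \ref{sec-Existence},'' and your computation is exactly that argument specialized to $\rho=x_n$ with $K=0$ (which the paper explicitly permits), using $x_n\le|x-(x_0',0)|$ in place of $\rho\le Cd\le C|x-x_0|$.
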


\begin{lemma}\label{3C}
For some constant $\alpha\in(0,1)$, assume $a_{ij},b_{i},c\in C^{\alpha}(\bar{Q}_{1,T})$ with \eqref{3a}. 
Suppose, for some $f\in C^{\alpha}(\bar{Q}_{1,T})$ and $\phi\in C^{\alpha}(\bar{G}_{1})$, \eqref{3c}-\eqref{3e} 
admits a solution $u\in C(\bar{Q}_{1,T})\cap C^{2}(Q_{1,T})$. Then, for any $(x,t)\in Q_{1/2,T}\cup\Omega_{1/2,0}$ and $x'_{0}\in B'_{1/2}$,
\begin{align}\label{3i}\begin{split}
&|u(x,t)-h(x'_{0},0,t)|\\
&\qquad\leq C\big\{\|u\|_{L^{\infty}(Q_{1,T})}+\|f\|_{C^{\alpha}(\bar{Q}_{1,T})}+\|\phi\|_{ C^{\alpha}(\bar{G}_{1})}\big\}|x-(x'_{0},0)|^{\alpha},
\end{split}\end{align}
where $C$ is a positive constant depending only on $n$, $T$, $\alpha$, and the $C^{\alpha}$-norms 
of $a_{ij}$, $b_{i}$, and $c$ in $\bar{Q}_{1,T}$.
\end{lemma}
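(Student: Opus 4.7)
The plan is to mirror the proof of Lemma \ref{2F} but adapted to the local setting, where we only have Dirichlet data on the flat piece $S_{1,T}$ and must use the $L^\infty$-bound on $u$ to control the barrier on the ``interior'' part of the parabolic boundary of any sub-cylinder we work in. Fix $x'_0\in B'_{1/2}$, set
$$F=\|u\|_{L^\infty(Q_{1,T})}+\|f\|_{C^{\alpha}(\bar{Q}_{1,T})}+\|\phi\|_{C^{\alpha}(\bar{G}_{1})},$$
and write $\tilde u(x,t)=u(x,t)-h(x'_0,0,t)$.

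First I would compute $L\tilde u$. Using \eqref{3f} evaluated at $(x'_0,0,t)$ to express $\partial_t h(x'_0,0,t)$, one obtains $L\tilde u=g$ in $Q_{1,T}$, where
$$g(x,t)=f(x,t)-f(x'_0,0,t)-\bigl(c(x,t)-c(x'_0,0,t)\bigr)h(x'_0,0,t),$$
so $|g(x,t)|\le CF|x-(x'_0,0)|^\alpha$. Next, by Lemma \ref{3B} applied with $\mu=\alpha$, the function
$$W(x,t)=BF\,e^{At}\,|x-(x'_0,0)|^{\alpha}$$
satisfies $LW\le -BC_0F|x-(x'_0,0)|^{\alpha}$ on $Q_{1,T}$, which dominates $\pm L\tilde u$ pointwise once $B$ is large enough.

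The comparison will be carried out on the sub-cylinder $Q_{3/4,T}$ rather than on all of $Q_{1,T}$. On the ``data'' part of $\partial_p Q_{3/4,T}$, namely $\Omega_{3/4,0}$ and $S_{3/4,T}$, the H\"older continuity of $\phi$ and of $h(\cdot,0,t)$ in the tangential variables yields $|\tilde u|\le CF|x-(x'_0,0)|^{\alpha}$. On the remaining ``interior'' piece of $\partial_p Q_{3/4,T}$ (where $|x'|=3/4$ or $x_n=3/4$), we have the lower bound $|x-(x'_0,0)|\ge 1/4$ by the assumption $x'_0\in B'_{1/2}$, so the crude bound $|\tilde u|\le CF$ can be re-written as $|\tilde u|\le CF\cdot 4^{\alpha}|x-(x'_0,0)|^{\alpha}$. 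Choosing $B$ large enough (depending only on the listed constants) so that $W\ge |\tilde u|$ everywhere on $\partial_p Q_{3/4,T}$, the maximum principle (Lemma \ref{2A}, localized) gives $|\tilde u|\le W\le CF|x-(x'_0,0)|^{\alpha}$ on $Q_{3/4,T}\cup\Omega_{3/4,0}$, which contains $Q_{1/2,T}\cup\Omega_{1/2,0}$.

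The main obstacle, which distinguishes this from Lemma \ref{2F}, is handling the interior part of the parabolic boundary of the working sub-cylinder: there $u$ is unknown apart from the global $L^\infty$-bound, so the barrier $|x-(x'_0,0)|^{\alpha}$ must dominate a constant-size quantity rather than a genuinely decaying one. The remedy is simply to shrink the working domain from $Q_{1,T}$ to $Q_{3/4,T}$, which forces a positive separation between the interior boundary and the base point $(x'_0,0)$ and lets the single barrier absorb the $L^\infty$-control with a constant factor, at the price of appending $\|u\|_{L^\infty(Q_{1,T})}$ to the right-hand side of \eqref{3i}.
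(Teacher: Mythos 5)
Your proposal is correct and is essentially the argument the paper intends: the paper does not write out a proof of Lemma \ref{3C}, stating only that it follows ``by proceeding similarly as in Section \ref{sec-Existence},'' and the one genuine modification you identify — running the comparison on a sub-cylinder so that the barrier $e^{At}|x-(x'_0,0)|^{\alpha}$ is bounded below on the non-degenerate part of the parabolic boundary, where only the $L^\infty$-bound on $u$ is available (which is precisely why $\|u\|_{L^{\infty}(Q_{1,T})}$ appears in \eqref{3i} but not in \eqref{2d}) — is the same device the authors use explicitly in Step 2 of the proof of Theorem \ref{1A}. The computation of $g$, the use of Lemma \ref{3B} with $\mu=\alpha$, and the boundary comparisons all match the template of Lemma \ref{2F}.
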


\begin{thm}\label{3D}
For some constant $\alpha\in(0,1)$, assume $a_{ij},b_{i},c\in C^{\alpha}(\bar{Q}_{1,T})$ with \eqref{3a}. 
Suppose that, for some $f\in C^{\alpha}(\bar{Q}_{1,T})$ and $\phi\in C^{2+\alpha}(\bar{G}_{1})$, \eqref{3c}-\eqref{3e} 
admits a solution $u\in C^{2}(Q_{1,T})\cap C(\bar{Q}_{1,T})$. Then, for any $r\in(0,1)$, $u\in C^{2+\alpha}(\bar{Q}_{r,T})$ and
$$
\|u\|_{C^{2+\alpha}(\bar{Q}_{1/2,T})}\leq C\big\{\|u\|_{L^{\infty}(Q_{1,T})}+\|f\|_{C^{\alpha}(\bar{Q}_{1,T})}+\|\phi\|_{C^{2+\alpha}(\bar{G}_{1})}\big\},
$$
where $C$ is a positive constant depending only on $n,$ $T,$ $\lambda,$ $\alpha,$  and the $C^{\alpha}$-norms 
of $a_{ij},$ $b_{i},$ and $c$ in $\bar{Q}_{1,T}$.
\end{thm}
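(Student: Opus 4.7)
The plan is to mirror the proof of Theorem \ref{2G} in this local, flat-boundary setting, replacing the global lemmas (Lemma \ref{2C}, Lemma \ref{2D}, Lemma \ref{2F}) with their local counterparts (Lemma \ref{3A}, Lemma \ref{3B}, Lemma \ref{3C}) that have just been stated. Since $S_{1,T}$ is flat and the distance-to-boundary is simply $x_n$, the defining function can be taken as $x_n$ itself, which removes all dependencies on $\rho$ and its derivatives. First I would invoke the classical Schauder theory interior in space and global in time (Chapter IV, Section 10 of \cite{L1968}) to deduce $u \in C^{2,\alpha}(Q_{r,T} \cup \Omega_{r,0})$ for any $r \in (0,1)$. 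This is legitimate because $Lu=f$ is uniformly parabolic on any compact subset of $Q_{1,T} \cup \Omega_{1,0}$, and the nonhomogeneous term and coefficients are H\"older continuous there.

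Next, for each $x \in G_{1/2}$, set $r = x_n/2$ and pick $x_0' \in B_1'$ with $x_n = |x - (x_0',0)|$ (so $x_0'=x'$). Rescale by
$$
v(y,t) = u(x+ry, t) - h(x_0',0,t) \qquad\text{on }Q_T' := B_1(0) \times (0,T].
$$
A direct computation shows $v$ satisfies a uniformly parabolic equation in $Q_T'$ with coefficients whose $C^{\alpha}$-norms are bounded independent of $x$ (since $x_n/2 \le x_n + ry_n \le 3x_n/2$ on $B_1(0)$), initial value $\tilde{\psi}(y) = \phi(x+ry) - \phi(x_0',0)$, and right-hand side $\tilde{g}(y,t) = f(x+ry,t) - f(x_0',0,t) - [c(x+ry,t)-c(x_0',0,t)]h(x_0',0,t)$. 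I would then apply Corollary \ref{5G} on $Q_T'' := B_{1/2}(0) \times (0,T]$ to obtain
$$
\|v\|_{C^{2,\alpha}_{\ast}(\bar{Q}_T'')} \le C\bigl\{\|v\|_{L^{\infty}(Q_T')} + \|\tilde{\psi}\|_{C^{2,\alpha}(\bar{B}_1(0))} + \|\tilde{g}\|_{C^{\alpha}_{\ast}(\bar{Q}_T')}\bigr\}.
$$
Using Lemma \ref{3C} for the first term and the definitions of $\tilde{\psi}$, $\tilde{g}$ together with $\phi \in C^{2+\alpha}(\bar G_1)$ for the rest, each of the three terms on the right is bounded by $CF r^{\alpha}$, where $F := \|u\|_{L^\infty(Q_{1,T})} + \|f\|_{C^\alpha(\bar Q_{1,T})} + \|\phi\|_{C^{2+\alpha}(\bar G_1)}$. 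Unscaling and using $\rho = x_n \sim r$ at $x$, this yields the pointwise bounds
$$
|u(x,t) - h(x_0',0,t)| + |x_n D_x u(x,t)| + |x_n^2 D_x^2 u(x,t)| \le CFx_n^{\alpha},
$$
and the corresponding $C^{\alpha}$ seminorm bounds for $u$, $x_n D_x u$, $x_n^2 D_x^2 u$ on $B_{x_n/4}(x) \times [0,T]$.

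Having established the pointwise decay \eqref{3i}-type estimates and the shrunk-ball H\"older seminorm estimates, I would assemble the global (in $Q_{1/2,T}$) weighted H\"older norms via Lemma \ref{3A} applied to each of $u$, $\rho D_x u$, $\rho^2 D_x^2 u$ (with boundary values $h$, $0$, $0$ respectively on $S_{1,T}$). Finally, the H\"older bound for $\partial_t u$ comes for free by reading it off the equation: $\partial_t u = x_n^2 a_{ij}\partial_{ij}u + x_n b_i \partial_i u + cu - f$, each term on the right being controlled in $C^{\alpha}(\bar Q_{1/2,T})$ by what has just been established.

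The main obstacle, exactly as flagged in Remark \ref{2G'}, is that the classical Schauder estimate applied after rescaling would produce $\|\tilde{g}\|_{C^\alpha(\bar Q_T')} \le CF$ \emph{without} the crucial factor $r^{\alpha}$, because the time-H\"older seminorm $[\tilde{g}]_{C^{\alpha/2}_t}$ does not scale. Without this factor, the unscaled estimates degenerate as $x \to \partial G_1 \cap \{x_n = 0\}$. The remedy is precisely to use the intermediate Schauder result Corollary \ref{5G}, which measures time regularity separately and only in the \emph{partial} $C^\alpha_\ast$ sense, so that $\|\tilde{g}\|_{C^\alpha_\ast(\bar Q_T')} \le CFr^\alpha$ genuinely holds and survives rescaling. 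A small technical point: Corollary \ref{5G} requires $v \in C^2(\bar Q_T')$ up to $t=0$, which is why the preliminary application of the classical Schauder theorem (interior in space, global in time) on $Q_T' \cup (B_1(0) \times \{0\})$ is needed first.
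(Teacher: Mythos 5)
Your proposal is correct and follows exactly the route the paper intends: the paper gives no separate proof of Theorem \ref{3D}, stating only that it is the local version of Theorem \ref{2G} obtained ``by proceeding similarly as in Section \ref{sec-Existence},'' which is precisely the rescaling argument with Lemma \ref{3A}, Lemma \ref{3B}, Lemma \ref{3C}, and Corollary \ref{5G} that you spell out. You also correctly identify the one genuinely delicate point (the factor $r^{\alpha}$ in $\|\tilde{g}\|_{C^{\alpha}_{\ast}}$ and why the intermediate Schauder estimate, rather than the classical one, is needed), exactly as in Remark \ref{2G'}.
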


\begin{remark}\label{boundedness}
In Theorem \ref{3D}, it suffices to assume $u\in C^{2}(Q_{1,T})\cap C(Q_{1,T}\cup\Omega_{1,0}) \cap L^{\infty}(Q_{1,T})$. 
Then, we can prove that $u$ is continuous up to $S_{1,T}$ and $u=h$ on $S_{1,T}$. 
The study is similar to the case of elliptic equations. Refer to Lemmas 3.1 and 3.2 in \cite{HanXie2024}.
\end{remark}

Throughout this section, summations over Latin letters are from $1$ to $n$
and those over Greek letters are from $1$ to $n-1$. However, an unrepeated $\alpha\in (0,1)$ is reserved for
the H\"older index.

\subsection{The Regularity of the Derivative in $x'$}\label{tan}
We first prove a decay estimate of the derivative in $x'$.

\begin{lemma}\label{3E}
For some constant $\alpha\in(0,1)$, assume $a_{ij},b_{i},c\in C^{\alpha}(\bar{Q}_{1,T})$ 
with $D_{x'}a_{ij},D_{x'}b_{i},D_{x'}c\in C^{\alpha}(\bar{Q}_{1,T})$ and \eqref{3a}. 
Suppose, for some $f\in C^{\alpha}(\bar{Q}_{1,T})$ with $D_{x'}f\in C^{\alpha}(\bar{Q}_{1,T})$, 
and $\phi\in C^{2+\alpha}(\bar{G}_{1})$ with $D_{x'}\phi\in C^{\alpha}(\bar{G}_{1})$, \eqref{3c}-\eqref{3e} 
admits a solution $u\in C(\bar{Q}_{1,T})\cap C^{2}(Q_{1,T})$. 
Then, $D_{x'}u$ is continuous up to $S_{1,T}$ and, for any $(x,t)\in Q_{1/2,T}\cup \Omega_{1/2,0}$ and $(x'_{0},0,t)\in S_{1/2,T}$,
\begin{align}\label{3j}\begin{split}
&|D_{x'}u(x,t)-D_{x'}h(x'_{0},0,t)|\\
&\qquad\leq C\big\{\|u\|_{L^{\infty}(Q_{1,T})}+\|f\|_{C^{\alpha}(\bar{Q}_{1,T})}
+\|D_{x'}f\|_{C^{\alpha}(\bar{Q}_{1,T})}\\
&\qquad\qquad+\|\phi\|_{ C^{2+\alpha}(\bar{G}_{1})}+\|D_{x'}\phi\|_{C^{\alpha}(\bar{G}_{1})}\big\}\cdot|x-(x'_{0},0)|^{\alpha},
\end{split}\end{align}
where $C$ is a positive constant depending only on $n$, $T$, $\lambda$, $\alpha$, and the $C^{\alpha}$-norms of $a_{ij},b_{i},c$ 
and $D_{x'}a_{ij},D_{x'}b_{i},D_{x'}c$ in $\bar{Q}_{1,T}$.
\end{lemma}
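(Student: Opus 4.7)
The strategy is to differentiate the equation in the tangential direction and apply Lemma \ref{3C} (the decay estimate) to $D_{x'}u$. Since the weight $x_n$ is independent of $x'$, differentiating $Lu = f$ formally preserves the structure of $L$: for each $\alpha = 1,\dots,n-1$,
\begin{equation*}
L(\partial_{x_\alpha} u) = \tilde{f}_\alpha := \partial_{x_\alpha}f - x_n^2(\partial_{x_\alpha}a_{ij})\partial_{ij}u - x_n(\partial_{x_\alpha}b_i)\partial_i u - (\partial_{x_\alpha}c)u\quad\text{in }Q_{1,T}.
\end{equation*}
By Theorem \ref{3D}, $u\in C^{2+\alpha}(\bar Q_{3/4,T})$ with the corresponding estimate, so the weighted quantities $x_n D_x u$ and $x_n^2 D_x^2 u$ belong to $C^\alpha(\bar Q_{3/4,T})$, and hence $\tilde{f}_\alpha\in C^\alpha(\bar Q_{3/4,T})$ with norm controlled by the right-hand side of \eqref{3j}. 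On $S_{1,T}$ we have $\partial_{x_\alpha}u = \partial_{x_\alpha}h$, and at $t=0$, $\partial_{x_\alpha}u(\cdot,0) = \partial_{x_\alpha}\phi$. Differentiating the compatibility identity \eqref{3f} in $x_\alpha$ gives $\partial_t\partial_{x_\alpha}h - c\,\partial_{x_\alpha}h - (\partial_{x_\alpha}c)h + \partial_{x_\alpha}f = 0$ on $S_{1,T}$, which (using $u=h$ on $S_{1,T}$) is exactly the compatibility identity $\partial_t\partial_{x_\alpha}h - c\,\partial_{x_\alpha}h + \tilde{f}_\alpha|_{S_{1,T}} = 0$ required for the IBVP satisfied by $\partial_{x_\alpha}u$.

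The central obstacle is to justify that $\partial_{x_\alpha} u$ is a classical solution of this IBVP that is continuous up to $S_{1,T}$, since Theorem \ref{3D} only controls $x_n\partial_{x_\alpha}u$ near the lateral boundary and does not by itself assert continuity of $\partial_{x_\alpha}u$ there. I would handle this by either of two approaches: (i) a difference-quotient argument, setting $v_h(x,t) = h^{-1}(u(x+he_\alpha,t) - u(x,t))$ for small $|h|$, verifying that $v_h$ solves an IBVP of the same form on a slightly smaller strip with data converging in $C^\alpha$, and extracting a uniform limit by applying Theorem \ref{3D} to $v_h$; or (ii) invoking a local, flat-boundary analogue of the existence Theorem \ref{1A} to produce a solution $w\in C^{2+\alpha}(\bar Q_{r,T})$ of the IBVP with source $\tilde{f}_\alpha$ and data $\partial_{x_\alpha}\phi$, $\partial_{x_\alpha}h$, and then identifying $w$ with $\partial_{x_\alpha}u$ in $Q_{r,T}$ by the uniqueness statement of Lemma \ref{2A}. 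Both routes are feasible because the equation for $\partial_{x_\alpha}u$ is of exactly the same uniformly degenerate type.

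Once $\partial_{x_\alpha}u$ is known to be a continuous solution of its IBVP on a subregion containing $Q_{1/2,T}$, Lemma \ref{3C} applied to $\partial_{x_\alpha}u$ yields, for $(x,t)\in Q_{1/2,T}\cup\Omega_{1/2,0}$ and $x'_0\in B'_{1/2}$,
\begin{equation*}
|\partial_{x_\alpha}u(x,t) - \partial_{x_\alpha}h(x'_0,0,t)| \leq C\bigl\{\|\partial_{x_\alpha}u\|_{L^\infty(Q_{3/4,T})} + \|\tilde{f}_\alpha\|_{C^\alpha(\bar Q_{3/4,T})} + \|\partial_{x_\alpha}\phi\|_{C^\alpha(\bar G_{3/4})}\bigr\}|x-(x'_0,0)|^\alpha.
\end{equation*}
The $L^\infty$-bound on $\partial_{x_\alpha}u$ on $\bar Q_{3/4,T}$ follows from Theorem \ref{3D} (noting that, for $x_n$ bounded away from zero, $\|D_{x'}u\|_{L^\infty}$ is controlled by $\|x_n D_x u\|_{C^\alpha}$, and near the boundary by interpolation between $u=h$ on $S_{1,T}$ and the weighted estimate), and the $C^\alpha$-norm of $\tilde{f}_\alpha$ is estimated via Theorem \ref{3D} together with the hypothesized $C^\alpha$-norms of $D_{x'}a_{ij}, D_{x'}b_i, D_{x'}c, D_{x'}f$. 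Summing over $\alpha$ gives \eqref{3j}, and the continuity of $D_{x'}u$ up to $S_{1,T}$ is established in the existence/difference-quotient step above.
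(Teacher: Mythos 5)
Your overall strategy (tangential difference quotients or differentiation of the equation, followed by the decay machinery) is the right one, and you correctly isolate the central obstacle. But the two routes you propose for resolving it do not close the gap, because the quantity you treat as available --- a uniform $L^\infty$ bound on $D_{x'}u$ (equivalently, on the difference quotients $v_h$) up to the degenerate boundary $\{x_n=0\}$ --- is not a consequence of Theorem \ref{3D} plus interpolation. From $\|x_nD_xu\|_{C^\alpha}\leq CF$ together with $x_nD_xu|_{S_{1,T}}=0$ one only gets $|D_{x'}u(x,t)|\leq CFx_n^{\alpha-1}$, which blows up as $x_n\to0$; and comparing $u$ with $h$ on $S_{1,T}$ via Lemma \ref{3C} only yields a $C^\alpha$ modulus of continuity in $x'$, not a Lipschitz one, so the ``interpolation'' you invoke does not produce the sup bound. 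That bound is really part of the content of the lemma (it follows from \eqref{3j}), so it cannot be fed into Lemma \ref{3C} as an input without circularity. Route (i) has the further defect that applying Theorem \ref{3D} to $v_h$ requires its initial data --- the difference quotients of $\phi$ --- to be bounded in $C^{2+\alpha}(\bar{G}_{1})$ uniformly in $h$, effectively $D_{x'}\phi\in C^{2+\alpha}$, whereas the lemma only assumes $D_{x'}\phi\in C^{\alpha}(\bar{G}_{1})$ (the stronger hypothesis is reserved for Theorem \ref{3F}). Route (ii) faces the same regularity mismatch, and identifying $w$ with $\partial_{x_\alpha}u$ via Lemma \ref{2A} presupposes continuity of $\partial_{x_\alpha}u$ up to the boundary of the local domain, which is again what is being proved.

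The missing ingredient is a maximum-principle/barrier argument performed directly on the difference quotient, which is what the paper does: write $L(u_{\tau}-u_{\tau}(x_0',0,\cdot))=g_1$, check $|g_1|\leq CF|x-(x_0',0)|^{\alpha}$ using the pointwise weighted bounds of Theorem \ref{3D} for the commutator terms and the compatibility identity \eqref{3f} for the zeroth-order terms, localize with a cutoff $\varphi(x')$, and compare with the barrier $e^{At}|x-(x_0',0)|^{\alpha}$ of Lemma \ref{3B}. The point is that the sup bounds needed for this comparison are required only on the parabolic boundary of $Q_{3/4,T}$: on $\{x_n=3/4\}$ they come from the interior-in-space, global-in-time $C^1$ estimate, and on $\{x_n=0\}$ and $\{t=0\}$ from the data $h_{\tau}$ and $\phi_{\tau}$, for which $D_{x'}h,D_{x'}\phi\in C^{\alpha}$ suffices. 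The maximum principle then yields $|u_{\tau}(x,t)-u_{\tau}(x_0',0,t)|\leq CF|x-(x_0',0)|^{\alpha}$ uniformly in $\tau$, and letting $\tau\to 0$ delivers simultaneously the boundedness of $D_{x'}u$, its continuity up to $S_{1,T}$, and \eqref{3j}. Without this step your argument assumes its conclusion.
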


\begin{proof}
Fix a $k=1,\cdots,n-1$ and set
\begin{align*}
F&=\|u\|_{L^{\infty}(Q_{1,T})}+\|f\|_{C^{\alpha}(\bar{Q}_{1,T})}+\|D_{x'}f\|_{C^{\alpha}(\bar{Q}_{1,T})}\\
&\qquad+\|\phi\|_{ C^{2+\alpha}(\bar{G}_{1})}+\|D_{x'}\phi\|_{C^{\alpha}(\bar{G}_{1})}.
\end{align*}
Take any $\tau$ small and define
$$
u_{\tau}(x,t)=\frac{1}{\tau}[u(x+\tau e_{k},t)-u(x,t)],
$$
and similarly $a_{ij,\tau}$, $b_{i,\tau}$, $c_{\tau}$, and $f_{\tau}$. A straightforward computation yields
\begin{equation}\label{3k}
L(u_{\tau}(x,t)-u_{\tau}(0,t))=g_{1}=I_1+I_2,
\end{equation}
where
\begin{align*}
I_1&=-x^{2}_{n}a_{ij,\tau}\partial_{ij}u(x+\tau e_{k},t)-x_{n}b_{i,\tau}\partial_{i}u(x+\tau e_{k},t),\\
I_2&=f_{\tau}-c_{\tau}u(x+\tau e_{k},t)-cu_{\tau}(0,t)+\partial_{t}u_{\tau}(0,t).
\end{align*}
By Theorem \ref{3D}, we have, for any $(x,t)\in Q_{7/8,T}$,
\begin{equation}\label{3l}
x_{n}|D_{x}u(x,t)|+x^{2}_{n}|D^{2}_{x}u(x,t)|\leq  CF x^{\alpha}_{n}.
\end{equation}
Hence, for any $(x,t)\in Q_{3/4,T}$,
\begin{equation}\label{3m}
|I_1|\leq CFx^{\alpha}_{n}.
\end{equation}
We write $I_2$ as
\begin{align*}
I_2&=f_{\tau}(x,t)-c_{\tau}(x,t)u(x+\tau e_{k},t)-c(x,t)u_{\tau}(0,t)+\partial_{t}u_{\tau}(0,t)\\
&= f_{\tau}(x,t)-f_{\tau}(0,t)-[c_{\tau}(x,t)-c_{\tau}(0,t)]u(x+\tau e_{k},t)\\
&\qquad-c_{\tau}(0,t)[u(x+\tau e_{k},t)-u(\tau e_{k},t)]-[c(x,t)-c(0,t)]u_{\tau}(0,t)\\
&\qquad+f_{\tau}(0,t)-c_{\tau}(0,t)u(\tau e_{k},t)-c(0,t)u_{\tau}(0,t)+\partial_{t}u_{\tau}(0,t).
\end{align*}
By \eqref{3f}, we have
$$
f_{\tau}(0,t)-c_{\tau}(0,t)u(\tau e_{k},t)-c(0,t)u_{\tau}(0,t)+\partial_{t}u_{\tau}(0,t)=0.
$$
We write
$$
f_{\tau}(x,t)=\frac{1}{\tau}[f(x+\tau e_{k},t)-f(x,t)]=\int^{1}_{0}\partial_{k}f(x+s\tau e_{k},t)ds.
$$
Then,
\begin{align*}
|f_{\tau}(x,t)-f_{\tau}(0,t)|&=\Big|\int^{1}_{0}[\partial_{k}f(x+s\tau e_{k},t)-\partial_{k}f(s\tau e_{k},t)]ds\Big|\\
&\leq [D_{x'}f]_{C^{\alpha}(\bar{Q}_{1,T})}|x|^{\alpha}.
\end{align*}
Similarly,
$$
|c_{\tau}(x,t)-c_{\tau}(0,t)|\leq[D_{x'}c]_{C^{\alpha}(\bar{Q}_{1,T})}|x|^{\alpha}.
$$
By Lemma \ref{3C}, we have, for any $(x,t)\in Q_{3/4,T}$,
$$
|u(x+\tau e_{k},t)-u(\tau e_{k},t)|\leq CF|x|^{\alpha}.
$$
Therefore, for any $(x,t)\in Q_{3/4,T}$,
\begin{equation}\label{3n}
|I_2|\leq CF|x|^{\alpha}.
\end{equation}
By combining \eqref{3m} and \eqref{3n}, we obtain, for any $(x,t)\in Q_{3/4,T}$,
$$
|g_{1}(x,t)|\leq CF|x|^{\alpha}.
$$
Take a cutoff function $\varphi=\varphi(x')\in C^{\infty}_{0}(B'_{3/4})$, with $\varphi=1$ in $B'_{1/2}$. Then,
\begin{equation}\label{3o}
L(\varphi (u_{\tau}-u_{\tau}(0,\cdot)))=\varphi g_{1}+g_{2},
\end{equation}
where
$$
g_{2}=2x^{2}_{n}a_{ij}\partial_{i}\varphi\partial_{j}u_{\tau}+(x_{n} a_{ij}\partial_{ij}\varphi+b_{i}\partial_{i}\varphi)x_{n}(u_{\tau}-u_{\tau}(0,t)).
$$
Similarly, we have, for any $(x,t)\in Q_{3/4,T}$,
$$
|g_{2}(x,t)|\leq CFx^{\alpha}_{n}.
$$
We now examine $\varphi (u_{\tau}-u_{\tau}(0,t))$ on $\partial_{p} Q_{3/4,T}$. First, $\varphi (u_{\tau}-u_{\tau}(0,t))=0$ on $\partial B'_{3/4}\times (0,\frac{3}{4})\times [0,T]$. Next, the $C^{1}$-estimate interior in space and global in time (applied to the equation $Lu=f$) implies
$$
\|D_x u\|_{L^{\infty}(B'_{7/8}\times \{3/4\}\times [0,T])}\leq CF.
$$
(In fact, this $C^{1}$-estimate can be obtained by combining (10.12) in Chapter IV of \cite{L1968} and the Sobolev embedding theorem.)
Hence,
$$
\|\varphi (u_{\tau}-u_{\tau}(0,\cdot))\|_{L^{\infty}(B'_{3/4}\times \{3/4\}\times[0,T])}\leq CF.
$$
Last, for any $x'\in B'_{3/4}$ and any $t\in[0,T]$,
$$
|\varphi(x') (u_{\tau}(x',0,t)-u_{\tau}(0,t))|=|\varphi(x') (h_{\tau}(x',0,t)-h_{\tau}(0,t))|\leq CF|x'|^{\alpha},
$$
and, for any $x\in G_{3/4}$,
$$
|\varphi(x') (u_{\tau}(x,0)-u_{\tau}(0,0))|=|\varphi(x') (\phi_{\tau}(x)-\phi_{\tau}(0))|\leq CF|x|^{\alpha}.
$$
By Lemma \ref{3B} with $\mu=\alpha$ and $x'_0=0$ and the maximum principle, we have, for any $(x,t)\in Q_{3/4,T}$, 
$$
|\varphi(x') (u_{\tau}(x,t)-u_{\tau}(0,t))|\leq CF|x|^{\alpha},
$$
and hence, for any $(x,t)\in Q_{1/2,T}$,
$$
|u_{\tau}(x,t)-u_{\tau}(0,t)|\leq CF|x|^{\alpha}.
$$
Similarly, for any fixed $x'_{0}\in B'_{1/2}$, we have, for any $(x,t)\in B'_{1/2}(x'_{0})\times (0,1/2)\times(0,T]$,
$$
|u_{\tau}(x,t)-u_{\tau}(x'_{0},0,t)|\leq CF|x-(x'_{0},0)|^{\alpha},
$$
where $C$ is a positive constant independent of $\tau$. By letting $\tau\rightarrow 0$, 
we obtain, for any $(x,t)\in B'_{1/2}(x'_{0})\times (0,1/2)\times(0,T]$,
$$
|\partial_{k}u(x,t)-\partial_{k}h(x'_{0},0,t)|\leq CF|x-(x'_{0},0)|^{\alpha}.
$$
This implies \eqref{3j}. By $\partial_k h\in C^{\alpha}(S_{1,T})$, we obtain the continuity of $\partial_{k}u$ at $(x'_{0},0,t)\in S_{1/2,T}$, with
$$\partial_{k}u(x'_{0},0,t)=\partial_{k}h(x'_{0},0,t).$$
We conclude the desired result.
\end{proof}

We are ready to prove the regularity of the derivative in $x'$.

\begin{thm}\label{3F}
For some constant $\alpha\in(0,1)$, assume $a_{ij},b_{i},c\in C^{\alpha}(\bar{Q}_{1,T})$ 
with $D_{x'}a_{ij},D_{x'}b_{i},D_{x'}c\in C^{\alpha}(\bar{Q}_{1,T})$ and \eqref{3a}. 
Suppose that, for some $f\in C^{\alpha}(\bar{Q}_{1,T})$ with $D_{x'}f\in C^{\alpha}(\bar{Q}_{1,T})$, 
and $\phi\in C^{2+\alpha}(\bar{G}_{1})$ with $D_{x'}\phi\in C^{2+\alpha}(\bar{G}_{1})$, 
\eqref{3c}-\eqref{3e} admits a solution $u\in C^{2}(Q_{1,T})\cap C(\bar{Q}_{1,T})$. 
Then, for any $r\in(0,1)$, $D_{x'}u\in C^{2+\alpha}(\bar{Q}_{r,T})$ and
\begin{align*}
\|D_{x'}u\|_{C^{2+\alpha}(\bar{Q}_{1/2,T})}&\leq C\big\{\|u\|_{L^{\infty}(Q_{1,T})}+\|f\|_{C^{\alpha}(\bar{Q}_{1,T})}+\|D_{x'}f\|_{C^{\alpha}(\bar{Q}_{1,T})}\\
&\qquad+\|\phi\|_{C^{2+\alpha}(\bar{G}_{1})}+\|D_{x'}\phi\|_{C^{2+\alpha}(\bar{G}_{1})}\big\},
\end{align*}
where $C$ is a positive constant depending only on $n,$ $T,$ $\lambda,$ $\alpha,$  and the $C^{\alpha}$-norms 
of $a_{ij},b_{i},c$ and $D_{x'}a_{ij},D_{x'}b_{i},D_{x'}c$ in $\bar{Q}_{1,T}$.
\end{thm}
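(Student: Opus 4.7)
The plan is to differentiate the equation tangentially in $x_k$ for $k=1,\ldots,n-1$ and apply Theorem~\ref{3D} to the resulting problem for $v := \partial_k u$. Writing
$$F := \|u\|_{L^\infty(Q_{1,T})} + \|f\|_{C^\alpha(\bar Q_{1,T})} + \|D_{x'}f\|_{C^\alpha(\bar Q_{1,T})} + \|\phi\|_{C^{2+\alpha}(\bar G_1)} + \|D_{x'}\phi\|_{C^{2+\alpha}(\bar G_1)},$$
formal application of $\partial_k$ to $Lu=f$ produces
$$Lv = g_k, \qquad g_k := \partial_k f - x_n^2(\partial_k a_{ij})\,\partial_{ij}u - x_n(\partial_k b_i)\,\partial_i u - (\partial_k c)u.$$
Theorem~\ref{3D} applied to $u$ gives $u\in C^{2+\alpha}(\bar Q_{7/8,T})$, so the weighted quantities $u$, $x_n D_x u$, $x_n^2 D_x^2 u$ lie in $C^\alpha(\bar Q_{7/8,T})$ with norms bounded by $CF$. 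Combined with the hypothesized $C^\alpha$-bounds on $\partial_k a_{ij}$, $\partial_k b_i$, $\partial_k c$, $\partial_k f$, this yields $g_k \in C^\alpha(\bar Q_{7/8,T})$ with $\|g_k\|_{C^\alpha}\le CF$.

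Next I check that the data for $v$ fit the format required by Theorem~\ref{3D}. Lemma~\ref{3E} already guarantees that $v$ extends continuously to $S_{1/2,T}$ with $v = \partial_k h$ there. Since $\partial_k$ is tangential to $S_{1,T}$, differentiating the compatibility identity $\partial_t h - ch + f = 0$ in $x_k$ produces
$$\partial_t(\partial_k h) - c\,(\partial_k h) + g_k = 0 \quad\text{on }S_{1,T}, \qquad (\partial_k h)(\cdot,0) = \partial_k \phi \quad\text{on }B'_1,$$
so $\partial_k h$ is precisely the boundary value associated (through the explicit formula preceding \eqref{3f}) with the equation $Lv = g_k$ and initial value $\partial_k\phi \in C^{2+\alpha}(\bar G_1)$.

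Applying Theorem~\ref{3D} (after a harmless rescaling from $Q_{7/8,T}$) then gives
$$\|\partial_k u\|_{C^{2+\alpha}(\bar Q_{1/2,T})} \le C\big\{\|\partial_k u\|_{L^\infty(Q_{7/8,T})} + \|g_k\|_{C^\alpha(\bar Q_{7/8,T})} + \|\partial_k \phi\|_{C^{2+\alpha}(\bar G_{7/8})}\big\} \le CF,$$
and summing over $k=1,\ldots,n-1$ yields the stated estimate.

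The main technical obstacle is that Theorem~\ref{3D} requires its solution to lie in $C^2(Q_{7/8,T}) \cap C(\bar Q_{7/8,T})$, whereas $v = \partial_k u$ is only a priori $C^{1+\alpha}$ in the weighted sense provided by Theorem~\ref{3D} applied to $u$. I would circumvent this by working with difference quotients $u_\tau(x,t) = \tau^{-1}[u(x+\tau e_k, t) - u(x,t)]$ exactly as in the proof of Lemma~\ref{3E}: each $u_\tau$ is itself a $C^{2+\alpha}$ solution of a uniformly degenerate problem whose coefficients and right-hand side have $C^\alpha$-norms controlled uniformly in $\tau$ (thanks to the $C^\alpha$-bounds on $D_{x'}a_{ij}, D_{x'}b_i, D_{x'}c, D_{x'}f$), Theorem~\ref{3D} gives a uniform $C^{2+\alpha}(\bar Q_{1/2,T})$-bound on $u_\tau$, and the limit $\tau \to 0$ both identifies the limit with $\partial_k u$ and transfers the estimate. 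The care lies in verifying the compatibility of the initial and boundary data of $u_\tau$ and in showing $C^\alpha$-convergence (rather than merely pointwise convergence) of the perturbation terms in $g_k^\tau$ to $g_k$.
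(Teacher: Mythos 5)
Your proposal is correct and follows essentially the same route as the paper: differentiate $Lu=f$ tangentially, observe that $f_k=\partial_k f-x_n^2\partial_k a_{ij}\partial_{ij}u-x_n\partial_k b_i\partial_i u-\partial_k c\,u$ lies in $C^{\alpha}$ by Theorem \ref{3D} applied to $u$, verify the compatibility relations \eqref{3s} so that $\partial_k h$ is the correct boundary datum, invoke Lemma \ref{3E} for continuity of $\partial_k u$ up to $S_{r,T}$, and apply Theorem \ref{3D} to the differentiated problem. The only (immaterial) difference is in justifying that $\partial_k u\in C^2(Q_{r,T})$: the paper simply cites the interior Schauder theory along the $x'$ directions, while you propose a second difference-quotient pass in the spirit of Lemma \ref{3E}; both are valid.
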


\begin{proof}
Fix a $k=1,\cdots,n-1$. By a simple differentiation of $Lu=f$ with respect to $x_{k}$ 
(The differentiation is valid by the interior Schauder theory along the $x'$ directions.) 
and using Theorem \ref{3D} and Lemma \ref{3E}, we have, 
for any $r\in(0,1)$, $\partial_{k}u\in C^{2}(Q_{r,T})\cap C(\bar{Q}_{r,T})$ satisfies
\begin{align}
L(\partial_{k}u)&=f_{k}\quad\text{in }Q_{r,T},\label{3p}\\
\partial_{k}u(\cdot,0)&=\partial_{k}\phi\quad\text{on } G_{r},\label{3q}\\
\partial_{k}u&=\partial_{k}h\quad\text{on }  S_{r,T},\label{3r}
\end{align}
where
$$
f_{k}=\partial_{k}f-x^{2}_{n}\partial_{k}a_{ij}\partial_{ij}u-x_{n}\partial_{k}b_{i}\partial_{i}u-\partial_{k}cu.
$$
Then, $f_k\in C^{\alpha}(\bar{Q}_{r,T})$,
$$
f_{k}=\partial_{k}f-\partial_{k}c h\quad\text{on } S_{r,T},
$$
and
\begin{align}\label{3s}\begin{split}
\partial_{t}(\partial_{k}h)-c\partial_{k}h+f_{k}&=0\quad\text{on } S_{r,T},\\
\partial_{k}h(\cdot,0,0)&=\partial_{k}\phi(\cdot,0)\quad\text{on }  B'_{r}.
\end{split}\end{align}
Thus, we can apply Theorem \ref{3D} to \eqref{3p}-\eqref{3r} and obtain the desired regularity of $\partial_{k}u$.
\end{proof}

We have the following more general result.

\begin{thm}\label{3G}
For some integer $\ell\geq 0$ and some constant $\alpha\in(0,1)$, 
assume $D^{\tau}_{x'}a_{ij},D^{\tau}_{x'}b_{i},D^{\tau}_{x'}c\in C^{\alpha}(\bar{Q}_{1,T})$ for any $\tau\leq \ell$, 
with \eqref{3a}. 
Suppose that, for some $f$ with $D^{\tau}_{x'}f\in C^{\alpha}(\bar{Q}_{1,T})$ for any $\tau\leq \ell$, and $\phi$ 
with $D^{\tau}_{x'}\phi\in C^{2+\alpha}(\bar{G}_{1})$ for any $\tau\leq \ell$, 
\eqref{3c}-\eqref{3e} admits a solution $u\in C^{2}(Q_{1,T})\cap C(\bar{Q}_{1,T})$. 
Then, for any $\tau\leq \ell$ and any $r\in(0,1)$, $D^{\tau}_{x'}u\in C^{2+\alpha}(\bar{Q}_{r,T})$ and
\begin{align*}
\|D^{\tau}_{x'}u\|_{C^{2+\alpha}(\bar{Q}_{1/2,T})}
&\leq C\Big\{\|u\|_{L^{\infty}(Q_{1,T})}+\sum^{\ell}_{i=0}\|D^{i}_{x'}f\|_{C^{\alpha}(\bar{Q}_{1,T})}\\
&\qquad\qquad+\sum^{\ell}_{i=0}\|D^{i}_{x'}\phi\|_{C^{2+\alpha}(\bar{G}_{1})}\Big\},
\end{align*}
where $C$ is a positive constant depending only on $n,$ $T,$ $\ell,$ $\lambda,$ $\alpha,$  
and the $C^{\alpha}$-norms of $D^{\tau}_{x'}a_{ij},D^{\tau}_{x'}b_{i},D^{\tau}_{x'}c$ in $\bar{Q}_{1,T}$ for $\tau\leq\ell$.
\end{thm}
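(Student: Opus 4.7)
The plan is to proceed by induction on $\ell$, with the base case $\ell=0$ being exactly Theorem \ref{3D}. The case $\ell=1$ is Theorem \ref{3F}, and the inductive step will essentially iterate the reduction used there: differentiate the equation once in a tangential direction, interpret the result as a new initial-boundary value problem of the same type, and apply the inductive hypothesis with parameter $\ell-1$.

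For the inductive step, fix $k\in\{1,\dots,n-1\}$ and set $v=\partial_{k}u$. As in the proof of Theorem \ref{3F}, differentiating \eqref{3c}--\eqref{3e} with respect to $x_{k}$ (which is justified in the interior by the classical Schauder theory for uniformly parabolic equations) shows that $v\in C^{2}(Q_{r,T})\cap C(\bar{Q}_{r,T})$ satisfies
\begin{align*}
Lv&=f_{k}\quad\text{in }Q_{r,T},\\
v(\cdot,0)&=\partial_{k}\phi\quad\text{on }G_{r},\\
v&=\partial_{k}h\quad\text{on }S_{r,T},
\end{align*}
where
$$
f_{k}=\partial_{k}f-x_{n}^{2}(\partial_{k}a_{ij})\partial_{ij}u-x_{n}(\partial_{k}b_{i})\partial_{i}u-(\partial_{k}c)u,
$$
and the compatibility identity \eqref{3s} holds on $S_{r,T}$ for $(\partial_{k}h,\partial_{k}\phi)$, obtained by differentiating \eqref{3f} tangentially.

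The crux is to verify that $(f_{k},\partial_{k}\phi)$ fulfill the hypotheses of the theorem for the level $\ell-1$, so that the inductive hypothesis applied to $v$ yields $D^{\sigma}_{x'}v\in C^{2+\alpha}(\bar{Q}_{r',T})$ for all $|\sigma|\leq\ell-1$ and any $r'<r$; this gives $D^{\tau}_{x'}u\in C^{2+\alpha}$ for every multi-index $\tau$ with $|\tau|\leq\ell$ and $\tau_{k}\geq1$, and varying $k$ together with the inductive hypothesis applied to $u$ itself (to handle $\tau$ involving no tangential differentiation beyond order $\ell-1$) covers all $|\tau|\leq\ell$. For the regularity of $f_{k}$, I use the Leibniz rule: for $|\sigma|\leq\ell-1$,
$$
D^{\sigma}_{x'}(\partial_{k}f)=D^{\sigma+e_{k}}_{x'}f\in C^{\alpha}(\bar{Q}_{1,T}),
$$
since $|\sigma+e_{k}|\leq\ell$; and each of the remaining terms expands by Leibniz into a sum of products of the form $D^{\sigma_{1}}_{x'}(\partial_{k}a_{ij})\cdot x_{n}^{2}\partial_{ij}(D^{\sigma_{2}}_{x'}u)$ (and analogues with $b_{i},c$), where $|\sigma_{1}+e_{k}|\leq\ell$ controls the coefficient factor in $C^{\alpha}$, while the factor $x_{n}^{2}\partial_{ij}(D^{\sigma_{2}}_{x'}u)\in C^{\alpha}$ is exactly a component of the $C^{2+\alpha}$-norm of $D^{\sigma_{2}}_{x'}u$ for $|\sigma_{2}|\leq\ell-1$, which is furnished by the inductive hypothesis applied to $u$ with parameter $\ell-1$.

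The main obstacle is the two-layered use of induction: the inductive hypothesis must be invoked simultaneously on $u$ (to bound the $C^{\alpha}$-norms of the weighted derivatives appearing in $f_{k}$) and on $v$ (to conclude $C^{2+\alpha}$-regularity of $v$). Each passage consumes a shrinkage of the domain, so I will state the inductive claim on every $r\in(0,1)$ (as in the statement) and carry out the reduction on a nested sequence of subdomains $G_{r_{j}}$ with $1/2<r_{\ell}<\cdots<r_{0}=1$. The estimates assemble via the inductive bounds, and finally, because only tangential derivatives of $a_{ij},b_{i},c,f,\phi$ enter $f_{k}$, the resulting constant depends only on the $C^{\alpha}$-norms of $D^{\tau}_{x'}a_{ij},D^{\tau}_{x'}b_{i},D^{\tau}_{x'}c$ for $|\tau|\leq\ell$, as required.
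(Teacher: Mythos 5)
Your proposal is correct and follows essentially the same inductive strategy as the paper; the only difference is organizational --- the paper differentiates the equation $k$ times and applies the one-derivative result (Theorem \ref{3F}) to the problem for $D^{k}_{x'}u$, whereas you differentiate once and apply the level-$(\ell-1)$ induction hypothesis to $\partial_{k}u$. In both versions the key step is identical: the commutator source term and its tangential derivatives are controlled in $C^{\alpha}$ by the induction hypothesis applied to $u$ itself (via the weighted components $x_{n}^{2}D_{x}^{2}D^{\sigma}_{x'}u$, $x_{n}D_{x}D^{\sigma}_{x'}u$ of the $C^{2+\alpha}$-norms of lower-order tangential derivatives), and your nested-domain bookkeeping matches the paper's ``for any $r\in(0,1)$'' formulation.
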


\begin{proof}
The proof is based on induction. Note that Theorem \ref{3F} corresponds to $\ell=1$. 
For some fixed integer $k \geq 1$, assume Theorem \ref{3G} holds for $\ell=1, \cdots, k$ and we consider $\ell=k+1$.

By differentiating \eqref{3c}-\eqref{3e}, we have, for any $r\in(0,1)$,
\begin{align}
L(D^{k}_{x'}u)&=f_{k}\quad\text{in }Q_{r,T},\label{3t}\\
D^{k}_{x'}u(\cdot,0)&=D^{k}_{x'}\phi\quad\text{on }  G_{r},\label{3u}\\
D^{k}_{x'}u&=D^{k}_{x'}h\quad\text{on } S_{r,T},\label{3v}
\end{align}
where
\begin{align*}
f_k&=D_{x'}^k f-\big[x^2_{n} D_{x'}^k(a_{i j} \partial_{i j} u)-x^2_{n} a_{i j} D_{x'}^k \partial_{i j} u\big] \\
&\qquad-\big[x_{n} D_{x'}^k(b_i \partial_i u)-x_{n} b_i D_{x'}^k \partial_i u\big]-\big[D_{x'}^k(c u)-c D_{x'}^k u\big].
\end{align*}
It is obvious that $f_k$ is a linear combination of
$$
D_{x'}^k f,\ x^2_{n} D^2_{x} D_{x'}^\tau u,\ x_{n}D_{x} D_{x'}^\tau u,\ D_{x'}^\tau u \quad\text{for } \tau \leq k-1,
$$
with coefficients given by $a_{i j}, b_i, c$, and their $x'$-derivatives up to order $k$.
By the induction hypothesis, we have $f_{k},D_{x'}f_{k}\in C^{\alpha}(\bar{Q}_{r,T})$, for any $r\in(0,1)$, and
\begin{align*}
&\|f_{k}\|_{C^{\alpha}(\bar{Q}_{r,T})}+\|D_{x'}f_{k}\|_{C^{\alpha}(\bar{Q}_{r,T})}\\
&\qquad\leq C\Big\{\|u\|_{L^{\infty}(Q_{1,T})}+\sum^{k+1}_{i=0}\|D^{i}_{x'}f\|_{C^{\alpha}(\bar{Q}_{1,T})}
+\sum^{k}_{i=0}\|D^{i}_{x'}\phi\|_{C^{2+\alpha}(\bar{G}_{1})}\Big\}.
\end{align*}
Moreover,
$$
f_{k}=D_{x'}^k f-\left[D_{x'}^k(c h)-c D_{x'}^k h\right]\quad\text{on } S_{r,T},
$$
and
\begin{align}\label{3w}\begin{split}
\partial_{t}(D^{k}_{x'}h)-cD^{k}_{x'}h+f_{k}&=0\quad\text{on }S_{r,T},\\
D^{k}_{x'}h(\cdot,0,0)&=D^{k}_{x'}\phi(\cdot,0)\quad\text{on } B'_{r}.
\end{split}\end{align}
By applying Theorem \ref{3F} to \eqref{3t}-\eqref{3v}, we obtain, for any $r \in(0,1)$,
$D^{k+1}_{x'}u\in C^{2+\alpha}(\bar{Q}_{r,T})$, and
\begin{align*}
&\|D^{k+1}_{x'}u\|_{C^{2+\alpha}(\bar{Q}_{1/2,T})}\\
&\qquad\leq C\big\{\|D^{k}_{x'}u\|_{L^{\infty}(Q_{3/4,T})}+\|f_{k}\|_{C^{\alpha}(\bar{Q}_{3/4,T})}+\|D_{x'}f_{k}\|_{C^{\alpha}(\bar{Q}_{3/4,T})}\\
&\qquad\qquad+\|D^{k}_{x'}\phi\|_{C^{2+\alpha}(\bar{G}_{3/4})}+\|D^{k+1}_{x'}\phi\|_{C^{2+\alpha}(\bar{G}_{3/4})}\big\}\\
&\qquad\leq C\Big\{\|u\|_{L^{\infty}(Q_{1,T})}+\sum^{k+1}_{i=0}\|D^{i}_{x'}f\|_{C^{\alpha}(\bar{Q}_{1,T})}
+\sum^{k+1}_{i=0}\|D^{i}_{x'}\phi\|_{C^{2+\alpha}(\bar{G}_{1})}\Big\}.
\end{align*}
This ends the proof by induction.
\end{proof}

\subsection{The Regularity of the Derivative in $x_{n}$}\label{nor}
We first prove a decay estimate of the derivative in $x_{n}$.

\begin{lemma}\label{3H}
For some constant $\alpha\in(0,1)$, assume $a_{ij},b_{i},c\in C^{1,\alpha}(\bar{Q}_{1,T})$ with \eqref{3a}. 
Suppose, for some $f\in C^{1,\alpha}(\bar{Q}_{1,T})$ and $\phi\in C^{1,2+\alpha}(\bar{G}_{1})$,
 \eqref{3c}-\eqref{3e} admits a solution $u\in C(\bar{Q}_{1,T})\cap C^{2}(Q_{1,T})$. 
 Then, $\partial_{n}u$ is continuous up to $S_{1,T}$ and, for any $(x,t)\in Q_{1/2,T}\cup \Omega_{1/2,0}$ and $(x'_{0},0,t)\in S_{1/2,T}$,
\begin{align}\label{3x}\begin{split}
&|\partial_{n}u(x,t)-u_{1}(x'_{0},0,t)|\\
&\qquad\leq C\big\{\|u\|_{L^{\infty}(Q_{1,T})}+\|f\|_{C^{1,\alpha}(\bar{Q}_{1,T})}\\
&\qquad\qquad+\|\phi\|_{ C^{1,2+\alpha}(\bar{G}_{1})}\big\}|x-(x'_{0},0)|^{\alpha},
\end{split}\end{align}
where
\begin{align}\label{3y}\begin{split}
u_{1}(x,t)&= \partial_{n}\phi(x)\exp\Big\{\int^{t}_{0}(c+b_{n})(x,s)ds\Big\}\\
&\qquad-\int^{t}_{0}\exp\Big\{\int^{t}_{s}(c+b_{n})(x,\tau)d\tau\Big\}f_{n}(x,s) d s\quad\text{on }S_{1,T},\\
f_{n}&=\partial_{n}f-\partial_{n}c h-b_{\alpha}\partial_{\alpha}h\quad\text{on }S_{1,T},
\end{split}\end{align}
and $C$ is a positive constant depending only on $n$, $T$, $\lambda$, $\alpha$, 
and the $C^{1,\alpha}$-norms of $a_{ij}$, $b_{i}$, and $c$ in $\bar{Q}_{1,T}$.
\end{lemma}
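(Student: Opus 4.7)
The strategy mirrors Lemma 3E but adapts to the normal direction, where taking a difference quotient across the boundary is impossible. The starting observation is that formally differentiating $Lu=f$ in $x_n$, substituting $2x_n a_{ij}\partial_{ij}u$ via the PDE, and letting $x_n\to 0^+$ (using $\partial_t h-ch+f=0$) yields a first-order ODE in $t$ for $\partial_n u|_{S_{1,T}}$, namely $\partial_t(\partial_n u)-(c+b_n)\partial_n u+f_n=0$ with initial value $\partial_n\phi$ at $t=0$. Its unique solution is exactly the $u_1$ defined in \eqref{3y}. Thus $u_1$ is the natural candidate boundary trace of $\partial_n u$, and the lemma asserts both the existence of this trace and a quantitative approach-rate.

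For the quantitative estimate, fix $(x'_0,0,t)\in S_{1/2,T}$ and set $w=\partial_n u-u_1(x'_0,0,t)$. Differentiating $Lu=f$ in $x_n$ and subtracting $L(u_1(x'_0,0,t))=cu_1(x'_0,0,t)-\partial_t u_1(x'_0,0,t)$, and then using the ODE for $u_1$ to eliminate $\partial_t u_1(x'_0,0,t)$, yields $(L-b_n)w=\tilde g$ in $Q_{1,T}$. The key claim is $|\tilde g(x,t)|\le CF|x-(x'_0,0)|^{\alpha}$ in $Q_{1/2,T}$. This exploits four ingredients: the decay estimate of Lemma \ref{3C} for $u-h(x'_0,0,t)$; the decay estimate of Lemma \ref{3E} for $\partial_\alpha u-\partial_\alpha h(x'_0,0,t)$; the weighted bounds of Theorem \ref{3D} giving $|x_n^2\partial_{ij}u|+|x_n\partial_i u|\le CF x_n^{\alpha}$; and the PDE-derived identity
\begin{equation*}
\frac{\partial_t(u-h)-c(u-h)}{x_n}=b_i\partial_i u+x_n a_{ij}\partial_{ij}u,
\end{equation*}
which rewrites the superficially singular term $2x_n a_{ij}\partial_{ij}u$ (arising in the commutator $[\partial_n,L]$) as a combination of quantities whose Hölder regularity is already controlled. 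The cancellations coming from the ODE for $u_1$ ensure that the $O(1)$ contributions at $(x'_0,0,t)$ annihilate each other, so $\tilde g$ vanishes at $(x'_0,0,t)$ with the desired Hölder rate.

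With the bound on $\tilde g$ in hand, the proof concludes by a barrier argument modeled on Lemma \ref{3E}. The boundary value satisfies $|w(x',0,t)|=|u_1(x',t)-u_1(x'_0,t)|\le CF|x'-x'_0|^{\alpha}$, and the initial value at $t=0$ reduces to $|\partial_n\phi(x)-\partial_n\phi(x'_0,0)|\le CF|x-(x'_0,0)|^{\alpha}$. Introducing a cutoff $\varphi=\varphi(x')\in C^\infty_0(B'_{3/4})$ with $\varphi=1$ on $B'_{1/2}$ produces an equation for $\varphi w$ on $Q_{3/4,T}$ with controlled parabolic boundary data. The supersolution $\psi=BFe^{At}|x-(x'_0,0)|^{\alpha}$ from Lemma \ref{3B} (with $\mu=\alpha$) satisfies $(L-b_n)\psi\le -C_0 F|x-(x'_0,0)|^{\alpha}$ for $A,B$ large, so the maximum principle (Lemma \ref{2A} applied to $L-b_n$) gives $|w(x,t)|\le CF|x-(x'_0,0)|^{\alpha}$ in $Q_{1/2,T}$, which is \eqref{3x}. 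Continuity of $\partial_n u$ up to $S_{1,T}$ with trace $u_1$ then follows by letting $x\to(x'_0,0)$. The main obstacle is Step 2: without the PDE-driven identity above, the commutator term $2x_n a_{ij}\partial_{ij}u$ appears to grow like $x_n^{\alpha-1}$ and spoils the barrier argument; rewriting it in the manner indicated is what makes the whole scheme work.
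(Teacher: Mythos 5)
Your identification of $u_1$ as the solution of the boundary ODE $\partial_t u_1-(c+b_n)u_1+f_n=0$ is correct and matches the paper's motivation, but the proof scheme has two genuine gaps. First, a circularity: to run the maximum principle for $w=\partial_n u-u_1(x'_0,0,\cdot)$ on $Q_{3/4,T}$ you must verify $\pm w\le\psi$ on the whole parabolic boundary, including the flat part $S_{3/4,T}$, and your claimed bound $|w(x',0,t)|=|u_1(x',t)-u_1(x'_0,t)|\le CF|x'-x'_0|^{\alpha}$ presupposes that $\partial_n u$ extends continuously to $S$ with trace $u_1$ --- which is exactly the assertion being proved. In the tangential case (Lemma \ref{3E}) this is avoided because the difference quotient $u_\tau$ is defined up to the boundary with known boundary values $h_\tau$; you correctly note that no such device exists in the normal direction, but then assume the trace anyway. (A priori, Theorem \ref{3D} only gives $|\partial_n u|\le CFx_n^{\alpha-1}$, so not even boundedness of $\partial_n u$ near $S_{1,T}$ is available to invoke a Phragm\'en--Lindel\"of-type variant.)

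Second, the key claim $|\tilde g|\le CF|x-(x'_0,0)|^{\alpha}$ is not established. The identity you invoke, $\frac{\partial_t(u-h)-c(u-h)}{x_n}=b_i\partial_i u+x_n a_{ij}\partial_{ij}u$, only re-expresses the singular commutator term: Lemma \ref{3C} and the equation give the numerator as $O(|x-(x'_0,0)|^{\alpha})$, so after division by $x_n$ you get $O(x_n^{-1}|x-(x'_0,0)|^{\alpha})$, which is exactly the $x_n^{\alpha-1}$ growth you were trying to remove. To make that quotient $O(|x-(x'_0,0)|^{\alpha})$ you would need the numerator to vanish to order $x_n^{1+\alpha}$, i.e.\ a first-order Taylor expansion of $u$ (and of $\partial_t u$) at the boundary with remainder $O(|x-(x'_0,0)|^{1+\alpha})$ --- and that is precisely what the paper's proof establishes and your proposal omits. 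The paper's Step 1 constructs $l_{x'_0}(x,t)=h(x'_0,0,t)+\partial_\alpha h(x'_0,0,t)(x_\alpha-x_{0\alpha})+u_1(x'_0,0,t)x_n$ with coefficients solving a triangular ODE system so that $|L(u-l_{x'_0})|\le CF|x-(x'_0,0)|^{1+\alpha}$, then applies the barrier of Lemma \ref{3B} with $\mu=1+\alpha$ to $u-l_{x'_0}$ itself (no boundary data for $\partial_n u$ needed), obtaining $|u-l_{x'_0}|\le CF|x-(x'_0,0)|^{1+\alpha}$; this simultaneously produces the trace $u_1$ and, after a rescaled interior $C^1$ parabolic estimate on balls $B_{3x_{0n}/4}(x_0)$ in Step 2, the decay \eqref{3x}. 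Your ``differentiate, then barrier'' scheme is essentially the proof of Theorem \ref{3I}, which in the paper is carried out only after Lemma \ref{3H} has supplied the boundary continuity of $\partial_n u$, and which handles the commutator correctly by absorbing $2x_na_{in}\partial_{in}u$ into the drift of $L^{(1)}$ (not ``$L-b_n$''; the zeroth-order coefficient becomes $c+b_n$) and controlling the remaining $2x_na_{i\alpha}\partial_{i\alpha}u$ by the tangential regularity of Theorem \ref{3F}.
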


\begin{proof}
Set
$$
F=\|u\|_{L^{\infty}(Q_{1,T})}+\|f\|_{C^{1,\alpha}(\bar{Q}_{1,T})}+\|\phi\|_{ C^{1,2+\alpha}(\bar{G}_{1})}.
$$

Step 1. We first prove a decay estimate of $u$. Consider a function linear in $x$ given by
$$
l(x,t)=l_{0}(t)+l_{\alpha}(t)x_{\alpha}+l_{n}(t)x_{n},
$$
where $l_{0},l_{1},\cdots,l_{n}$ are differentiable functions on $[0,T]$, to be determined.
Then,
$$
L(u-l)=f-x_{n}(b_{\alpha}l_{\alpha}+b_{n}l_{n})-cl_{0}-cl_{\alpha}x_{\alpha}-cl_{n}x_{n}+l'_{0}+l'_{\alpha}x_{\alpha}+l'_{n}x_{n}.
$$
For fixed $t\in [0,T]$, the linear part of the expression in $x$ in the right-hand side is given by
\begin{align*}
&l'_{0}+f(0,t)-c(0,t)l_{0}\\
&\qquad+[l'_{\alpha}+\partial_{\alpha}f(0,t)-\partial_{\alpha}c(0,t)l_{0}-c(0,t)l_{\alpha}]x_{\alpha}\\
&\qquad+[l'_{n}+\partial_{n}f(0,t)-\partial_{n}c(0,t)l_{0}-c(0,t)l_{n}-b_{\alpha}(0,t)l_{\alpha}-b_{n}(0,t)l_{n}]x_{n}.
\end{align*}
We now make this equal to zero by choosing $l_{0},l_{\alpha}$, and $l_{n}$ to satisfy the following ODEs on $[0, T]$:
\begin{align*}
l'_{0}+f(0,t)-c(0,t)l_{0}&=0,\\
l'_{\alpha}+\partial_{\alpha}f(0,t)-\partial_{\alpha}c(0,t)l_{0}-c(0,t)l_{\alpha}&=0,
\end{align*}
and
\begin{align*}
l'_{n}+\partial_{n}f(0,t)-\partial_{n}c(0,t)l_{0}-c(0,t)l_{n}-b_{\alpha}(0,t)l_{\alpha}-b_{n}(0,t)l_{n}=0.
\end{align*}
We take the initial values at $t=0$ given by
\begin{align*}
l_{0}(0)=\phi(0), \quad l_{\alpha}(0)=\partial_{\alpha}\phi(0),
\quad l_{n}(0)=\partial_{n}\phi(0).
\end{align*}
First, we have
$$
l_{0}(t)=\phi(0)e^{\int^{t}_{0}c(0,s)ds}-\int^{t}_{0}e^{\int^{t}_{s}c(0,\tau)d\tau}f(0,s) d s=h(0,t).
$$
Next, we get
$$
l_{\alpha}(t)=\partial_{\alpha}\phi(0)e^{\int^{t}_{0}c(0,s)ds}
-\int^{t}_{0}e^{\int^{t}_{s}c(0,\tau)d\tau}\big(\partial_{\alpha}f(0,s)-\partial_{\alpha}c(0,s) l_{0}(s)\big) d s.
$$
We compute the above integral involving $l_0$ separately. By a simple substitution, we have
\begin{align*}
&\int^{t}_{0}e^{\int^{t}_{s}c(0,\tau)d\tau}\partial_{\alpha}c(0,s) l_{0}(s) d s\\
&\quad=\phi(0)e^{\int^{t}_{0}c(0,\tau)d\tau}\int^{t}_{0}\partial_{\alpha}c(0,s)d s
-\int^{t}_{0}\int^{s}_{0}e^{\int^{t}_{\rho}c(0,\tau)d\tau}\partial_{\alpha}c(0,s)f(0,\rho) d\rho d s\\
&\quad=\phi(0)e^{\int^{t}_{0}c(0,\tau)d\tau}\int^{t}_{0}\partial_{\alpha}c(0,s)d s\\
&\quad\qquad-\int^{t}_{0}e^{\int^{t}_{\rho}c(0,\tau)d\tau}\Big(\int^{t}_{\rho}\partial_{\alpha}c(0,s)d s\Big) f(0,\rho)  d\rho.
\end{align*}
Hence,
$$l_{\alpha}(t)=\partial_{\alpha}h(0,t).$$
Last, we get
\begin{align*}
l_{n}(t)&=\partial_{n}\phi(0)e^{\int^{t}_{0}(c+b_{n})(0,s)ds}\\
&\quad-\int^{t}_{0}e^{\int^{t}_{s}(c+b_{n})(0,\tau)d\tau}(\partial_{n}f(0,s)-\partial_{n}c(0,s)l_{0}(s)-b_{\alpha}(0,s)l_{\alpha}(s)) d s.
\end{align*}
By substituting $l_0$ and $l_\alpha$, we obtain
\begin{align*}
l_{n}(t)=u_{1}(0,t),
\end{align*}
where $u_{1}$ is given by \eqref{3y}. Hence,
$$
l(x,t)=h(0,t)+\partial_{\alpha}h(0,t)x_{\alpha}+u_{1}(0,t)x_{n}.
$$
With such a choice of $l$, we have
$$
|L(u-l)|\leq CF|x|^{1+\alpha}\quad\text{in }Q_{1/2,T}.
$$
Moreover, we have, for any $(x',0,t)\in S_{1/2,T}$,
$$
|u(x',0,t)-l(x',0,t)|=|h(x',0,t)-h(0,t)-\partial_{\alpha}h(0,t)x_{\alpha}|\leq CF|x'|^{1+\alpha},
$$
and, for any $x\in G_{1/2}$,
$$
|u(x,0)-l(x,0)|=|\phi(x)-\phi(0)-\partial_{i}\phi(0) x_{i}|\leq CF|x|^{1+\alpha}.
$$
In general, for any fixed $x'_{0}\in B'_{1/2}$, we set
$$
l_{x'_{0}}(x,t)=h(x'_{0},0,t)+\partial_{\alpha}h(x'_{0},0,t)(x_{\alpha}-x_{0\alpha})+u_{1}(x'_{0},0,t)x_{n}.
$$
Similarly, we obtain
\begin{equation}\label{3z}
L(u-l_{x'_{0}})=g,
\end{equation}
such that, for any $(x,t)\in B'_{1/2}(x'_{0})\times(0,1/2)\times(0,T]$,
\begin{equation}\label{3aa}
|g(x,t)|\leq  CF|x-(x'_{0},0)|^{1+\alpha},
\end{equation}
for any $x'\in B'_{1/2}(x'_{0})$ and $t\in[0,T]$,
\begin{equation}\label{3ab}
|u(x',0,t)-l_{x'_{0}}(x',0,t)|\leq CF|x'-x'_{0}|^{1+\alpha},
\end{equation}
and, for any $x\in B'_{1/2}(x'_{0})\times(0,1/2)$,
\begin{equation}\label{3ac}
|u(x,0)-l_{x'_{0}}(x,0)|\leq CF|x-(x'_{0},0)|^{1+\alpha}.
\end{equation}
Next, take $w$ as in Lemma \ref{3B} with $\mu=1+\alpha$. 
By applying the maximum principle, we have, for any $(x,t)\in B'_{1/2}(x'_{0})\times (0,\frac{1}{2})\times(0,T]$,
\begin{equation}\label{3ad}
|u(x,t)-l_{x'_{0}}(x,t)|\leq CF|x-(x'_{0},0)|^{1+\alpha}.
\end{equation}
With $x'=x'_{0}$, we obtain, for any $(x,t)\in Q_{1/2,T}$,
$$
|u(x,t)-h(x',0,t)-u_{1}(x',0,t)x_{n}|\leq CFx^{1+\alpha}_{n}.
$$
Dividing by $x_{n}$ and letting $x_n\rightarrow 0$, we conclude that $\partial_{n}u(x',0,t)$ exists and is equal to $u_{1}(x',0,t)$, 
for any $x'\in B'_{1/2}$ and $t\in (0,T]$.

Step 2. We prove \eqref{3x}. We take any $x_{0}\in G_{1/2}$ and consider \eqref{3z} in $B_{3x_{0n}/4}(x_{0})\times(0,T]$. 
Set $r=3x_{0n}/4$, $M'=B_{3x_{0n}/4}(x_{0})\times(0,T]$, and $M''=B_{x_{0n}/2}(x_{0})\times(0,T]$. 
Note that, for any $(x,t)\in B_{3x_{0n}/4}(x_{0})\times(0,T]$,
$$
\frac{1}{4}x_{0n}\leq x_{n}\leq \frac{7}{4}x_{0n}.
$$
We consider
$$
v(y,t)=(u-l_{x'_{0}})(x_{0}+ry,t)\quad\text{in } Q',
$$
where $Q'=B_{1}(0)\times (0,T]$. A straightforward computation yields 
\begin{equation}\label{3ae}
\Tilde{a}_{ij}\partial_{ij}v+\Tilde{b}_{i}\partial_{i}v+\Tilde{c}v-\partial_{t}v=\Tilde{g}\quad\text{in }Q',
\end{equation}
and
$$
v(\cdot,0)=\Tilde{\psi}\quad\text{on }B_{1}(0),
$$
where
\begin{align*}
\Tilde{a}_{ij}(y,t)&=\frac{(x_{0n}+ry_{n})^{2}}{r^{2}}a_{ij}(x_{0}+ry,t),\\
\Tilde{b}_{i}(y,t)&=\frac{x_{0n}+ry_{n}}{r}b_{i}(x_{0}+ry,t),\\
\Tilde{c}(y,t)&=c(x_{0}+ry,t),
\end{align*}
and
\begin{align*}
\Tilde{g}(y,t)&=g(x_0+ry,t),\\
\Tilde{\psi}(y)&=\phi(x_{0}+ry)-\phi(x'_{0},0)-\partial_{\alpha}\phi(x'_{0},0)(ry_{\alpha})-\partial_{n}\phi(x'_{0},0)(x_{0n}+ry_{n}).
\end{align*}
By \eqref{3aa}, we have
\begin{equation}\label{3af}
\|\Tilde{g}\|_{L^{\infty}(Q')}\leq CFr^{1+\alpha}.
\end{equation}
It is easy to verify that
$$
\|\Tilde{a}_{ij}\|_{C^{\alpha}(\bar{Q}')}+\|\Tilde{b}_{i}\|_{L^{\infty}(Q')}+\|\Tilde{c}\|_{L^{\infty}(Q')}\leq C,
$$
where $C$ is a positive constant independent of $x_{0}$. Moreover, for any $\xi\in\mathbb{R}^{n}$ and any $(y,t)\in Q'$,
$$
\Tilde{a}_{ij}(y,t)\xi_{i}\xi_{j}\geq \lambda \frac{(x_{0n}+ry_{n})^{2}}{r^{2}}|\xi|^{2}\geq \frac{1}{9}\lambda |\xi|^{2}.
$$
Set $Q''=B_{2/3}(0)\times(0,T]$. 
By applying the $C^{1}$-estimate interior in space and global in time for uniformly parabolic equations to \eqref{3ae} in $Q'$,
we obtain
\begin{equation}\label{3ag}
\|v\|_{C^{1}(\bar{Q}'')}\leq C\big\{\|v\|_{L^{\infty}(Q')}+\|\Tilde{\psi}\|_{C^{2}(\bar{B}_{1}(0))}+\|\Tilde{g}\|_{L^{\infty}(Q')}\big\}.
\end{equation}
By \eqref{3ad}, we have
\begin{equation}\label{3ah}
\|v\|_{L^{\infty}(Q')}=\|u-l_{x'_{0}}\|_{L^{\infty}(M')}\leq CFr^{1+\alpha}.
\end{equation}
By $\phi\in C^{1,2+\alpha}(\bar{G}_{1})$, we have
\begin{equation}\label{3ai}
\|\Tilde{\psi}\|_{C^{2}(\bar{B}_{1}(0))}\leq  CFr^{1+\alpha}.
\end{equation}
By combining \eqref{3af}-\eqref{3ai}, we obtain
$$
r\|D_x(u-l_{x'_{0}})\|_{L^{\infty}(M'')}\leq CFr^{1+\alpha},
$$
and hence
$$
\|D_x(u-l_{x'_{0}})\|_{L^{\infty}(M'')}\leq CFr^{\alpha}.
$$
By considering $\partial_{n}$ only and evaluating at $(x_{0},t)$, we get, for any $(x,t)\in Q_{1/2,T}$,
$$
|\partial_{n}u(x,t)-u_{1}(x',0,t)|\leq CFx^{\alpha}_{n}.
$$
By $u_{1}\in C^{\alpha}(S_{1,T})$, we obtain \eqref{3x} easily. As a consequence, $\partial_{n}u$ is continuous up to $S_{1,T}$.
\end{proof}

We are ready to prove the regularity of the derivative in $x_{n}$.

\begin{thm}\label{3I}
For some constant $\alpha\in(0,1)$, assume $a_{ij},b_{i},c\in C^{1,\alpha}(\bar{Q}_{1,T})$ with \eqref{3a}. 
Suppose that, for some $f\in C^{1,\alpha}(\bar{Q}_{1,T})$ and $\phi\in C^{1,2+\alpha}(\bar{G}_{1})$, 
\eqref{3c}-\eqref{3e} admits a solution $u\in C^{2}(Q_{1,T})\cap C(\bar{Q}_{1,T})$. 
Then, for any $r\in(0,1)$, $\partial_{n}u\in C^{2+\alpha}(\bar{Q}_{r,T})$ and
$$
\|\partial_{n}u\|_{C^{2+\alpha}(\bar{Q}_{1/2,T})}\leq C\big\{\|u\|_{L^{\infty}(Q_{1,T})}+\|f\|_{C^{1,\alpha}(\bar{Q}_{1,T})}
+\|\phi\|_{C^{1,2+\alpha}(\bar{G}_{1})}\big\},
$$
where $C$ is a positive constant depending only on $n,$ $T,$ $\lambda,$ $\alpha,$  
and the $C^{1,\alpha}$-norms of $a_{ij},$ $b_{i},$ and $c$ in $\bar{Q}_{1,T}$.
\end{thm}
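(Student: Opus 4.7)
My plan is to treat $v := \partial_n u$ as the solution of an initial-boundary value problem of the form \eqref{3c}-\eqref{3e} on a slightly shrunken box $Q_{r,T}$ with $r \in (1/2,1)$, and then invoke Theorem \ref{3D}. Lemma \ref{3H} already identifies the boundary trace on $S_{r,T}$ as the explicit function $u_1$ given in \eqref{3y}, and shows that $\partial_n u$ extends continuously to $S_{r,T}$ with this value. The hypothesis $\phi \in C^{1,2+\alpha}(\bar G_1)$ furnishes the initial datum $\partial_n \phi \in C^{2+\alpha}(\bar G_r)$. Differentiating $L u = f$ formally in $x_n$ yields $L v = f_n^{eq}$ with
\[
f_n^{eq} := \partial_n f - 2 x_n a_{ij}\partial_{ij} u - x_n^2 (\partial_n a_{ij})\partial_{ij} u - b_i \partial_i u - x_n (\partial_n b_i)\partial_i u - (\partial_n c)\, u,
\]
and the required compatibility $\partial_t u_1 - c u_1 + f_n^{eq}|_{S_{r,T}} = 0$ is a direct consequence of the ODE defining $u_1$. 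Once $f_n^{eq}\in C^\alpha(\bar Q_{r,T})$ with norm controlled by $F := \|u\|_{L^\infty(Q_{1,T})} + \|f\|_{C^{1,\alpha}(\bar Q_{1,T})} + \|\phi\|_{C^{1,2+\alpha}(\bar G_1)}$, Theorem \ref{3D} applied to $v$ on a box $Q_{r',T}$ with $r'<r$ delivers both the claimed regularity and the quantitative estimate; a covering argument promotes the bound to $\bar Q_{1/2,T}$.

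The main obstacle is precisely the verification that $f_n^{eq}\in C^\alpha(\bar Q_{r,T})$. From $u\in C^{2+\alpha}(\bar Q_{1,T})$ alone (which Theorem \ref{3D} supplies), one only has $x_n D_x u\in C^\alpha$ and $x_n^2 D_x^2 u\in C^\alpha$, both vanishing on $S_{1,T}$, so the pointwise estimates $|\partial_i u|, |x_n\partial_{ij} u| = O(x_n^{\alpha-1})$ near $S_{1,T}$ make the summands $-2 x_n a_{ij}\partial_{ij} u$ and $-b_i \partial_i u$ appear unbounded. A naive differentiation of the equation is therefore not enough.

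To remedy this, I would first apply Theorem \ref{3F} in each tangential direction, yielding $\partial_\alpha u\in C^{2+\alpha}(\bar Q_{r,T})$ for $\alpha=1,\ldots,n-1$. This upgrades every derivative of $u$ that involves at least one tangential direction to the appropriate weighted $C^\alpha$-class, so that all pieces of $f_n^{eq}$ involving $x_n\partial_{n\alpha} u$, $x_n\partial_{\alpha\beta} u$, and $\partial_\alpha u$ automatically lie in $C^\alpha(\bar Q_{r,T})$. The only remaining problematic components are $\partial_n u$ and $x_n\partial_{nn} u$. The former is handled directly by Lemma \ref{3H}. For the latter, I would isolate $\partial_{nn} u$ from $Lu=f$ using the uniform ellipticity of $a_{nn}$,
\[
x_n^2 a_{nn}\partial_{nn} u = f - c u + \partial_t u - x_n b_i\partial_i u - 2 x_n^2 a_{n\alpha}\partial_{n\alpha} u - x_n^2 a_{\alpha\beta}\partial_{\alpha\beta} u,
\]
and exploit the boundary compatibility $\partial_t h - c h + f = 0$ on $S_{r,T}$ together with the $C^{1,\alpha}$-regularity of $f$ and the coefficients (and the already established tangential regularity of $u$) to show that, after division by $x_n$, the right-hand side defines a $C^\alpha(\bar Q_{r,T})$-function. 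This step is the technical core of the proof and is where the hypothesis $\phi\in C^{1,2+\alpha}$ (rather than merely $C^{2+\alpha}$) is crucial.

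With $f_n^{eq}\in C^\alpha(\bar Q_{r,T})$ in hand, Theorem \ref{3D} applies to $v$ directly and produces $v\in C^{2+\alpha}(\bar Q_{r',T})$ with the desired norm estimate; the boundary traces $x_n D_x v|_{S_{r',T}}=0$ and $x_n^2 D_x^2 v|_{S_{r',T}}=0$ are part of the conclusion of Theorem \ref{3D}, so no separate check is required. Choosing $r$ close to $1$ and covering $\bar G_{1/2}$ by finitely many such subboxes completes the proof.
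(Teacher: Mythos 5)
Your overall skeleton (differentiate in $x_n$, identify the lateral boundary value as $u_1$ via Lemma \ref{3H}, verify compatibility, then invoke Theorem \ref{3D}) matches the paper, and your use of Theorem \ref{3F} to dispose of all commutator terms containing at least one tangential derivative is exactly right. The gap is in the one step you yourself flag as ``the technical core'': showing that the purely normal pieces of your source term, namely $2x_n a_{nn}\partial_{nn}u$ and $b_n\partial_n u$, lie in $C^{\alpha}(\bar{Q}_{r,T})$. Your proposed mechanism --- isolate $x_n^2 a_{nn}\partial_{nn}u$ from $Lu=f$ and divide by $x_n$, using the compatibility $\partial_t h - ch + f=0$ --- is circular. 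Indeed, by the equation itself $f - cu + \partial_t u = -\big(x_n^2 a_{ij}\partial_{ij}u + x_n b_i\partial_i u\big)$, so $(f-cu+\partial_t u)/x_n$ \emph{is} (up to the tangential terms already controlled) the quantity $x_n a_{nn}\partial_{nn}u + b_n\partial_n u$ you are trying to bound; there is no independent handle on it, since controlling $(\partial_t u - \partial_t h)/x_n$ up to $S_{r,T}$ amounts to a bound on $\partial_{nt}u$ near the boundary, which is part of the conclusion $\partial_n u\in C^{2+\alpha}$ rather than something available from $f\in C^{1,\alpha}$ and the tangential regularity.

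The paper's resolution is not to estimate these terms at all but to absorb them into the operator: since $2x_n a_{in}\partial_{in}u = 2a_{in}\,x_n\partial_i(\partial_n u)$ and $b_n\partial_n u = b_n\,(\partial_n u)$ are first- and zeroth-order expressions in $v=\partial_n u$ itself, one writes
$$
L^{(1)} v = f_n,\qquad L^{(1)}=x_n^2 a_{ij}\partial_{ij}+x_n(b_i+2a_{in})\partial_i+(c+b_n)-\partial_t,
$$
with $f_n=\partial_n f - x_n^2\partial_n a_{ij}\partial_{ij}u - x_n\partial_n b_i\partial_i u-\partial_n c\,u-2x_n a_{i\alpha}\partial_{i\alpha}u-b_\alpha\partial_\alpha u$, every term of which is in $C^{\alpha}$ by Theorems \ref{3D} and \ref{3F} alone; $L^{(1)}$ has the same structure as $L$, so Theorem \ref{3D} applies verbatim, and the compatibility condition is \eqref{3am} with the shifted zeroth-order coefficient $c+b_n$ (your ODE check for $u_1$ is consistent with this, since the two formulations solve the same ODE). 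If you insist on keeping the original operator, the only non-circular route to $x_n\partial_{nn}u\in C^{\alpha}$ I can see is to rerun the barrier-plus-rescaling argument of Lemma \ref{3H} (the intermediate decay $|u-l_{x_0'}|\le CF|x-(x_0',0)|^{1+\alpha}$ combined with interior Schauder estimates at scale $x_n$ and the gluing Lemma \ref{3A}); that is substantially more work than the operator modification and is not what your proposal describes.
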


\begin{proof}
By a simple differentiation of $Lu=f$ with respect to $x_{n}$ 
(The differentiation is valid by the interior Schauder theory along the $x_n$ direction.) 
and using Theorem \ref{3D}, Theorem \ref{3F}, and Lemma \ref{3H}, 
we have, for any $r\in(0,1)$, $\partial_{n}u\in C^{2}(Q_{r,T})\cap C(\bar{Q}_{r,T})$ satisfies
\begin{align}
L^{(1)}(\partial_{n}u)&=f_{n}\quad\text{in }Q_{r,T},\label{3aj}\\
\partial_{n}u(\cdot,0)&=\partial_{n}\phi\quad\text{on } G_{r},\label{3ak}\\
\partial_{n}u&=u_{1}\quad\text{on }  S_{r,T},\label{3al}
\end{align}
where
\begin{align*}
L^{(1)}&=x^{2}_{n}a_{ij}\partial_{ij}+x_{n}(b_{i}+2a_{in})\partial_{i}+(c+b_{n})-\partial_{t},\\
f_{n}&=\partial_{n}f-x^{2}_{n}\partial_{n}a_{ij}\partial_{ij}u-x_{n}\partial_{n}b_{i}\partial_{i}u-\partial_{n}c u\\
&\qquad-2x_{n} a_{i\alpha}\partial_{i\alpha}u-b_{\alpha}\partial_{\alpha}u.
\end{align*}
Then, $f_n\in C^{\alpha}(\bar{Q}_{r,T})$,
$$
f_{n}=\partial_{n}f-\partial_{n}c h-b_{\alpha}\partial_{\alpha}h\quad \text{on } S_{r,T},
$$
and
\begin{align}\label{3am}\begin{split}
\partial_{t}u_{1}-(c+b_{n})u_{1}+f_{n}&=0\quad\text{on } S_{r,T},\\
u_{1}(\cdot,0,0)&=\partial_{n}\phi(\cdot,0)\quad\text{on }  B'_{r}.
\end{split}\end{align}
Thus, we can apply Theorem \ref{3D} to \eqref{3aj}-\eqref{3al} and obtain the desired regularity of $\partial_{n}u$.
\end{proof}

By combining Theorem \ref{3G} and Theorem \ref{3I}, we have the following more general result.

\begin{thm}\label{3J}
For some integer $\ell\geq 0$ and some constant $\alpha\in(0,1)$, 
assume $D^{\tau}_{x}a_{ij},D^{\tau}_{x}b_{i},D^{\tau}_{x}c\in C^{\alpha}(\bar{Q}_{1,T})$ for any $\tau\leq \ell$, with \eqref{3a}. 
Suppose that, for some
$f$ with $D^{\tau}_{x}f\in C^{\alpha}(\bar{Q}_{1,T})$ for any $\tau\leq \ell$, and $\phi\in C^{\ell,2+\alpha}(\bar{G}_{1})$, 
\eqref{3c}-\eqref{3e} admits a solution $u\in C^{2}(Q_{1,T})\cap C(\bar{Q}_{1,T})$. 
Then, for any $\tau\leq\ell$ and any $r\in(0,1)$, $D^{\tau}_{x}u\in C^{2+\alpha}(\bar{Q}_{r,T})$ and
$$
\|D^{\tau}_{x}u\|_{C^{2+\alpha}(\bar{Q}_{1/2,T})}\leq C\Big\{\|u\|_{L^{\infty}(Q_{1,T})}
+\sum^{\ell}_{i=0}\|D^{i}_{x}f\|_{C^{\alpha}(\bar{Q}_{1,T})}+\|\phi\|_{C^{\ell,2+\alpha}(\bar{G}_{1})}\Big\},
$$
where $C$ is a positive constant depending only on $n,$ $T,$ $\ell,$ $\lambda,$ $\alpha,$  and the $C^{\alpha}$-norms of
$D^{\tau}_{x}a_{ij},D^{\tau}_{x}b_{i},D^{\tau}_{x}c$ in $\bar{Q}_{1,T}$ for $\tau\leq\ell$.
\end{thm}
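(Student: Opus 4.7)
The plan is to prove Theorem \ref{3J} by induction on $\ell$. The base case $\ell=0$ is exactly Theorem \ref{3D}. For the inductive step, I would assume the conclusion for $\ell-1$ and prove it for $\ell$. Fix any $\tau\le\ell$ and write $D^\tau_x u = D^{\tau-j}_{x'}\partial_n^{j} u$ for some $0\le j\le\tau$; this rearrangement is legal since $u\in C^{\tau+2}(Q_{1,T})$ on the interior, where mixed partials commute, and we only need continuity up to $S_{1/2,T}$ of the derivatives.

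If $j=0$, the claim $D^\tau_{x'}u\in C^{2+\alpha}(\bar Q_{1/2,T})$ with the stated estimate is immediate from Theorem \ref{3G} applied with the same $\ell$, since the tangential-derivative hypotheses on $a_{ij},b_i,c,f,\phi$ are weaker consequences of the full $x$-derivative hypotheses assumed here. If $j\ge 1$, I reduce to a smaller index by differentiating $Lu=f$ once in $x_n$: as in the proof of Theorem \ref{3I}, the function $w:=\partial_n u$ lies in $C^2(Q_{r,T})\cap C(\bar Q_{r,T})$ and satisfies
\[
L^{(1)}w = f_n \text{ in } Q_{r,T},\quad w(\cdot,0)=\partial_n\phi \text{ on }G_r,\quad w=u_1 \text{ on }S_{r,T},
\]
where $L^{(1)}=x_n^2 a_{ij}\partial_{ij}+x_n(b_i+2a_{in})\partial_i+(c+b_n)-\partial_t$, $f_n$ is the explicit combination given in \eqref{3aj}, and $u_1$ from \eqref{3y} satisfies the compatibility relation \eqref{3am}. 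Applying the induction hypothesis with $\ell$ replaced by $\ell-1$ to the problem for $w$ yields $D^\sigma_x w\in C^{2+\alpha}(\bar Q_{1/2,T})$ for every $\sigma\le\ell-1$; taking $\sigma=\tau-1$ and rearranging derivatives produces $D^\tau_x u\in C^{2+\alpha}(\bar Q_{1/2,T})$ with the quantitative estimate.

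The crux is verifying the hypotheses of the inductive step for the new problem: (a) the coefficients of $L^{(1)}$ have $x$-derivatives up to order $\ell-1$ in $C^\alpha(\bar Q_{1,T})$; (b) $D^\sigma_x f_n\in C^\alpha(\bar Q_{1,T})$ for $\sigma\le\ell-1$; (c) $\partial_n\phi\in C^{\ell-1,2+\alpha}(\bar G_1)$. Items (a) and (c) are immediate from the original assumptions and from the definition of $C^{\ell,2+\alpha}$. For (b), applying the Leibniz rule to each term of $f_n$, the potentially troublesome contributions are $x_n^2\partial_n a_{ij}\partial_{ij}u$ and $x_n a_{i\alpha}\partial_{i\alpha}u$; after distributing $D^\sigma_x$, the worst piece is of the form $x_n^2 D_x^2(D^{\sigma'}_x u)$ with $\sigma'\le\ell-1$, which lies in $C^\alpha$ precisely because, by applying the induction hypothesis to $u$ itself, $D^{\sigma'}_x u\in C^{2+\alpha}$. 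The main obstacle is exactly this Leibniz bookkeeping: one must check that at each step the weights $x_n$ and $x_n^2$ accompany the correct number of spatial derivatives of $u$, so that every high-order term matches the definition of the $C^{k,2+\alpha}$-norm and stays within the scope of the inductive hypothesis for $\ell-1$. Once this is verified, the quantitative estimate follows by telescoping the estimates of Theorems \ref{3G} and \ref{3I} through the induction.
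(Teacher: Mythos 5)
Your overall scheme is essentially the paper's: induct, handle pure tangential derivatives by Theorem \ref{3G}, and gain normal derivatives by differentiating the equation in $x_n$, which produces the conjugated operator $L^{(1)}$, the source $f_n$, and the boundary value $u_1$ with the compatibility \eqref{3am}. (The paper organizes the induction slightly differently — it iterates $\partial_n$ all the way to $\partial_n^k u$ with operators $L^{(k)}$ and applies Theorem \ref{3I} once at the top, whereas you peel off one $\partial_n$ and recurse on the full statement — but both schemes are viable.) The gap is in your step (b), and it is exactly at the crux of the proof. You claim that after distributing $D^{\sigma}_x$, $\sigma\le\ell-1$, over $f_n$, "the worst piece is of the form $x_n^2D_x^2(D^{\sigma'}_xu)$ with $\sigma'\le\ell-1$," controllable by the induction hypothesis applied to $u$ itself. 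This is false for the commutator terms $-2x_na_{i\alpha}\partial_{i\alpha}u$ and $-b_\alpha\partial_\alpha u$ in $f_n$: letting all $\ell-1$ derivatives fall on $u$ produces $x_na_{i\alpha}\,\partial_{i\alpha}D^{\ell-1}_xu$ and $b_\alpha\,\partial_\alpha D^{\ell-1}_xu$, i.e.\ an $(\ell+1)$-th order derivative carrying only the weight $x_n$ and an $\ell$-th order derivative carrying no weight at all. The membership $D^{\ell-1}_xu\in C^{2+\alpha}$ gives you only $x_n^2D_x^2D^{\ell-1}_xu$, $x_nD_xD^{\ell-1}_xu\in C^{\alpha}$; it does \emph{not} give $x_nD_x^2D^{\ell-1}_xu\in C^\alpha$ or $D_xD^{\ell-1}_xu\in C^\alpha$. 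So the induction hypothesis for $u$ alone does not close the argument.

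These deficient-weight terms are rescuable only because each carries at least one tangential derivative $\partial_\alpha$: one must write $x_n\partial_i\partial_\alpha D^{\ell-1}_xu=x_n\partial_i\bigl(D^{\ell-1}_x\partial_\alpha u\bigr)$ and invoke the induction hypothesis for the problem satisfied by $\partial_\alpha u$ (Theorem \ref{3F}/\ref{3G}), which yields $D^{\ell-1}_x\partial_\alpha u\in C^{2+\alpha}$ and hence $x_nD_x\bigl(D^{\ell-1}_x\partial_\alpha u\bigr)\in C^{\alpha}$ and $D^{\ell-1}_x\partial_\alpha u\in C^\alpha$. This forces a genuine double induction — on normal and tangential orders jointly — with the tangential step carried out \emph{before} the normal step at each level; it is precisely what the paper does when it first establishes $\partial_n^\tau u,\ \partial_n^\tau D_{x'}u,\ \partial_n^{\tau-1}D^2_{x'}u,\ \partial_n^{\tau-2}D^3_{x'}u\in C^{2+\alpha}(\bar Q_{r,T})$ and then verifies $f^{(k)}\in C^{1,\alpha}$ via identities such as $\partial_n\bigl(x_n^2D_xD_{x'}\partial_n^\tau u\bigr)=x_n^2D_xD_{x'}\partial_n^{\tau+1}u+2x_nD_x\bigl(\partial_n^\tau D_{x'}u\bigr)$. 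Your proposal waves at "Leibniz bookkeeping" but misidentifies where the difficulty lies, so as written the inductive step does not go through; you need to add the tangential layer of the induction explicitly.
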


\begin{proof}
The proof is based on induction. 
First, we claim that, for any $\tau\leq\ell$ and any $r\in(0,1)$, $\partial^{\tau}_{n}u\in C^{2+\alpha}(\bar{Q}_{r,T}) $ and
$$
\|\partial^{\tau}_{n}u\|_{C^{2+\alpha}(\bar{Q}_{1/2,T})}\leq C\Big\{\|u\|_{L^{\infty}(Q_{1,T})}
+\sum^{\ell}_{i=0}\|D^{i}_{x}f\|_{C^{\alpha}(\bar{Q}_{1,T})}+\|\phi\|_{C^{\ell,2+\alpha}(\bar{G}_{1})}\Big\}.
$$
Note that Theorem \ref{3I} corresponds to $\ell=1$. 
For some fixed integer $k \geq 1$, assume the claim holds for $\ell=1, \cdots, k$ and we consider $\ell=k+1$.

For $\nu=0,1,\cdots,k$, set
$$
L^{(\nu)}=x^2_{n} a_{i j} \partial_{i j}+x_{n} b_i^{(\nu)} \partial_i+c^{(\nu)}-\partial_{t}u,
$$
where
\begin{align*}
&b^{(0)}_{i}=b_{i},\quad b_i^{(\nu)}=b_i^{(\nu-1)}+2 a_{i n}\quad\text{for }\nu=1,\cdots,k, \\
&c^{(0)}=c,\quad c^{(\nu)}=c^{(\nu-1)}+b_n^{(\nu-1)}\quad\text{for }\nu=1,\cdots,k.
\end{align*}
Set $f^{(0)}=f$, and for $\nu=1,\cdots,k,$
\begin{align*}
f^{(\nu)}&=\partial_n f^{(\nu-1)}-x^2_{n} \partial_n a_{i j} \partial_{i j} \partial_n^{\nu-1} u
-x_{n} \partial_n b_i^{(\nu-1)} \partial_i \partial_n^{\nu-1} u\\
&\qquad -\partial_n c^{(\nu-1)} \partial_n^{\nu-1} u 
-2 x_{n} a_{i \alpha} \partial_{i \alpha} \partial_n^{\nu-1} u-b_\alpha^{(\nu-1)} \partial_\alpha \partial_n^{\nu-1} u.
\end{align*}
For $\nu=0,1,\cdots,k$, set
\begin{align*}
h^{(\nu)}(x,t)&=\partial^{\nu}_{n}\phi(x)\exp\Big\{\int^{t}_{0}c^{(\nu)}(x,s)ds\Big\}\\
&\qquad-\int^{t}_{0}\exp\Big\{\int^{t}_{s}c^{(\nu)}(x,\tau)d\tau\Big\}f^{(\nu)}(x,s) d s.
\end{align*}
By differentiating \eqref{3c}-\eqref{3e} with respect to $x_{n}$ successively and using the induction hypothesis, we have, for any $r\in(0,1)$,
\begin{align}
L^{(k)}(\partial^{k}_{n}u)&=f^{(k)}\quad\text{in }Q_{r,T},\label{3an}\\
\partial^{k}_{n}u(\cdot,0)&=\partial^{k}_{n}\phi\quad\text{on } G_{r},\label{3ao}\\
\partial^{k}_{n}u&=h^{(k)}\quad\text{on }  S_{r,T},\label{3ap}
\end{align}
and
\begin{align}\label{3aq}\begin{split}
\partial_{t}h^{(k)}-c^{(k)}h^{(k)}+f^{(k)}&=0\quad\text{on }S_{r,T},\\
h^{(k)}(\cdot,0,0)&=\partial^{k}_{n}\phi(\cdot,0)\quad\text{on } B'_{r}.
\end{split}\end{align}
A simple induction yields
$$
b^{(k)}_{i}=b_{i}+2ka_{in},\quad c^{(k)}=c+k b_{n}+k(k-1)a_{nn},
$$
and
\begin{align*}
f^{(k)}&=\partial_n^k  f-\left[\partial_n^{k}(x^2_{n} a_{i j} \partial_{i j} u)-a_{i j} \partial_n^k(x^2_{n} \partial_{i j} u)\right] \\
&\qquad -\left[\partial_n^k(x_{n} b_i \partial_i u)-b_i \partial_n^k(x_{n} \partial_i u)\right]-\left[\partial_n^k(c u)
-c \partial_n^k u\right]-k b_\alpha \partial_\alpha \partial_n^{k-1} u \\
&\qquad-2 k x_{n} a_{i \alpha} \partial_{i \alpha} \partial_n^{k-1} u  -\sum_{(i, j) \neq(n, n)} k(k-1) a_{i j} \partial_{i j} \partial_n^{k-2} u.
\end{align*}
It is obvious that $f^{(k)}$ is a linear combination of
\begin{align*}
&\partial^{k}_{n} f,\ x^2_{n} D^2_{x} \partial^{\tau}_{n}u,\ x_{n}D_{x} \partial^{\tau}_{n}u,
\ x_{n}D_{x} \partial^{\tau}_{n}D_{x'}u,\  \partial^{\tau}_{n} u,\\
&\qquad\partial^{\tau}_{n} D_{x'}u,\ \partial^{\tau-1}_{n} D^{2}_{x'}u \quad\text{for }\tau \leq k-1,
\end{align*}
with coefficients given by $a_{i j}, b_i, c$, and their $x$-derivatives up to order $k$. 
Recall \eqref{3t}-\eqref{3w}. 
By Theorem \ref{3G} and the induction hypothesis, we have, for any $\tau\leq k$ and $r\in(0,1)$,
$$\partial^{\tau}_{n}u,\partial^{\tau}_{n} D_{x'} u,\partial^{\tau-1}_{n} D^{2}_{x'} u,\partial^{\tau-2}_{n} D^{3}_{x'} u\in C^{2+\alpha}(\bar{Q}_{r,T}),$$
and
\begin{align*}
&\|\partial^{\tau}_{n}u\|_{C^{2+\alpha}(\bar{Q}_{r,T})}+\|\partial^{\tau}_{n}D_{x'}u\|_{C^{2+\alpha}(\bar{Q}_{r,T})}
+\|\partial^{\tau-1}_{n}D^{2}_{x'}u\|_{C^{2+\alpha}(\bar{Q}_{r,T})}\\
&\qquad\qquad+\|\partial^{\tau-2}_{n}D^{3}_{x'}u\|_{C^{2+\alpha}(\bar{Q}_{r,T})}\\
&\qquad\leq C\Big\{\|u\|_{L^{\infty}(Q_{1,T})}+\sum^{k+1}_{i=0}\|D^{i}_{x}f\|_{C^{\alpha}(\bar{Q}_{1,T})}+\|\phi\|_{C^{k+1,2+\alpha}(\bar{G}_{1})}\Big\}.
\end{align*}
Hence, $f^{(k)}\in  C^{1,\alpha}(\bar{Q}_{r,T})$, for any $r\in(0,1)$, and
$$
\|f^{(k)}\|_{C^{1,\alpha}(\bar{Q}_{r,T})}\leq C\Big\{\|u\|_{L^{\infty}(Q_{1,T})}
+\sum^{k+1}_{i=0}\|D^{i}_{x}f\|_{C^{\alpha}(\bar{Q}_{1,T})}+\|\phi\|_{C^{k+1,2+\alpha}(\bar{G}_{1})}\Big\}.
$$
For an illustration, we consider $x^2_{n} D^2_{x} \partial^{\tau}_{n}u$ for any $\tau\leq k-1$. Note that
\begin{align*}
D_{x'}(x^2_{n} D^2_{x} \partial^{\tau}_{n}u)&=x^2_{n} D^2_{x}( \partial^{\tau}_{n}D_{x'}u),\\
\partial_n(x^2_{n} D_{x}D_{x'} \partial^{\tau}_{n}u)&=x^2_{n} D_{x} D_{x'} (\partial^{\tau+1}_{n}u)+2x_n D_x(\partial^{\tau}_{n}D_{x'}u),
\end{align*}
and
$$
\partial_n(x^2_{n} D_{x}\partial_n\partial^{\tau}_{n}u)=x^2_{n} D_{x}\partial_n(\partial^{\tau+1}_{n}u)+2x_n D_{x}(\partial^{\tau+1}_{n}u).
$$
Thus, we have  $x^2_{n} D^2_{x} \partial^{\tau}_{n}u\in  C^{1,\alpha}(\bar{Q}_{r,T})$, 
for any $\tau\leq k-1$ and any $r\in(0,1)$. We discuss other terms in $f^{(k)}$ similarly.
By applying Theorem \ref{3I} to \eqref{3an}-\eqref{3ap}, we obtain, for any $r \in(0,1)$,
$\partial^{k+1}_{n}u\in C^{2+\alpha}(\bar{Q}_{r,T})$ and
\begin{align*}
&\|\partial^{k+1}_{n}u\|_{C^{2+\alpha}(\bar{Q}_{1/2,T})}\\
&\qquad \leq C\big\{\|\partial^{k}_{n}u\|_{L^{\infty}(Q_{3/4,T})}+\|f^{(k)}\|_{C^{1,\alpha}(\bar{Q}_{3/4,T})}
+\|\partial^{k}_{n}\phi\|_{C^{1,2+\alpha}(\bar{G}_{3/4})}\big\}\\
&\qquad\leq C\Big\{\|u\|_{L^{\infty}(Q_{1,T})}+\sum^{k+1}_{i=0}\|D^{i}_{x}f\|_{C^{\alpha}(\bar{Q}_{1,T})}
+\|\phi\|_{C^{k+1,2+\alpha}(\bar{G}_{1})}\Big\}.
\end{align*}
Hence, the claim holds by induction.

Similarly, by induction and Theorem \ref{3G}, we can prove, for any nonnegative integers $\tau$ 
and $\sigma$ with $\tau+\sigma\leq k+1$, and any $r\in(0,1)$, $\partial^{\tau}_{n}D^{\sigma}_{x'}u\in C^{2+\alpha}(\bar{Q}_{r,T})$ and
\begin{align*}
\|\partial^{\tau}_{n}D^{\sigma}_{x'}u\|_{C^{2+\alpha}(\bar{Q}_{1/2,T})}
&\leq C\Big\{\|u\|_{L^{\infty}(Q_{1,T})}
+\sum^{k+1}_{i=0}\|D^{i}_{x}f\|_{C^{\alpha}(\bar{Q}_{1,T})}\\
&\qquad+\|\phi\|_{C^{k+1,2+\alpha}(\bar{G}_{1})}\Big\}.
\end{align*}
We obtain the desired result.
\end{proof}

\subsection{The Regularity of the Derivative in $t$}\label{time}
By Theorem \ref{3F} and Theorem \ref{3I}, we can directly obtain the following results.

\begin{thm}\label{3K}
For some constant $\alpha\in(0,1)$, assume $a_{ij},b_{i},c\in C^{1,\alpha}(\bar{Q}_{1,T})$ with \eqref{3a}. 
Suppose that, for some $f\in C^{1,\alpha}(\bar{Q}_{1,T})$ and $\phi\in C^{1,2+\alpha}(\bar{G}_{1})$, 
\eqref{3c}-\eqref{3e} admits a solution $u\in C^{2}(Q_{1,T})\cap C(\bar{Q}_{1,T})$. 
Then, for any $r\in(0,1)$, $u\in C^{1,2+\alpha}(\bar{Q}_{r,T})$ and
$$
\|u\|_{C^{1,2+\alpha}(\bar{Q}_{1/2,T})}\leq C\big\{\|u\|_{L^{\infty}(Q_{1,T})}+\|f\|_{C^{1,\alpha}(\bar{Q}_{1,T})}
+\|\phi\|_{C^{1,2+\alpha}(\bar{G}_{1})}\big\},
$$
where $C$ is a positive constant depending only on $n,$ $T,$ $\lambda,$ $\alpha,$  
and the $C^{1,\alpha}$-norms of $a_{ij},$ $b_{i},$ and $c$ in $\bar{Q}_{1}$.
\end{thm}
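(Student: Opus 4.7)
The plan is to unpack the definition of the $C^{1,2+\alpha}$-norm given in \eqref{eq-def-k-2+alpha-norm} for the case $k=1$. The auxiliary sum $\sum_{|\beta|+2i\le 3,\, i\ge 2}\|D^{\beta}_{x}\partial^{i}_{t}u\|_{C^{\alpha}(\bar Q_{1/2,T})}$ is vacuous, since $i\ge 2$ forces $|\beta|+2i\ge 4>3$. Consequently,
$$
\|u\|_{C^{1,2+\alpha}(\bar Q_{1/2,T})}=\|u\|_{C^{2+\alpha}(\bar Q_{1/2,T})}+\sum_{k=1}^{n}\|\partial_{k}u\|_{C^{2+\alpha}(\bar Q_{1/2,T})},
$$
and each summand on the right is exactly what one of the previously established theorems controls.

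First I would apply Theorem \ref{3D} to bound $\|u\|_{C^{2+\alpha}(\bar Q_{1/2,T})}$ by $C\{\|u\|_{L^{\infty}(Q_{1,T})}+\|f\|_{C^{\alpha}(\bar Q_{1,T})}+\|\phi\|_{C^{2+\alpha}(\bar G_{1})}\}$, which is already dominated by the right-hand side of the claimed estimate. Next, for the tangential derivatives $\partial_{k}u$ with $1\le k\le n-1$, I would invoke Theorem \ref{3F}; its hypotheses ($D_{x'}a_{ij},D_{x'}b_{i},D_{x'}c,D_{x'}f\in C^{\alpha}$ and $D_{x'}\phi\in C^{2+\alpha}$) are immediately implied by our standing assumptions $a_{ij},b_{i},c,f\in C^{1,\alpha}(\bar Q_{1,T})$ and $\phi\in C^{1,2+\alpha}(\bar G_{1})$. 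For the normal derivative $\partial_{n}u$, the hypotheses of Theorem \ref{3I} coincide with ours verbatim, and I would apply it directly to obtain the estimate of $\|\partial_{n}u\|_{C^{2+\alpha}(\bar Q_{1/2,T})}$.

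Summing the three inequalities yields the claimed bound on $\|u\|_{C^{1,2+\alpha}(\bar Q_{1/2,T})}$. The time-derivative pieces of the norm, namely the $C^{\alpha}$-control of $\partial_{t}u$ and of $\partial_{t}\partial_{k}u$, require no separate argument: they are built into the $C^{2+\alpha}$-norms of $u$ and of each $\partial_{k}u$ respectively, by the very definition in \eqref{eq-def-2+alpha-norm}. There is no genuine obstacle at this stage; all the serious work (barrier construction in Lemmas \ref{3B} and \ref{2D}, boundary decay in Lemmas \ref{3C} and \ref{3E}, weighted interior Schauder estimates via Corollary \ref{5G}, and the successive differentiation along $x'$ and $x_{n}$) has already been carried out in Subsections \ref{tan} and \ref{nor}. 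The role of Theorem \ref{3K} is organizational: it packages those results into the unified $C^{1,2+\alpha}$-statement which, together with Theorem \ref{3J}, will supply the base case for the induction establishing higher-order $C^{k,2+\alpha}$-regularity and ultimately Theorem \ref{1B}.
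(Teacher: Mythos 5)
Your proposal is correct and follows exactly the paper's route: the paper gives no written proof of Theorem \ref{3K}, stating only that it follows directly from Theorems \ref{3F} and \ref{3I}, and your decomposition of the $C^{1,2+\alpha}$-norm (with the $i\ge 2$ sum vacuous for $k=1$) together with Theorem \ref{3D} for the zeroth-order piece is precisely the intended argument.
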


By combining \eqref{3c} and Theorem \ref{3J}, we can prove the regularity of the derivative in $t$.

\begin{thm}\label{3L}
For some constant $\alpha\in(0,1)$, assume $a_{ij},b_{i},c\in C^{2,\alpha}(\bar{Q}_{1,T})$ with \eqref{3a}. 
Suppose that, for some $f\in C^{2,\alpha}(\bar{Q}_{1,T})$ and $\phi\in C^{2,2+\alpha}(\bar{G}_{1})$, 
\eqref{3c}-\eqref{3e} admits a solution $u\in C^{2}(Q_{1,T})\cap C(\bar{Q}_{1,T})$. 
Then, for any $r\in(0,1)$, $u\in C^{2,2+\alpha}(\bar{Q}_{r,T})$ and
$$
\|u\|_{C^{2,2+\alpha}(\bar{Q}_{1/2,T})}\leq C\big\{\|u\|_{L^{\infty}(Q_{1,T})}+\|f\|_{C^{2,\alpha}(\bar{Q}_{1,T})}
+\|\phi\|_{C^{2,2+\alpha}(\bar{G}_{1})}\big\},
$$
where $C$ is a positive constant depending only on $n,$ $T,$ $\lambda,$ $\alpha,$  
and the $C^{2,\alpha}$-norms of $a_{ij},$ $b_{i},$ and $c$ in $\bar{Q}_{1,T}$.
\end{thm}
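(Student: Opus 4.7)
The plan is to split the target norm $\|u\|_{C^{2,2+\alpha}(\bar Q_{1/2,T})}$ according to its definition \eqref{eq-def-k-2+alpha-norm} into the spatial sum $\sum_{|\beta|\le 2}\|D^\beta_x u\|_{C^{2+\alpha}(\bar Q_{1/2,T})}$ and the single remaining pure time term $\|\partial_t^2 u\|_{C^\alpha(\bar Q_{1/2,T})}$ (the only admissible pair in the second sum, given $k=2$ and the constraint $|\beta|+2i\le 4$, $i\ge 2$, is $|\beta|=0$, $i=2$). The spatial piece will be produced directly by Theorem \ref{3J}; the time piece will then be extracted by differentiating \eqref{3c} once more in $t$ and reading off $\partial_t^2 u$ as an algebraic combination of already-controlled quantities.

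To obtain the spatial estimate I would apply Theorem \ref{3J} with $\ell=2$. Its hypotheses hold since $a_{ij},b_i,c,f\in C^{2,\alpha}(\bar Q_{1,T})$ yields $D^\tau_x a_{ij},D^\tau_x b_i,D^\tau_x c,D^\tau_x f\in C^\alpha(\bar Q_{1,T})$ for every $\tau\le 2$, and $\phi\in C^{2,2+\alpha}(\bar G_1)$ is assumed. The theorem then provides $D^\tau_x u\in C^{2+\alpha}(\bar Q_{r,T})$ for each $r\in(0,1)$ and $\tau\le 2$, together with the quantitative bound
$$\sum_{|\beta|\le 2}\|D^\beta_x u\|_{C^{2+\alpha}(\bar Q_{1/2,T})}\le C\bigl\{\|u\|_{L^\infty(Q_{1,T})}+\|f\|_{C^{2,\alpha}(\bar Q_{1,T})}+\|\phi\|_{C^{2,2+\alpha}(\bar G_1)}\bigr\}.$$
This step already supplies the boundary-vanishing conditions $x_n D_x D^2_x u|_{S_{r,T}}=0$ and $x_n^2 D^2_x D^2_x u|_{S_{r,T}}=0$ required by the definition of $C^{2,2+\alpha}(\bar Q_{r,T})$, and, crucially, it delivers $\partial_t D^\beta_x u\in C^\alpha(\bar Q_{1/2,T})$ for every $|\beta|\le 2$ as part of each individual $\|D^\beta_x u\|_{C^{2+\alpha}}$.

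It remains to estimate $\partial_t^2 u$ in $C^\alpha$. I would rewrite \eqref{3c} as $\partial_t u=x_n^2 a_{ij}\partial_{ij}u+x_n b_i\partial_i u+cu-f$ and differentiate once more in $t$, which is a legitimate classical operation since the previous step produces genuine $C^\alpha$-derivatives $\partial_t u,\partial_t\partial_i u,\partial_t\partial_{ij}u$. The result is
$$\partial_t^2 u=x_n^2 a_{ij}\,\partial_{ij}\partial_t u+x_n b_i\,\partial_i\partial_t u+c\,\partial_t u+x_n^2\,\partial_t a_{ij}\,\partial_{ij}u+x_n\,\partial_t b_i\,\partial_i u+\partial_t c\,u-\partial_t f.$$
Each factor on the right is a bounded $C^\alpha(\bar Q_{1/2,T})$-function: the $u$-derivatives $u,\partial_i u,\partial_{ij}u,\partial_t u,\partial_i\partial_t u,\partial_{ij}\partial_t u$ are controlled by the previous paragraph; the weights $x_n,x_n^2$ are smooth and bounded; and $\partial_t a_{ij},\partial_t b_i,\partial_t c,\partial_t f$ lie in $C^\alpha(\bar Q_{1,T})$ because the $C^{2,\alpha}$-norm covers the index $i=1$, $|\beta|=0$. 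Products of bounded $C^\alpha$-functions being $C^\alpha$, we conclude $\partial_t^2 u\in C^\alpha(\bar Q_{1/2,T})$ with the bound claimed in the theorem.

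The main obstacle is conceptual rather than analytic. In this uniformly degenerate setting all higher $t$-regularity is slaved to spatial regularity through the equation itself, and one must recognize that Theorem \ref{3J} has been formulated precisely so that $\|D^\beta_x u\|_{C^{2+\alpha}}$ already encodes $\|\partial_t D^\beta_x u\|_{C^\alpha}$. Once that bookkeeping is made explicit, the $C^{2,2+\alpha}$-estimate requires no new PDE analysis — only the equation and an elementary product rule in $C^\alpha$.
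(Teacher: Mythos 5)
Your proposal is correct and follows essentially the same route as the paper: apply Theorem \ref{3J} with $\ell=2$ for the spatial part, then obtain $\partial_t^2 u$ by differentiating the relation $\partial_t u=x_n^2a_{ij}\partial_{ij}u+x_nb_i\partial_iu+cu-f$ in $t$ and bounding each resulting term via the quantities already controlled by Theorem \ref{3J}. The only cosmetic difference is that the paper justifies the extra $t$-differentiation by first citing interior $C^{4,\alpha}$ regularity from the classical parabolic Schauder theory, whereas you read it off from the $C^\alpha$-regularity of $\partial_t u$, $\partial_t\partial_iu$, $\partial_t\partial_{ij}u$ supplied by Theorem \ref{3J}; both are adequate.
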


\begin{proof}
By the Schauder theory for uniformly parabolic equations, we have $u\in C^{4,\alpha}(Q_{1,T})$. 
We need to consider the $C^{\alpha}$-norm of $\partial^2_{t}u$. By $Lu=f$, we get
$$
\partial_{t}u=x^{2}_{n} a_{ij}\partial_{ij}u+x_{n} b_{i}\partial_{i}u+cu-f,
$$
and 
\begin{align*}
\partial_{t}(\partial_{t}u)&=x^{2}_{n} \partial_{t}a_{ij}\partial_{ij}u+x_{n} \partial_{t}b_{i}\partial_{i}u+\partial_{t}cu-\partial_{t}f\\
&\qquad +x^{2}_{n} a_{ij}\partial_{ijt}u+x_{n}b_{i}\partial_{it}u+c\partial_{t}u.
\end{align*}
Then, by Theorem \ref{3J}, we have, for any $r\in(0,1)$, $\partial^2_{t}u\in C^{\alpha}(\bar{Q}_{r,T})$ and
$$
\|\partial^2_{t}u\|_{ C^{\alpha}(\bar{Q}_{1/2,T})}\leq C\big\{\|u\|_{L^{\infty}(Q_{1,T})}+\|f\|_{C^{2,\alpha}(\bar{Q}_{1,T})}
+\|\phi\|_{C^{2,2+\alpha}(\bar{G}_{1})}\big\}.
$$
We obtain the desired result.
\end{proof}

By induction, we have the following more general result.

\begin{thm}\label{3M}
For some integer $\ell\geq 0$ and some constant $\alpha\in(0,1)$, assume $a_{ij},b_{i},c\in C^{\ell,\alpha}(\bar{Q}_{1,T})$ with \eqref{3a}. 
Suppose that, for some $f\in C^{\ell,\alpha}(\bar{Q}_{1,T})$ and $\phi\in C^{\ell,2+\alpha}(\bar{G}_{1})$,
\eqref{3c}-\eqref{3e} admits a solution $u\in C^{2}(Q_{1,T})\cap C(\bar{Q}_{1,T})$. 
Then, for any $r\in(0,1)$, $u\in C^{\ell,2+\alpha}(\bar{Q}_{r,T})$ and
$$
\|u\|_{C^{\ell,2+\alpha}(\bar{Q}_{1/2,T})}\leq C\big\{\|u\|_{L^{\infty}(Q_{1,T})}+\|f\|_{C^{\ell,\alpha}(\bar{Q}_{1,T})}
+\|\phi\|_{C^{\ell,2+\alpha}(\bar{G}_{1})}\big\},
$$
where $C$ is a positive constant depending only on $n,$ $T,$ $\ell,$ $\lambda,$ $\alpha,$  
and the $C^{\ell,\alpha}$-norms of $a_{ij},$ $b_{i},$ and $c$ in $\bar{Q}_{1,T}$.
\end{thm}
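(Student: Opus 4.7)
The plan is to proceed by induction on $\ell$. The cases $\ell=0$, $1$, and $2$ are handled by Theorems~\ref{3D}, \ref{3K}, and \ref{3L}, so I assume the result for $\ell-1\ge 2$ and derive it for $\ell$. Applying Theorem~\ref{3J} with index $\ell$, I immediately obtain $D^{\tau}_{x}u\in C^{2+\alpha}(\bar{Q}_{1/2,T})$ for every $|\tau|\le\ell$, with the expected bound. By the very definition of the $C^{2+\alpha}$-norm, this controls the first sum $\sum_{|\beta|\le\ell}\|D^{\beta}_{x}u\|_{C^{2+\alpha}(\bar{Q}_{1/2,T})}$ in $\|u\|_{C^{\ell,2+\alpha}(\bar{Q}_{1/2,T})}$ and produces as byproducts $D^{\tau}_{x}\partial_{t}u\in C^{\alpha}$ for $|\tau|\le\ell$ together with the weighted derivatives $x_{n}^{m}D^{|\tau|+m}_{x}u\in C^{\alpha}$ for $|\tau|\le\ell$ and $m=1,2$.

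It then remains to bound $\|D^{\beta}_{x}\partial^{i}_{t}u\|_{C^{\alpha}(\bar{Q}_{1/2,T})}$ for $i\ge 2$ and $|\beta|+2i\le\ell+2$, which I do by a secondary induction on $i$. Solving $Lu=f$ for $\partial_{t}u$ and applying $D^{\beta}_{x}\partial^{i-1}_{t}$ via Leibniz, every summand of $D^{\beta}_{x}\partial^{i}_{t}u$ has the generic form
$$D^{\beta_{1}}_{x}\partial^{k_{1}}_{t}(\mathrm{coeff})\cdot x_{n}^{m-s}D^{|\beta_{2}|+m-s}_{x}\partial^{k_{2}}_{t}u\quad\text{or}\quad D^{\beta}_{x}\partial^{i-1}_{t}f,$$
with $\beta_{1}+\beta_{2}=\beta$, $k_{1}+k_{2}=i-1$, $m\in\{0,1,2\}$, and $0\le s\le m$. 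The coefficient factor has parabolic order $|\beta_{1}|+2k_{1}\le|\beta|+2(i-1)\le\ell$, hence lies in $C^{\alpha}$ by the hypothesis $a_{ij},b_{i},c\in C^{\ell,\alpha}$; the source term $D^{\beta}_{x}\partial^{i-1}_{t}f$ is controlled the same way via $f\in C^{\ell,\alpha}$.

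For the $u$-factor, the governing parabolic-order bound is $|\beta_{2}|+m-s+2k_{2}\le|\beta|+2+2(i-1)=|\beta|+2i\le\ell+2$. When $k_{2}\in\{0,1\}$ the weighted derivative $x_{n}^{m-s}D^{|\beta_{2}|+m-s}_{x}\partial^{k_{2}}_{t}u$ is absorbed by Step~1: for $k_{2}=0$ it is one of the explicit ingredients of $\|D^{|\beta_{2}|}_{x}u\|_{C^{2+\alpha}}$, while for $k_{2}=1$ it reduces, using $|\beta_{2}|+m-s\le\ell$, to a term of the form $x_{n}^{m-s}D^{\tau}_{x}\partial_{t}u$ with $|\tau|\le\ell$, which lies in $C^{\alpha}$ by the byproducts of Step~1. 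When $k_{2}\ge 2$ the secondary induction hypothesis applies (since $k_{2}\le i-1<i$) and yields the bound directly. The main obstacle is the combinatorial bookkeeping: verifying that the parabolic-order inequality saturates in precisely the way needed so that the weight $x_{n}^{m-s}$ absorbs exactly the regularity that neither Step~1 nor the $C^{\ell-1,2+\alpha}$-estimate alone would supply. Once this is checked, the secondary induction on $i$ closes and completes the induction on $\ell$.
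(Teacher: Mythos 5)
Your proposal is correct and follows essentially the same route as the paper: the spatial derivatives come from Theorem \ref{3J}, and the time derivatives $D^{\beta}_{x}\partial^{i}_{t}u$ with $i\geq 2$ are obtained by solving $Lu=f$ for $\partial_{t}u$ and inducting on the number of time derivatives, exactly as in the paper's proof (which runs the induction on $\nu=i$, with the base case $\nu=2$ handled by applying Theorem \ref{3L} to $D^{\beta}_{x}u$). The outer induction on $\ell$ in your write-up is vestigial---Theorem \ref{3J} at level $\ell$ together with your secondary induction on $i$ already suffices---but this does not affect correctness.
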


\begin{proof}
We will only discuss the case that $\ell$ is an even number. 
The discussion is similar if $\ell$ is an odd number.  
By the Schauder theory for uniformly parabolic equations, we have $u\in C^{\ell+2,\alpha}(Q_{1,T})$. We need to prove, for any nonnegative integers $\tau$ and $\nu$ with $\tau+2\nu\leq\ell+2$ and $\nu>1$, and any $r\in(0,1)$,
\begin{equation}\label{3ar}
D^{\tau}_{x}\partial^{\nu}_{t}u\in C^{\alpha}(\bar{Q}_{r,T}),
\end{equation}
and
\begin{equation}
\begin{aligned}\label{3as}
&\|D^{\tau}_{x}\partial^{\nu}_{t}u\|_{C^{\alpha}(\bar{Q}_{1/2,T})}\\
&\qquad\leq C\big\{\|u\|_{L^{\infty}(Q_{1,T})}
+\|f\|_{C^{\ell,\alpha}(\bar{Q}_{1,T})}+\|\phi\|_{C^{\ell,2+\alpha}(\bar{G}_{1})}\big\}.
\end{aligned}
\end{equation}
First, we consider $\nu=2$ and $\tau\leq\ell-2$. Take $\beta\in\mathbb{Z}^{n}_{+}$ with $|\beta|=\tau$. 
By the discussion in Subsection \ref{tan} and Subsection \ref{nor} and using 
Theorem \ref{3J}, 
we know that $D^{\beta}_{x}u$ satisfies a problem similar to \eqref{3c}-\eqref{3e} 
and the assumption of Theorem \ref{3L} is satisfied. 
By applying Theorem \ref{3L} to $D^{\beta}_{x}u$, we have, 
for any $r\in(0,1)$, $D^{\beta}_{x}\partial^2_{t}u\in C^{\alpha}(\bar{Q}_{r,T})$ and \eqref{3as} holds.

We fix an integer $k$ with $3\leq k\leq m$, for $m=\ell/2+1$.
We assume \eqref{3ar} and \eqref{3as} for any $2\leq\nu\leq k-1$ and $\tau\leq\ell+2-2\nu$, 
and proceed to prove \eqref{3ar} and \eqref{3as} for $\nu=k$ and $\tau\leq\ell+2-2k$. 
By $Lu=f$, we have, for any $0\leq \sigma\leq \tau$,
$$
D^{\tau-\sigma}_{x'}\partial^{\sigma}_{n}\partial^{\nu}_{t}u
=D^{\tau-\sigma}_{x'}\partial^{\sigma}_{n}\partial^{\nu-1}_{t}(x^{2}_{n} a_{ij}\partial_{ij}u+x_{n} b_{i}\partial_{i}u+cu-f).
$$
A straightforward computation yields
\begin{align*}
D^{\tau-\sigma}_{x'}\partial^{\sigma}_{n}\partial^{\nu-1}_{t}(x^{2}_{n}\partial_{ij}u)
&=x^{2}_{n}D^{\tau-\sigma}_{x'}\partial_{ij}\partial^{\sigma}_{n}\partial^{\nu-1}_{t}u
+2\sigma x_{n}D^{\tau-\sigma}_{x'}\partial_{ij}\partial^{\sigma-1}_{n}\partial^{\nu-1}_{t}u\\
&\qquad+\sigma(\sigma-1)D^{\tau-\sigma}_{x'}\partial_{ij}\partial^{\sigma-2}_{n}\partial^{\nu-1}_{t}u,
\end{align*}
and
$$
D^{\tau-\sigma}_{x'}\partial^{\sigma}_{n}\partial^{\nu-1}_{t}(x_{n}\partial_{i}u)
=x_{n}D^{\tau-\sigma}_{x'}\partial_{i}\partial^{\sigma}_{n}\partial^{\nu-1}_{t}u
+\sigma D^{\tau-\sigma}_{x'}\partial_{i}\partial^{\sigma-1}_{n}\partial^{\nu-1}_{t}u.
$$
By \eqref{3ar} and \eqref{3as} for $2\leq \nu\leq k-1$ and $\tau\leq \ell+2-2\nu$ and Theorem \ref{3J}, 
we establish \eqref{3ar} and \eqref{3as} for $D^{\tau-\sigma}_{x'}\partial^{\sigma}_{n}\partial^{\nu}_{t}u$,
 for $2\leq \nu\leq k$, $\tau\leq \ell+2-2\nu$, and $0\leq \sigma\leq \tau$. 
 Hence, we obtain \eqref{3ar} and \eqref{3as} for $2\leq \nu\leq k$ and $\tau\leq \ell+2-2\nu$.
\end{proof}

Theorem \ref{3M} implies Theorem \ref{1B} easily.

\section{The Convergence of Solutions}\label{sec-Convergence}

In this section, we discuss solutions $u=u(x,t)$ for large $t$.
We first study the convergence of the boundary values, and then discuss the $L^{\infty}$-convergence, 
the $C^{2+\alpha}_{\ast}$-convergence, and the $C^{k,2+\alpha}_{\ast}$-convergence of solutions in sequence. 
Based on Sections 2 and 3, we will use standard interior estimates for parabolic equations, not estimates global in time.

First, we prove the following lemma concerning the 
convergence of the boundary values.

\begin{lemma}\label{4A}
Let $\Omega$ be a bounded domain in $\mathbb{R}^{n}$ with a $C^{1}$-boundary $\partial\Omega$ 
and $S=\partial\Omega\times[0,\infty)$. 
Let $c,f\in C(S)\cap L^{\infty}(S)$ and $\bar{c},\bar{f},\phi\in C(\partial\Omega)$, with $\bar{c}<0$ on $\partial\Omega$. 
Define, for any $(x,t)\in S$,
$$
h(x,t)=\phi(x)\exp\left\{\int^{t}_{0}c(x,s)ds\right\}-\int^{t}_{0}\exp\left\{\int^{t}_{s}c(x,\tau)d\tau\right\}f(x,s) d s.
$$


$\mathrm{(i)}$  Assume $c(\cdot,t),f(\cdot,t)$ converge to $\bar{c},\bar{f}$ in $L^{\infty}(\partial\Omega)$ 
as $t\rightarrow\infty$, respectively, then
$h(\cdot,t)$ converges to $\bar{f}/\bar{c}$ in $L^{\infty}(\partial\Omega)$ as $t\rightarrow\infty$.

$\mathrm{(ii)}$  For some constant $\alpha\in(0,1)$, assume $c,f\in C^{\alpha}(S)$ 
and $\bar{c},\bar{f},\phi\in C^{\alpha}(\partial\Omega)$. 
Assume $c(\cdot,t),f(\cdot,t)$ converge to $\bar{c},\bar{f}$ in $C^{\alpha}(\partial\Omega)$ as $t\rightarrow\infty$, respectively, then
$h(\cdot,t)$ converges to $\bar{f}/\bar{c}$ in $C^{\alpha}(\partial\Omega)$ as $t\rightarrow\infty$.

$\mathrm{(iii)}$ Assume $c(\cdot,t),f(\cdot,t)$ converge to $\bar{c},\bar{f}$ 
in $L^{\infty}(\partial\Omega)$ as $t\rightarrow\infty $, respectively, 
then $\partial_{t}h(\cdot,t)$ converges to $0$ in $L^{\infty}(\partial\Omega)$ as $t\rightarrow\infty$. 
In particular, $[h(x,\cdot)]_{C^{0,1}([T,T+1])}$ converges to $0$ uniformly in $x\in\partial\Omega$, as $T\rightarrow\infty$.
\end{lemma}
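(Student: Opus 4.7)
The starting point is that \eqref{1l} is equivalent to the first-order linear ODE (in $t$, with $x\in\partial\Omega$ as a parameter)
\begin{equation*}
\partial_t h(x,t)=c(x,t)h(x,t)-f(x,t),\qquad h(x,0)=\phi(x),
\end{equation*}
so the problem is really a pointwise-in-$x$ asymptotic analysis of a scalar ODE, with $C^{\alpha}$-tracking in the $x$-parameter for part (ii). Since $\bar{c}<0$ on the compact set $\partial\Omega$, there exists $c_0>0$ with $\bar{c}\le -c_0$, and by the assumed uniform convergence $c(\cdot,t)\to\bar{c}$ there is $T_0$ with $c(x,t)\le -c_0/2$ for all $x\in\partial\Omega$ and $t\ge T_0$. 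Set $H=\bar{f}/\bar{c}\in C(\partial\Omega)$ (and $\in C^{\alpha}(\partial\Omega)$ in part (ii), by $\bar{c}\le-c_0$) and $g=h-H$; then $g$ satisfies $\partial_t g=cg+R$ with $R(x,t)=(c(x,t)-\bar{c}(x))H(x)+\bar{f}(x)-f(x,t)$, and $R(\cdot,t)\to 0$ in $L^{\infty}$ (resp.\ $C^{\alpha}$). Variation of parameters gives, for $t\ge T_0$,
\begin{equation*}
g(x,t)=g(x,T_0)\,e^{\int_{T_0}^{t}c(x,s)\,ds}+\int_{T_0}^{t}e^{\int_{s}^{t}c(x,\tau)\,d\tau}R(x,s)\,ds.
\end{equation*}

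\textbf{Part (i).} The first term is bounded by $\|g(\cdot,T_0)\|_{\infty}e^{-c_0(t-T_0)/2}\to 0$. For the integral, given $\varepsilon>0$, choose $T_1\ge T_0$ with $\|R(\cdot,s)\|_{\infty}\le\varepsilon$ for $s\ge T_1$; splitting at $T_1$, the $[T_0,T_1]$-part is bounded by $\|R\|_{\infty}\cdot\tfrac{2}{c_0}e^{-c_0(t-T_1)/2}$, which vanishes as $t\to\infty$, while the $[T_1,t]$-part is bounded by $2\varepsilon/c_0$. Letting $t\to\infty$ then $\varepsilon\to 0$ gives $\|g(\cdot,t)\|_{\infty}\to 0$.

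\textbf{Part (ii).} The same representation applies; we need to control the $C^{\alpha}(\partial\Omega)$-seminorm. Convergence in $C^{\alpha}$ implies boundedness, so $M:=\sup_{t\ge 0}[c(\cdot,t)]_{C^{\alpha}(\partial\Omega)}<\infty$. Using $|e^{A}-e^{B}|\le e^{\max(A,B)}|A-B|$, for $T_0\le s\le t$,
\begin{equation*}
\big[e^{\int_{s}^{t}c(\cdot,\tau)\,d\tau}\big]_{C^{\alpha}(\partial\Omega)}\le e^{-c_0(t-s)/2}\,M(t-s),\qquad \big\|e^{\int_{s}^{t}c(\cdot,\tau)\,d\tau}\big\|_{\infty}\le e^{-c_0(t-s)/2}.
\end{equation*}
Combining the Hölder product rule with these bounds,
\begin{equation*}
\big[e^{\int_{s}^{t}c}R(\cdot,s)\big]_{C^{\alpha}}\le e^{-c_0(t-s)/2}\big(M(t-s)\|R(\cdot,s)\|_{\infty}+[R(\cdot,s)]_{C^{\alpha}}\big).
\end{equation*}
Since $e^{-c_0(t-s)/2}(t-s)$ is uniformly bounded and integrable on $[T_0,\infty)$, the same $\varepsilon$-splitting as in part (i) drives $[g(\cdot,t)]_{C^{\alpha}}\to 0$; the initial-data term decays similarly by the same elementary inequality applied to $g(\cdot,T_0)$. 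Together with part (i), $h(\cdot,t)\to\bar{f}/\bar{c}$ in $C^{\alpha}(\partial\Omega)$.

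\textbf{Part (iii).} Rewrite the ODE as
\begin{equation*}
\partial_t h=(c-\bar{c})h+\bar{c}(h-H)+(\bar{f}-f).
\end{equation*}
By part (i), $h(\cdot,t)$ is uniformly bounded and $h-H\to 0$ in $L^{\infty}(\partial\Omega)$; combined with $c-\bar{c}\to 0$, $\bar{f}-f\to 0$ in $L^{\infty}$, each term on the right tends to $0$ in $L^{\infty}(\partial\Omega)$, so $\partial_t h(\cdot,t)\to 0$ in $L^{\infty}(\partial\Omega)$. The Lipschitz statement follows from the mean value theorem: for any $t_1,t_2\in[T,T+1]$ and $x\in\partial\Omega$,
\begin{equation*}
|h(x,t_1)-h(x,t_2)|\le\sup_{s\in[T,T+1]}\|\partial_t h(\cdot,s)\|_{L^{\infty}(\partial\Omega)}\cdot|t_1-t_2|\xrightarrow{T\to\infty}0.
\end{equation*}

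\textbf{Main obstacle.} The only substantive point is the $C^{\alpha}$-step in part (ii): even though $[c(\cdot,t)]_{C^{\alpha}}$ stays bounded, $[\int_{s}^{t}c(\cdot,\tau)\,d\tau]_{C^{\alpha}}$ grows linearly in $t-s$, so the Hölder seminorm of the propagator $\exp\{\int_{s}^{t}c\}$ does not decay quite as fast as the propagator itself. The elementary inequality $|e^{A}-e^{B}|\le e^{\max(A,B)}|A-B|$ is the pivot that converts this into the integrable bound $e^{-c_0(t-s)/2}(t-s)$, after which the rest of the argument reduces to the $L^{\infty}$-argument of part (i).
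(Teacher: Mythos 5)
Your proof is correct, and it takes a recognizably different route from the paper's. The paper works directly with the explicit formula for $h$: it splits $h=J_1+J_2+J_3$ (initial-data term, integral over $[0,\sigma]$, integral over $[\sigma,t]$) and then compares $J_3$ with the candidate limit by further splitting $J_3=I_1+I_2+I_3$, where $I_1$ replaces $f$ by $\bar f$ in the integrand, $I_2$ replaces $c$ by $\bar c$ in the exponent via the interpolation $\frac{d}{d\rho}\exp\{\rho\int\bar c+(1-\rho)\int c\}$ (yielding the integrable factor $(t-s)e^{-c_0(t-s)}$), and $I_3$ is computed exactly. You instead subtract the limit first: $g=h-\bar f/\bar c$ solves $\partial_t g=cg+R$ with $R=(c-\bar c)\bar f/\bar c+(\bar f-f)\to 0$, and Duhamel from time $T_0$ plus the standard $\varepsilon$-splitting finishes part (i) without ever comparing two propagators — the paper's $I_2$ step simply disappears. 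The trade-off is that the analogue of that step resurfaces in your part (ii), where $[\int_s^t c(\cdot,\tau)\,d\tau]_{C^\alpha}$ grows like $t-s$ and you control the propagator's H\"older seminorm by $|e^A-e^B|\le e^{\max(A,B)}|A-B|$, recovering the same integrable bound $(t-s)e^{-c_0(t-s)/2}$; this is morally identical to the paper's $I_2$ estimate. Notably, you actually write out part (ii), which the paper declares ``similar but more complicated'' and omits, and your argument there is sound (the only points worth making explicit are that $\sup_{t\ge 0}[c(\cdot,t)]_{C^\alpha(\partial\Omega)}<\infty$ uses $c\in C^\alpha(S)$ on the compact piece $S_{T_0}$ in addition to the convergence for large $t$, and that $g(\cdot,T_0)\in C^\alpha(\partial\Omega)$ follows from the explicit formula). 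Your part (iii) uses the same identity $\partial_t h=ch-f$ from \eqref{1l} as the paper, with an equivalent regrouping of terms.
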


\begin{proof}
(i) Fix an $\varepsilon>0$. By assumptions, we can take positive constants $\sigma$ and $c_{0}$ such that
\begin{align*}
&c(x,t)<-c_0,\\
&|c(x,t)-\bar{c}(x)|<\varepsilon,\quad|f(x,t)-\bar{f}(x)|<\varepsilon\quad\text{for any }(x,t)\in S\backslash S_{\sigma}.
\end{align*}
Here, $S_\sigma=\partial\Omega\times [0, \sigma]$ as introduced before. 
Note that $c_{0}$ does not need to depend on $\varepsilon$.
We write
\begin{align*}
h(x,t)=J_1+J_2+J_3,
\end{align*}
where
\begin{align*}
J_1&=\phi(x)e^{\int^{t}_{0}c(x,s)ds},\\
J_2&=-\int^{\sigma}_{0}e^{\int^{t}_{s}c(x,\tau)d\tau}f(x,s) d s,\\
J_3&=-\int^{t}_{\sigma}e^{\int^{t}_{s}c(x,\tau)d\tau}f(x,s) d s.
\end{align*}
Then, for any $t\geq\sigma$, we have
\begin{align*}
|J_1|&=|\phi(x)|e^{\int^{\sigma}_{0}c(x,s)ds}e^{\int^{t}_{\sigma}c(x,s)ds}
\leq Ce^{-c_{0}(t-\sigma)}\rightarrow 0\quad\text{as }t\rightarrow\infty,\\
|J_2|&=\Big|\int^{\sigma}_{0}e^{\int^{\sigma}_{s}c(x,\tau)d\tau}e^{\int^{t}_{\sigma}c(x,\tau)d\tau}f(x,s) d s\Big|
\leq Ce^{-c_{0}(t-\sigma)}\quad\text{as }t\rightarrow\infty.
\end{align*}
Hence, there exists a constant $t_{0}\geq\sigma$ such that, for any $(x,t)\in S\backslash S_{t_{0}}$, 
\begin{equation}\label{j1j2}
|J_1|+|J_2|<\varepsilon.
\end{equation}
Now we consider $J_3$. For any $t\geq\sigma$, we write
$$
J_3=I_{1}+I_{2}+I_{3},
$$
where
\begin{align*}
I_{1}&=\int^{t}_{\sigma}e^{\int^{t}_{s}c(x,\tau)d\tau}[\bar{f}(x)-f(x,s)]d s,\\
I_{2}&=\bar{f}(x)\int^{t}_{\sigma}\Big[e^{\int^{t}_{s}\bar{c}(x)d\tau}-e^{\int^{t}_{s}c(x,\tau)d\tau}\Big]ds,\\
I_{3}&=-\bar{f}(x)\int^{t}_{\sigma}e^{\int^{t}_{s}\bar{c}(x)d\tau}ds.
\end{align*}
For $I_{1}$, we have
\begin{equation}\label{4c}
|I_{1}|\leq \varepsilon\int^{t}_{\sigma}e^{-c_{0}(t-s)}ds\leq \frac{\varepsilon}{c_{0}}.
\end{equation}
For $I_{2}$, we get
\begin{align*}
I_{2}&=\bar{f}(x)\int^{t}_{\sigma}\int^{1}_{0}\frac{d}{d\rho}e^{\rho\int^{t}_{s}\bar{c}(x)d\tau+(1-\rho)\int^{t}_{s}c(x,\tau)d\tau}d\rho d s\\
&=\bar{f}(x)\int^{t}_{\sigma}\int^{1}_{0}e^{\rho\int^{t}_{s}\bar{c}(x)d\tau+(1-\rho)\int^{t}_{s}c(x,\tau)d\tau}
\Big[\int^{t}_{s}(\bar{c}(x)-c(x,\tau))d\tau\Big]d\rho d s.
\end{align*}
Hence,
\begin{equation}\label{4d}
|I_{2}|\leq C\varepsilon\int^{t}_{\sigma}(t-s)e^{-c_{0}(t-s)}ds\leq C\varepsilon.
\end{equation}
For $I_{3}$, we have
$$
\Big|I_{3}-\frac{\bar{f}}{\bar{c}}\Big|
=\Big|\frac{\bar{f}}{\bar{c}}\Big|e^{\bar{c}(t-\sigma)}\leq \frac{C}{c_{0}}e^{-c_{0}(t-\sigma)}\rightarrow 0\quad\text{as }t\rightarrow\infty.
$$
Hence, there exists a constant $t_{1}\geq\sigma$ such that, for any $(x,t)\in S\backslash S_{t_{0}}$, 
\begin{equation}\label{4e}
\Big|I_{3}-\frac{\bar{f}}{\bar{c}}\Big|<\varepsilon.
\end{equation}
By combining \eqref{j1j2}-\eqref{4e}, we have, for any $t\geq \max\{t_0,t_1\}$,
$$
\Big\|h(\cdot,t)-\frac{\bar{f}}{\bar{c}}(\cdot)\Big\|_{L^{\infty}(\partial\Omega)}\leq C\varepsilon,
$$
where $C$ is a positive constant independent of $\varepsilon$. This finishes the proof of (i).

The proof of (ii) is similar to that of (i) but more complicated. We omit the proof.

(iii) By (i), we have $h(\cdot,t)$ converging to $\bar{f}/\bar{c}$ in $L^{\infty}(\partial\Omega)$ as $t\rightarrow\infty$. 
By \eqref{1l}, we have, for any $(x,t)\in S$,
\begin{align*}
\partial_{t}h(x,t)&=c(x,t)h(x,t)-f(x,t)\\
&=c(x,t)\Big(h(x,t)-\frac{\bar{f}(x)}{\bar{c}(x)}\Big)+\big(c(x,t)-\bar{c}(x)\big)\frac{\bar{f}(x)}{\bar{c}(x)}\\
&\qquad+(\bar{f}(x)-f(x,t)).
\end{align*}
Hence, we conclude that $\partial_{t}h(\cdot,t)$ converges to $0$ in $L^{\infty}(\partial\Omega)$ as $t\rightarrow\infty$.
We have the desired result. 
\end{proof}

\subsection{$L^{\infty}$-Convergence}\label{infty-convergence}

We now derive the $L^{\infty}$-decay of solutions of the initial-boundary value problem 
by modifying the proof of Theorem 1 in Chapter 6 of \cite{F1964}.

\begin{thm}\label{4C}
Let $\Omega$ be a bounded domain in $\mathbb{R}^n$, with a $C^{1}$-boundary $\partial\Omega$ 
and a $C^{1}(\bar{\Omega})$-defining function $\rho$. 
Assume that $a_{ij},b_{i},c,f\in C(\bar{Q})\cap L^{\infty}(Q)$ and $\bar{c}\in C(\bar{\Omega})$, with \eqref{1g} and \eqref{1o}, 
and that $c(\cdot,t)$ converges to $\bar{c}$ in $L^{\infty}(\Omega)$ 
and $f(\cdot,t)$ converges to $0$ in $L^{\infty}(\Omega)$ as $t\rightarrow\infty$. 
Suppose, for some $\phi\in C(\bar{\Omega})$, the  initial-boundary value problem \eqref{1i}-\eqref{1k} 
admits a solution $u\in  C^{2}(Q)\cap C(\bar{Q})$. 
Then, $u(\cdot,t)$ converges to $0$ in $L^{\infty}(\Omega)$ as $t\rightarrow\infty$.
\end{thm}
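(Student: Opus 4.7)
The plan is to combine Lemma~\ref{4A} with a maximum principle comparison argument in the spirit of Friedman's Chapter 6, Theorem 1, adapted to the degeneracy of $L$. First, by Lemma~\ref{4A}(i) applied with $\bar f=0$, we have $h(\cdot,t)\to 0$ in $L^\infty(\partial\Omega)$ as $t\to\infty$. A uniform bound $\|u\|_{L^\infty(\bar Q)}\le M_0$ follows from Lemma~\ref{2B} applied on successive unit time slabs $\Omega\times[T,T+1]$: the growth constant $e^{c_0}$ from that lemma stays bounded uniformly in $T$ because $\sup_{\bar Q}c<+\infty$ by \eqref{1o}, while $\sup_S|h|<+\infty$ is immediate from the explicit representation of $h$ in terms of bounded data.

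Fix $\varepsilon>0$. Using the hypotheses together with the boundary convergence from Lemma~\ref{4A}, we choose $T_0>0$ and $r_0,c_0>0$ such that for all $t\ge T_0$ and $x\in\bar\Omega$:
\begin{align*}
\|f(\cdot,t)\|_{L^\infty(\Omega)}\le\varepsilon,\qquad \|h(\cdot,t)\|_{L^\infty(\partial\Omega)}\le\varepsilon,
\end{align*}
and $c(x,t)\le -c_0/2$ on the boundary collar $\{d(x,\partial\Omega)\le r_0\}$; this uses the $L^\infty$-convergence $c\to\bar c$ together with the strict negativity $\bar c<0$ on $\partial\Omega$ provided by \eqref{1o}. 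The heart of the argument is the construction of a space-time supersolution $v$ on $\bar\Omega\times[T_0,\infty)$ of the general shape
\begin{align*}
v(x,t)=K\varepsilon\,\omega(x)+M_0\,e^{-\gamma(t-T_0)}\chi(x),
\end{align*}
where $\omega\in C^2(\bar\Omega)$ is a positive spatial barrier, $\chi\in C^2(\bar\Omega)$ is a smooth cutoff vanishing near $\partial\Omega$, and $K,\gamma>0$ are constants to be chosen. The functions $\omega$ and $\chi$ are engineered, using the strict dissipation $c\le -c_0/2$ in the collar together with the uniform ellipticity of the interior part $\rho^2 a_{ij}\partial_{ij}+\rho b_i\partial_i$, so that $Lv\le f$ in $\Omega\times(T_0,\infty)$ and $v\ge |u|$ on the parabolic boundary (absorbing the initial bound $|u(\cdot,T_0)|\le M_0$ via the decaying exponential factor and the boundary data $|h|\le\varepsilon$ via $K\varepsilon\omega$). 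The maximum principle, Lemma~\ref{2A}, then gives $-v\le u\le v$ on $\bar\Omega\times[T_0,\infty)$; letting $t\to\infty$ and then $\varepsilon\downarrow 0$ yields the desired convergence $u(\cdot,t)\to 0$ in $L^\infty(\Omega)$.

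The main obstacle is the construction of the supersolution $v$. Because $\bar c$ may vanish in the interior of $\Omega$, and the coefficients $a_{ij},b_i$ are only assumed bounded (with no convergence as $t\to\infty$), no pointwise dissipation is available away from the boundary; the supersolution must therefore transport the boundary dissipation from the collar $\{d\le r_0\}$ into the interior through the uniformly elliptic diffusion, with the barriers of Lemmas~\ref{2D} and~\ref{2E} serving as building blocks for the spatial profile of $\omega$ near $\partial\Omega$ and the exponential-decay factor compensating for the initial bound on the interior. This boundary-driven mechanism is precisely what distinguishes the degenerate problem from the uniformly parabolic case treated by Friedman, and the careful calibration of $K$, $\gamma$, $\omega$, and $\chi$ so that $Lv\le f$ holds uniformly in $\Omega\times(T_0,\infty)$ is the step that carries the essential weight of the proof.
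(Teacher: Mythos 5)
Your overall strategy is the paper's: bound $u$ up to a time $T_0$, use Lemma \ref{4A}(i) to make $h$ small beyond $T_0$, and then compare $\pm u$ on $\Omega\times(T_0,\infty)$ with a supersolution of the form $A\varepsilon\cdot(\text{spatial barrier})+B e^{-\gamma(t-T_0)}\cdot(\text{spatial profile})$. But the step you defer as ``engineering'' is the entire content of the proof, and the specific ansatz you propose for the transient term does not work. If $\chi$ is a cutoff vanishing near $\partial\Omega$, it is constant (equal to $1$) on an open interior region, and there
$$L\bigl(M_0e^{-\gamma(t-T_0)}\chi\bigr)=M_0e^{-\gamma(t-T_0)}(c+\gamma)\chi.$$
Since \eqref{1o} only gives $\limsup_{t\to\infty}\sup_{\bar\Omega}c\le 0$ (and $\bar c$ may vanish in the interior), $c+\gamma$ can be positive there for every $\gamma>0$, so $Lv\le f$ fails precisely where you need the transient term to dominate. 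A spatial profile that is locally constant can never be a strict supersolution of the spatial operator where $c=0$; the profile must be genuinely non-constant in the interior so that the uniformly elliptic second-order term produces the dissipation. The paper resolves this by using a single global barrier $z(x)=r^2(e^{2\mu}-e^{\mu x_1/r})$ (the one from Lemma \ref{2B}) for \emph{both} terms: for $\mu$ and $T_0$ large one has $Lz\le-1$ on all of $Q\setminus Q_{T_0}$ — in the collar $\{\rho\le\rho_0\}$ via $c\le-c_0$, and in the interior $\{\rho\ge\rho_0\}$ via $\rho^2a_{11}\mu^2e^{\mu x_1/r}\ge\rho_0^2\lambda\mu^2$ — and the uniform strictness $-1$ then absorbs the extra $+Hz$ coming from $\partial_t e^{-H(t-T_0)}$ once $H\le 1/\sup_\Omega z$. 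Your heuristic that the interior dissipation is ``transported from the boundary through the diffusion'' is also not what happens: it is generated locally by the ellipticity acting on the convex exponential profile. Separately, your claim that a \emph{uniform} bound $\|u\|_{L^\infty(\bar Q)}\le M_0$ follows by iterating Lemma \ref{2B} over unit slabs is unjustified (the constant $Ce^{c_0}>1$ compounds at each step); fortunately only the bound on $Q_{T_0}$ for the fixed $T_0$ is needed, which Lemma \ref{2B} gives directly.

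A secondary gap: with $\chi$ vanishing near $\partial\Omega$, the comparison $v\ge|u|$ on the initial slice $\Omega\times\{T_0\}$ fails near the boundary unless you additionally control the modulus of continuity of $u(\cdot,T_0)$ to ensure $|u(\cdot,T_0)|\le K\varepsilon\omega$ in the region where $\chi<1$. The paper's choice sidesteps this because its transient profile $Bz$ is bounded below by a positive constant on all of $\bar\Omega$, so it dominates $|u(\cdot,T_0)|\le C_0$ everywhere, including near $\partial\Omega$.
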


\begin{proof}
Without loss of generality, we assume $$\Omega\subset \{0<x_{1}<r\},$$
for some constant $r>0$. For some constant $\mu>0$ to be chosen later, set
$$
z(x)=r^{2}(e^{2\mu}-e^{\frac{\mu x_{1}}{r}}).
$$
A straightforward calculation yields
$$
L z(x)=-e^{\frac{\mu x_{1}}{r}}[\rho^{2}a_{11}\mu^{2}+r\rho b_{1}\mu-cr^{2}(e^{\mu(2-\frac{x_{1}}{r})}-1)].
$$
We claim that, by choosing $\mu>0$ and $T_0>0$ sufficient large, 
\begin{equation}\label{4i}
Lz\leq -1\quad\text{in }Q\backslash Q_{T_{0}}.
\end{equation}
We first use the claim to prove the desired result, and then prove the claim.
Fix an $\varepsilon>0$. By Lemma \ref{4A}(i) and choosing $T_{0}$ larger if necessary, we have
\begin{equation}\label{4j}
|f|<\varepsilon\quad\text{in }Q\backslash Q_{T_{0}},
\end{equation}
and
\begin{equation}\label{4k}
|h|<\varepsilon\quad\text{on }S\backslash S_{T_{0}}.
\end{equation}
For some positive constants $A$, $B$, and $H$ to be determined, we set
$$
\psi(x,t)=A\varepsilon z(x)+B z(x)e^{-H(t-T_{0})}\quad\text{in }Q\backslash Q_{T_{0}}.
$$
By \eqref{4i}, we have
$$
L\psi\leq -A\varepsilon-Be^{-H(t-T_{0})}+BH z e^{-H(t-T_{0})}\quad\text{in }Q\backslash Q_{T_{0}}.
$$
We take $H=1/\sup_{\Omega}z>0$. Then, 
$$
L\psi\leq -A\varepsilon\quad\text{in }Q\backslash Q_{T_{0}}.
$$
Note that
$$
\psi\geq A\varepsilon\inf_{\Omega}z \quad\text{on }S\backslash S_{T_{0}},
$$
and
$$
\psi\geq B\inf_{\Omega}z\quad\text{on }\Omega_{T_{0}}=\Omega\times\{T_{0}\}.
$$
By Lemma \ref{2B}, we have
\begin{equation}\label{4l}
|u(x,t)|\leq C_{0}\quad\text{in }Q_{T_{0}}.
\end{equation}
We now take $A=\max\{1,1/\inf_{\Omega}z\}$ and $B=C_{0}/\inf_{\Omega}z$. Then, 
$$
L(\pm u)=\pm f\geq L\psi \quad\text{in }Q\backslash Q_{T_{0}},
$$
and
$$
\pm u\leq \psi\quad\text{on }\partial_{p}(Q\backslash Q_{T_{0}}).
$$
By the maximum principle, we obtain
$$
|u|\leq \psi\quad\text{in }Q\backslash Q_{T_{0}}.
$$
Hence,
$$
\|u(\cdot,t)\|_{L^{\infty}(\Omega)}\leq (A\varepsilon+Be^{-H(t-T_{0})})\sup_{\Omega}z \leq 2A\varepsilon\sup_{\Omega}z,
$$
if
$$
t\geq \max\Big\{T_{0},T_{0}-\frac{1}{H}\log\frac{A\varepsilon}{B}\Big\}.
$$
This yields the desired result.

We now prove \eqref{4i}.  
By $\bar{c}<0$ on $\partial\Omega$ and $c(\cdot,t)$ converging to $\bar{c}$ in $L^{\infty}(\Omega)$ as $t\rightarrow\infty$, 
we can take positive constants $\rho_{0}, c_{0}$, and $T_{1}$ such that
\begin{equation}\label{4f}
c\leq -c_{0}\quad\text{in }\{0\leq \rho\leq\rho_{0}\}\cap Q\backslash Q_{T_{1}}.
\end{equation}
Next, we choose $\mu$ sufficient large such that
\begin{equation}\label{mu}
r\rho b_{1}\mu+c_{0}r^{2}(e^{\mu}-1)\geq 1,\quad \rho^{2}_{0}\lambda\mu^{2}+r\rho b_{1}\mu\geq 2\quad\text{in }Q.
\end{equation}
We fix such a $\mu$. Then, by \eqref{1o}, we take $T_0>T_1$ such that
\begin{equation}\label{limc}
cr^{2}(e^{\mu(2-\frac{x_{1}}{r})}-1)\leq 1\quad\text{in }Q\backslash Q_{T_{0}}.
\end{equation}
By \eqref{4f} and \eqref{mu}, we have, for any $(x,t)\in\{0\leq \rho\leq\rho_{0}\}\cap Q\backslash Q_{T_{0}}$,
$$
\rho^{2}a_{11}\mu^{2}+r\rho b_{1}\mu-cr^{2}(e^{\mu(2-\frac{x_{1}}{r})}-1)\geq r\rho b_{1}\mu+c_{0}r^{2}(e^{\mu}-1)\geq 1.
$$
Next, by \eqref{limc} and \eqref{mu}, we get, for any $(x,t)\in\{ \rho\geq\rho_{0}\}\cap Q\backslash Q_{T_{0}}$,
$$
\rho^{2}a_{11}\mu^{2}+r\rho b_{1}\mu-cr^{2}(e^{\mu(2-\frac{x_{1}}{r})}-1)\geq \rho^{2}_{0}\lambda\mu^{2}+r\rho b_{1}\mu-1\geq 1.
$$
This finishes the proof of \eqref{4i}.
\end{proof}

\begin{remark}
We further assume that $c\leq 0$ in $Q$. 
Similar to the proof of Corollaries 4 and 5 in Chapter 6 of \cite{F1964}, 
we can establish explicit convergence rates of the solutions.

$\mathrm{(i)}$ If $\|f(\cdot,t)\|_{L^{\infty}(\Omega)}=O((1+t)^{-\mu})$ 
and $\|h(\cdot,t)\|_{L^{\infty}(\Omega)}=O((1+t)^{-\mu})$ as $t\rightarrow\infty$, for some positive constant $\mu$, 
then $\|u(\cdot,t)\|_{L^{\infty}(\Omega)}=O((1+t)^{-\mu})$.  

$\mathrm{(ii)}$ If $\|f(\cdot,t)\|_{L^{\infty}(\Omega)}=O(e^{-\mu t})$ 
and $\|h(\cdot,t)\|_{L^{\infty}(\Omega)}=O(e^{-\mu t})$ as $t\rightarrow\infty$, for some positive constant $\mu$, 
then $\|u(\cdot,t)\|_{L^{\infty}(\Omega)}=O(e^{-\nu t})$, for some positive constant $\nu\leq\mu$. 
\end{remark}

Note that the parabolic operator $L$ in Theorem \ref{4C} does not need to have a limit elliptic operator as $t\rightarrow\infty$.

As a direct consequence, we obtain an $L^{\infty}$-convergence result.

\begin{corollary}\label{4D}
Let $\Omega$ be a bounded domain in $\mathbb{R}^{n}$, 
with a $C^{1}$-boundary $\partial\Omega$ and a $C^{1}(\bar{\Omega})$-defining function $\rho$. 
Assume that $a_{ij},b_{i},c,f\in C(\bar{Q})\cap L^{\infty}(Q)$ and $\bar{a}_{ij},\bar{b}_{i},\bar{c},\bar{f}\in C(\bar{\Omega})$, 
with \eqref{1g} and \eqref{1o}, 
and that $a_{i j}(\cdot,t)$, $b_{i}(\cdot,t)$, $c(\cdot,t)$, $f(\cdot,t)$ 
converge to $\bar{a}_{i j},\bar{b}_{i},\bar{c},\bar{f}$ in $L^{\infty}(\Omega)$ as $t\rightarrow\infty$, respectively. 
Suppose that, for some $\phi\in C(\bar{\Omega})$, the initial-boundary value problem \eqref{1i}-\eqref{1k} 
admits a solution $u\in C^{2}(Q)\cap C(\bar{Q})$ and the Dirichlet problem \eqref{1m}-\eqref{1n} 
admits a solution $v\in C^{2}(\Omega)\cap C(\bar{\Omega})$, with $\rho Dv,\rho^{2}D^{2}v\in L^{\infty}(\Omega)$. 
Then, $u(\cdot,t)$ converges to $v$ in $L^{\infty}(\Omega)$ as $t\rightarrow\infty$.
\end{corollary}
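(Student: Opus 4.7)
The plan is to set $w = u - v$ and show $\|w(\cdot,t)\|_{L^\infty(\Omega)} \to 0$ by running the argument of Theorem \ref{4C} with $w$ in place of $u$. Since $v$ is independent of $t$, $Lw = Lu - Lv$ reduces, after inserting $L_0 v = \bar f$, to
\begin{align*}
Lw = (f-\bar f) - \rho^2(a_{ij}-\bar a_{ij})\partial_{ij}v - \rho(b_i-\bar b_i)\partial_i v - (c-\bar c)v =: g\quad\text{in }Q.
\end{align*}
The $L^\infty(\Omega)$-convergences $a_{ij},b_i,c,f \to \bar a_{ij},\bar b_i,\bar c,\bar f$ combined with the hypotheses $v \in C(\bar\Omega)$ and $\rho Dv,\rho^2 D^2 v \in L^\infty(\Omega)$ then give $\|g(\cdot,t)\|_{L^\infty(\Omega)}\to 0$ as $t\to\infty$.

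On the lateral boundary $S$, $w = h - \bar f/\bar c$. Since $\bar c<0$ on $\partial\Omega$, Lemma \ref{4A}(i) yields $h(\cdot,t)\to\bar f/\bar c$ in $L^\infty(\partial\Omega)$, so the lateral boundary values of $w$ tend to zero uniformly. Moreover $w\in C^2(Q)\cap C(\bar Q)$ as a difference of the two given solutions.

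The conclusion then follows by repeating the barrier construction of Theorem \ref{4C}. Given $\varepsilon>0$, I would choose $T_0$ so that $|g|<\varepsilon$ in $Q\setminus Q_{T_0}$ and $|w|<\varepsilon$ on $S\setminus S_{T_0}$; bound $\sup_{Q_{T_0}}|w|$ by Lemma \ref{2B}; and use the barrier $\psi = A\varepsilon z + Bze^{-H(t-T_0)}$ with $z(x)=r^2(e^{2\mu}-e^{\mu x_1/r})$, with $\mu$, $A$, $B$, $H$ chosen exactly as in the proof of Theorem \ref{4C}. The key claim $Lz\le -1$ in $Q\setminus Q_{T_0}$ depends only on $\bar c<0$ on $\partial\Omega$ together with \eqref{1o}, both of which are assumed. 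The maximum principle on $Q\setminus Q_{T_0}$ then yields $|w|\le\psi$ there, hence $\|w(\cdot,t)\|_{L^\infty(\Omega)}\le 2A\varepsilon\sup_\Omega z$ for all sufficiently large $t$, finishing the proof.

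The one point requiring care is that Theorem \ref{4C} is formally stated for boundary data $h$ of the specific form \eqref{1k}, whereas $w$ carries boundary data $h-\bar f/\bar c$ of a different form, so the theorem cannot be quoted verbatim. However, the specific form is used in the proof of Theorem \ref{4C} only to invoke Lemma \ref{4A}(i) and extract $\|h(\cdot,t)\|_{L^\infty(\partial\Omega)}\to 0$; the analogous vanishing for $w|_S$ is already obtained above, so the remainder of the proof of Theorem \ref{4C} applies to $w$ without change, and no genuine obstacle arises.
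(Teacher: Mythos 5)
Your proposal is correct and follows essentially the same route as the paper: the same decomposition $w=u-v$ with the same expression for $Lw$, the same appeal to Lemma \ref{4A}(i) for the decay of $w$ on $S$, the same bound on $Q_{T_0}$ via Lemma \ref{2B} (together with $v\in C(\bar\Omega)$), and the same observation that Theorem \ref{4C} cannot be quoted verbatim but that its barrier argument applies unchanged once the two inputs it needs are supplied. No gaps.
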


\begin{proof}
Let $w=u-v$ in $Q$. Then, $w$ satisfies the initial-boundary value
problem
\begin{align}
Lw&=f_{0}\quad\text{in }Q,\label{4m}\\
w(\cdot,0)&=\phi_{0}=\phi-v\quad\text{on }\Omega,\label{4n}\\
w&=h_{0}=h-\bar{f}/\bar{c}\quad\text{on }S,\label{4o}
\end{align}
where
\begin{align*}
f_{0}&=Lu-L_{0}v+(L_{0}-L)v\\
&=f-\bar{f}-\rho^{2}(a_{ij}-\bar{a}_{ij})\partial_{ij}v-\rho(b_{i}-\bar{b}_{i})\partial_{i}v-(c-\bar{c})v.
\end{align*}
By $a_{ij}(\cdot,t),b_{i}(\cdot,t),c(\cdot,t),f(\cdot,t)$ converging to 
$\bar{a}_{ij},\bar{b}_{i},\bar{c},\bar{f}$ in $L^{\infty}(\Omega)$ as $t\rightarrow\infty$, respectively, 
and $\rho Dv,\rho^{2}D^{2}v\in L^{\infty}(\Omega)$, 
we have $f_{0}(\cdot,t)$ converges to $0$ in $L^{\infty}(\Omega)$ as $t\rightarrow\infty$. 
We cannot apply Theorem \ref{4C} to \eqref{4m}-\eqref{4o} directly since it is not clear that
$$
h_{0}(x,t)=\phi_{0}(x)\exp\Big\{\int^{t}_{0}c(x,s)ds\Big\}-\int^{t}_{0}\exp\Big\{\int^{t}_{s}c(x,\tau)d\tau\Big\}f_{0}(x,s) ds.
$$
However, in the proof of Theorem \ref{4C}, the above relation is only used to obtain \eqref{4k} and \eqref{4l}. 
By Lemma \ref{4A}(i), we have $h_{0}(\cdot,t)$ converges to $0$ in $L^{\infty}(\partial\Omega)$ as $t\rightarrow\infty$. 
For some constant $T_{0}>0$, by Lemma \ref{2B}, we have
$$
|w(x,t)|\leq |u(x,t)|+|v(x)|\leq C\quad\text{in }Q_{T_{0}}.
$$
Hence, by a similar argument as in the proof of Theorem \ref{4C}, 
we conclude that $w(\cdot,t)$ converges to $0$ in $L^{\infty}(\Omega)$ as $t\rightarrow\infty$ and obtain the desired result.
\end{proof}

\subsection{$C^{2+\alpha}_{\ast}$-Convergence}\label{2+alpha convergence}
First, we construct supersolutions independent of $t$ when $t$ is large, which improves Lemma \ref{2D}.

\begin{lemma}\label{4E}
Let $\mu>0$ be a constant, 
$\Omega$ be a bounded domain in $\mathbb R^n$ with a $C^{1}$-boundary $\partial\Omega$, and $\rho$ be a $C^{1}(\bar\Omega)\cap C^{2}(\Omega)$-defining function with $\rho\nabla^2\rho\in C(\bar\Omega)$ and $\rho\nabla^2\rho=0$ on $\partial\Omega$. 
Assume that $a_{ij},b_{i},c\in C(\bar{Q})$ and $\bar{a}_{ij},\bar{b}_{i},\bar{c}\in C(\bar{\Omega})$, 
with \eqref{1g}, and $\bar{c}<0$ and $P(\mu)<0$ on $\partial\Omega$, 
and that $a_{ij}(\cdot,t),b_{i}(\cdot,t),c(\cdot,t)$ converge to 
$\bar{a}_{ij},\bar{b}_{i},\bar{c}$ in $L^{\infty}(\Omega)$ as $t\rightarrow\infty$, respectively. 
Then, there exist positive constants $r$, $K$, $T$, and $C_{0}$, depending only on $\mu$, $\Omega$, $\rho$, $a_{i j},b_{i},c$, and $\bar{a}_{i j},\bar{b}_{i},\bar{c}$,
such that, for any $x_{0}\in \partial\Omega$ and
any $(x,t)\in(\Omega\cap B_{r}(x_{0}))\times [T,\infty)$,
$$
L(|x-x_{0}|^{\mu}+K\rho^{\mu})\leq -C_{0}|x-x_{0}|^{\mu}.
$$
\end{lemma}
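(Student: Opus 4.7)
The plan is to reprise the barrier construction of Lemma \ref{2D} with the ansatz $w(x) = |x-x_0|^\mu + K\rho(x)^\mu$, but without the exponential factor $e^{At}$. Since $w$ is time-independent, the operator splits as $Lw = L(|x-x_0|^\mu) + K L(\rho^\mu)$, and the negativity that was previously supplied by $-A$ must now come from the structure of the limit operator near the boundary: the assumptions $\bar c<0$ and $P(\mu)<0$ on $\partial\Omega$ together with the $L^\infty$ convergence of $a_{ij},b_i,c$.

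Translating so $x_0=0$, a direct computation as in Lemma \ref{2D} yields
\[
L(|x|^\mu) \leq \bigl[C_1(\rho/|x|)^2 + C_2(\rho/|x|) + c(x,t)\bigr]\,|x|^\mu
\]
and
\[
L(\rho^\mu) = \rho^\mu\, \widetilde P(x,t) + \mu\rho^{\mu+1} a_{ij}\rho_{ij},
\]
where $\widetilde P(x,t) = \mu(\mu-1)a_{ij}\rho_i\rho_j + \mu b_i\rho_i + c$, and $C_1,C_2$ depend only on $\mu,n,\Lambda,\|\nabla\rho\|_{L^\infty},\|b_i\|_{L^\infty}$. At each $\tilde x\in\partial\Omega$ one has $\rho_i(\tilde x)=\nu_i(\tilde x)$, so $\widetilde P(\tilde x,t)\to P(\mu)(\tilde x)\leq -\delta_0<0$ as $t\to\infty$ (with $\delta_0=-\max_{\partial\Omega}P(\mu)>0$ by compactness). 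Using uniform continuity of $\rho_i,\bar a_{ij},\bar b_i,\bar c$ on $\bar\Omega$ and uniform ($L^\infty$) convergence, there exist $\rho_0>0$ and $T\geq 0$ depending only on the data in the lemma such that $\widetilde P(x,t)\leq -\delta_0/2$ whenever $\rho(x)\leq\rho_0$ and $t\geq T$. Shrinking $\rho_0$ further, we arrange $|\rho\nabla^2\rho|\leq\delta_0/(8\mu n\Lambda)$ on the same set (using $\rho\nabla^2\rho\in C(\bar\Omega)$ with $\rho\nabla^2\rho=0$ on $\partial\Omega$) and $c(x,t)\leq -c_0<0$ there (from $\bar c<0$ on $\partial\Omega$ and the convergence of $c$). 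These combine to give $L(\rho^\mu)\leq -(\delta_0/4)\rho^\mu$ on $\{\rho\leq\rho_0\}\times[T,\infty)$.

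Now set $r=\rho_0/\|\nabla\rho\|_{L^\infty}$ so that $B_r(x_0)\cap\Omega\subset\{\rho\leq\rho_0\}$ uniformly in $x_0\in\partial\Omega$, and fix $\epsilon\in(0,1)$ so small that $C_1\epsilon^2+C_2\epsilon\leq c_0/2$. I split into two cases. If $\rho(x)/|x-x_0|\leq\epsilon$, the first bound gives $L(|x-x_0|^\mu)\leq -(c_0/2)|x-x_0|^\mu$, while $K L(\rho^\mu)\leq 0$, hence $Lw\leq -(c_0/2)|x-x_0|^\mu$. If $\rho(x)/|x-x_0|>\epsilon$, then $\rho^\mu\geq\epsilon^\mu|x-x_0|^\mu$, so $K L(\rho^\mu)\leq -(K\delta_0\epsilon^\mu/4)|x-x_0|^\mu$, while $L(|x-x_0|^\mu)\leq C_\ast|x-x_0|^\mu$ with $C_\ast=C_1+C_2+\|c\|_{L^\infty}$; choosing $K$ large enough that $K\delta_0\epsilon^\mu/4\geq C_\ast+c_0/2$ again gives $Lw\leq -(c_0/2)|x-x_0|^\mu$. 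Setting $C_0=c_0/2$ yields the desired inequality, and the construction is uniform in $x_0\in\partial\Omega$ by compactness.

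The main obstacle is the ``tangential'' regime where $\rho(x)\ll|x-x_0|$: there $K\rho^\mu$ cannot absorb the $|x-x_0|^\mu$ error terms produced by $L(|x-x_0|^\mu)$, no matter how large $K$ is. The resolution is that in this regime the factors $(\rho/|x-x_0|)^2$ and $\rho/|x-x_0|$ appearing in those error terms are themselves small, so that the surviving contribution is absorbed by the genuinely negative coefficient $c(x,t)\leq -c_0$; the case split above makes this quantitative.
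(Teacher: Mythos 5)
Your proposal is correct and follows essentially the same route as the paper's proof: the paper likewise computes $L(\rho^\mu)=\eta\rho^\mu$ with $\bar\eta=P(\mu)$ on $\partial\Omega$ to get $L(\rho^\mu)\le -c_\mu\rho^\mu$ near the boundary for large $t$, bounds $L(|x-x_0|^\mu)\le |x-x_0|^\mu\bigl(C\rho^2/|x-x_0|^2-c_0/2\bigr)$ using $c\le -c_0$, and then performs exactly your dichotomy on the ratio $\rho/|x-x_0|$ (with threshold $\delta=\sqrt{c_0/(4C)}$) before choosing $K$ large. The only cosmetic differences are that the paper absorbs the first-order term $|x-x_0|^{\mu-1}\rho$ via the Cauchy inequality rather than carrying a separate $C_2(\rho/|x-x_0|)$ term, and keeps $\mu\rho a_{ij}\rho_{ij}$ inside $\eta$ rather than estimating it separately.
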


\begin{proof}
Without loss of generality, we assume $x_{0}=0$. A straightforward computation yields
$$
L(\rho^{\mu})=\eta \rho^{\mu}\quad\text{in } Q,
$$
where
$$
\eta=\mu(\mu-1)a_{ij}\rho_{i}\rho_{j}+\mu b_{i}\rho_{i}+c +\mu \rho a_{ij}\rho_{ij}.
$$
By $a_{ij}(\cdot,t),b_{i}(\cdot,t),c(\cdot,t)$ converging to $\bar{a}_{ij},\bar{b}_{i},\bar{c}$ 
in $L^{\infty}(\Omega)$ as $t\rightarrow\infty$, respectively, we have
$$
\eta(\cdot,t)\rightarrow \bar{\eta}\quad\text{in }L^{\infty}(\Omega)\text{ as }t\rightarrow\infty,
$$
where
$$
\bar{\eta}=\mu(\mu-1)\bar{a}_{ij}\rho_{i}\rho_{j}+\mu \bar{b}_{i}\rho_{i}+\bar{c} +\mu \rho \bar{a}_{ij}\rho_{ij}.
$$
Note that $\bar\eta=P(\mu)$ on $\partial\Omega$. By $P(\mu)<0$ on $\partial\Omega$, 
we can take positive constants $T,c_{\mu}$, and $r$ such that
\begin{equation}\label{4p}
L(\rho^{\mu})\leq -c_{\mu}\rho^{\mu}\quad\text{in }(\Omega\cap B_{r})\times [T,\infty).
\end{equation}
By $\bar{c}<0$ on $\partial\Omega$ and taking $r$ smaller and $T$ larger if necessary, there exists a positive constant $c_{0}$ such that
\begin{equation}\label{4q}
c<-c_{0}\quad\text{in }(\Omega\cap B_{r})\times [T,\infty).
\end{equation}
By \eqref{4q} and the Cauchy inequality, we have, for any $(x,t)\in(\Omega\cap B_{r})\times [T,\infty)$, 
\begin{align}\label{4r}\begin{split}
L(|x|^{\mu})&=\mu(\mu-2)|x|^{\mu-4}\rho^{2}a_{ij}x_{i}x_{j}+\mu|x|^{\mu-2}\rho^{2}a_{ii}\\
&\qquad+\mu|x|^{\mu-2}\rho b_{i}x_{i}+c|x|^{\mu}\\
&\leq C(|x|^{\mu-2}\rho^{2}+|x|^{\mu-1}\rho)-c_{0}|x|^{\mu}\\
&\leq C|x|^{\mu-2}\rho^{2}-\frac{c_{0}}{2}|x|^{\mu}\\
&=|x|^{\mu}\Big(C\frac{\rho^{2}}{|x|^{2}}-\frac{c_{0}}{2}\Big).
\end{split}\end{align}
For some constant $K\geq 0$ to be determined, set
$$
w(x)=|x|^{\mu}+K\rho^{\mu}.
$$
By \eqref{4p} and \eqref{4r}, we get
$$
Lw\leq|x|^{\mu}\Big(C\frac{\rho^{2}}{|x|^{2}}-\frac{c_{0}}{2}-c_{\mu}K\frac{\rho^{\mu}}{|x|^{\mu}}\Big)
\quad\text{in }(\Omega\cap B_{r})\times [T,\infty).
$$
We take $\delta=\sqrt{c_{0}/(4C)}$. For $\rho\leq \delta |x|$, we have
$$
Lw\leq -\frac{c_{0}}{4}|x|^{\mu}.
$$
Note that we always have $\rho\leq C_1d\leq C_1|x|$. Next, for $\rho\geq \delta |x|$, we have
$$
Lw\leq |x|^{\mu}\left(CC^2_1-\frac{c_{0}}{2}-c_{\mu}K\delta^{\mu}\right)\leq-\frac{c_{0}}{4}|x|^{\mu},
$$
by taking $K$ sufficiently large. We obtain the desired result by taking $C_{0}=c_{0}/4$.
\end{proof}

Next, by Lemma \ref{4E}, we can improve Lemma \ref{2F} when $t$ is large. 

\begin{lemma}\label{4F}
Let $\Omega$ be a bounded domain in $\mathbb R^n$ with a $C^{1}$-boundary $\partial\Omega$
and $\rho$ be a $C^{1}(\bar\Omega)\cap C^{2}(\Omega)$-defining function with $\rho\nabla^2\rho\in C(\bar\Omega)$ and $\rho\nabla^2\rho=0$ on $\partial\Omega$. 
For some constant $\alpha\in(0,1)$, assume that $a_{ij},b_{i},c,f\in C^{\alpha}(\bar{Q})$ 
and $\bar{a}_{ij},\bar{b}_{i},\bar{c}\in C^{\alpha}(\bar{\Omega})$, 
with \eqref{1g}, \eqref{1o}, and $P(\alpha)<0$ on $\partial\Omega$, 
and that $a_{ij}(\cdot,t),b_i(\cdot,t)$ converge to $\bar{a}_{ij},\bar{b}_{i}$ in $L^{\infty}(\Omega)$ as $t\rightarrow\infty$, respectively, 
and $f(\cdot,t),c(\cdot,t)$ converge to $0,\bar{c}$ in $C^{\alpha}(\bar{\Omega})$ as $t\rightarrow\infty$, respectively. 
Suppose, for some $\phi\in C^{\alpha}(\bar{\Omega})$, the initial-boundary value problem \eqref{1i}-\eqref{1k} 
admits a solution $u\in C(\bar{Q})\cap C^{2}(Q)$. 
Then, for any $\varepsilon>0$, there exists a positive constant $T$, 
such that for any $(x,t)\in Q\backslash Q_{T}$ and $(x_{0},t)\in S\backslash S_{T}$,
\begin{equation}\label{4s}
|u(x,t)-h(x_{0},t)|\leq \varepsilon |x-x_{0}|^{\alpha}.
\end{equation}
\end{lemma}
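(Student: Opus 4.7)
The plan is to mirror the barrier argument of Lemma \ref{2F}, but to replace its time-growing barrier with the time-independent supersolution $w(x):=|x-x_0|^\alpha+K\rho^\alpha(x)$ furnished by Lemma \ref{4E} (applied with $\mu=\alpha$), and to add a time-decaying spatial correction that absorbs the (possibly large) bottom-time data at a fixed reference time $T_0$. Fixing $x_0\in\partial\Omega$, the same computation as in Lemma \ref{2F} yields $L(u(x,t)-h(x_0,t))=g(x,t)$, where $g(x,t)=f(x,t)-f(x_0,t)-(c(x,t)-c(x_0,t))h(x_0,t)$. Using $f(\cdot,t)\to 0$ and $c(\cdot,t)\to\bar c$ in $C^\alpha(\bar\Omega)$, together with $h(\cdot,t)\to 0$ in $L^\infty(\partial\Omega)$ (Lemma \ref{4A}(i), since $\bar f=0$), one obtains $|g(x,t)|\le G(t)|x-x_0|^\alpha$ with $G(t)\to 0$ as $t\to\infty$. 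Lemma \ref{4E} provides uniform-in-$x_0$ constants $r,K,T_\ast,C_0$ for which $Lw\le -C_0|x-x_0|^\alpha$ on $(\Omega\cap B_r(x_0))\times[T_\ast,\infty)$; and since $\rho$ is $C^1$ and vanishes on $\partial\Omega$, there is a constant $\tilde K$ with $w(x)\le \tilde K|x-x_0|^\alpha$ on $\bar B_r(x_0)\cap\bar\Omega$.

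Given $\varepsilon>0$, I would set $\delta:=\varepsilon/(2\tilde K)$ and pick $\gamma>0$ with $\gamma\le c_0$ and $\gamma<C_0/\tilde K$, where $c_0>0$ satisfies $c\le -c_0$ near $\partial\Omega$ for large $t$ (from $\bar c<0$ and the $L^\infty$-convergence of $c$). I would then choose $T_0\ge T_\ast$ large enough that, for all $t\ge T_0$: $G(t)\le\delta C_0$; $[h(\cdot,t)]_{C^\alpha(\partial\Omega)}\le\delta$ (Lemma \ref{4A}(ii)); and $\|u(\cdot,t)\|_{L^\infty(\Omega)}+\|h(\cdot,t)\|_{L^\infty(\partial\Omega)}\le\delta r^\alpha$ (by Theorem \ref{4C}, whose hypotheses here force $u(\cdot,t)\to 0$ in $L^\infty$, together with Lemma \ref{4A}(i)). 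At the fixed time $T_0$, Lemma \ref{2F} yields $|u(x,T_0)-h(x_0,T_0)|\le M_0|x-x_0|^\alpha\le M_0 w(x)$ for some $M_0=M_0(T_0)\ge\delta$. On the cylinder $(\Omega\cap B_r(x_0))\times[T_0,T']$, I apply the parabolic maximum principle to the composite barrier
\[
\Psi(x,t)=\bigl(\delta+(M_0-\delta)e^{-\gamma(t-T_0)}\bigr)w(x),
\]
and let $T'\to\infty$. Using $L(\phi(t)w(x))=\phi Lw-\phi'w$, a short computation gives
\[
L\Psi\le-\bigl[\delta C_0+(M_0-\delta)e^{-\gamma(t-T_0)}(C_0-\gamma\tilde K)\bigr]|x-x_0|^\alpha\le-\delta C_0|x-x_0|^\alpha\le-|g|.
\]
The parabolic boundary conditions hold by design: at $t=T_0$, $\Psi=M_0 w\ge|u(x,T_0)-h(x_0,T_0)|$; on $\partial\Omega\cap\bar B_r(x_0)$, $|h(x,t)-h(x_0,t)|\le\delta|x-x_0|^\alpha\le\Psi$; and on $\Omega\cap\partial B_r(x_0)$, $|u-h(x_0,t)|\le\delta r^\alpha\le\delta w(x)\le\Psi$.

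By Lemma \ref{2A}, $|u(x,t)-h(x_0,t)|\le\Psi(x,t)$ on the cylinder. Choosing $\tau>0$ with $(M_0-\delta)\tilde K e^{-\gamma\tau}\le\varepsilon/2$ and setting $T:=T_0+\tau$, I obtain, for any $t\ge T$ and $x\in\Omega\cap B_r(x_0)$,
\[
|u(x,t)-h(x_0,t)|\le\bigl(\delta+(M_0-\delta)e^{-\gamma(t-T_0)}\bigr)\tilde K|x-x_0|^\alpha\le\varepsilon|x-x_0|^\alpha.
\]
The remaining case $|x-x_0|\ge r$ would be handled by the $L^\infty$-smallness alone, since $|u-h(x_0,t)|\le\delta r^\alpha\le\delta|x-x_0|^\alpha\le\varepsilon|x-x_0|^\alpha$. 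The hard part will be arriving at the right form of the composite barrier. Multiplying the time-decaying correction by $w(x)$—rather than leaving it as an $x$-independent term $Me^{-\gamma(t-T_0)}$—is essential, because only then does the final estimate still vanish at the required rate $|x-x_0|^\alpha$ as $x\to x_0$, while the factor $M_0-\delta$ simultaneously absorbs the large bottom contribution at $t=T_0$; the constraint $\gamma<C_0/\tilde K$ is precisely what keeps $L\Psi$ below $-\delta C_0|x-x_0|^\alpha$ despite the positive term generated by $-\phi'w$.
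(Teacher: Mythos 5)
Your proposal is correct and follows essentially the same route as the paper: fix $x_0\in\partial\Omega$, write $L(u-h(x_0,\cdot))=g$ with $|g|\le G(t)|x-x_0|^\alpha$ and $G(t)\to 0$, use the time-independent supersolution $w=|x-x_0|^\alpha+K\rho^\alpha$ from Lemma \ref{4E}, apply the maximum principle on $(\Omega\cap B_r(x_0))\times[T_0,\infty)$ with a composite barrier of the form $(\text{small constant}+\text{decaying exponential})\cdot w(x)$, and handle $|x-x_0|\ge r$ by the $L^\infty$-smallness of $u$ and $h$. The paper's barrier $A\varepsilon w+Bwe^{-H(t-T)}$ with $H=C_0/(Kc_1^\alpha+1)$ is the same construction as your $\Psi$ (your constraint $\gamma<C_0/\tilde K$ versus the paper's choice $H=C_0/\tilde K$ is an immaterial difference, and your side condition $\gamma\le c_0$ is not actually needed).
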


\begin{proof}
By Lemma \ref{4E}, we can take positive constants $r,K,T,$ and $C_{0}$ such that, 
for any $x_0\in\partial\Omega$ and any $(x,t)\in(\Omega\cap B_{r}(x_{0}))\times [T,\infty)$,
\begin{equation}\label{4t}
L(|x-x_{0}|^{\alpha}+K\rho^{\alpha})\leq -C_{0}|x-x_{0}|^{\alpha}.
\end{equation}
Fix an $\varepsilon>0$. By $f(\cdot,t),c(\cdot,t)$ converging to $0,\bar{c}$ 
in $C^{\alpha}(\bar{\Omega})$ as $t\rightarrow\infty$, respectively, Lemma \ref{4A}(ii), and Theorem \ref{4C}, we have, for any $t\geq T$,
\begin{equation}\label{4u}
\|f(\cdot,t)\|_{C^{\alpha}(\bar{\Omega})}<\varepsilon,\quad \|c(\cdot,t)-\bar{c}(\cdot)\|_{C^{\alpha}(\bar{\Omega})}
<\varepsilon,\quad\|h(\cdot,t)\|_{C^{\alpha}(\partial\Omega)}<\varepsilon,
\end{equation}
and
\begin{equation}\label{4v}
\|u(\cdot,t)\|_{L^{\infty}(\Omega)}<\varepsilon,
\end{equation}
by taking $T$ larger if necessary.

Fix a point $x_{0}\in \partial\Omega$. By \eqref{1l}, we get
\begin{equation}\label{4w}
L(u(x,t)-h(x_{0},t))=g,
\end{equation}
where
\begin{align*}
g(x,t)&=f(x,t)-c(x,t)h(x_{0},t)+\partial_{t}h(x_{0},t)\\
&=f(x,t)-f(x_{0},t)-(c(x,t)-c(x_{0},t))h(x_{0},t).
\end{align*}
Then, by \eqref{4u}, we have
$$
|g(x,t)|\leq C_{1}\varepsilon|x-x_{0}|^{\alpha}\quad\text{for any }(x,t)\in Q\backslash Q_{T}.
$$
Hence, we obtain, for any $(x,t)\in Q\backslash Q_{T}$,
$$
\pm L(u(x,t)-h(x_{0},t))\geq -C_{1}\varepsilon|x-x_{0}|^{\alpha}.
$$
Next, by Lemma \ref{2F}, \eqref{4u}, and \eqref{4v}, we get, for any $x\in\Omega$,
$$
\pm (u(x,T)-h(x_{0},T))\leq C_{T}|x-x_{0}|^{\alpha},
$$
for any $(x,t)\in S\backslash S_{T}$,
$$
\pm (u(x,t)-h(x_{0},t))=\pm (h(x,t)-h(x_{0},t))\leq \varepsilon|x-x_{0}|^{\alpha},
$$
and, for any $(x,t)\in Q\backslash Q_{T}$,
$$
\pm (u(x,t)-h(x_{0},t))\leq 2|u|_{L^{\infty}(Q\backslash Q_{T})}\leq 2\varepsilon.
$$
For some positive constants $A,B,$ and $H$ to be determined, we set
$$
\psi(x,t)=A\varepsilon w(x)+Bw(x)e^{-H(t-T)},
$$
where
$$
w(x)=|x-x_{0}|^{\alpha}+K \rho^{\alpha}.
$$
By \eqref{4t} and $\rho(x)\leq c_1 d(x)\leq c_1|x-x_{0}|$ for some positive constant $c_1$, 
we have, for any $(x,t)\in (\Omega\cap B_{r}(x_{0}))\times [T,\infty),$
$$
\begin{aligned}
L\psi&\leq -C_{0}A\varepsilon|x-x_{0}|^{\alpha}-C_{0}B|x-x_{0}|^{\alpha}e^{-H(t-T)}+BHwe^{-H(t-T)}\\
&\leq -C_{0}A\varepsilon|x-x_{0}|^{\alpha}-[C_{0}-H(Kc^\alpha_1+1)]B|x-x_{0}|^{\alpha}e^{-H(t-T)}.
\end{aligned}
$$
We take $H=C_{0}/(Kc^\alpha_1+1)$. Then,
$$
L\psi\leq -C_{0}A\varepsilon|x-x_{0}|^{\alpha}\quad\text{in } (\Omega\cap B_{r}(x_{0}))\times [T,\infty).
$$
Next, we have, for any $(x,t)\in S\backslash S_{T}$,
$$
\psi(x,t)\geq A\varepsilon |x-x_{0}|^{\alpha},
$$
for any $(x,t)\in \bar{Q}\cap (\partial B_{r}(x_{0})\times [T,\infty))$,
$$
\psi(x,t)\geq A\varepsilon r^{\alpha},
$$
and for any $x\in B_{r}(x_{0})\cap\Omega$,
$$
\psi(x,T)\geq B|x-x_{0}|^{\alpha}.
$$
We now take $A=\max\{C_{1}/C_{0},1,2r^{-\alpha}\}$ and $B=C_{T}$. Then, 
$$
\begin{aligned}
\pm L(u(x,t)-h(x_{0},t))&\geq L\psi\quad\text{in }(\Omega\cap B_{r}(x_{0}))\times (T,\infty)\\
\pm (u(x,t)-h(x_{0},t))&\leq \psi\quad\text{on }\partial_{p}\{(\Omega\cap B_{r}(x_{0}))\times (T,\infty)\}.
\end{aligned}
$$
The maximum principle implies
$$
\pm (u(x,t)-h(x_{0},t))\leq \psi\quad\text{in }(\Omega\cap B_{r}(x_{0}))\times [T,\infty).
$$
Hence,
we have, for any $(x,t)\in (\Omega\cap B_{r}(x_{0}))\times [T,\infty)$,
\begin{equation}\label{4x}
\begin{aligned}
|u(x,t)-h(x_{0},t)|&\leq (A\varepsilon +Be^{-H(t-T)})(Kc^\alpha_1+1)|x-x_{0}|^{\alpha}\\
&\leq2A(Kc^\alpha_1+1)\varepsilon |x-x_{0}|^{\alpha},
\end{aligned}
\end{equation}
if
$$
t\geq T-\frac{1}{H}\log\frac{A\varepsilon}{B}.
$$
By \eqref{4v}, we obtain, for any $(x,t)\in (\Omega\backslash B_{r}(x_{0}))\times [T,\infty)$.
\begin{equation}\label{4y}
|u(x,t)-h(x_{0},t)|\leq 2\varepsilon\leq \frac{2\varepsilon}{r^{\alpha}}|x-x_{0}|^{\alpha}.
\end{equation}
Note that $A,K,c_1$, and $r$ are independent of $\varepsilon$. 
By combining \eqref{4x} and \eqref{4y}, and renaming $T$ and $\varepsilon$, we conclude the desired result.
\end{proof}

Lemma \ref{4F} and its local version Lemma \ref{4J} 
will play an important role in proving the $C^{k,2+\alpha}_*$-convergence of solutions.

Now, we prove the $C^{2+\alpha}_{\ast}$-decay for solutions of the initial-boundary value problem.

\begin{thm}\label{4G}
Let $\Omega$ be a bounded domain in $\mathbb R^n$ with a $C^{1}$-boundary $\partial\Omega$ 
and $\rho$ be a $C^{1}(\bar\Omega)\cap C^{2}(\Omega)$-defining function 
with $\rho\nabla^2\rho\in C(\bar\Omega)$ and $\rho\nabla^2\rho=0$ on $\partial\Omega$. 
For some constant $\alpha\in(0,1)$, assume that $a_{ij},b_{i},c,f\in C^{\alpha}(\bar{Q})$ 
and $\bar{a}_{ij},\bar{b}_{i},\bar{c}\in C^{\alpha}(\bar{\Omega})$, 
with \eqref{1g}, \eqref{1o}, and $P(\alpha)<0$ on $\partial\Omega$, 
and that $a_{ij}(\cdot,t),b_i(\cdot,t),c(\cdot,t),f(\cdot,t)$ converge to $\bar{a}_{ij},\bar{b}_{i},\bar{c},0$ 
in $C^{\alpha}(\bar{\Omega})$ as $t\rightarrow\infty$, respectively. 
Suppose, for some $\phi\in C^{2+\alpha}(\bar{\Omega})$, the initial-boundary value problem \eqref{1i}-\eqref{1k} 
admits a solution $u\in C(\bar{Q})\cap C^{2}(Q)$. Then, $u$ $C^{2+\alpha}_{\ast}$-converges to $0$ in $\bar{Q}$ at infinity.
\end{thm}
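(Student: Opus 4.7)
Fix $\varepsilon>0$. The goal is to show that, for all sufficiently large $T$,
$$
\|u\|_{C^{2+\alpha}_{\ast}(\overline{Q_{T+1}\setminus Q_T})}\le C\varepsilon,
$$
with $C$ independent of $\varepsilon$ and $T$; the definition of $C^{\alpha}$-convergence at infinity then yields the claim. My plan is to parallel the proof of Theorem \ref{2G}, but with all estimates sharpened so the multiplicative constants are $\varepsilon$-small on $\overline{Q_{T+1}\setminus Q_T}$ rather than merely bounded on $\bar Q_T$. The three ingredients I will combine are: the $L^{\infty}$-convergence $\|u(\cdot,t)\|_{L^{\infty}(\Omega)}\to 0$ from Theorem \ref{4C}, the $C^{\alpha}$-convergence $\|h(\cdot,t)\|_{C^{\alpha}(\partial\Omega)}\to 0$ from Lemma \ref{4A}(ii) (with $\bar f=0$, $\bar c\neq 0$ so $\bar f/\bar c=0$), and the refined boundary decay from Lemma \ref{4F}, which uses the hypothesis $P(\alpha)<0$ on $\partial\Omega$.

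\textbf{Step 1 (boundary decay with small constant).} For $T$ large, Lemma \ref{4F} yields, for any $(x,t)\in Q\setminus Q_T$ and any $x_0\in\partial\Omega$ with $d(x)=|x-x_0|$,
$$
|u(x,t)-h(x_0,t)|\le \varepsilon d^{\alpha}(x).
$$
I aim to upgrade this to the weighted estimate
$$
|u(x,t)-h(x_0,t)|+|\rho(x)D_xu(x,t)|+|\rho^2(x)D_x^2u(x,t)|\le C\varepsilon\, d^{\alpha}(x),
$$
together with the matching interior-in-space Hölder semi-norm bound on $B_{d(x)/4}(x)\times[T,T+1]$.

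\textbf{Step 2 (rescaling and intermediate Schauder).} Given $x\in\Omega$ near $\partial\Omega$ and $x_0\in\partial\Omega$ with $d(x)=|x-x_0|$, set $r=d(x)/2$ and
$$
v(y,s)=u(x+ry,s)-h(x_0,s)\quad\text{for }(y,s)\in B_1(0)\times[T-1,T+1].
$$
A direct computation, as in the proof of Theorem \ref{2G}, shows $v$ satisfies a uniformly parabolic equation on $B_1\times[T-1,T+1]$ whose coefficients have $C^{\alpha}$-norms bounded independently of $x$ and $T$, and whose right-hand side
$$
\tilde g(y,s)=f(x+ry,s)-f(x_0,s)-(c(x+ry,s)-c(x_0,s))h(x_0,s)
$$
satisfies $\|\tilde g\|_{C^{\alpha}_{\ast}(B_1\times[T-1,T+1])}\le C\varepsilon\, r^{\alpha}$ once $T$ is large enough, using $\|f(\cdot,t)\|_{C^{\alpha}(\bar\Omega)}\to0$, $\|c(\cdot,t)-\bar c\|_{C^{\alpha}(\bar\Omega)}\to0$, and $\|h(\cdot,t)\|_{C^{\alpha}(\partial\Omega)}\to0$. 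Apply the intermediate Schauder estimate, Corollary \ref{5G}, interior in both space and time (on the larger cylinder $B_1\times[T-1,T+1]$, restricting to $B_{1/2}\times[T,T+1]$) to obtain
$$
\|v\|_{C^{2,\alpha}_{\ast}(B_{1/2}\times[T,T+1])}\le C\bigl\{\|v\|_{L^{\infty}(B_1\times[T-1,T+1])}+\|\tilde g\|_{C^{\alpha}_{\ast}(B_1\times[T-1,T+1])}\bigr\}.
$$
By Step 1 and the $L^{\infty}$-convergence, $\|v\|_{L^{\infty}}\le C\varepsilon\, r^{\alpha}$, so the full right-hand side is $\le C\varepsilon\, r^{\alpha}$. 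Unscaling yields, as in \eqref{2g}--\eqref{2h}, the pointwise decay and local Hölder estimate with the small constant $C\varepsilon$ in place of $CF$.

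\textbf{Step 3 (global Hölder estimate on the slab).} Apply a time-slab version of Lemma \ref{2C} to the functions $u-h$, $\rho D_xu$, $\rho^2 D_x^2 u$ on $\overline{Q_{T+1}\setminus Q_T}$: the decay estimate toward the lateral boundary, the interior-in-space Hölder seminorm bound, and the $C^{\alpha}$-smallness of the boundary trace (which is $0$ for the weighted derivatives and $h$ for $u$, with $\|h\|_{C^{\alpha}(S\setminus S_T)}$ small by Lemma \ref{4A}(ii)) combine to give
$$
\|u\|_{C^{\alpha}}+\|\rho D_xu\|_{C^{\alpha}}+\|\rho^2 D_x^2u\|_{C^{\alpha}}\le C\varepsilon\quad\text{on }\overline{Q_{T+1}\setminus Q_T}.
$$
Finally, substituting into the equation $\partial_t u=\rho^2 a_{ij}\partial_{ij}u+\rho b_i\partial_iu+cu-f$ and using $\|f\|_{C^{\alpha}}\to 0$ bounds $\|\partial_t u\|_{C^{\alpha}_{\ast}(\overline{Q_{T+1}\setminus Q_T})}\le C\varepsilon$, which completes the $C^{2+\alpha}_{\ast}$-convergence.

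\textbf{Main obstacle.} The core difficulty is Step 2: one must be sure the rescaled data $\tilde g$ has small $C^{\alpha}_{\ast}$-norm uniformly in the base point $x$, and that the intermediate Schauder estimate is available in the interior-in-time form so that the dependence on an initial slice at some fixed earlier time can be absorbed into $\|v\|_{L^{\infty}}$ (which is small by the $L^{\infty}$-convergence). The weighted decay of $u-h$ with a small prefactor relies crucially on Lemma \ref{4F}, whose proof uses the $t$-independent supersolution of Lemma \ref{4E}; both invoke the assumption $P(\alpha)<0$. Without the $C^{\alpha}_{\ast}$-version of the Schauder estimate (Remark \ref{2G'}), the factor $r^{\alpha}$ from rescaling the time-Hölder seminorm would be lost and the argument would not close.
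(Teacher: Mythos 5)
Your proposal is correct and follows essentially the same route as the paper: Lemma \ref{4F} for the $\varepsilon$-small boundary decay, rescaling to cylinders $B_1(0)\times(T-1,T+1]$ and applying the intermediate Schauder estimate interior in both space and time, then patching with Lemma \ref{2C} and reading off $\partial_t u$ from the equation. The only slip is the citation in Step 2: the interior-in-time estimate you correctly describe (absorbing the earlier time slice into $\|v\|_{L^\infty}$) is Theorem \ref{5A}, not Corollary \ref{5G}, which is the version carrying an initial-slice term $\|\phi\|_{C^{2,\alpha}}$ that is not available here.
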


\begin{proof}
The interior Schauder theory for uniformly parabolic equations implies $u\in C^{2,\alpha}(Q_{T})$ for any $T>0$. 
Now we derive a weighted $C^{2,\alpha}$-estimate of $u$.
Fix an $\varepsilon>0$. For any $T>0$ and $0<a<b$, set 
$$Q^{a,b}=\Omega\times (a,b] \quad\text{and}\quad S^{a,b}=\partial\Omega\times[a,b].$$ 
We claim, there exists a constant $T_{0}>0$ such that, for any $T>T_{0}$, 
and any  $(x,t)\in Q^{T,T+1}$ and $(x_{0},t)\in S^{T,T+1}$, with $d(x)=|x-x_{0}|$,
\begin{equation}\label{4z}
|u(x,t)-h(x_{0},t)|+|\rho(x)D_{x}u(x,t)|+|\rho^{2}(x)D^{2}_{x}u(x,t)|\leq C\varepsilon d^{\alpha}(x),
\end{equation}
and
\begin{align}\label{4aa}\begin{split}
[u]_{C^{\alpha}(B_{d(x)/4}(x)\times[T,T+1])}+[\rho D_{x}u]_{C^{\alpha}(B_{d(x)/4}(x)\times[T,T+1])} &\\
+[\rho^{2}D^{2}_{x}u]_{C^{\alpha}(B_{d(x)/4}(x)\times[T,T+1])}
&\leq C\varepsilon,
\end{split}\end{align}
where $C$ is a positive constant independent of $\varepsilon$.
By Lemma \ref{2C}, we get
\begin{align*}
&\|u\|_{C^{\alpha}(\bar{Q}^{T,T+1})}+\|\rho D_{x}u\|_{C^{\alpha}(\bar{Q}^{T,T+1})}+\|\rho^{2}D^{2}_{x}u\|_{C^{\alpha}(\bar{Q}^{T,T+1})}\\
&\qquad\leq C\big\{\|h\|_{C^{\alpha}(S^{T,T+1})}+\varepsilon\big\}.
\end{align*}
By Lemma \ref{4A}(ii)-(iii) and taking $T_{0}$ larger if necessary, we have, for any $T>T_{0}$,
$$
\|f(\cdot,T)\|_{C^{\alpha}(\bar{\Omega})}< \varepsilon,\quad \|h\|_{C^{\alpha}(S^{T,T+1})}<\varepsilon.
$$
Hence,
\begin{equation}\label{4aa'}
\|u\|_{C^{\alpha}(\bar{Q}^{T,T+1})}+\|\rho D_{x}u\|_{C^{\alpha}(\bar{Q}^{T,T+1})}
+\|\rho^{2}D^{2}_{x}u\|_{C^{\alpha}(\bar{Q}^{T,T+1})}\leq C\varepsilon,
\end{equation}
and, by \eqref{1i}, we obtain
$$
\|\partial_{t}u(\cdot,T)\|_{C^{\alpha}(\bar{\Omega})}\leq C\varepsilon.
$$
We hence conclude the desired result.

We now prove \eqref{4z} and \eqref{4aa}. 
We take arbitrary $T>1$, $x\in \Omega $, and $x_{0}\in \partial\Omega$ with $d(x)=|x-x_{0}|$. 
Set $r=d(x)/2$. 
We consider
$$
\Tilde{u}(y,t)=u(x+ry,t)-h(x_{0},t)\quad\text{in }Q'_{T},
$$
where $Q'_{T}=B_{1}(0)\times (T-1,T+1]$. A straightforward computation yields 
\begin{equation}\label{4ad}
\Tilde{a}_{ij}(y,t)\partial_{ij}\Tilde{u}+\Tilde{b}_{i}(y,t)\partial_{i}\Tilde{u}+\Tilde{c}(y,t)\Tilde{u}-\partial_{t}\Tilde{u}=\Tilde{g}(y,t)\quad\text{in }Q'_{T},
\end{equation}
where
\begin{align*}
\Tilde{a}_{ij}(y,t)&=\frac{\rho^{2}(x+ry)}{r^{2}}a_{ij}(x+ry,t),\\
\Tilde{b}_{i}(y,t)&=\frac{\rho(x+ry)}{r}b_{i}(x+ry,t),\\
\Tilde{c}(y,t)&=c(x+ry,t),
\end{align*}
and
\begin{align*}
\Tilde{g}(y,t)&=f(x+ry,t)-c(x+ry,t)h(x_{0},t)+\partial_{t}h(x_{0},t)\\
&=f(x+ry,t)-f(x_{0},t)-(c(x+ry,t)-c(x_{0},t))h(x_{0},t).
\end{align*}
Note that
$$
\|a_{ij}\|_{C^{\alpha}_{\ast}(\bar{Q}^{T-1,T+1})}+\|b_{i}\|_{C^{\alpha}_{\ast}(\bar{Q}^{T-1,T+1})}+\|c\|_{C^{\alpha}_{\ast}(\bar{Q}^{T-1,T+1})}\leq C,
$$
where $C$ is a positive constant independent of $T$. Hence, it is easy to verify that
$$
\|\Tilde{a}_{ij}\|_{C^{\alpha}_{\ast}(\bar{Q}'_{T})}+\|\Tilde{b}_{i}\|_{C^{\alpha}_{\ast}(\bar{Q}'_{T})}+\|\Tilde{c}\|_{C^{\alpha}_{\ast}(\bar{Q}'_{T})}\leq C,
$$
where $C$ is a positive constant independent of $x$ and $T$. Moreover, for any $\xi\in\mathbb{R}^{n}$ and any $(y,t)\in Q'_{T}$,
$$
\Tilde{a}_{ij}(y,t)\xi_{i}\xi_{j}\geq \lambda \frac{\rho^{2}(x+ry)}{r^{2}}|\xi|^{2}\geq \lambda c_1 |\xi|^{2}.
$$
Set $Q''_{T}=B_{1/2}(0)\times(T,T+1]$. By applying Theorem \ref{5A} to \eqref{4ad} in $Q'_{T}$, we obtain
\begin{equation}\label{4ae}
\begin{aligned}
\|\Tilde{u}\|_{C^{2,\alpha}_{\ast}(\bar{Q}''_{T})}\leq C\big\{\|\Tilde{u}\|_{L^{\infty}(Q'_{T})}+\|\Tilde{g}\|_{C^{\alpha}_{\ast}(\bar{Q}'_{T})}\big\}.
\end{aligned}
\end{equation}
Set $M'_{T}=B_{r}(x)\times (T-1,T+1]$ and $M''_{T}=B_{r/2}(x)\times (T,T+1]$.
By Lemma \ref{4F}, there exists a constant $T_{1}>1$ such that, for any $z\in B_{r}(x)$ and any $t>T_1$, 
\begin{equation}\label{decay-C2+alpha}
|u(z,t)-h(x_{0},t)|\leq  \varepsilon|z-x_{0}|^{\alpha}\leq C\varepsilon r^{\alpha}.
\end{equation}
Here, $T_1$ plays the role of $T$ in Lemma \ref{4F}. 
Hence, we obtain, for any $T>T_1+1$,
\begin{equation}\label{4ag}
\|\Tilde{u}\|_{L^{\infty}(Q'_{T})}=\|u-h(x_{0},\cdot)\|_{L^{\infty}(M'_{T})}\leq C\varepsilon r^{\alpha}.
\end{equation}
By the assumption and Lemma \ref{4A}(ii)-(iii), there exists a positive constant $T_{2}>1$ such that, for any $T>T_{2}$,
\begin{equation}\label{4af}
\|f(\cdot,T)\|_{C^{\alpha}(\bar{\Omega})}<\varepsilon,\quad \|h\|_{C^{\alpha}(S^{T-1,T+1})}< \varepsilon.
\end{equation}
Thus, 
for any $T>T_{2}+1$,
\begin{equation}\label{4ah}
\|\Tilde{g}\|_{C^{\alpha}_{\ast}(\bar{Q}'_{T})}\leq C\varepsilon r^{\alpha}.
\end{equation}
We take $T_{0}>\max\{T_{1},T_{2}\}+1$. By combining \eqref{4ae}, \eqref{4ag}, and \eqref{4ah}, we obtain, for any $T>T_{0}$,
\begin{align*}
&r\|D_{x}u\|_{L^{\infty}(M''_{T})}+r^{2}\|D^{2}_{x}u\|_{L^{\infty}(M''_{T})}\\
&\qquad\qquad+r^{\alpha}[u]_{C^{\alpha}_{x}(\bar{M}''_{T})}
+r^{1+\alpha}[D_{x}u]_{C^{\alpha}_{x}(\bar{M}''_{T})}+r^{2+\alpha}[D^{2}_{x}u]_{C^{\alpha}_{x}(\bar{M}''_{T})}\\
&\qquad\qquad+[u-h(x_{0},t)]_{C^{\alpha/2}_{t}(\bar{M}''_{T})}+r[D_{x}u]_{C^{\alpha/2}_{t}(\bar{M}''_{T})}
+r^{2}[D^{2}_{x}u]_{C^{\alpha/2}_{t}(\bar{M}''_{T})}\\
&\qquad\leq C\varepsilon r^{\alpha}.
\end{align*}
Recall \eqref{decay-C2+alpha} and \eqref{4af}. We finish the proof of \eqref{4z} and \eqref{4aa}.
\end{proof}

\begin{remark}\label{4G'}
After proving \eqref{4aa'}, if we further assume that $a_{ij},b_{i},c,f$ $C^{\alpha}$-converge 
to $\bar{a}_{ij},\bar{b}_{i},\bar{c},0$ in $\bar{Q}$ at infinity, respectively,  we can take $T_{0}$ larger such that
$$
\|f\|_{C^{\alpha}(\bar{Q}^{T,T+1})}<\varepsilon\quad\text{for }T>T_{0}.
$$
By \eqref{1i}, we have
$$
\|\partial_{t}u\|_{C^{\alpha}(\bar{Q}^{T,T+1})}\leq C\varepsilon\quad\text{for }T>T_{0}.
$$
Therefore, $u$ $C^{2+\alpha}$-converges to $0$ in $\bar{Q}$ at infinity.
\end{remark}

As a direct consequence, we obtain a $C^{2+\alpha}_{\ast}$-convergence result.

\begin{corollary}\label{4H}
Let $\Omega$ be a bounded domain in $\mathbb R^n$ with a $C^{1}$-boundary $\partial\Omega$
and $\rho$ be a $C^{1}(\bar\Omega)\cap C^{2}(\Omega)$-defining function with $\rho\nabla^2\rho\in C(\bar\Omega)$ and $\rho\nabla^2\rho=0$ on $\partial\Omega$. 
For some constant $\alpha\in(0,1)$, assume that $a_{ij},b_{i},c,f\in C^{\alpha}(\bar{Q})$ 
and $\bar{a}_{ij},\bar{b}_{i},\bar{c},\bar{f}\in C^{\alpha}(\bar{\Omega})$, 
with \eqref{1g}, \eqref{1o}, and $P(\alpha)<0$ on $\partial\Omega$, 
and that $a_{ij}(\cdot,t),b_i(\cdot,t),c(\cdot,t),f(\cdot,t)$ converge to $\bar{a}_{ij},\bar{b}_{i},\bar{c},\bar{f}$ 
in $C^{\alpha}(\bar{\Omega})$ as $t\rightarrow\infty$, respectively.  
Suppose that, for some $\phi\in C^{2+\alpha}(\bar{\Omega})$, the initial-boundary value problem \eqref{1i}-\eqref{1k} 
admits a solution $u\in C^{2}(Q)\cap C(\bar{Q})$. Then, $u$ $C^{2+\alpha}_{\ast}$-converges 
to the unique solution $v\in C^{2}(\Omega)\cap C(\bar{\Omega})$ of the Dirichlet problem \eqref{1m}-\eqref{1n} in $\bar{Q}$ at infinity.
\end{corollary}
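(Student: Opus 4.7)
The plan is to reduce the convergence assertion to Theorem~\ref{4G} by considering the difference $w=u-v$, thereby transforming the problem into one where the right-hand side and boundary data converge to zero.

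First, I would invoke Theorem~\ref{1C'} to produce the unique solution $v\in C^{2+\alpha}(\bar\Omega)$ of the Dirichlet problem \eqref{1m}-\eqref{1n}. The assumptions $\bar c\le 0$ in $\Omega$ and $\bar c<0$ on $\partial\Omega$ from \eqref{1o}, together with $P(\alpha)<0$ on $\partial\Omega$, are exactly what Theorem~\ref{1C'} requires. In particular $v,\rho Dv,\rho^{2}D^{2}v\in C^{\alpha}(\bar\Omega)$ and $v=\bar f/\bar c$ on $\partial\Omega$.

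Next, set $w=u-v$ in $Q$. Since $v$ is independent of $t$ and $L_{0}v=\bar f$, a direct computation shows that $w\in C^{2}(Q)\cap C(\bar Q)$ solves
\begin{align*}
Lw&=f_{0}\quad\text{in }Q,\\
w(\cdot,0)&=\phi_{0}\quad\text{on }\Omega,\\
w&=h_{0}\quad\text{on }S,
\end{align*}
where $\phi_{0}=\phi-v\in C^{2+\alpha}(\bar\Omega)$, $h_{0}=h-\bar f/\bar c$, and
\[
f_{0}=(f-\bar f)-(a_{ij}-\bar a_{ij})(\rho^{2}\partial_{ij}v)-(b_{i}-\bar b_{i})(\rho\partial_{i}v)-(c-\bar c)v.
\]

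I would then verify the three ingredients needed to apply Theorem~\ref{4G} to $w$: (i) $f_{0}\in C^{\alpha}(\bar Q)$ with $f_{0}(\cdot,t)\to 0$ in $C^{\alpha}(\bar\Omega)$ as $t\to\infty$; (ii) $\phi_{0}\in C^{2+\alpha}(\bar\Omega)$; (iii) $h_{0}$ arises from $(f_{0},\phi_{0})$ as the boundary value in \eqref{1k}, equivalently, satisfies the compatibility relations \eqref{1l}. Item~(i) follows because each pairing $(a_{ij}-\bar a_{ij})(\rho^{2}\partial_{ij}v)$ and $(b_{i}-\bar b_{i})(\rho\partial_{i}v)$ factors a coefficient that $C^{\alpha}$-converges to zero against a weighted derivative of $v$ that lies in $C^{\alpha}(\bar\Omega)$. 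Item~(iii) is a direct algebraic check: on $S$ (where $\rho=0$ and $v=\bar f/\bar c$), using $\partial_{t}h-ch+f=0$ one computes
\[
\partial_{t}h_{0}-ch_{0}+f_{0}=c\cdot\frac{\bar f}{\bar c}-\bar f-(c-\bar c)\frac{\bar f}{\bar c}=0,
\]
and $h_{0}(\cdot,0)=\phi_{0}$ on $\partial\Omega$ is immediate.

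With these verified, Theorem~\ref{4G} applied to $w$ yields that $w$ is $C^{2+\alpha}_{\ast}$-convergent to $0$ in $\bar Q$ at infinity, which is exactly the assertion that $u$ $C^{2+\alpha}_{\ast}$-converges to $v$. The step I expect to require the most care is the verification of item~(i): a naive expansion of $Lw$ produces factors $\partial_{ij}v$ that blow up near $\partial\Omega$, and the $C^{\alpha}(\bar\Omega)$-convergence of $f_{0}(\cdot,t)$ is secured only because the weights $\rho^{2}$ and $\rho$ pair with the degenerate derivatives of $v$ so that $\rho^{2}D^{2}v$ and $\rho Dv$ belong to $C^{\alpha}(\bar\Omega)$ by Theorem~\ref{1C'}. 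This weighted structure is precisely what makes the reduction to Theorem~\ref{4G} possible.
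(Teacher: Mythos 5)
Your proposal is correct and follows essentially the same route as the paper's proof: both reduce to Theorem~\ref{4G} by setting $w=u-v$ with $v$ the $C^{2+\alpha}(\bar\Omega)$-solution from Theorem~\ref{1C'}, verify that $f_{0}(\cdot,t)\to 0$ in $C^{\alpha}(\bar\Omega)$ using the weighted regularity $\rho Dv,\rho^{2}D^{2}v\in C^{\alpha}(\bar\Omega)$, and check the compatibility relation $\partial_{t}h_{0}-ch_{0}+f_{0}=0$ on $S$ so that $h_{0}$ is the boundary value generated by $(f_{0},\phi_{0})$. Your emphasis on the weighted pairing being the delicate point matches the paper's reliance on $v\in C^{2+\alpha}(\bar\Omega)$ at exactly the same step.
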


\begin{proof}
By Theorem \ref{1C'}, the Dirichlet problem \eqref{1m}-\eqref{1n} 
admits a unique solution $v\in C^{2+\alpha}(\bar\Omega)$. Let $w=u-v$ in $Q$. Then, $w$ satisfies 
\begin{align}
Lw&=f_{0}\quad\text{in }Q,\label{4ai}\\
w(\cdot,0)&=\phi_{0}=\phi-v\quad\text{on }\Omega,\label{4aj}\\
w&=h_{0}=h-\bar{f}/\bar{c}\quad\text{on }S,\label{4ak}
\end{align}
where
\begin{align*}
f_{0}&=Lu-L_{0}v+(L_{0}-L)v\\
&=f-\bar{f}-\rho^{2}(a_{ij}-\bar{a}_{ij})v_{ij}-\rho(b_{i}-\bar{b}_{i})v_{i}-(c-\bar{c})v.
\end{align*}
By $v\in C^{2+\alpha}(\bar\Omega)$, we get
$$
f_{0}= f-\bar{f}-(c-\bar{c})\frac{\bar{f}}{\bar{c}}=f-\frac{c\bar{f}}{\bar{c}}\quad\text{on }S,
$$
and, by \eqref{1l},
$$
\partial_{t}h_{0}-ch_{0}+f_{0}=\partial_{t}h-c\left(h-\frac{\bar{f}}{\bar{c}}\right)+f_{0}=-f+\frac{c\bar{f}}{\bar{c}}+f_{0}=0\quad\text{on }S.
$$
Hence, 
$$
h_{0}(x,t)=\phi_{0}(x)\exp\Big\{\int^{t}_{0}c(x,s)ds\Big\}-\int^{t}_{0}\exp\Big\{\int^{t}_{s}c(x,\tau)d\tau\Big\}f_{0}(x,s) d s.
$$
By $v\in C^{2+\alpha}(\bar\Omega)$ and $a_{ij}(\cdot,t),b_i(\cdot,t),c(\cdot,t),f(\cdot,t)$ 
converging to $\bar{a}_{ij},\bar{b}_{i},\bar{c},\bar{f}$ in $C^{\alpha}(\bar{\Omega})$ as $t\rightarrow\infty$, respectively, 
we have $f_{0}(\cdot,t)$ converges to $0$ in $C^{\alpha}(\bar{\Omega})$ as $t\rightarrow\infty$. 
By applying Theorem \ref{4G} to \eqref{4ai}-\eqref{4ak}, we conclude the desired result.
\end{proof}

\subsection{$C^{k,2+\alpha}_{\ast}$-Convergence}\label{k-2+alpla convergence}
In this subsection, we study the higher order convergence of solutions of the initial-boundary problem \eqref{1i}-\eqref{1k}.  
We do this in a special setting and consider domains with a piece of flat boundary.

Denote by $x=(x',x_{n})$ points in $\mathbb{R}^{n}$ and $X=(x,t)=(x',x_{n},t)$ points in $\mathbb{R}^{n}\times[0,\infty)$. 
In addition to continuing to use the notations from Section \ref{sec-Higher-regularity}, we also set, for any $r>0$ and $0<a<b\leq \infty$,
\begin{align*}
Q_{r,\infty}&=\{(x,t)\in\mathbb{R}^{n}\times (0,\infty):x\in G_{r}\},\\
S_{r,\infty}&=\{(x',0,t)\in\mathbb{R}^{n}\times[0,\infty):x'\in B'_{r}\},
\end{align*}
and 
\begin{align*}
Q^{a,b}_{r}&=\{(x,t)\in\mathbb{R}^{n}\times(0,\infty):x\in G_{r},a< t\leq b\},\\
S^{a,b}_{r}&=\{(x',0,t)\in\mathbb{R}^{n}\times[0,\infty):x'\in B'_{r},a\leq t\leq b\}.
\end{align*}

Let $a_{i j}$, $b_{i}$, and $c$ be bounded and continuous functions in $\bar{Q}_{1,\infty}$, with $a_{i j}=a_{j i}$ and, 
for any $(x,t)\in\bar{Q}_{1,\infty}$ and any $\xi\in\mathbb{R}^{n}$,
\begin{equation}\label{4am}
\lambda|\xi|^{2}\leq a_{ij}(x,t)\xi_{i}\xi_{j}\leq \Lambda|\xi|^{2},
\end{equation}
for some positive constants $\lambda$ and $\Lambda$. We consider the operator
\begin{equation}\label{4an}
L=x^{2}_{n} a_{ij}\partial_{ij}+x_{n} b_{i}\partial_{i}+c-\partial_{t}\quad\text{in }Q_{1,\infty},
\end{equation}
and its limit elliptic operator $L_{0}$ given by
$$
L_{0}=x^{2}_{n}\bar{a}_{ij}\partial_{ij}+x_{n} \bar{b}_{i}\partial_{i}+\bar{c}\quad\text{in }G_{1},
$$
where $\bar{a}_{i j},\bar{b}_{i},\bar{c}\in C(\bar{G}_{1})$ are the limits of $a_{i j}(\cdot,t),b_{i}(\cdot,t),c(\cdot,t)$ 
in certain sense as $t\rightarrow\infty$, respectively, with $\bar{c}(\cdot,0)<0$ on $B'_1$ and, 
for any $x\in\bar{G}_{1}$ and any $\xi\in\mathbb{R}^{n}$,
$$
\lambda|\xi|^{2}\leq \bar{a}_{ij}(x)\xi_{i}\xi_{j}\leq \Lambda|\xi|^{2}.
$$
We now define
$$
Q(\mu)=\mu(\mu-1)\bar{a}_{nn}+\mu \bar{b}_{n}+\bar{c}\quad\text{in }G_{1}.
$$
We point out that $Q(\mu)$ here is defined in the entire domain $G_{1}$, 
instead of $P(\mu)$ only on the boundary in Subsection \ref{Main Results}. Note that $Q(0)=\bar{c}$.

Let $f$ be a bounded continuous function in $\bar{Q}_{1,\infty}$ and $\bar{f}\in C(\bar{G}_{1})$ be the limit of $f(\cdot,t)$ 
in certain sense as $t\rightarrow\infty$. Let $u$ be a solution of the problem
\begin{align}
Lu&=f\quad\text{in }Q_{1,\infty},\label{4ao}\\
u(\cdot,0)&=\phi\quad\text{on }G_{1},\label{4ap}\\
u&=h\quad \text{on }S_{1,\infty},\label{4aq}
\end{align}
where $\phi\in C(\bar{G}_{1})$ and
$$
h(x,t)=\phi(x)\exp\Big\{\int^{t}_{0}c(x,s)ds\Big\}-\int^{t}_{0}\exp\Big\{\int^{t}_{s}c(x,\tau)d\tau\Big\}f(x,s) d s.
$$
Note that $h$ and $\phi$ satisfy
\begin{align}\label{4ar}\begin{split}
\partial_{t}h-ch+f&=0\quad\text{on } S_{1,\infty},\\
h(\cdot,0,0)&=\phi(\cdot,0)\quad\text{on }B'_{1}.
\end{split}\end{align}
Let $v$ be a solution of the problem
\begin{align}
L_{0}v&=\bar{f}\quad\text{in }G_{1},\label{4as}\\
v(\cdot,0)&=\frac{\bar{f}}{\bar{c}}(\cdot,0)\quad\text{on }B'_{1}.\label{4at}
\end{align}
In this local setting, we may assume that
\begin{equation}\label{4au}
\bar{c}< 0\quad\text{in } \bar{G}_{1},
\end{equation}
and
\begin{equation}\label{4av}
u(\cdot,t)\rightarrow 0\quad\text{in }L^{\infty}(G_{1})\quad\text{as }t\rightarrow\infty.
\end{equation}

By proceeding similarly as in Subsection \ref{2+alpha convergence}, 
we can prove the local versions of Lemma \ref{4E}, Lemma \ref{4F}, and Theorem \ref{4G}. 

\begin{lemma}\label{4I}
Let $\mu>0$ be a constant. Assume that $a_{ij},b_{i},c\in C(\bar{Q}_{1,\infty})$ 
and $\bar{a}_{ij},\bar{b}_{i},\bar{c}\in C(\bar{G}_{1})$, with \eqref{4am}, \eqref{4au}, and $Q(\mu)<0$ in $\bar{G}_{1}$, 
and that $a_{ij}(\cdot,t),b_{i}(\cdot,t)$, $c(\cdot,t)$ converge to $\bar{a}_{ij},\bar{b}_{i},\bar{c}$ 
in $L^{\infty}(G_{1})$ as $t\rightarrow\infty$, respectively. 
Then, there exist positive constants $K$, $T$, and $C_{0}$, depending only on $\mu$, $a_{i j},b_{i},c$, 
and $\bar{a}_{i j},\bar{b}_{i},\bar{c}$, such that, for any $x'_{0}\in B'_{1}$ and any $(x,t)\in Q^{T,\infty}_{1}$,
$$
L(|x-(x'_{0},0)|^{\mu}+Kx^{\mu}_{n})\leq -C_{0}|x-(x'_{0},0)|^{\mu}.
$$
\end{lemma}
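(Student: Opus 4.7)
The plan is to mirror the proof of Lemma \ref{4E}, with $\rho$ replaced by $x_n$ throughout; the flat-boundary setting offers the useful simplifications $|\nabla x_n|\equiv 1$ and $\nabla^2 x_n\equiv 0$, so no cross term involving $\rho\,a_{ij}\rho_{ij}$ appears. Fix $x_0'\in B_1'$ and write $z=x-(x_0',0)$, so that $z_n=x_n\le|z|$. A direct calculation gives $L(x_n^\mu)=\eta\, x_n^\mu$ with
$$\eta=\mu(\mu-1)a_{nn}+\mu b_n+c.$$
By the hypothesized $L^\infty(G_1)$-convergence of $a_{nn}$, $b_n$, $c$ to their barred counterparts, $\eta(\cdot,t)\to Q(\mu)$ in $L^\infty(G_1)$; since $Q(\mu)<0$ on the compact set $\bar G_1$, it is bounded there by some $-c_\mu<0$, and hence
$$L(x_n^\mu)\le -\tfrac{c_\mu}{2}\,x_n^\mu\quad\text{in }Q_1^{T_1,\infty},$$
for $T_1$ chosen sufficiently large.

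Next, I would compute
\begin{align*}
L(|z|^\mu)&=\mu(\mu-2)|z|^{\mu-4}x_n^2 a_{ij}z_iz_j+\mu|z|^{\mu-2}x_n^2 a_{ii}\\
&\qquad+\mu|z|^{\mu-2}x_n b_iz_i+c|z|^\mu,
\end{align*}
and bound the first three terms by $C(x_n^2|z|^{\mu-2}+x_n|z|^{\mu-1})$ using only the ellipticity bounds and the $L^\infty$-norms of the $b_i$. Since $\bar c<0$ on the compact set $\bar G_1$ and $c(\cdot,t)\to\bar c$ in $L^\infty(G_1)$, I can arrange $c\le -c_0$ on $Q_1^{T_2,\infty}$ for some $c_0>0$ and $T_2$ large. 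A Young-inequality absorption $Cx_n|z|^{\mu-1}\le \tfrac{c_0}{2}|z|^\mu+C'x_n^2|z|^{\mu-2}$ then yields
$$L(|z|^\mu)\le|z|^\mu\Big(C''\,\frac{x_n^2}{|z|^2}-\frac{c_0}{2}\Big)\quad\text{in }Q_1^{T_2,\infty}.$$

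Finally, I would set $w(x)=|z|^\mu+K\,x_n^\mu$ and split into two regimes according to the ratio $x_n/|z|$. Picking $\delta=\sqrt{c_0/(4C'')}$: on $\{x_n\le\delta|z|\}$ the preceding bound alone gives $L(|z|^\mu)\le -(c_0/4)|z|^\mu$, while the $Kx_n^\mu$-contribution is nonpositive; on $\{x_n\ge\delta|z|\}$ we have $x_n^\mu\ge\delta^\mu|z|^\mu$ and $x_n^2/|z|^2\le 1$, so
$$Lw\le\Big(C''-\frac{c_0}{2}-\frac{Kc_\mu}{2}\delta^\mu\Big)|z|^\mu,$$
which is $\le-(c_0/4)|z|^\mu$ once $K$ is chosen sufficiently large. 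Taking $T=\max\{T_1,T_2\}$ and $C_0=c_0/4$ completes the proof. I do not expect a substantive obstacle here; the only point needing care is the uniformity in $x_0'\in B_1'$, but each constant produced above comes from the ellipticity bounds, the uniform $L^\infty$-norms of the coefficients, or from the uniform upper bound $Q(\mu)\le-c_\mu$ on the compact set $\bar G_1$, and so is independent of $x_0'$.
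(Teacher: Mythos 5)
Your proposal is correct and is essentially the paper's intended argument: the paper proves Lemma \ref{4I} by noting it is the local, flat-boundary version of Lemma \ref{4E}, i.e.\ one repeats that proof with $\rho$ replaced by $x_n$ (so $\nabla^2\rho$ vanishes and the term $\mu\rho a_{ij}\rho_{ij}$ drops out), uses $Q(\mu)<0$ and $\bar c<0$ on all of $\bar G_1$ to get the bounds on the whole of $G_1$ rather than only near the boundary, and concludes with the same Cauchy-inequality absorption and the same split on the ratio $x_n/|x-(x_0',0)|$ with $x_n\le |x-(x_0',0)|$ in place of $\rho\le C_1|x-x_0|$.
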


\begin{lemma}\label{4J}
For some constant $\alpha\in(0,1)$, assume that $a_{ij},b_{i},c,f\in C^{\alpha}(\bar{Q}_{1,\infty})$ 
and $\bar{a}_{ij},\bar{b}_{i},\bar{c}\in C^{\alpha}(\bar{G}_{1})$, with \eqref{4am}, \eqref{4au}, and $Q(\alpha)<0$ 
in $\bar{G}_{1}$, and that $a_{ij}(\cdot,t),b_i(\cdot,t)$ converge to $\bar{a}_{ij},\bar{b}_{i}$ in $L^{\infty}(G_{1})$ 
as $t\rightarrow\infty$, respectively, and $f(\cdot,t)$, $c(\cdot,t)$ converge to $0,\bar{c}$ in $C^{\alpha}(\bar{G}_{1})$ 
as $t\rightarrow\infty$, respectively. 
Suppose, for some $\phi\in C^{\alpha}(\bar{G}_{1})$, the problem \eqref{4ao}-\eqref{4aq} 
admits a solution $u\in C(\bar{Q}_{1,\infty})\cap C^{2}(Q_{1,\infty})$ with \eqref{4av}. 
Then, for any $\varepsilon>0$, there exists a positive constant $T$, such that for any $(x,t)\in Q^{T,\infty}_{1/2}$ and $x'_{0}\in B'_{1/2}$,
\begin{equation}\label{4aw}
|u(x,t)-h(x'_{0},0,t)|\leq \varepsilon |x-(x'_{0},0)|^{\alpha}.
\end{equation}
\end{lemma}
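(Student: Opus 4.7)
The proof plan is to mimic the proof of Lemma \ref{4F} in the local flat-boundary setting, replacing the global supersolution from Lemma \ref{4E} with its local analogue Lemma \ref{4I} and using hypothesis \eqref{4av} to control $u$ on the portion of $\partial B_r(x'_0,0)$ that is interior to $G_1$. Fix $\varepsilon>0$. By Lemma \ref{4I} applied with $\mu=\alpha$, there are constants $r\in (0,1/4)$, $K>0$, $T_0>0$, and $C_0>0$ such that
$$L(|x-(x'_0,0)|^\alpha+K x_n^\alpha)\le -C_0 |x-(x'_0,0)|^\alpha$$
uniformly in $x'_0\in B'_{1/2}$ for $(x,t)\in Q^{T_0,\infty}_1\cap(B_r(x'_0,0)\times(T_0,\infty))$. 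By the $C^\alpha$-convergence of $f(\cdot,t)$ to $0$ and $c(\cdot,t)$ to $\bar c$, the local version of Lemma \ref{4A}(ii) applied on $S_{1,\infty}$ (the boundary value $h$ solves the ODE \eqref{4ar}), and \eqref{4av}, we may enlarge $T_0$ so that $\|f(\cdot,t)\|_{C^\alpha(\bar G_1)}<\varepsilon$, $\|c(\cdot,t)-\bar c\|_{C^\alpha(\bar G_1)}<\varepsilon$, $\|h(\cdot,t)\|_{C^\alpha(\partial G_1\cap\{x_n=0\})}<\varepsilon$, and $\|u(\cdot,t)\|_{L^\infty(G_1)}<\varepsilon$ for all $t\ge T_0$.

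Fix $x'_0\in B'_{1/2}$. Exactly as in Lemma \ref{4F}, using $\partial_t h-c h+f=0$ on $S_{1,\infty}$, we get
$$L\bigl(u(x,t)-h(x'_0,0,t)\bigr)=g,$$
with
$$g(x,t)=f(x,t)-f(x'_0,0,t)-\bigl(c(x,t)-c(x'_0,0,t)\bigr)h(x'_0,0,t),$$
so that $|g(x,t)|\le C_1\varepsilon|x-(x'_0,0)|^\alpha$ for $(x,t)\in Q^{T_0,\infty}_1$. Next, by the local version of Theorem \ref{2G} (Theorem \ref{3D}) applied at time $T_0$, we obtain $|u(x,T_0)-h(x'_0,0,T_0)|\le C_{T_0}|x-(x'_0,0)|^\alpha$ for $x\in G_{3/4}$; on $S_{1,\infty}\cap\{t\ge T_0\}$ we have $|u-h(x'_0,0,t)|=|h(x,t)-h(x'_0,0,t)|\le\varepsilon|x-(x'_0,0)|^\alpha$ using the $C^\alpha$-bound on $h$.

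Set $w(x)=|x-(x'_0,0)|^\alpha+K x_n^\alpha$ and, for constants $A,B>0$ and $H=C_0/(K+1)$ to be chosen,
$$\psi(x,t)=A\varepsilon\, w(x)+B\, w(x)\, e^{-H(t-T_0)}\quad\text{in }(G_{3/4}\cap B_r(x'_0,0))\times[T_0,\infty).$$
Then $L\psi\le -C_0 A\varepsilon|x-(x'_0,0)|^\alpha$ by the computation in Lemma \ref{4F}. Choosing $A\ge\max\{C_1/C_0,2r^{-\alpha}\}$ and $B=C_{T_0}$, we verify $\pm(u-h(x'_0,0,t))\le\psi$ on the parabolic boundary of the cylinder $(G_{3/4}\cap B_r(x'_0,0))\times(T_0,\infty)$: on the bottom face by the initial bound, on $S_{1,\infty}$ by the $C^\alpha$-bound of $h$, and — this is the new ingredient — on the portion $\partial B_r(x'_0,0)\cap G_{3/4}$ by using $\|u(\cdot,t)\|_{L^\infty}<\varepsilon$, $|h(x'_0,0,t)|<\varepsilon$, and the fact that $w\ge r^\alpha$ there (so $2\varepsilon\le 2r^{-\alpha}\varepsilon\cdot r^\alpha\le A\varepsilon w$).

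The maximum principle then yields $|u(x,t)-h(x'_0,0,t)|\le C\varepsilon|x-(x'_0,0)|^\alpha$ for $(x,t)\in(G_{3/4}\cap B_r(x'_0,0))\times[T_0+H^{-1}\log(B/(A\varepsilon)),\infty)$. Outside $B_r(x'_0,0)$, the bound $\|u(\cdot,t)\|_{L^\infty}<\varepsilon$ combined with $|h(x'_0,0,t)|<\varepsilon$ gives $|u-h(x'_0,0,t)|\le 2\varepsilon\le 2r^{-\alpha}\varepsilon|x-(x'_0,0)|^\alpha$. Renaming the threshold time and the constant multiplying $\varepsilon$ yields \eqref{4aw}. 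The main obstacle I anticipate is verifying that the extra boundary term from $\partial B_r(x'_0,0)\cap G_{3/4}$ can be absorbed into $A\varepsilon w$ \emph{without} the constant depending on $\varepsilon$; this is exactly why we use \eqref{4av} (rather than merely an interior bound) to compare $|u|\le\varepsilon$ against $w\ge r^\alpha$ with the fixed radius $r$ from Lemma \ref{4I}.
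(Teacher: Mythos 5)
Your proposal is correct and follows essentially the same route the paper intends: the paper states Lemma \ref{4J} as the local analogue of Lemma \ref{4F}, obtained "by proceeding similarly as in Subsection \ref{2+alpha convergence}," which is precisely what you do — replacing Lemma \ref{4E} by Lemma \ref{4I}, running the same barrier $\psi=A\varepsilon w+Bwe^{-H(t-T_0)}$ in $(G_1\cap B_r(x'_0,0))\times(T_0,\infty)$, and using \eqref{4av} on the spherical portion of the parabolic boundary exactly as Theorem \ref{4C} is used in the global proof. Your closing remark correctly identifies why \eqref{4av} (with the fixed radius $r$) is the ingredient that makes the constant independent of $\varepsilon$.
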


\begin{thm}\label{4K}
For some constant $\alpha\in(0,1)$, assume that $a_{ij}$, $b_{i}$, $c$, $f\in C^{\alpha}(\bar{Q}_{1,\infty})$ 
and $\bar{a}_{ij},\bar{b}_{i},\bar{c}\in C^{\alpha}(\bar{G}_{1})$, with \eqref{4am}, \eqref{4au}, 
and $Q(\alpha)<0$ in $\bar{G}_{1}$, and that $a_{ij}(\cdot,t),b_i(\cdot,t),c(\cdot,t),f(\cdot,t)$ converge to 
$\bar{a}_{ij},\bar{b}_{i},\bar{c},0$ in $C^{\alpha}(\bar{G}_{1})$ as $t\rightarrow\infty$, respectively. 
Suppose, for some $\phi\in C^{2+\alpha}(\bar{G}_{1})$, the problem \eqref{4ao}-\eqref{4aq} admits 
a solution $u\in C(\bar{Q}_{1,\infty})\cap C^{2}(Q_{1,\infty})$ with \eqref{4av}. 
Then, for any $r\in(0,1)$, $u$ $C^{2+\alpha}_{\ast}$-converges to $0$ in $\bar{Q}_{r,\infty}$ at infinity.
\end{thm}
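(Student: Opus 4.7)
The plan is to mirror the proof of the global version Theorem \ref{4G} in the half-cylinder setting, substituting the local decay estimate (Lemma \ref{4J}) and local supersolution (Lemma \ref{4I}) for their global counterparts. Fix $\varepsilon>0$ and $r\in(0,1)$. I will show there exists $T_0>0$ such that for every $T>T_0$, every $(x,t)\in Q^{T,T+1}_{r}$, and every $x'_0\in B'_{r}$ with $x_n=|x-(x'_0,0)|$,
$$|u(x,t)-h(x'_0,0,t)|+x_n|D_xu|+x_n^2|D^2_xu|\le C\varepsilon x_n^{\alpha},$$
together with the analogous Hölder seminorm bound on $B_{x_n/4}(x)\times[T,T+1]$. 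Combining these via the local version Lemma \ref{3A} of Lemma \ref{2C} yields $\|u\|_{C^{\alpha}(\bar Q^{T,T+1}_r)}+\|x_nD_xu\|_{C^{\alpha}}+\|x_n^2D^2_xu\|_{C^{\alpha}}\le C\varepsilon$. Using the equation $Lu=f$ together with the hypothesis $f(\cdot,t)\to 0$ in $C^{\alpha}(\bar G_1)$ then delivers $\|\partial_tu\|_{C^{\alpha}_{\ast}(\bar Q^{T,T+1}_r)}\le C\varepsilon$, completing the $C^{2+\alpha}_{\ast}$-convergence.

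To establish the pointwise and interior-seminorm claim, fix $x\in G_{r}$, let $x'_0$ be its projection onto $\{x_n=0\}$, set $s=x_n/2$, and rescale by $\tilde{u}(y,t)=u(x+sy,t)-h(x'_0,0,t)$ on $Q'_T=B_1(0)\times(T-1,T+1]$. The rescaled coefficients $\tilde a_{ij},\tilde b_i,\tilde c$ are uniformly parabolic with ellipticity constant bounded below independently of $x$ and $T$, and their $C^{\alpha}_{\ast}$-norms on $\bar Q'_T$ are uniformly bounded. Writing
$$\tilde g(y,t)=f(x+sy,t)-f(x'_0,0,t)-\bigl[c(x+sy,t)-c(x'_0,0,t)\bigr]h(x'_0,0,t),$$
one checks $\|\tilde g\|_{C^{\alpha}_{\ast}(\bar Q'_T)}\le C\varepsilon s^{\alpha}$ for $T$ sufficiently large, using the $C^{\alpha}(\bar G_1)$-convergence of $f,c$ together with Lemma \ref{4A}(ii)--(iii) giving $\|h\|_{C^{\alpha}(S^{T-1,T+1}_1)}\to 0$ and $\|\partial_th\|_{L^{\infty}}\to 0$, and invoking \eqref{4au}.

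Applying the intermediate Schauder estimate (Theorem \ref{5A}) to the rescaled equation yields
$$\|\tilde u\|_{C^{2,\alpha}_{\ast}(\bar Q''_T)}\le C\bigl\{\|\tilde u\|_{L^{\infty}(Q'_T)}+\|\tilde g\|_{C^{\alpha}_{\ast}(\bar Q'_T)}\bigr\},$$
where the $L^{\infty}$-term is bounded by $C\varepsilon s^{\alpha}$ by Lemma \ref{4J}, once $T_0$ is taken large enough that the hypotheses of that lemma are in force with the prescribed $\varepsilon$. Unscaling converts the $C^{2,\alpha}_{\ast}$-bound on $\tilde u$ into the pointwise and interior-seminorm estimates claimed above.

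The main obstacle is the careful bookkeeping on $\tilde g$ in the $C^{\alpha}_{\ast}$-norm (Hölder in space, only sup in time) rather than the full $C^{\alpha}$-norm. The cancellation in $\tilde g$ eliminates the $\partial_th$ contribution directly, but the residue still involves $h(x'_0,0,t)$ whose time oscillation is absorbed only through $\|\partial_th\|_{L^{\infty}}$; thanks to Lemma \ref{4A}(iii), this decays without requiring $C^{\alpha}$-in-time convergence of $c$ or $f$. As emphasized in Remark \ref{2G'}, this is precisely why the intermediate Schauder theory (rather than the classical parabolic Schauder theory) is indispensable: the classical estimate would require a $C^{\alpha}$-in-time bound on $\tilde g$, which is unavailable in our setting and would lose the crucial factor $s^{\alpha}$.
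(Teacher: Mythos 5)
Your proposal is correct and follows essentially the same route the paper intends: the paper gives no separate proof of Theorem \ref{4K}, stating only that it follows "by proceeding similarly as in Subsection 4.2," and your argument is precisely that adaptation — the rescaling at scale $x_n/2$, the intermediate Schauder estimate of Theorem \ref{5A}, the $L^\infty$-input from Lemma \ref{4J}, the treatment of $\tilde g$ in the $C^{\alpha}_{\ast}$-norm via Lemma \ref{4A}(ii)--(iii), and the patching via the local Lemma \ref{3A}. The only cosmetic difference is that you make explicit the bookkeeping the paper leaves implicit.
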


In Lemma \ref{4J} and Theorem \ref{4K},  we assume \eqref{4av}. 
In applications, \eqref{4av} is a consequence of Theorem \ref{4C}.

Now, we study the $C^{2+\alpha}_{\ast}$-decay of derivatives of solutions of the problem \eqref{4ao}-\eqref{4aq}.
First, we consider the $C^{2+\alpha}_{\ast}$-decay of the $x'$-derivative.

\begin{thm}\label{4L}
For some constant $\alpha\in(0,1)$, assume that $D_{x'}^{\sigma}a_{ij}$, $D_{x'}^{\sigma}b_{i}$, 
$D_{x'}^{\sigma}c$, $D_{x'}^{\sigma}f\in C^{\alpha}(\bar{Q}_{1,\infty})$ 
and $D_{x'}^{\sigma}\bar{a}_{ij},D_{x'}^{\sigma}\bar{b}_{i},D_{x'}^{\sigma}\bar{c}\in C^{\alpha}(\bar{G}_{1})$, 
for any $\sigma\leq 1$, with \eqref{4am}, \eqref{4au}, and $Q(\alpha)<0$ in $\bar{G}_{1}$, 
and that $D_{x'}^{\sigma}a_{ij}(\cdot,t)$, $D_{x'}^{\sigma}b_{i}(\cdot,t)$, $D_{x'}^{\sigma}c(\cdot,t)$, 
$D_{x'}^{\sigma}f(\cdot,t)$ converge to $D_{x'}^{\sigma}\bar{a}_{ij},D_{x'}^{\sigma}\bar{b}_{i},D_{x'}^{\sigma}\bar{c},0$ 
in $C^{\alpha}(\bar{G}_{1})$ as $t\rightarrow\infty$, for any $\sigma\leq 1$, respectively. 
Suppose that, for some $\phi\in C^{2+\alpha}(\bar{G}_{1})$ with $D_{x'}\phi\in C^{2+\alpha}(\bar{G}_{1})$, the problem \eqref{4ao}-\eqref{4aq} admits a solution $u\in C^{2}(Q_{1,\infty})\cap C(\bar{Q}_{1,\infty})$ with \eqref{4av}. 
Then, for any $\sigma\leq 1$ and any $r\in(0,1)$, $D^{\sigma}_{x'}u$ $C^{2+\alpha}_{\ast}$-converges 
to $0$ in $\bar{Q}_{r,\infty}$ at infinity.
\end{thm}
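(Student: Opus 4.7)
The plan is to reduce the $\sigma=1$ case (the only new one, since $\sigma=0$ is exactly Theorem \ref{4K}) to Theorem \ref{4K} applied to a tangential derivative of $u$. Fix $k\in\{1,\ldots,n-1\}$ and an intermediate radius $r_0\in(r,1)$. By the interior $x'$-Schauder theory invoked in the proof of Theorem \ref{3F}, differentiating \eqref{4ao}-\eqref{4aq} with respect to $x_k$ is permissible and yields
\begin{align*}
L(\partial_k u)&=f_k\quad\text{in }Q_{r_0,\infty},\\
\partial_k u(\cdot,0)&=\partial_k\phi\quad\text{on }G_{r_0},\\
\partial_k u&=\partial_k h\quad\text{on }S_{r_0,\infty},
\end{align*}
where
$$f_k=\partial_k f-x_n^2(\partial_k a_{ij})\partial_{ij}u-x_n(\partial_k b_i)\partial_i u-(\partial_k c)u,$$
and the compatibility $\partial_t(\partial_k h)-c\,\partial_k h+f_k=0$ on $S_{r_0,\infty}$ is inherited from \eqref{4ar}. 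This matches the structure needed to invoke Theorem \ref{4K} on $Q_{r_0,\infty}$.

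I now go through the hypotheses of Theorem \ref{4K} for $\partial_k u$. The coefficients and their limits are unchanged, so the ellipticity \eqref{4am}, the sign condition \eqref{4au}, the hypothesis $Q(\alpha)<0$, and the $C^\alpha(\bar G_{r_0})$-convergence as $t\to\infty$ all transfer verbatim. The initial datum $\partial_k\phi$ lies in $C^{2+\alpha}(\bar G_{r_0})$ by hypothesis. Applying Theorem \ref{4K} to $u$ (the $\sigma=0$ case), we know $u$ is $C^{2+\alpha}_{\ast}$-converging to $0$ in $\bar Q_{r_0,\infty}$ at infinity; in particular each of $\|u(\cdot,t)\|_{C^\alpha(\bar G_{r_0})}$, $\|x_n D_x u(\cdot,t)\|_{C^\alpha(\bar G_{r_0})}$, and $\|x_n^2 D_x^2 u(\cdot,t)\|_{C^\alpha(\bar G_{r_0})}$ tends to $0$ as $t\to\infty$. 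Combined with the uniform $C^\alpha$-boundedness of $\partial_k a_{ij},\partial_k b_i,\partial_k c$ and the hypothesis $\partial_k f(\cdot,t)\to 0$ in $C^\alpha(\bar G_1)$, a term-by-term inspection of the definition of $f_k$ gives $f_k(\cdot,t)\to 0$ in $C^\alpha(\bar G_{r_0})$ as $t\to\infty$.

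The only hypothesis of Theorem \ref{4K} not immediately inherited is \eqref{4av} for $\partial_k u$, namely $\partial_k u(\cdot,t)\to 0$ in $L^\infty(G_{r_0/2})$. This is the main obstacle. I plan to obtain it by repeating the difference-quotient argument in the proof of Lemma \ref{3E} in the asymptotic regime. Set $u_\tau(x,t)=\tau^{-1}[u(x+\tau e_k,t)-u(x,t)]$, multiply by a cutoff $\varphi(x')\in C_0^\infty(B'_{r_0})$ with $\varphi=1$ on $B'_{r_0/2}$, and derive the equation $L(\varphi(u_\tau-u_\tau(0,\cdot)))=\varphi g_{1,\tau}+g_{2,\tau}$ exactly as in \eqref{3k}-\eqref{3o}. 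Each error term is a product of either a convergent coefficient difference or $\partial_k$ of a coefficient with a factor that the $C^{2+\alpha}_{\ast}$-convergence of $u$ controls (namely $u$, $x_n D_xu$, $x_n^2 D_x^2u$, or $\partial_tu$); hence the $C^\alpha$-norm of the right-hand side on $\bar Q^{T,\infty}_{r_0}$ tends to $0$ as $T\to\infty$, uniformly in small $\tau$. Applying Lemma \ref{4J} (the asymptotic boundary decay) in place of Lemma \ref{3C} and sending $\tau\to 0$ yields, for every $\varepsilon>0$, a $T>0$ with
$$|\partial_k u(x,t)-\partial_k h(x'_0,0,t)|\le\varepsilon\,|x-(x'_0,0)|^\alpha\quad\text{for }(x,t)\in Q^{T,\infty}_{r_0/2},\ x'_0\in B'_{r_0/2}.$$
A tangential-derivative version of Lemma \ref{4A} (valid under the hypotheses on $D_{x'}c,D_{x'}f,D_{x'}\phi$) gives $\partial_k h(\cdot,t)\to 0$ in $L^\infty(B'_1)$, and combining these two facts verifies \eqref{4av} for $\partial_k u$ on $G_{r_0/2}$. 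Theorem \ref{4K} then applies and produces the $C^{2+\alpha}_{\ast}$-convergence of $\partial_k u$ to $0$ in $\bar Q_{r,\infty}$ at infinity for every $r\in(0,r_0/2)$; since $r_0\in(r,1)$ was arbitrary, the conclusion holds for all $r\in(0,1)$.
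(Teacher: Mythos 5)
Your proposal is correct and follows the same overall reduction as the paper: differentiate the problem in $x_k$, check that the hypotheses of Theorem \ref{4K} transfer to $\partial_k u$ (with $f_k(\cdot,t)\to 0$ in $C^\alpha$ coming from the $C^{2+\alpha}_{\ast}$-convergence of $u$ and the convergence of $D_{x'}a_{ij}, D_{x'}b_i, D_{x'}c, D_{x'}f$), identify the verification of \eqref{4av} for $\partial_k u$ as the crux, and then invoke Theorem \ref{4K}. The one place you diverge is in how that crux, the $L^\infty$-decay \eqref{4bb} of $\partial_k u$, is obtained. The paper works directly with the cutoff function applied to $\partial_k u$ itself (legitimate since Theorem \ref{3F} already gives the existence and regularity of $\partial_k u$), bounds $\varphi f_k+g_k$ and the parabolic boundary data of $\varphi\partial_k u$ by $\varepsilon$ for $t>T$, and then runs the two-barrier argument of Theorem \ref{4C} to get plain $L^\infty$-decay. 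You instead rerun the difference-quotient argument of Lemma \ref{3E} in the asymptotic regime, replacing Lemma \ref{3C} by Lemma \ref{4J} and the static barrier by the time-decaying barrier of Lemma \ref{4F}, which yields the stronger weighted decay $|\partial_k u-\partial_k h|\le\varepsilon|x-(x_0',0)|^\alpha$ and hence \eqref{4bb}. Both routes work; yours proves a sharper intermediate estimate (the analog of Lemma \ref{4J} for $\partial_k u$), but that estimate is re-derived internally anyway once Theorem \ref{4K} is applied to $\partial_k u$, so the difference quotients add length without adding strength. One small imprecision: for the maximum-principle step you only need the pointwise bound $|\varphi g_{1,\tau}+g_{2,\tau}|\le C\varepsilon|x|^\alpha$, not smallness of a $C^\alpha$-norm of the right-hand side; and for the lateral boundary data you do need $\partial_k h(\cdot,t)\to 0$ in $C^\alpha$ (not just $L^\infty$), which the tangential version of Lemma \ref{4A}(ii) supplies.
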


\begin{proof}
Fix $k=1,\cdots,n-1$. As in the proof of Theorem \ref{3F}, we have, 
for any $r\in(0,1)$, $\partial_{k}u\in C^{2}(Q_{r,\infty})\cap C(\bar{Q}_{r,\infty})$ satisfies
\begin{align}
L(\partial_{k}u)&=f_{k}\quad\text{in }Q_{r,\infty},\label{4ax}\\
\partial_{k}u(\cdot,0)&=\partial_{k}\phi\quad\text{on } G_{r},\label{4ay}\\
\partial_{k}u&=\partial_{k}h\quad\text{on }  S_{r,\infty},\label{4az}
\end{align}
where
$$
f_{k}=\partial_{k}f-x^{2}_{n}\partial_{k}a_{ij}\partial_{ij}u-x_{n}\partial_{k}b_{i}\partial_{i}u-\partial_{k}cu,
$$
with
$$
f_{k}=\partial_{k}f-\partial_{k}c h\quad\text{on } S_{r,\infty},
$$
and
\begin{align}\label{4ba}\begin{split}
\partial_{t}(\partial_{k}h)-c\partial_{k}h+f_{k}&=0\quad\text{on } S_{r,\infty},\\
\partial_{k}h(\cdot,0,0)&=\partial_{k}\phi(\cdot,0)\quad\text{on }  B'_{r}.
\end{split}\end{align}
By Theorem \ref{3D} and Theorem \ref{4K}, we have, for any $r\in(0,1)$, $u\in C^{2+\alpha}(\bar{Q}_{r,\infty})$ and $u$ $C^{2+\alpha}_{\ast}$-converges to $0$ in $\bar{Q}_{r,\infty}$ at infinity. 
Hence, we obtain, for any $r\in(0,1)$, $f_{k}\in C^{\alpha}(\bar{Q}_{r,\infty})$ and $f_{k}(\cdot,t)$ 
converges to $0$ in $C^{\alpha}(\bar{G}_{r})$ as $t\rightarrow\infty$.  
We will prove, for any $r\in(0,1)$,
\begin{equation}\label{4bb}
\partial_{k}u(\cdot,t)\rightarrow 0\quad\text{in }L^{\infty}(G_{r})\quad\text{as }t\rightarrow\infty.
\end{equation}
Then, we can apply Theorem \ref{4K} to \eqref{4ax}-\eqref{4az} and conclude the desired decay of $\partial_{k}u$.

We now prove \eqref{4bb}. Take a cutoff function $\varphi=\varphi(x')\in C^{\infty}_{0}(B'_{3/4})$, with $\varphi=1$ in $B'_{1/2}$. Then,
\begin{equation}\label{4bc}
L(\varphi \partial_{k}u)=\varphi f_{k}+g_{k},
\end{equation}
where
$$
g_{k}=2x^{2}_{n}a_{ij}\partial_{i}\varphi\partial_{jk}u+(x_{n} a_{ij}\partial_{ij}\varphi+b_{i}\partial_{i}\varphi)x_{n}\partial_{k}u.
$$
Fix an $\varepsilon>0$. 
By $f_{k}(\cdot,t)$ converging to $0$ in $C^{\alpha}(\bar{G}_{3/4})$ as $t\rightarrow\infty$, there exists a constant $T>0$ such that
\begin{equation}\label{4bd}
\|f_{k}(\cdot,t)\|_{L^{\infty}(G_{3/4})}<\varepsilon\quad\text{for any }t>T.
\end{equation}
By assumptions and $u$ $C^{2+\alpha}_{\ast}$-converging to $0$ in $\bar{Q}_{3/4,\infty}$ at infinity, 
we can take $T$ larger if necessary such that
\begin{equation}\label{4be}
\|g_{k}(\cdot,t)\|_{L^{\infty}(G_{3/4})}<\varepsilon\quad\text{for any }t>T.
\end{equation}
We now examine $\varphi \partial_{k}u$ on $\partial_{p}Q^{T,\infty}_{3/4}$. 
First, $\varphi\partial_{k}u=0$ on $\partial B'_{3/4}\times (0,3/4)\times [T,\infty)$. 
Next, Theorem \ref{5A} (applied to the equation $Lu=f$ in $Q^{t-1,t}_{7/8}$) implies, for any $t>1$,
$$
\|\partial_{k}u\|_{L^{\infty}(B'_{3/4}\times \{3/4\}\times\{t\})}
\leq C\big\{\|u\|_{L^{\infty}(Q^{t-1,t}_{7/8})}+\|f\|_{C^{\alpha}_{\ast}(Q^{t-1,t}_{7/8})}\big\},
$$
where $C$ is a positive constant independent of $t$. 
By \eqref{4av} and $f(\cdot,t)$ converging to $0$ in $C^{\alpha}(\bar{G}_{7/8})$ as $t\rightarrow\infty$, 
we can take $T$ larger if necessary such that, for any $t>T$,
$$
\|\varphi\partial_{k}u\|_{L^{\infty}(B'_{3/4}\times \{3/4\}\times\{t\})}<\varepsilon.
$$
By \eqref{4ba}, $c(\cdot,t),f_{k}(\cdot,t)$ converging to $\bar{c},0$ in $C^{\alpha}(\bar{G}_{3/4})$ as $t\rightarrow\infty$, 
respectively, and Lemma \ref{4A}(i), we can take $T$ large further if necessary such that, for any $t>T$,
$$
\|(\varphi\partial_{k}h)(\cdot,0,t)\|_{L^{\infty}(B'_{3/4})}<\varepsilon.
$$
Fix such a $T>0$. By Theorem \ref{3F}, we have
$$
\|\varphi\partial_{k}u\|_{L^{\infty}(Q_{3/4,T})}\leq C_{T}.
$$
We can proceed similarly as in the proof of Theorem \ref{4C}. 
Then, there exists a constant $T'>T$ such that, for any $t>T'$,
$$
\|(\varphi\partial_{k}u)(\cdot,t)\|_{L^{\infty}(G_{3/4})}<C\varepsilon,
$$
where $C$ is a positive constant independent of $\varepsilon$. Hence, for any $t>T'$,
$$
\|\partial_{k}u(\cdot,t)\|_{L^{\infty}(G_{1/2})}<C\varepsilon,
$$
and we obtain \eqref{4bb} easily.
\end{proof}

By induction, We have the following more general result.

\begin{thm}\label{4M}
For some integer $\ell\geq 0$ and some constant $\alpha\in(0,1)$, assume that 
$D_{x'}^{\sigma}a_{ij},D_{x'}^{\sigma}b_{i},D_{x'}^{\sigma}c,D^{\sigma}_{x'}f\in C^{\alpha}(\bar{Q}_{1,\infty})$ 
and $D_{x'}^{\sigma}\bar{a}_{ij},D_{x'}^{\sigma}\bar{b}_{i},D_{x'}^{\sigma}\bar{c}\in C^{\alpha}(\bar{G}_{1})$, 
for any $\sigma\leq \ell$, with \eqref{4am}, \eqref{4au}, and $Q(\alpha)<0$ in $\bar{G}_{1}$,
and that $D_{x'}^{\sigma}a_{ij}(\cdot,t),D_{x'}^{\sigma}b_{i}(\cdot,t),D_{x'}^{\sigma}c(\cdot,t),D_{x'}^{\sigma}f(\cdot,t)$ converge to 
$D_{x'}^{\sigma}\bar{a}_{ij},D_{x'}^{\sigma}\bar{b}_{i},D_{x'}^{\sigma}\bar{c},0$
 in $C^{\alpha}(\bar{G}_{1})$ as $t\rightarrow\infty$, for any $\sigma\leq \ell$, respectively. 
 Suppose that, for some $\phi$ with $D^{\sigma}_{x'}\phi\in C^{2+\alpha}(\bar{G}_{1})$ for any $\sigma\leq \ell$, 
 the problem \eqref{4ao}-\eqref{4aq} admits a solution $u\in C^{2}(Q_{1,\infty})\cap C(\bar{Q}_{1,\infty})$ with \eqref{4av}. 
 Then, for any $\sigma\leq \ell$ and any $r\in(0,1)$, $D^{\sigma}_{x'}u$ $C^{2+\alpha}_{\ast}$-converges 
 to $0$ in $\bar{Q}_{r,\infty}$ at infinity.
\end{thm}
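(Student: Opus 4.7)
The plan is to argue by induction on $\ell$, taking Theorem \ref{4L} as the base case $\ell = 1$. Assume the conclusion holds for all orders up to $k$, and consider $\ell = k+1$. The strategy mirrors that of Theorem \ref{3G}: differentiate the equation \eqref{4ao} in the tangential directions $k$ times and apply Theorem \ref{4L} to the resulting initial-boundary value problem for $D^k_{x'} u$, after verifying that the new data satisfies all the hypotheses needed for the $C^{2+\alpha}_*$-convergence conclusion.

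First, differentiating \eqref{4ao}--\eqref{4aq} $k$ times in $x'$, we obtain, for any $r \in (0,1)$,
\begin{align*}
L(D^{k}_{x'} u) &= f_k \quad \text{in } Q_{r,\infty}, \\
D^{k}_{x'} u(\cdot, 0) &= D^{k}_{x'} \phi \quad \text{on } G_r, \\
D^{k}_{x'} u &= D^{k}_{x'} h \quad \text{on } S_{r,\infty},
\end{align*}
where $f_k$ is a linear combination of $D^{k}_{x'} f$, $x_n^2 D^2_x D^{\tau}_{x'} u$, $x_n D_x D^{\tau}_{x'} u$, and $D^{\tau}_{x'} u$ for $\tau \leq k-1$, with coefficients built from $a_{ij}, b_i, c$ and their $x'$-derivatives up to order $k$. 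The same calculation as in Theorem \ref{3G} shows that $D^{k}_{x'} h$ and $D^{k}_{x'} \phi$ satisfy the compatibility relation analogous to \eqref{3w} with source $f_k$, so that the hypotheses of Theorem \ref{4L} at level one (applied to $D^{k}_{x'} u$) are in place provided I control the $C^\alpha$-convergence of $f_k$ (and of $D_{x'} f_k$, which corresponds to the increment from $k$ to $k+1$).

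The heart of the proof is therefore the convergence estimate
\begin{align*}
\|f_k(\cdot, t)\|_{C^{\alpha}(\bar{G}_r)} + \|D_{x'} f_k(\cdot, t)\|_{C^{\alpha}(\bar{G}_r)} \longrightarrow 0 \quad\text{as } t \to \infty,
\end{align*}
for every $r \in (0,1)$. By the inductive hypothesis, $D^{\tau}_{x'} u$ is known to $C^{2+\alpha}_*$-converge to $0$ on $\bar{Q}_{r',\infty}$ for each $\tau \leq k$ and $r' \in (0,1)$. In particular each of the weighted quantities $D^{\tau}_{x'} u$, $x_n D_x D^{\tau}_{x'} u$, and $x_n^2 D^2_x D^{\tau}_{x'} u$ appearing in $f_k$ converges to $0$ in $C^\alpha(\bar{G}_{r'})$ for $t \to \infty$. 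Combining this with the assumed $C^\alpha$-convergence of $D^{\sigma}_{x'} a_{ij}, D^{\sigma}_{x'} b_i, D^{\sigma}_{x'} c, D^{\sigma}_{x'} f$ up to order $\sigma = k+1$, and using that $C^\alpha$ is a Banach algebra, yields the $C^\alpha$-decay of $f_k$; the $D_{x'} f_k$ control is obtained similarly, since the same combinatorics apply after one more $x'$-differentiation and use hypotheses only up to order $k+1$.

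It remains to verify the hypothesis \eqref{4av} of Theorem \ref{4L} for the differentiated function, i.e. that $D^{k}_{x'} u(\cdot, t) \to 0$ in $L^{\infty}(G_{r})$ as $t \to \infty$; this is already contained in the inductive hypothesis. Once these ingredients are assembled, Theorem \ref{4L} applied on slightly shrunk domains to $D^{k}_{x'} u$ delivers the $C^{2+\alpha}_*$-convergence to $0$ of $D^{k}_{x'} u$, and then of $D^{k+1}_{x'} u$, on $\bar{Q}_{r,\infty}$ for every $r \in (0,1)$, closing the induction. The main technical obstacle I expect is the bookkeeping verifying that the terms $x_n^2 D^2_x D^{\tau}_{x'} u$ and $x_n D_x D^{\tau}_{x'} u$ in $f_k$ are genuinely absorbed by the $C^{2+\alpha}_*$-convergence provided by the inductive hypothesis, so that no loss of weight occurs when passing one more $x'$-derivative through; this is where the precise definition \eqref{eq-def-k-2+alpha-norm} of the $C^{k,2+\alpha}_*$-norm, with weights only on the top-order spatial derivatives, is essential.
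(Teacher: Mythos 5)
Your proposal is correct and follows essentially the same route the paper intends: the paper states Theorem \ref{4M} with only the remark ``by induction,'' and the intended argument is precisely the induction you describe, mirroring the proof of Theorem \ref{3G} with Theorem \ref{4L} applied to the differentiated problem for $D^{k}_{x'}u$, whose hypothesis \eqref{4av} is supplied by the inductive $C^{2+\alpha}_{\ast}$-convergence. Your identification of the key point --- that the commutator terms in $f_k$ and $D_{x'}f_k$ decay in $C^{\alpha}$ because the inductive hypothesis controls exactly the weighted quantities $x_n^2 D^2_x D^{\tau}_{x'}u$ and $x_n D_x D^{\tau}_{x'}u$ --- matches the bookkeeping carried out in the proof of Theorem \ref{3G}.
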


Now, we consider $C^{2+\alpha}_{\ast}$-decay of the $x_{n}$-derivative.

\begin{thm}\label{4N}
For some constant $\alpha\in(0,1)$, assume that $a_{ij},b_{i},c,f\in C^{1,\alpha}(\bar{Q}_{1,\infty})$ 
and $\bar{a}_{ij},\bar{b}_{i},\bar{c}\in C^{1,\alpha}(\bar{G}_{1})$, with \eqref{4am}, \eqref{4au}, 
and $Q(1+\alpha)<0$ in $\bar{G}_{1}$, and that $a_{ij}(\cdot,t),b_{i}(\cdot,t),c(\cdot,t),f(\cdot,t)$ 
converge to $\bar{a}_{ij},\bar{b}_{i},\bar{c},0$ in $C^{1,\alpha}(\bar{G}_{1})$ as $t\rightarrow\infty$, respectively. 
Suppose that, for some $\phi\in C^{1,2+\alpha}(\bar{G}_{1})$, the problem \eqref{4ao}-\eqref{4aq} 
admits a solution $u\in C^{2}(Q_{1,\infty})\cap C(\bar{Q}_{1,\infty})$ with \eqref{4av}. 
Then, for any $r\in(0,1)$, $u$ $C^{1,2+\alpha}_{\ast}$-converges to $0$ in $\bar{Q}_{r,\infty}$ at infinity.
\end{thm}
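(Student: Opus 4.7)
The plan is to combine Theorems \ref{4K} and \ref{4L} (with $\ell=1$), which control $u$ itself and its tangential spatial derivatives, with an analogous argument for the normal derivative $\partial_n u$ obtained by differentiating the equation in $x_n$ as in Theorem \ref{3I}. The hypotheses of Theorem \ref{4K} for $u$ are satisfied: the characteristic polynomial $Q(\mu)=\bar a_{nn}\mu^2+(\bar b_n-\bar a_{nn})\mu+\bar c$ is a convex quadratic in $\mu$ (with leading coefficient $\bar a_{nn}\ge\lambda>0$), so the assumptions $Q(0)=\bar c<0$ and $Q(1+\alpha)<0$ in $\bar G_1$ together force $Q(\mu)<0$ on all of $[0,1+\alpha]$; in particular $Q(\alpha)<0$ and $Q(1)<0$ in $\bar G_1$. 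Theorem \ref{4K} applied to $u$ then gives its $C^{2+\alpha}_\ast$-convergence to $0$ in $\bar Q_{r,\infty}$ for any $r\in(0,1)$, and Theorem \ref{4L} with $\ell=1$ yields the same for $D_{x'}u$.

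Following Theorem \ref{3I}, $w=\partial_n u$ satisfies $L^{(1)}w=f_n$ in $Q_{r,\infty}$, $w=u_1$ on $S_{r,\infty}$, and $w(\cdot,0)=\partial_n\phi$ on $G_r$, where $L^{(1)}=x_n^2 a_{ij}\partial_{ij}+x_n(b_i+2a_{in})\partial_i+(c+b_n)-\partial_t$ and $f_n,u_1$ are as in Lemma \ref{3H}. A direct computation gives the characteristic polynomial of the limit of $L^{(1)}$ as $Q^{(1)}(\mu)=Q(\mu+1)$, so $Q^{(1)}(\alpha)=Q(1+\alpha)<0$ in $\bar G_1$; the zero-order coefficient satisfies $\bar c+\bar b_n=Q(1)<0$ in $\bar G_1$ by the convexity observation above; and the coefficients of $L^{(1)}$ $C^\alpha$-converge to their limits by hypothesis. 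The nonhomogeneous term $f_n$ is a linear combination of $\partial_n f$ with products of bounded coefficients (or their first $x$-derivatives, which themselves converge in $C^\alpha$) and the expressions $x_n^2\partial_{ij}u$, $x_n\partial_i u$, $u$, $x_n\partial_i\partial_\alpha u$, $\partial_\alpha u$; by the $C^{1,\alpha}$-convergence of $f$ and the $C^{2+\alpha}_\ast$-convergences of $u$ and $D_{x'}u$ already in hand, $f_n(\cdot,t)\to 0$ in $C^\alpha(\bar G_r)$. The initial datum $\partial_n\phi$ lies in $C^{2+\alpha}(\bar G_1)$ since $\phi\in C^{1,2+\alpha}(\bar G_1)$.

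The main obstacle is verifying hypothesis \eqref{4av} for $\partial_n u$, namely $\partial_n u(\cdot,t)\to 0$ in $L^\infty(G_r)$ as $t\to\infty$, which is required before Theorem \ref{4K} can be applied to $\partial_n u$. I would mirror the cutoff argument used to establish \eqref{4bb} in the proof of Theorem \ref{4L}: choose $\varphi=\varphi(x')\in C_0^\infty(B'_{3/4})$ with $\varphi\equiv 1$ on $B'_{1/2}$, so that $\varphi\partial_n u$ satisfies $L^{(1)}(\varphi\partial_n u)=\varphi f_n+g_n$ in $Q^{T,\infty}_{3/4}$, with $g_n$ a linear combination of terms of the form $x_n^2 a_{\alpha j}\partial_\alpha\varphi\,\partial_{jn}u$ and $x_n(x_n a_{\alpha\beta}\partial_{\alpha\beta}\varphi+\tilde b_\alpha\partial_\alpha\varphi)\partial_n u$. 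These vanish in $L^\infty$ as $t\to\infty$ since $x_n^2\partial_{jn}u$ and $x_n\partial_n u$ do, by the $C^{2+\alpha}_\ast$-convergence of $u$. On the parabolic boundary of $Q^{T,\infty}_{3/4}$: the lateral side $\partial B'_{3/4}\times(0,3/4)\times[T,\infty)$ is zero by the support of $\varphi$; on the interior slice $B'_{3/4}\times\{3/4\}\times\{t\}$, Theorem \ref{5A} applied to $Lu=f$ on a small cylinder around the slice (where $L$ is uniformly parabolic since $x_n$ is bounded away from $0$) forces $\partial_n u\to 0$ in $L^\infty$; on $S^{T,\infty}_{3/4}$, the boundary value $\varphi u_1(\cdot,0,t)\to 0$ in $L^\infty(B'_{3/4})$ by Lemma \ref{4A}(i) applied to the ODE representation of $u_1$ (using $\bar c+\bar b_n<0$ and $f_n\to 0$ in $L^\infty(\partial G_1)$); and the initial slice at $t=T$ is bounded by Theorem \ref{3I}. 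A barrier for $L^{(1)}$ of the type constructed in the proof of Theorem \ref{4C}, available because $\bar c+\bar b_n<0$ in $\bar G_1$, together with the maximum principle then yields $\varphi\partial_n u\to 0$ in $L^\infty$, hence $\partial_n u(\cdot,t)\to 0$ in $L^\infty(G_{1/2})$. With \eqref{4av} secured for $\partial_n u$, Theorem \ref{4K} applied to $w=\partial_n u$ with the operator $L^{(1)}$ delivers its $C^{2+\alpha}_\ast$-convergence to $0$ in $\bar Q_{r,\infty}$. Combining this with the previously obtained convergences of $u$ and $D_{x'}u$, and recalling the definition of the $C^{1,2+\alpha}_\ast$-norm, yields the claimed $C^{1,2+\alpha}_\ast$-convergence of $u$ in $\bar Q_{r,\infty}$ at infinity.
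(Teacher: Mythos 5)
Your proposal is correct and follows essentially the same route as the paper: differentiate in $x_n$ to get the problem $L^{(1)}(\partial_n u)=f_n$ as in Theorem \ref{3I}, use the convergences of $u$ and $D_{x'}u$ (Theorems \ref{4K}, \ref{4L}) to show $f_n(\cdot,t)\to 0$ in $C^\alpha$, observe $Q^{(1)}(\mu)=Q(\mu+1)$ so that $\bar c+\bar b_n=Q(1)<0$ and $Q^{(1)}(\alpha)=Q(1+\alpha)<0$, establish the $L^\infty$-decay of $\partial_n u$ by the cutoff/barrier argument from the proof of Theorem \ref{4L}, and then apply Theorem \ref{4K} to $\partial_n u$. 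Your explicit convexity justification for $Q(1)<0$ and $Q(\alpha)<0$ is the same observation the paper leaves implicit.
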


\begin{proof}
As in the proof of Theorem \ref{3I}, we have, 
for any $r\in(0,1)$, $\partial_{n}u\in C^{2}(Q_{r,\infty})\cap C(\bar{Q}_{r,\infty})$ satisfies
\begin{align}
L^{(1)}(\partial_{n}u)&=f_{n}\quad\text{in }Q_{r,\infty},\label{4bf}\\
\partial_{n}u(\cdot,0)&=\partial_{n}\phi\quad\text{on } G_{r},\label{4bg}\\
\partial_{n}u&=u_{1}\quad\text{on }  S_{r,\infty},\label{4bh}
\end{align}
where
\begin{align*}
L^{(1)}&=x^{2}_{n}a_{ij}\partial_{ij}+x_{n}(b_{i}+2a_{in})\partial_{i}+(c+b_{n})-\partial_{t},\\
f_{n}&=\partial_{n}f-x^{2}_{n}\partial_{n}a_{ij}\partial_{ij}u-x_{n}\partial_{n}b_{i}\partial_{i}u-\partial_{n}c u\\
&\qquad-2x_{n} a_{i\alpha}\partial_{i\alpha}u-b_{\alpha}\partial_{\alpha}u,
\end{align*}
with
$$
f_{n}=\partial_{n}f-\partial_{n}c h-b_{\alpha}\partial_{\alpha}h\quad \text{on } S_{r.\infty},
$$
and
\begin{align}\label{4bi}\begin{split}
\partial_{t}u_{1}-(c+b_{n})u_{1}+f_{n}&=0\quad\text{on } S_{r,\infty},\\
u_{1}(\cdot,0,0)&=\partial_{n}\phi(\cdot,0)\quad\text{on }  B'_{r}.
\end{split}
\end{align}
By Theorem \ref{3G} and Theorem \ref{4L}, we have, for any $r\in(0,1)$, $u,D_{x'}u\in C^{2+\alpha}(\bar{Q}_{r,\infty})$ 
and they both $C^{2+\alpha}_{\ast}$-converge to $0$ in $\bar{Q}_{r,\infty}$ at infinity. 
Hence, we obtain, for any $r\in(0,1)$, $f_{n}\in C^{\alpha}(\bar{Q}_{r,\infty})$ 
and $f_{n}(\cdot,t)$ converges to $0$ in $C^{\alpha}(\bar{G}_{r})$ as $t\rightarrow\infty$. 
Note that the limit elliptic operator of $L^{(1)}$ is given by
$$
L^{(1)}_{0}=x^{2}_{n}\bar{a}_{ij}\partial_{ij}+x_{n}(\bar{b}_{i}+2\bar{a}_{in})\partial_{i}+(\bar{c}+\bar{b}_{n}).
$$
We denote by $Q^{(1)}$ the characteristic polynomial of $L^{(1)}_0$. Then, $Q^{(1)}(\mu)=Q(\mu+1)$. Hence, 
$$
\bar{b}_{n}+\bar{c}=Q^{(1)}(0)=Q(1)<0,\quad Q^{(1)}(\alpha)=Q(1+\alpha)<0\quad\text{in }\bar{G}_{1}.
$$
By a similar argument as in the proof of Theorem \ref{4L}, we get, for any $r\in(0,1)$,
\begin{equation}\label{4bj}
\partial_{n}u(\cdot,t)\rightarrow 0\quad\text{in }L^{\infty}(G_{r})\quad\text{as }t\rightarrow\infty.
\end{equation}
Then, we can apply Theorem \ref{4K} to \eqref{4bf}-\eqref{4bh} and conclude the desired decay of $\partial_{n}u$.
\end{proof}

By induction and combining Theorem \ref{4M} and Theorem \ref{4N}, we have the following more general result.

\begin{thm}\label{4O}
For some integers $\ell\geq m\geq  0$ and some constant $\alpha\in(0,1)$, 
assume that $D_{x}^{\sigma}a_{ij},D_{x}^{\sigma}b_{i},D_{x}^{\sigma}c,,D_{x}^{\sigma}f\in C^{\alpha}(\bar{Q}_{1,\infty})$ 
for any $\sigma\leq\ell$ and $\bar{a}_{ij},\bar{b}_{i},\bar{c}\in C^{\ell,\alpha}(\bar{G}_{1})$, 
with \eqref{4am}, \eqref{4au}, and $Q(m+\alpha)<0$ in $\bar{G}_{1}$, 
and that $a_{ij}(\cdot,t),b_{i}(\cdot,t),c(\cdot,t),f(\cdot,t)$ converge to 
$\bar{a}_{ij},\bar{b}_{i},\bar{c},0$ in $C^{\ell,\alpha}(\bar{G}_{1})$ as $t\rightarrow\infty$, respectively. 
Suppose that, for some $\phi\in C^{\ell,2+\alpha}(\bar{G}_{1})$, the problem \eqref{4ao}-\eqref{4aq} 
admits a solution $u\in C^{2}(Q_{1,\infty})\cap C(\bar{Q}_{1,\infty})$ with \eqref{4av}. 
Then, for any nonnegative integers $\nu$ and $\tau$ with $\nu\leq m$ and $\tau+\nu\leq \ell$, 
and any $r\in(0,1)$, $\partial^{\nu}_{n}D^{\tau}_{x'}u$ $C^{2+\alpha}_{\ast}$-converges to $0$ in $\bar{Q}_{r,\infty}$ at infinity.
\end{thm}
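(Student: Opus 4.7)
The plan is to induct on $m$, establishing the statement $P(m, \ell)$ for all admissible pairs $\ell \ge m \ge 0$. The base case $P(0, \ell)$ is Theorem \ref{4M}, since then only $\nu = 0$ and $\tau \le \ell$ arise. For the inductive step, I will assume $P(m', \ell')$ for all $m' \le m$ and $\ell' \ge m'$, and prove $P(m+1, \ell)$ for any $\ell \ge m+1$. The conclusion for pairs $(\nu, \tau)$ with $\nu \le m$ follows directly from $P(m, \ell)$: its hypothesis $Q(m + \alpha) < 0$ is implied by $Q((m+1) + \alpha) < 0$ and $Q(0) = \bar c < 0$, since $Q(\mu) = \mu(\mu-1)\bar a_{nn} + \mu \bar b_n + \bar c$ is a quadratic with positive leading coefficient and hence convex, forcing $Q < 0$ throughout $[0, m+1+\alpha]$.

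The remaining case $\nu = m+1$ with $\tau \le \ell - m - 1$ is handled by differentiating the equation once in $x_n$, as in the proof of Theorem \ref{4N}. For each $r \in (0,1)$ we obtain
\begin{equation*}
L^{(1)}(\partial_n u) = f_n \quad\text{in } Q_{r,\infty},
\end{equation*}
with $L^{(1)} = x_n^2 a_{ij}\partial_{ij} + x_n(b_i + 2 a_{in})\partial_i + (c + b_n) - \partial_t$ and initial-boundary data as in \eqref{4bg}--\eqref{4bi}. A direct computation (already carried out in the proof of Theorem \ref{4N}) gives that the limit operator $L_0^{(1)}$ has characteristic polynomial $Q^{(1)}(\mu) = Q(\mu + 1)$, so $Q^{(1)}(m + \alpha) = Q((m+1) + \alpha) < 0$. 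The coefficients of $L^{(1)}$ are polynomial combinations of the coefficients of $L$, hence they lie in $C^{\ell - 1, \alpha}(\bar Q_{1, \infty})$ and $C^{\ell - 1, \alpha}$-converge to their elliptic limits. My plan is then to apply the inductive hypothesis $P(m, \ell - 1)$ to $\partial_n u$ on $G_r$: this yields $C^{2+\alpha}_{\ast}$-convergence of $\partial_n^{m'} D_{x'}^\tau (\partial_n u) = \partial_n^{m'+1} D_{x'}^\tau u$ for $m' \le m$ and $m' + \tau \le \ell - 1$, whose subcase $m' = m$, $\tau \le \ell - m - 1$ is precisely what is needed.

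Two hypotheses of $P(m, \ell - 1)$ for $\partial_n u$ require verification, and together form the heart of the argument. The first is that $f_n \in C^{\ell - 1, \alpha}(\bar Q_{r, \infty})$ and $f_n(\cdot, t) \to 0$ in $C^{\ell - 1, \alpha}(\bar G_r)$ as $t \to \infty$. Since $f_n$ is a linear combination of $\partial_n f$ and of terms of the shape $x_n^2 \partial_n a_{ij}\, \partial_{ij} u$, $x_n \partial_n b_i\, \partial_i u$, $\partial_n c \cdot u$, $x_n a_{i\alpha}\, \partial_{i\alpha} u$, $b_\alpha \partial_\alpha u$, the inductive hypothesis $P(m, \ell)$ for $u$ together with the base global regularity from Theorem \ref{3M} controls each factor in $C^{\ell - 1, \alpha}$ and forces it to vanish in that norm at infinity. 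The second, and the main expected obstacle, is the $L^\infty$-decay $\partial_n u(\cdot, t) \to 0$ in $G_r$, the analogue of \eqref{4av} for the new equation. I intend to establish it by imitating the cutoff argument \eqref{4bc}--\eqref{4bj} from the proof of Theorem \ref{4N}, localizing $\varphi(x') \partial_n u$ with $\varphi \in C_0^\infty(B'_{3/4})$, bounding the lateral trace $u_1$ via Lemma \ref{4A}, and applying the global $L^\infty$-decay machinery of Theorem \ref{4C} to $L^{(1)}$. This step uses crucially that the zeroth-order coefficient of $L_0^{(1)}$ satisfies $\bar c + \bar b_n = Q(1) < 0$, which is again the convexity of $Q$ applied at $\mu = 1 \in [0, m+1+\alpha]$. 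With both verifications in place, $P(m, \ell - 1)$ applies and the induction closes.
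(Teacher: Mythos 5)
Your overall strategy (induct, reduce the new normal derivative to Theorem \ref{4N}'s differentiation step, and close with the tangential result) is the same one the paper indicates, and several pieces are correct: the convexity argument showing $Q<0$ on $[0,m+1+\alpha]$, the identity $Q^{(1)}(\mu)=Q(\mu+1)$, and the cutoff argument for the $L^\infty$-decay of $\partial_n u$. But the key reduction has a genuine gap. To apply $P(m,\ell-1)$ \emph{as stated} to $\partial_n u$, you must verify that $f_n(\cdot,t)\to 0$ in the full isotropic norm $C^{\ell-1,\alpha}(\bar G_r)$, i.e. that $D_x^{\ell-1}f_n\to 0$ in $C^\alpha$, including the pure normal derivative $\partial_n^{\ell-1}f_n$. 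Since $f_n$ contains $x_n^2\,\partial_n a_{ij}\,\partial_{ij}u$ and $\partial_n a_{ij}\to\partial_n\bar a_{ij}\neq 0$, this forces you to show that $x_n^2 D_x^2\partial_n^{\ell-1}u\to 0$ in $C^\alpha$ up to $\{x_n=0\}$, i.e. essentially the $C^{2+\alpha}_\ast$-decay of $\partial_n^{\ell-1}u$. When $\ell-1>m$ this is exactly what the inductive hypothesis $P(m,\ell)$ does \emph{not} provide (it is restricted to $\nu\le m$), and it is not supplied by Theorem \ref{3M} either, which gives only finite-time bounds with $T$-dependent constants, not decay. Establishing it would require $Q(\ell-1+\alpha)<0$, which is not assumed; indeed the restriction $\nu\le m$ in the theorem exists precisely because such higher normal derivatives need not decay. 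In the smallest nontrivial case $m=0$, $\ell=2$ (proving decay of $\partial_n D_{x'}u$), your verification of $\|f_n\|_{C^{1,\alpha}}\to0$ already requires the decay of $x_n^2D_x^2\partial_n u$, which is part of the conclusion you are trying to reach — the argument is circular.

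The fix is to exploit the anisotropy rather than the isotropic statement. Differentiate the equation $\nu$ times in $x_n$ (for $\nu\le m$, iteratively as in the proof of Theorem \ref{3J}), obtaining $L^{(\nu)}(\partial_n^\nu u)=f^{(\nu)}$, where $f^{(\nu)}$ is a linear combination of $\partial_n^\nu f$ and of weighted derivatives of $\partial_n^{\nu'}u$, $\partial_n^{\nu'}D_{x'}u$, $\partial_n^{\nu'-1}D_{x'}^2u$ with $\nu'\le\nu-1$ only. Then apply the \emph{tangential} convergence theorem, Theorem \ref{4M}, to this equation: its hypothesis on the source involves only $D_{x'}^\sigma f^{(\nu)}$ for $\sigma\le\ell-\nu$, and these quantities involve only $\partial_n^{\nu'}D_{x'}^\sigma u$ with $\nu'\le\nu-1\le m-1$ and $\nu'+\sigma\le\ell-1$, all of which are controlled by the inductive hypothesis on $\nu$. (Your treatment of the $L^\infty$-decay of $\partial_n^\nu u$ and of the characteristic polynomial $Q^{(\nu)}(\mu)=Q(\mu+\nu)$ carries over verbatim to this setting.) With this bookkeeping the induction closes; with yours it does not.
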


Finally, we consider the $C^{\alpha}_{\ast}$-decay of the $t$-derivative. 
By proceeding similarly as in Subsection \ref{time} and replacing Theorem \ref{3J} with Theorem \ref{4O}, 
we have the following result.

\begin{thm}\label{4P}
For some integers $\ell\geq m\geq  0$ and some constant $\alpha\in(0,1)$, 
assume that $a_{ij},b_{i},c,f\in C^{\ell,\alpha}(\bar{Q}_{1,\infty})$ 
and $\bar{a}_{ij},\bar{b}_{i},\bar{c}\in C^{\ell,\alpha}(\bar{G}_{1})$, 
with \eqref{4am}, \eqref{4au}, and $Q(m+\alpha)<0$ in $\bar{G}_{1}$, 
and that $a_{ij}(\cdot,t),b_{i}(\cdot,t),c(\cdot,t),f(\cdot,t)$ converge to $\bar{a}_{ij},\bar{b}_{i},\bar{c},0$ 
in $C^{\ell,\alpha}(\bar{G}_{1})$ as $t\rightarrow\infty$, respectively. 
Suppose that, for some $\phi\in C^{\ell,2+\alpha}(\bar{G}_{1})$, 
the problem \eqref{4ao}-\eqref{4aq} admits a solution $u\in C^{2}(Q_{1,\infty})\cap C(\bar{Q}_{1,\infty})$ with \eqref{4av}. 
Then, for any for nonnegative integers $\nu$, $\tau$, and $\sigma$, 
with $\sigma\geq 2$, $\nu+2(\sigma-1)\leq m$, and $2\sigma+\tau+\nu\leq \ell+2$, 
and any $r\in(0,1)$, $\partial^{\sigma}_{t}\partial^{\nu}_{n}D^{\tau}_{x'}u$ $C^{\alpha}_{\ast}$-converge to $0$ in $\bar{Q}_{r,\infty}$ at infinity.
\end{thm}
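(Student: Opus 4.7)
The plan is to run an induction on the time-derivative order $\sigma$, paralleling the proof of Theorem \ref{3M} but invoking Theorem \ref{4O} wherever Theorem \ref{3J} was used in the stationary higher-regularity argument. The central identity is the equation itself,
\begin{align*}
\partial_{t}u = x_{n}^{2}a_{ij}\partial_{ij}u + x_{n}b_{i}\partial_{i}u + cu - f,
\end{align*}
which trades one time derivative of $u$ for two spatial derivatives, at the cost of a factor $x_{n}^{2}$ (or $x_{n}$) on the leading terms. By Theorem \ref{3M}, applied in any bounded time slab $Q^{a,b}_{r}$, $u$ possesses all the classical derivatives that appear below on $Q_{r,\infty}$ for $r\in(0,1)$, so the differentiations are legitimate.

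For the base case $\sigma=2$, I first apply $\partial_{n}^{\nu}D_{x'}^{\tau}$ to the equation and differentiate once more in $t$. Substituting $\partial_{t}u$ again via the equation produces an identity expressing $\partial_{t}^{2}\partial_{n}^{\nu}D_{x'}^{\tau}u$ as a finite linear combination of terms of the form $C(x,t)\,x_{n}^{p}\,\partial_{n}^{\nu'}D_{x'}^{\tau'}u$ with $\nu'\le \nu+2$, $\nu'+\tau'\le \nu+\tau+4$, and $p\ge \nu'+\tau'-2$ (precisely the weight Theorem \ref{4O} controls through the $C^{2+\alpha}_{\ast}$-seminorm), together with spatial derivatives of $f$ of order $\le \nu+\tau+2$. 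The coefficients $C(x,t)$ depend polynomially on $a_{ij},b_{i},c$ and on at most two $t$-derivatives and $\nu+\tau+2$ spatial derivatives of them, hence are uniformly $C^{\ell,\alpha}_{\ast}$-bounded. Under the hypotheses $\nu+2\le m$ and $\tau+\nu\le \ell-2$, every spatial-derivative term $\partial_{n}^{\nu'}D_{x'}^{\tau'}u$ that appears lies in the range $\nu'\le m$, $\nu'+\tau'\le \ell$ covered by Theorem \ref{4O}. Combined with the assumed $C^{\ell,\alpha}$-convergence of $f$ and of the coefficients, and with Lemma \ref{4A}, this delivers $C^{\alpha}_{\ast}$-decay of the forcing terms, and multiplicativity of the $C^{\alpha}_{\ast}$-seminorm yields $\partial_{t}^{2}\partial_{n}^{\nu}D_{x'}^{\tau}u\to 0$ in $C^{\alpha}_{\ast}(\bar{Q}_{r,\infty})$ at infinity for every $r\in(0,1)$.

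For the inductive step $\sigma-1\Rightarrow \sigma$ with $\sigma\ge 3$ (and admissible $\nu,\tau$), I apply $\partial_{t}$ to the identity obtained at stage $\sigma-1$. Each new $\partial_{t}$ either lands on a coefficient of that identity (in which case the $t$-derivative count of the coefficient increases by one, still bounded by $\sigma$ and hence by the available $C^{\ell,\alpha}$-regularity), or on a factor $\partial_{n}^{\nu'}D_{x'}^{\tau'}u$, which I replace via $\partial_{t}u=\text{RHS of the PDE}$, raising the spatial-derivative count by two and the normal-derivative count by at most two. By induction the resulting expression has the same structural form, now with $\nu'\le \nu+2(\sigma-1)\le m$ and $\nu'+\tau'\le 2\sigma+\tau+\nu-2\le \ell$. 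Applying Theorem \ref{4O} to every spatial factor, and using the $C^{\ell,\alpha}$-convergence of $f$ and of the coefficients together with Lemma \ref{4A}, closes the induction.

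The hard part is the combinatorial bookkeeping: one must verify that after $\sigma$ iterations of this substitution scheme, (a) the normal-derivative count on $u$ in every surviving term is at most $\nu+2(\sigma-1)$, (b) the total spatial-derivative count is at most $2\sigma+\tau+\nu$, and (c) each term carries enough powers of $x_{n}$ that it matches a weighted derivative Theorem \ref{4O} controls in $C^{2+\alpha}_{\ast}$. These three accounting constraints are exactly what the quantitative hypotheses $\nu+2(\sigma-1)\le m$ and $2\sigma+\tau+\nu\le \ell+2$ are designed to accommodate, and they are the same constraints that appear implicitly in Theorem \ref{3M}. Once the bookkeeping is established by direct expansion of the iterated substitution, the desired $C^{\alpha}_{\ast}$-convergence follows from Theorem \ref{4O} and Lemma \ref{4A} without further analysis.
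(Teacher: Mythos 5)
Your proposal is correct and follows essentially the same route as the paper: the paper proves Theorem \ref{4P} precisely by repeating the induction of Subsection \ref{time} (Theorems \ref{3L} and \ref{3M}), using the equation to trade each time derivative for two weighted spatial derivatives and replacing Theorem \ref{3J} by Theorem \ref{4O} at each stage, which is exactly your substitution-and-bookkeeping scheme with the constraints $\nu+2(\sigma-1)\le m$ and $2\sigma+\tau+\nu\le \ell+2$.
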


By combining Theorem \ref{4O} and Theorem \ref{4P}, we can obtain the following $C^{k,2+\alpha}_{\ast}$-convergence result.

\begin{corollary}\label{4Q}
For some integer $k\geq  0$ and some constant $\alpha\in(0,1)$, 
assume that $a_{ij},b_{i},c,f\in C^{k,\alpha}(\bar{Q}_{1,\infty})$ 
and $\bar{a}_{ij},\bar{b}_{i},\bar{c},\bar{f}\in C^{k,\alpha}(\bar{G}_{1})$, 
with \eqref{4am}, \eqref{4au}, and $Q(k+\alpha)<0$ in $\bar{G}_{1}$, 
and that $a_{ij}(\cdot,t),b_{i}(\cdot,t),c(\cdot,t),f(\cdot,t)$ converge to 
$\bar{a}_{ij},\bar{b}_{i},\bar{c},\bar{f}$ in $C^{k,\alpha}(\bar{G}_{1})$ as $t\rightarrow\infty$, respectively. 
Suppose that, for some $\phi\in C^{k,2+\alpha}(\bar{G}_{1})$, 
the problem \eqref{4ao}-\eqref{4aq} admits a solution $u\in C^{2}(Q_{1,\infty})\cap C(\bar{Q}_{1,\infty})$, 
with $u(\cdot,t)$ converging to a solution $v\in C^{2}(G_{1})\cap C(\bar{G}_{1})$
of the problem \eqref{4as}-\eqref{4at} in $L^{\infty}(G_{1})$ as $t\rightarrow\infty$. 
Then, for any $r\in(0,1)$, $u$ $C^{k,2+\alpha}_{\ast}$-converges to $v$ in $\bar{Q}_{r,\infty}$ at infinity.
\end{corollary}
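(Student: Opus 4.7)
The plan is to reduce Corollary \ref{4Q} to the convergence-to-zero theorems already established (Theorem \ref{4O} for spatial derivatives and Theorem \ref{4P} for time derivatives) via the substitution $w=u-v$, in exact parallel with the proof of Corollary \ref{4H}. The starting point is to upgrade the regularity of the stationary limit $v$. Although Theorem \ref{1D'} is phrased globally, its local version applies to the problem \eqref{4as}-\eqref{4at}: because $\bar c<0$ in $\bar G_1$ and $Q(k+\alpha)<0$ in $\bar G_1$, we obtain $v\in C^{k,2+\alpha}(\bar G_r)$ for any $r\in(0,1)$, with the boundary compatibility $v=\bar f/\bar c$ on $B'_1$.

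Next, set $w=u-v$. A direct calculation as in the proof of Corollary \ref{4H} gives
\begin{align*}
Lw &= Lu-L_0 v+(L_0-L)v \\
&= f-\bar f-x_n^{2}(a_{ij}-\bar a_{ij})\partial_{ij}v-x_n(b_i-\bar b_i)\partial_i v-(c-\bar c)v\\
&=: f_0\qquad\text{in }Q_{1,\infty}.
\end{align*}
On the flat boundary $S_{1,\infty}$, using $v=\bar f/\bar c$ on $B'_1$ one checks that $f_0=f-c\bar f/\bar c$ and that $h_0:=h-\bar f/\bar c$ satisfies $\partial_t h_0-c h_0+f_0=0$ with $h_0(\cdot,0,0)=\phi(\cdot,0)-(\bar f/\bar c)(\cdot,0)$, so $w$ solves a local initial-boundary value problem of the form \eqref{4ao}-\eqref{4aq} with data $(f_0,\phi_0,h_0)$, where $\phi_0=\phi-v\in C^{k,2+\alpha}(\bar G_r)$. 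Since $v\in C^{k,2+\alpha}(\bar G_r)$ and $a_{ij}(\cdot,t),b_i(\cdot,t),c(\cdot,t),f(\cdot,t)$ converge to $\bar a_{ij},\bar b_i,\bar c,\bar f$ in $C^{k,\alpha}(\bar G_1)$, the forcing $f_0(\cdot,t)$ converges to $0$ in $C^{k,\alpha}(\bar G_r)$ as $t\to\infty$; in particular, $a_{ij},b_i,c,f_0$ satisfy the hypotheses of Theorems \ref{4O} and \ref{4P} for the operator $L$ acting on $w$. Finally, the standing assumption that $u(\cdot,t)\to v$ in $L^\infty(G_1)$ gives precisely \eqref{4av} for $w$.

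With these preparations Theorem \ref{4O} applies (taking $m=k$ and $\ell=k$), yielding, for every nonnegative integers $\nu,\tau$ with $\nu\leq k$ and $\nu+\tau\leq k$, that $\partial_n^\nu D_{x'}^\tau w$ $C^{2+\alpha}_\ast$-converges to $0$ in $\bar Q_{r,\infty}$ at infinity. Since the weighted pieces $x_n D_x$ and $x_n^2 D_x^2$ applied to $D^\beta_x w$ with $|\beta|=k$ are controlled exactly by a $C^{2+\alpha}_\ast$-norm, this already covers the portion of $\|\cdot\|_{C^{k,2+\alpha}_\ast}$ that does not involve $\partial_t$. For the time-derivative pieces we invoke Theorem \ref{4P}: with $m=k$ and $\ell=k$, every derivative $\partial_t^i D^\beta_x w$ with $|\beta|+2i\le k+2$, $i\ge 1$, is shown to $C^\alpha_\ast$-converge to $0$ (the case $i=1$ is recovered from the equation $\partial_t w = x_n^2 a_{ij}\partial_{ij}w+x_n b_i\partial_i w+cw-f_0$ together with the spatial result). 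Adding up all terms in the definition of $\|\cdot\|_{C^{k,2+\alpha}_\ast(\bar Q_{r,\infty})}$ then gives the asserted $C^{k,2+\alpha}_\ast$-convergence of $w=u-v$ to $0$.

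The main obstacle in executing the plan is the bookkeeping between the elliptic regularity of $v$ and the parabolic induction: one must verify that $v\in C^{k,2+\alpha}(\bar G_r)$ is regular enough that $f_0$ lies in $C^{k,\alpha}$ and converges to $0$ in this norm; once this is secured, the boundary compatibility $\partial_t h_0-c h_0+f_0=0$ and the $L^\infty$-convergence hypothesis make Theorems \ref{4O} and \ref{4P} directly applicable, and no new estimates are needed.
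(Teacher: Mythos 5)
Your proposal is correct and follows essentially the same route as the paper: decompose $w=u-v$, upgrade $v$ to $C^{k,2+\alpha}(\bar G_r)$ via the local version of Theorem \ref{1D'}, verify the boundary compatibility so that $w$ solves a problem of the form \eqref{4ao}--\eqref{4aq} with forcing $f_0\to 0$ in $C^{k,\alpha}$, and then apply Theorems \ref{4O} and \ref{4P}. The parameter choices and the handling of the time derivatives via the equation match the paper's argument, so no further comment is needed.
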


\begin{proof}
Let $w=u-v$ in $Q_{1,\infty}$. Then, $w$ satisfies 
\begin{align}
Lw&=f_{0}\quad\text{in }Q_{1,\infty},\label{4bk}\\
w(\cdot,0)&=\phi_{0}=\phi-v\quad\text{on }G_{1},\label{4bl}\\
w&=h_{0}=h-\bar{f}/\bar{c}\quad\text{on }S_{1,\infty},\label{4bm}
\end{align}
where
\begin{align*}
f_{0}&=Lu-L_{0}v+(L_{0}-L)v\\
&=f-\bar{f}-x^{2}_{n}(a_{ij}-\bar{a}_{ij})v_{ij}-x_{n}(b_{i}-\bar{b}_{i})v_{i}-(c-\bar{c})v.
\end{align*}
By applying the local version of Theorem \ref{1D'} (Refer to Theorem 6.2 in \cite{HanXie2024}.) to \eqref{4as}-\eqref{4at},
we have, for any $r\in(0,1)$,
\begin{equation}\label{4bn}
v\in C^{k,2+\alpha}(\bar{G}_{r}).
\end{equation}
Then, we get
$$
f_{0}= f-\bar{f}-(c-\bar{c})\frac{\bar{f}}{\bar{c}}=f-\frac{c\bar{f}}{\bar{c}}\quad\text{on }S_{1,\infty},
$$
and, by \eqref{4ar},
$$
\partial_{t}h_{0}-ch_{0}+f_{0}=\partial_{t}h-c\left(h-\frac{\bar{f}}{\bar{c}}\right)+f_{0}=-f+\frac{c\bar{f}}{\bar{c}}+f_{0}=0\quad\text{on }S_{1,\infty}.
$$
Hence, 
$$
h_{0}(x,t)=\phi_{0}(x)\exp\Big\{\int^{t}_{0}c(x,s)ds\Big\}-\int^{t}_{0}\exp\Big\{\int^{t}_{s}c(x,\tau)d\tau\Big\}f_{0}(x,s) d s.
$$
By \eqref{4bn} and the assumption that $a_{ij}(\cdot,t),b_{i}(\cdot,t),c(\cdot,t),f(\cdot,t)$ 
converge to $\bar{a}_{ij},\bar{b}_{i},\bar{c},\bar{f}$ in $C^{k,\alpha}(\bar{G}_{1})$ as $t\rightarrow\infty$, respectively, 
we have, for any $r\in(0,1)$, $f_{0}(\cdot,t)$ converges to $0$ in $C^{k,\alpha}(\bar{G}_{r})$ as $t\rightarrow\infty$. 
By the assumption, we have
$$
w(\cdot,t)\rightarrow 0\quad\text{in }L^{\infty}(G_{1})\quad\text{as }t\rightarrow\infty.
$$
Hence, we can apply Theorem \ref{4O} and Theorem \ref{4P} to \eqref{4bk}-\eqref{4bm} and conclude the desired result.
\end{proof}

We are ready to prove Theorem \ref{1E}.

\begin{proof}[Proof of Theorem \ref{1E}.]
The existence and uniqueness of $v$ can be obtained from Theorem \ref{1C'}. 
By Corollary \ref{4D}, we have $u(\cdot,t)$ converges to $v$ in $L^{\infty}(\Omega)$ as $t\rightarrow\infty$. 
Then, by Theorem \ref{5A} applied to the equations satisfied by $u-v$ and its derivatives, 
we can obtain that $u$ $C^{k+2,\alpha}_{\ast}$-converges to $v$ 
in $\bar{\Omega}'\times[0,\infty)$ at infinity, for any domain $\Omega'\subset\subset\Omega$. 
Moreover, we can apply Corollary \ref{4Q} to conclude a $C^{k,2+\alpha}_*$-convergence near the lateral boundary $S$. Recall the weighted H\"older function spaces defined in Subsection \ref{Notations}. 
Let $x_0$ be a point on $\partial D$ and $r>0$ be a constant. 
We can define local weighted H\"older function spaces in $[D\cap B_r(x_0)]\times (0,T]$ 
by replacing $Q_T$ and $S_T$ in the definition of Subsection \ref{Notations} 
with $[D\cap B_r(x_0)]\times (0,T]$ and $[\partial D\cap B_r(x_0)]\times [0,T]$. 
Then, we can apply Corollary \ref{4Q} to conclude that $u$ $C^{k,2+\alpha}_{\ast}$-converges 
to $v$ in $\overline{[\Omega \cap B_r(x_0)]}\times[0,\infty)$ at infinity for some $x_0\in\partial\Omega$ and small $r>0$.
Hence, a standard covering argument yields Theorem \ref{1E}.
\end{proof}


\section{Appendix: Intermediate Schauder Theory}\label{sec-Intermediate-Schauder}

In this section, we discuss some estimates in the intermediate Schauder theory for uniformly parabolic equations.
We set, for any $X=(x,t)\in\mathbb{R}^{n}\times\mathbb{R}$ and $r>0$,
$$
\begin{aligned}
Q(X,r)&=\{Y=(y,s)\in\mathbb{R}^{n+1}:|x-y|<r,t-r^{2}< s\leq  t\},\\
B_r(x)&=\{y\in\mathbb{R}^{n}:|x-y|<r\}.
\end{aligned}
$$
The following interior regularity is proved in \cite{B1969,K198081,L1992}.

\begin{thm}\label{5A}
For some point $X_{0}=(x_{0},t_{0})\in\mathbb{R}^{n+1}$ and some constants $r>0$ and $\alpha\in(0,1)$, 
let $Q=Q(X_{0},r)$ and assume $a_{i j},b_{i},c,f\in C^{\alpha}_{\ast}(\bar{Q})$, 
with $a_{i j}=a_{j i}$ and, for any $x\in\overline{B_{r}(x_{0})}$, almost all $t\in(t_{0}-r^{2},t_{0})$, and any $\xi\in\mathbb{R}^{n}$, 
$$
\lambda|\xi|^{2}\leq a_{ij}(x,t)\xi_{i}\xi_{j}\leq \Lambda|\xi|^{2},
$$
for some positive constants $\lambda$ and $\Lambda$. Suppose that $u:Q\rightarrow\mathbb{R}$ satisfies
$$
a_{ij}\partial_{ij}u+b_{i}\partial_{i}u+cu-\partial_{t}u=f\quad a.e.\text{ in }Q,
$$
with $D_xu,D^2_x u,\partial_t u\in L^{\infty}(Q)$. Then, for any $Q'=Q(X',r')\subset\subset Q$, $u\in C^{2,\alpha}_{\ast}(\bar{Q}')$, and
\begin{align*}
\|u\|_{C^{2,\alpha}_{\ast}(\bar{Q}')}\leq C\big\{\|u\|_{L^{\infty}(Q)}+\|f\|_{C^{\alpha}_{\ast}(\bar{Q})}\big\},
\end{align*}
where $C$ is a positive constant depending only on $n$, $\alpha$, $\lambda$, $\mathrm{dist}(Q',\partial_{p}Q)$, 
and the $C^{\alpha}_{\ast}$-norms of $a_{ij}$, $b_{i}$, and $c$ in $\bar{Q}$.
\end{thm}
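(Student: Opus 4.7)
The plan is to follow the intermediate Schauder framework developed by Brandt, Knerr, and Lieberman, adapted to the $C^{\alpha}_{\ast}$ scale in which only spatial Hölder regularity is assumed on the coefficients and data. The guiding principle is that in the norms $C^{\alpha}_{\ast}$ and $C^{2,\alpha}_{\ast}$ the time variable only appears through the $L^{\infty}$ or $C^{\alpha/2}_{t}$ seminorms, so the perturbation and polynomial approximation can be carried out slice by slice in $t$ while gaining the usual classical decay in the space variable.

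First, I would establish the model estimate for the constant-coefficient operator $\bar{a}_{ij}\partial_{ij} - \partial_{t}$, where $\bar{a}_{ij}$ is the value of $a_{ij}$ frozen at a reference point. For solutions of the homogeneous constant-coefficient problem on a parabolic cylinder $Q(X_{0},r)$, differentiation of the equation together with the classical interior Schauder theory (applied to derivatives that are again solutions of the same equation) yields polynomial approximation: there exists a parabolic polynomial $P(x,t)$ of degree at most $2$ in $x$ and $1$ in $t$ such that on every subcylinder $Q(X_{0},\theta r)$ with $\theta\in(0,1/2)$, one has the improvement of flatness
\[
\sup_{Q(X_{0},\theta r)}\,|u - P|\,\le\,C\theta^{2+\alpha}\,\bigl(\sup_{Q(X_{0},r)}|u| + r^{2+\alpha}\|f\|_{C^{\alpha}_{\ast}(\bar Q)}\bigr).
\]
This is the standard dyadic decay produced by the heat-type kernel estimates in the space-Hölder scale.

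Second, I would pass to the variable-coefficient case by a freezing-and-perturbation argument. At any interior point $X_{0}=(x_{0},t_{0})$, rewrite the equation as
\[
\bar a_{ij}\partial_{ij}u - \partial_{t}u \;=\; f - (a_{ij}-\bar a_{ij})\partial_{ij}u - b_{i}\partial_{i}u - cu,
\]
where $\bar a_{ij}:=a_{ij}(X_{0})$. On a cylinder $Q(X_{0},r)$ the perturbation coefficient $(a_{ij}-\bar a_{ij})(y,t)$ is bounded pointwise by $[a_{ij}]_{C^{\alpha}_{\ast}}|y-x_{0}|^{\alpha}$, uniformly in $t$, which is exactly the Hölder gain needed to absorb the second derivative. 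Combining with the model estimate of Step~1 produces a polynomial approximation at $X_{0}$ with the correct quantitative decay, namely
\[
\sup_{Q(X_{0},r)}|u-P_{X_{0}}|\le C r^{2+\alpha}\bigl(\|u\|_{L^\infty(Q)}+\|f\|_{C^{\alpha}_{\ast}(\bar Q)}\bigr)
\]
for a polynomial $P_{X_{0}}$ depending on $X_{0}$. The $C^{2,\alpha}_{\ast}$ estimate of $u$ then follows from the Campanato-type characterization of $C^{2,\alpha}_{\ast}(\bar Q')$ in terms of such pointwise parabolic polynomial approximations, which in turn provides Hölder control of $\partial_{ij}u$ in space and of $\partial_{t}u$ in the $C^{\alpha/2}_{t}$ sense.

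The main obstacle will be running the iteration carefully enough to keep track of the lower-order terms $b_{i}\partial_{i}u$, $cu$ and, more importantly, the time-behavior of the constants. Because $a_{ij}$ need not be Hölder in $t$, one cannot directly invoke the classical Hölder-in-$(x,t)$ Schauder theory on the perturbed equation; the iteration must be done slice-by-slice with uniform-in-time constants, and the polynomial $P_{X_{0}}$ must be allowed to depend on $t$ with controlled $C^{\alpha/2}_{t}$-modulus. Once this bookkeeping is in place, a standard absorption argument (interpolating $\sup|u|$ between $\|u\|_{L^\infty(Q)}$ and $\|u\|_{C^{2,\alpha}_{\ast}(\bar Q')}$) converts the qualitative polynomial approximation into the quantitative estimate stated in the theorem.
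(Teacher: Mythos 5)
The paper does not prove Theorem \ref{5A}: it is quoted from Brandt, Knerr, and Lieberman \cite{B1969,K198081,L1992}, and the only argument supplied in Section \ref{sec-Intermediate-Schauder} is the extension to the initial surface in Corollary \ref{5G}. So your proposal can only be measured against the cited proofs, whose overall strategy (a model estimate, freezing of coefficients, and a Campanato-type characterization of the target space) you have correctly identified.

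There is, however, a concrete gap in Step 2 as written. You freeze the coefficient at the space-time point $X_{0}$ and assert $|(a_{ij}-\bar a_{ij})(y,t)|\le [a_{ij}]_{C^{\alpha}_{\ast}}|y-x_{0}|^{\alpha}$ uniformly in $t$; this is false, because membership in $C^{\alpha}_{\ast}$ carries no modulus of continuity in $t$ at all, so $a_{ij}(y,t)-a_{ij}(x_{0},t_{0})$ need not be small for $y$ near $x_{0}$ and $t\ne t_{0}$. The correct freezing is in space only, $\bar a_{ij}(t):=a_{ij}(x_{0},t)$, which does yield the bound $[a_{ij}]_{C^{\alpha}_{x}}|y-x_{0}|^{\alpha}$ --- but then the model operator of your Step 1 is $\bar a_{ij}(t)\partial_{ij}-\partial_{t}$ with merely measurable time dependence, and the ``classical interior Schauder theory'' you invoke there, together with the approximation by a parabolic polynomial of degree $1$ in $t$, is not available off the shelf for such operators. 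Establishing the dyadic decay for this time-measurable model --- by maximum-principle barriers as in Brandt and Knerr, or by Lieberman's Campanato iteration --- is precisely the substance of the intermediate Schauder theory, and it is the step your sketch leaves open; your closing paragraph acknowledges the difficulty but does not resolve it. A smaller point: in the norm $C^{2,\alpha}_{\ast}$ of this paper it is $D^{2}_{x}u$ that must be H\"older with respect to the parabolic distance (hence $C^{\alpha/2}$ in $t$, which requires an extra argument using the equation), while $\partial_{t}u$ is only required to be H\"older in $x$ uniformly in $t$; your Step 3 has these two requirements transposed, and the stronger claim that $\partial_{t}u\in C^{\alpha/2}_{t}$ is in general false when the data are merely measurable in time.
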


For the purpose of applications, we extend the above result to the bottom of the parabolic boundary.
We set, for any $R,T>0$, 
$$B_R=B_{R}(0)\quad\text{and}\quad Q_{T}(R)=B_{R}\times (0,T].$$

\begin{corollary}\label{5G}
For some positive constants $R$, $T$, and $\alpha\in(0,1)$, assume $a_{i j},b_{i},c,f\in C^{\alpha}_{\ast}(\overline{Q_{T}(R)})$, 
with $a_{i j}=a_{j i}$ and,  for any $x\in\bar{B}_R$, almost all $t\in(0,T)$, and any $\xi\in\mathbb{R}^{n}$, 
$$
\lambda|\xi|^{2}\leq a_{ij}(x,t)\xi_{i}\xi_{j}\leq \Lambda|\xi|^{2},
$$
for some positive constants $\lambda$ and $\Lambda$. Suppose that $u:Q_{T}(R)\rightarrow\mathbb{R}$ satisfies
$$
a_{ij}\partial_{ij}u+b_{i}\partial_{i}u+cu-\partial_{t}u=f\quad a.e.\text{ in }Q_{T}(R),
$$
with $D_xu,D^2_x u,\partial_t u\in L^{\infty}(Q_{T}(R))$ and $u(\cdot,0)=\phi\in C^{2,\alpha}(\bar{B}_{R})$. 
Then, for any $r\in(0,R)$, $u\in C^{2,\alpha}_{\ast}(\overline{{Q}_{T}(r)})$, and
\begin{align*}
\|u\|_{C^{2,\alpha}_{\ast}(\overline{Q_{T}(r)})}
\leq C\big\{\|u\|_{L^{\infty}(Q_{T}(R))}+\|\phi\|_{C^{2.\alpha}(\bar{B}_{R})}+\|f\|_{C^{\alpha}_{\ast}(\overline{Q_{T}(R)})}\big\},
\end{align*}
where $C$ is a positive constant depending only on $R-r$, $T$, $n$, $\alpha$, $\lambda$, 
and the $C^{\alpha}_{\ast}$-norms of $a_{ij}$, $b_{i}$, and $c$ in $\overline{Q_{T}(R)}$.
\end{corollary}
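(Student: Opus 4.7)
\smallskip
\noindent\textbf{Proof plan for Corollary~\ref{5G}.}
The plan is to reduce the Corollary to the purely interior statement of Theorem~\ref{5A} by extending the equation, its coefficients, and the solution to negative times, so that the initial surface $B_R\times\{0\}$ becomes an interior time-slice of an enlarged parabolic cylinder. Concretely, I would define
\begin{align*}
\tilde u(x,t)&=\begin{cases} u(x,t),& t\in(0,T],\\ \phi(x),& t\in(-T,0],\end{cases}\\
\tilde a_{ij}(x,t)&=a_{ij}(x,\max(t,0)),\quad \tilde b_i(x,t)=b_i(x,\max(t,0)),\quad \tilde c(x,t)=c(x,\max(t,0)),
\end{align*}
so that $\tilde u$ is continuous across $t=0$ (using $u(\cdot,0)=\phi$) and the extended coefficients lie in $C^{\alpha}_{\ast}(\overline{B_R\times(-T,T]})$ with norms dominated by the original ones, while preserving the ellipticity bound with the same $\lambda,\Lambda$.

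Next, I choose the extended source so that the equation continues to hold: for $t\in(-T,0)$, set
$$\tilde f(x,t)=\tilde a_{ij}(x,t)\partial_{ij}\phi(x)+\tilde b_i(x,t)\partial_i\phi(x)+\tilde c(x,t)\phi(x),$$
and $\tilde f(x,t)=f(x,t)$ for $t\ge 0$. Since $\phi\in C^{2,\alpha}(\bar B_R)$ and the coefficients are in $C^{\alpha}_{\ast}$, this gives $\tilde f\in C^{\alpha}_{\ast}(\overline{B_R\times(-T,T]})$ with
$$\|\tilde f\|_{C^{\alpha}_{\ast}}\le \|f\|_{C^{\alpha}_{\ast}(\overline{Q_T(R)})}+C\|\phi\|_{C^{2,\alpha}(\bar B_R)},$$
where $C$ depends only on the $C^{\alpha}_{\ast}$-norms of the coefficients. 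By construction, $\tilde u$ satisfies $\tilde a_{ij}\partial_{ij}\tilde u+\tilde b_i\partial_i\tilde u+\tilde c\tilde u-\partial_t\tilde u=\tilde f$ almost everywhere on $B_R\times(-T,T]$ (the identity is trivial on $t<0$ where $\tilde u=\phi$ and $\partial_t\tilde u=0$, and is the original equation on $t>0$), and $D_x\tilde u,D_x^2\tilde u,\partial_t\tilde u\in L^{\infty}$ — the only failure of the equation occurs on the measure-zero slice $\{t=0\}$ where $\partial_t\tilde u$ has a jump, which is irrelevant to the a.e.\ hypothesis of Theorem~\ref{5A}.

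Fix $\delta_0=\tfrac14\min\{R-r,\sqrt{T}\}$. For every $X_0=(x_0,t_0)\in\overline{Q_T(r)}$, the cylinder $Q(X_0,2\delta_0)$ is compactly contained in the enlarged domain $B_R\times(-T,T]$, with parabolic distance to the parabolic boundary bounded below uniformly in $X_0$. Applying Theorem~\ref{5A} to $\tilde u$ on $Q(X_0,2\delta_0)$ yields
$$\|\tilde u\|_{C^{2,\alpha}_{\ast}(\overline{Q(X_0,\delta_0)})}\le C\bigl\{\|u\|_{L^{\infty}(Q_T(R))}+\|\phi\|_{C^{2,\alpha}(\bar B_R)}+\|f\|_{C^{\alpha}_{\ast}(\overline{Q_T(R)})}\bigr\},$$
with $C$ independent of $X_0$. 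Covering the compact set $\overline{Q_T(r)}$ by finitely many such half-cylinders and patching the Hölder seminorms in the standard way (handling pairs of points whose parabolic distance exceeds $\delta_0/2$ by the $L^{\infty}$-bound, and closer pairs by the local estimate) produces the claimed global $C^{2,\alpha}_{\ast}$-bound on $\overline{Q_T(r)}$.

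The main obstacle is to verify that the extension is truly admissible for Theorem~\ref{5A}: the jump of $\partial_t\tilde u$ across $t=0$ might raise the worry that $\tilde u$ is too rough to be treated as a strong interior solution. This is resolved because Theorem~\ref{5A} requires only a.e.\ solvability together with $L^{\infty}$-bounds on $D_x\tilde u,D_x^2\tilde u,\partial_t\tilde u$, all of which persist through the extension, and because the $C^{\alpha}_{\ast}$-norms of $\tilde a_{ij},\tilde b_i,\tilde c,\tilde f$ measure only spatial Hölder continuity uniformly in $t$ and so tolerate the discontinuity in $t$ at the initial slice. A pleasant by-product of the argument is that the conclusion $\tilde u\in C^{2,\alpha}_{\ast}$ automatically forces the compatibility $D_x^2 u(\cdot,0^+)=D^2\phi$, which we never had to impose.
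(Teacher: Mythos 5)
Your proposal is correct and follows essentially the same route as the paper: extend the solution, coefficients, and source to negative times so that the initial slice becomes interior, then apply Theorem \ref{5A} (the paper first normalizes to $\phi=0$ and extends by zero, which is equivalent to your extension by $\phi$). The only point the paper treats more carefully is the verification, via an integration-by-parts argument with a temporal cutoff, that the weak derivatives $\partial_{ij}\tilde u$ and $\partial_t\tilde u$ of the glued function carry no singular part on $\{t=0\}$ — the fact you invoke when asserting that the jump of $\partial_t\tilde u$ across the initial slice is harmless.
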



\begin{proof}
Without loss of generality, we assume $\phi=0$, since we can consider the equation for $u-\phi$. 
We extend $u$ to a function in $Q^{\infty}_T(R)=B_R\times(-\infty,T]$ by defining
$$
\Tilde{u}=
\begin{cases}
u&\quad\text{in }Q_T(R),\\
0&\quad\text{in }B_R\times(-\infty,0].
\end{cases}
$$
Similarly, we define $\Tilde{b}_i$, $\Tilde{c}$, and $\Tilde{f}$. In addition, we define
$$
A_{ij}=
\begin{cases}
a_{ij}&\quad\text{in }Q_T(R),\\
\delta_{ij}&\quad\text{in }B_R\times(-\infty,0].
\end{cases}
$$
Then, we can verify that $\Tilde{u}:Q^{\infty}_{T}(R)\rightarrow\mathbb{R}$ satisfies
$$
A_{ij}\partial_{ij}\Tilde{u}+\Tilde{b}_{i}\partial_{i}\Tilde{u}+\Tilde{c}\Tilde{u}-\partial_{t}\Tilde{u}=\Tilde{f}\quad a.e.\text{ in }Q^{\infty}_{T}(R),
$$
with $D_x\Tilde{u},D^2_x \Tilde{u},\partial_t \Tilde{u}\in L^{\infty}(Q^{\infty}_{T}(R))$. 
In fact, we can prove that
\begin{equation}\label{weak1}
\partial_{ij}\Tilde{u}=
\begin{cases}
\partial_{ij}u&\quad\text{in }Q_T(R),\\
0&\quad\text{in }B_R\times(-\infty,0],
\end{cases}
\end{equation}
and
\begin{equation}\label{weak2}
\partial_{t}\Tilde{u}=
\begin{cases}
\partial_{t}u&\quad\text{in }Q_T(R),\\
0&\quad\text{in }B_R\times(-\infty,0].
\end{cases}
\end{equation}
To verify \eqref{weak1} and \eqref{weak2}, 
we take any $\zeta\in C^{\infty}(Q^{\infty}_{T}(R))$ with compact support in $B_R\times(-\infty,T)$. 
For any $\epsilon>0$ small, we take $\eta=\eta(t)\in C^{\infty}(\mathbb{R})$ such that
$$
\eta(t)=0\quad\text{for }|t|<\epsilon,\quad \eta(t)=1\quad\text{for }|t|>2\epsilon,\quad 0\leq\eta\leq 1,\quad|\eta'|\leq\frac{2}{\epsilon}.
$$
Note that $\zeta\eta|_{Q_{T}(R)}$ has compact support in $B_R\times(0,T)$. Then, we have
\begin{align*}
\int_{Q^{\infty}_{T}(R)}\Tilde{u}\partial_{ij}\zeta \eta dX=\int_{Q_{T}(R)}u\partial_{ij}(\zeta \eta) dX=\int_{Q_{T}(R)}\partial_{ij} u\zeta \eta dX.
\end{align*}
Here, the first equality holds since $\Tilde{u}=0$ in $B_R\times(-\infty,0]$ and 
$\eta$ is a function of $t$ only. 
By letting $\epsilon\rightarrow 0$, we obtain \eqref{weak1}. Next, we have
\begin{equation}\label{weak3}
\begin{aligned}
\int_{Q^{\infty}_{T}(R)}\Tilde{u}\partial_{t}\zeta \eta dX&=\int_{Q_{T}(R)}u\partial_{t}(\zeta \eta )dX-\int_{Q_{T}(R)}u\zeta\eta' dX\\
&=-\int_{Q_{T}(R)}\partial_{t} u\zeta \eta dX-\int_{Q_{T}(R)}u\zeta\eta' dX.
\end{aligned}
\end{equation}
Note that $u\in C(\overline{Q_T(R)})$ by the Sobolev embedding theorem and $u|_{t=0}=0$. 
We get
$$
\Big|\int_{Q_{T}(R)}u\zeta\eta' dX\Big|\leq \frac{2}{\epsilon}\int_{B_{R}\times(\epsilon,2\epsilon)}|u||\zeta|dX
\leq C\max_{B_{R}\times(\epsilon,2\epsilon)}|u|\rightarrow 0,
$$
as $\epsilon\rightarrow 0$. By letting $\epsilon\rightarrow 0$ in \eqref{weak3}, we obtain \eqref{weak2}.

Hence, we can apply Theorem \ref{5A} to $\Tilde{u}$ and obtain the desired results.
\end{proof}

\begin{remark}\label{5H}
In general, we cannot extend the above result to the global Schauder estimate for the initial-boundary value problem. 
Refer to Lemma 16.8 and the subsequent paragraph in \cite{L1992} for details.
\end{remark}

\end{document}